\documentclass[12pt]{amsart}
\usepackage[utf8]{inputenc}

\usepackage[english]{babel}
\usepackage[T1]{fontenc}

\textheight 22cm
\textwidth 16cm
\topmargin -0.8cm
\oddsidemargin -0.2cm
\evensidemargin -0.2cm

\usepackage{colortbl}
\usepackage[usenames,dvipsnames]{xcolor}

\usepackage{amsmath}
\usepackage{enumitem}
\usepackage{amssymb,amsfonts,stmaryrd,mathrsfs}
\usepackage{amsmath,latexsym,amsthm,mathtools,bbm}
\usepackage{pgf,tikz}

\usepackage{graphicx,xspace}
%\usepackage[config, labelfont={normalsize}]{caption,subfig}
%\captionsetup[subfigure]{subrefformat=simple,labelformat=simple,listofformat=subsimple}
%\renewcommand\thesubfigure{(\alph{subfigure})}
\usepackage{subcaption}

\definecolor{green}{RGB}{43,92,47}
\definecolor{red}{RGB}{191,0,0}
\usepackage[colorlinks,cite color=red,link color=green,pagebackref=true]{hyperref}
\usepackage{todonotes}

\usepackage[capitalize]{cleveref}

\newtheorem{thm}{Theorem}[section]
\newtheorem{cor}[thm]{Corollary}
\newtheorem{lem}[thm]{Lemma}
\newtheorem{prop}[thm]{Proposition}
\newtheorem{defi}[thm]{Definition}
\newtheorem{conj}{Conjecture}

\theoremstyle{remark}
\newtheorem{rmq}{Remark}

\newcommand{\bfp}{\mathbf{p}}
\newcommand{\bfq}{\mathbf{q}}
\newcommand{\bfr}{\mathbf{r}}
\newcommand{\bfx}{\mathbf{x}}

\newcommand{\B}{\mathcal B^{(\alpha)}}
\newcommand{\C}{\mathcal C^{(\alpha)}}
\newcommand{\Cone}{\mathcal C^{(1)}}
\newcommand{\Bone}{\mathcal B^{(1)}}
\newcommand{\G}{\mathcal G^{(\alpha)}}
\newcommand{\tG}{\widetilde{\mathcal G}^{(\alpha)}}
\newcommand{\Gone}{\mathcal G^{(1)}}
\newcommand{\Binf}{\mathcal B_\infty^{(\alpha)}}

\newcommand{\Span}{\mathrm{Span}}
\newcommand{\mcP}{\mathcal{P}}
\newcommand{\mcA}{\mathcal{A}}
\newcommand{\PY}{\mathcal{P}_{Y}}
\newcommand{\tildePY}{\mathcal{\widetilde P}_{Y}}

\newcommand{\mcS}{\mathcal{S}_{\alpha}}
\newcommand{\mcSstar}{\mathcal{S}_{\alpha}^*}

\newcommand{\J}{J^{(\alpha)}}

\newcommand{\Jch}{\theta^{(\alpha)}}

\newcommand{\Vcirc}{\mathcal{V}_\circ}

\newcommand{\tf}{\nu_\diamond}

\newcommand{\GY}{\Gamma_Y}

\newcommand{\hypermap}{\mathcal{H}}
\newcommand{\tplus}{\lambda^+}
\newcommand{\tminus}{\lambda^-}
\newcommand{\tblack}{\lambda^\bullet}

\newcommand{\ohypermap}{\mathcal{OH}}
\newcommand{\oprehypermap}{\mathcal{OPH}}
\newcommand{\BFC}{{\normalfont BFC} }

\DeclareMathOperator{\weight}{weight}
\DeclareMathOperator{\hG}{\mathit{\widehat G}}
\DeclareMathOperator{\htau}{\widehat \tau}

\title[Differential equations for the series of hypermaps]{Differential equations for the series of hypermaps with control on their full degree profile}
\author{Houcine Ben Dali}
\address{\parbox{\linewidth}{Université de Lorraine, CNRS, IECL, F-54000 Nancy\\
  Universit\'e de Paris, CNRS, IRIF, F-75006 Paris, France\\
  Department of Mathematics, Harvard University, Cambridge, MA 02138, U.S.A.}
}
\email{bendali@math.harvard.edu}

\begin{document}

\begin{abstract}
    We consider the generating series of oriented and non-oriented hypermaps with controlled degrees of vertices, hyperedges and faces. It is well known that these series have natural expansions in terms of Schur and Zonal symmetric functions, and with some particular specializations, they satisfy the celebrated KP and BKP equations.
    
    We prove that the full generating series of hypermaps satisfy a family of differential equations. 
    We give a first proof which works for an $\alpha$ deformation of these series related to Jack polynomials. This proof is based on a recent construction formula for Jack characters using differential operators. We also provide a combinatorial proof for the orientable case.

    Our approach also applies to the series of $k$-constellations with control of the degrees of vertices of all colors. In other words, we obtain an equation for the
generating function of Hurwitz numbers (and their $\alpha$-deformations) with
control of full ramification profiles above an arbitrary number of
points. Such equations are new even in the orientable case.
\end{abstract}
\maketitle

\section{Introduction}

\subsection{Maps}
 A \textit{connected map} is a cellular embedding of a  connected graph into a closed surface without boundary, orientable or not. In this paper, a \textit{map} is an unordered collection of connected maps. We say that a map is  \textit{orientable} if each one of its connected components is embedded on a orientable surface. We will use the word \textit{non-oriented} for maps on general surfaces, orientable or not. Maps appear in various branches of algebraic combinatorics, probability and physics. The study of maps involves various methods such as generating series, matrix integral techniques and bijective methods, see e.g  \cite{LandoZvonkin2004,Eynard2016,BenderCanfield1986,Chapuy2011,AlbenqueLepoutre2020,ChapuyDolega2022}.

A \textit{hypermap} is a map whose faces are colored in two colors $(+)$ and $(-)$, and such that each edge is incident to two faces of different colors. Usually the faces of one color are called \textit{hyperedges}, and the faces of the other color are the \textit{faces} of the hypermap. Hypermaps were first introduced by Cori in \cite{Cori1975} and are in bijection with bipartite maps by duality \cite{walsh1975}.

We consider generating series of hypermaps with three alphabets of variables controlling the degrees of vertices, faces of color $(+)$ and faces of color $(-)$, see \cref{eq gen series oriented,eq gen series non oriented} below. Representation theory allows one to relate these generating series, in the orientable and the non-orientable cases  to Schur and Zonal symmetric functions, respectively \cite{JacksonVisentin1990,GouldenJackson1996}.

The generating series in which we keep one alphabet $\bfp$ and we replace the alphabets $\bfq$ and $\bfr$ by two variables $u$ and $v$ are well studied and are known to be functions of the KP hierarchy (resp. BKP hierarchy) in the orientable (resp. the non-orientable) case; see \cite{Kharchev91,VandeLeur2001}. 

When we keep two alphabets and we only specialize the third one (see \cref{ssec Jack characters}), the hypermap series also satisfies differential equations related to the integrable 2-Toda hierarchy, see \textit{e.g} \cite{AdlervanMoerbeke2001,BousquetSchaeffer2002,EynardOrantin2007}.
Moreover, it has been recently proved in \cite{ChapuyDolega2022} that the two-alphabet series satisfies a family of decomposition equations, which are a sort of Tutte equations.

However, studying the full generating series of hypermaps \textit{i.e.} without any specialization of the three alphabets of variables, is known to be a hard problem: the usual decomposition a la Tutte does not seem to exist and we are not aware of any differential equations satisfied by these series.

The main contribution of this paper is to prove that the generating series of orientable and non-orientable hypermaps satisfy a family of differential equations; see \cref{thm diff eq} below. We also prove that these equations characterize the generating series. These results are established for a more general series which depends on a deformation parameter $\alpha$, and which gives the series of orientable hypermaps when $\alpha=1$, and the series of the non-orientable hypermaps when $\alpha=2$. The result is based on a recent result of Maciej Do\l{}e\k{}ga and the author about Jack characters \cite{BenDaliDolega2023}, see also \cref{thm Jch}.

\subsection{Generating series of hypermaps}
Throughout the paper, we use straight letters to denote series ($H$,$G$,\dots), and curved letters to denote operators ($\B_n$, $\C_\ell$, $\G$,\dots) or linear spaces ($\mcA$, $\mcS$,\dots).

We start by some definitions related to hypermaps.
\begin{defi}\label{def profile hypermap}
The \textit{size} of a map $M$ is its number of edges, and will be denoted $|M|$.
If $M$ is a hypermap, then we associate to it three integer partitions of size $|M|$: 
\begin{itemize}
    \item its \textit{vertex type}, denoted $\lambda^\bullet(M)$, is the partition obtained by reordering the vertex degrees divided\footnote{When we turn around a vertex in a hypermap, colors $(+)$ and $(-)$ alternate. By consequence, each vertex has necessarily even degree.} by 2. 
    \item its \textit{$(+)$ type}, denoted $\tplus(M)$, is the partition obtained by reordering the degrees of the $(+)$ faces.
    \item its \textit{$(-)$ type}, denoted $\tminus(M)$, is the partition obtained by reordering the degrees of the $(-)$ faces.
\end{itemize} 
The \textit{profile} of $M$, is then the tuple of partitions $(\lambda^\bullet(M),\tplus(M),\tminus(M))$. Finally, we say that a hypermap (oriented or not) is \textit{vertex labelled} if:
\begin{enumerate}
    \item for each $d\geq 1$, vertices of same degree $2d$ are labelled $v_{d,1},v_{d,2},\dots$. 
    \item each vertex has a marked oriented corner in a face colored $(+)$. This corner is called the \textit{vertex root}.
\end{enumerate}

\end{defi}

When a map is orientable, we can choose for each one of the connected surfaces an orientation, which will be called the \textit{direct orientation of the surface}. Once this orientation is fixed, we say that the map is oriented. An oriented hypermap is said vertex labelled if vertices are numbered as in item \textit{(1)} above, and if
\textit{
\begin{enumerate}[label={(\arabic*$'$})]
\setcounter{enumi}{1}
\item each vertex has a marked $(+)$ corner,  oriented in the direct orientation, called the \textit{vertex root}.
\end{enumerate}
}

We give in \cref{fig oriented hyper map} an example of an oriented vertex labelled hypermap. Notice that if we start from a hypermap whose faces of color $(-)$ have all degree 2, then we can glue the double edges forming $(-)$ faces in order to obtain a map with only $(+)$ faces. Hence, simple maps (maps with uncolored faces) can be seen as hypermaps for which $\lambda^{-}=[2,2,\dots,2].$

\begin{figure}[t]
    \centering
    \includegraphics[width=0.5\textwidth]{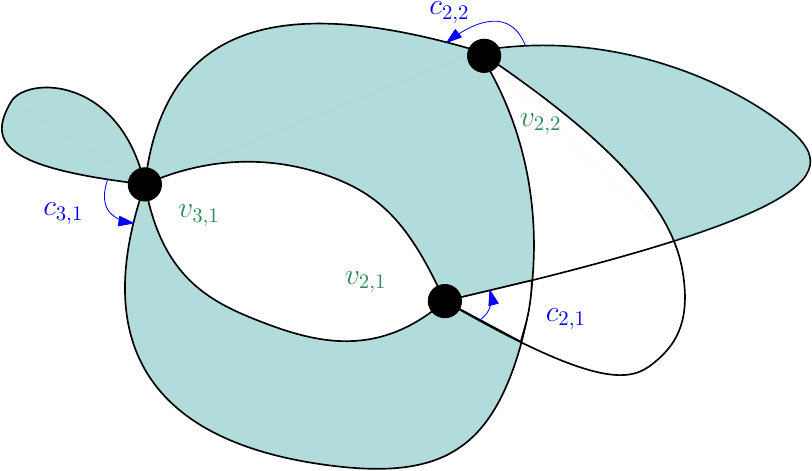}
    \caption{An example of a vertex labelled hypermap of profile $([3,2,2],[5,2],[6,1])$. Faces of color $(-)$ are represented in blue, the root of the vertex $v_{d,i}$ is denoted by $\color{blue}c_{d,i}$.}
    \label{fig oriented hyper map}
\end{figure}

Understanding hypermaps with controlled profile is a hard combinatorial problem. Indeed, usual techniques to enumerate maps do not allow to control the three partitions of the profile and the only known answer to this question is given by an algebraic approach, which provides an expression of the generating series of hypermaps using symmetric functions.

In order to define these generating series, we consider a variable $t$ and three alphabets of variables $\bfp=(p_i)_{i\geq 1}$, $\bfq=(q_i)_{i\geq 1}$ and $\bfr=(r_i)_{i\geq 1}$. For any integer partition $\lambda:=[\lambda_1,\lambda_2,\dots,\lambda_s]$, we define $p_\lambda$ as follows;
$$p_\lambda:=p_{\lambda_1}p_{\lambda_2}\dots p_{\lambda_s}.$$
We define $q_\lambda$ and $r_\lambda$ in a similar way. These three alphabets will be the respective weights of vertices, $(+)$ and $(-)$ faces. More precisely, we consider the two generating series
\begin{equation}\label{eq gen series oriented}
  H^{(1)}(t,\bfp,\bfq,\bfr)=\sum_{M}\frac{t^{|M|}}{z_{\tblack(M)}}p_{\lambda^\bullet(M)}q_{\tplus(M)}r_{\tminus(M)},  
\end{equation}
\begin{equation}\label{eq gen series non oriented}
H^{(2)}(t,\bfp,\bfq,\bfr)=\sum_{M}\frac{t^{|M|}}{2^{ \ell(\tblack(M))}z_{\tblack(M)}}p_{\lambda^\bullet(M)}q_{\tplus(M)}r_{\tminus(M)},    
\end{equation}
where the sum runs over vertex labelled oriented (resp. non-oriented) hypermaps in \cref{eq gen series oriented} (resp. \cref{eq gen series non oriented}). Here, $z_\lambda$ is a normalization numerical factor  (see \cref{subsec Partitions}) related to the labelling of vertices.
By the remark above, the generating series of simple maps are obtained from $H^{(1)}$ and $H^{(2)}$ by taking the specialization $r_2=1$ and $r_i=0$ for $i\neq 2$. It is also worth mentioning that the series $H^{(1)}$ and $H^{(2)}$ are symmetric in the three alphabets $\bfp,\bfq$ and $\bfr$: the symmetry between $\bfq$ and $\bfr$ is clear from the definition and symmetry between $\bfp$ and $\bfq$ can be seen using a duality operation which exchanges vertices and $(+)$ faces (see \textit{e.g} \cite[Definition 2.4]{ChapuyDolega2022}). 

Representation theory tools can be used to give an explicit  expression of the series $H^{(1)}$ and $H^{(2)}$ in terms of symmetric functions (Schur and Zonal functions); see \cref{thm coef c b=0}. These results use an encoding of hypermaps with permutations and matchings. The disadvantage of this approach is that it is quite rigid and it is hard to generalize to the case of generating series of maps with additional weights; an example of such a problem is given by the Matching-Jack conjecture of Goulden--Jackson \cite{GouldenJackson1996}  (see also \cref{conj MJ} below).

The main contribution of this paper is to establish a differential equation for the generating series of hypermaps (\cref{thm diff eq} below) and to solve this equation by giving a recursive formula for the number of hypermaps with fixed profile $(\pi,\mu,\nu)$, see \cref{eq recursive formula}. This recursive formula is given for a one parameter deformation related to Jack polynomials and also to the enumeration of maps with non-orientability weights.

In order to understand the recursive structure of a family of coefficients indexed by three partitions of the same size, it is sometimes  convenient to start by introducing a generalized family of coefficients indexed by partitions of arbitrary size (see \cite{alexanderssonferay2017} for an example of application of this idea). It turns out that one way to make such a generalization in the case of hypermaps is by marking faces of degree 1.
This operation will be justified algebraically in \cref{ssec structure coeff} and a natural combinatorial interpretation in terms of "partially constructed" hypermaps is given for orientable maps in \cref{sec alpha 1}.

\begin{defi}\label{def hypermaps}
Let $\pi$, $\mu$ and $\nu$ be three partitions. We denote by $\ohypermap^\pi_{\mu,\nu}$ (resp. $\hypermap^\pi_{\mu,\nu}$)  the set of vertex labelled oriented hypermaps (resp. non-oriented hypermaps) of profile $(\pi,\mu\cup1^{|\pi|-|\mu|},\nu\cup1^{|\pi|-|\nu|})$, with $|\pi|-|\mu|$ marked $(+)$ faces of degree 1, $|\pi|-|\nu|$ marked $(-)$ faces of degree 1, with the condition that in an isolated loop\footnote{An isolated loop is a connected component of a hypermap with exactly one edge and one vertex. It has necessarily two faces, one colored $(+)$ and one colored $(-)$.}, we can not have both the $(+)$ face and the $(-)$ face marked. By definition, $\ohypermap^\pi_{\mu,\nu}$ and $\hypermap^\pi_{\mu,\nu}$ are empty  when $|\pi|<\max(|\mu|,|\nu|)$.
\end{defi}

We then define the generating series of hypermaps with marked faces;
\begin{equation*}
  \widetilde H^{(1)}(t,\bfp,\bfq,\bfr)=\sum_{\pi,\mu,\nu}\frac{|\ohypermap^\pi_{\mu,\nu}|}{z_\pi}t^{|\mu|+|\nu|-|\pi|} p_\pi q_\mu r_\nu,  
\end{equation*}
\begin{equation*}
  \widetilde H^{(2)}(t,\bfp,\bfq,\bfr)=\sum_{\pi,\mu,\nu}\frac{|\hypermap^\pi_{\mu,\nu}|}{2^{ \ell(\pi)}z_\pi}t^{|\mu|+|\nu|-|\pi|}p_\pi q_\mu r_\nu,  
\end{equation*}
The integer $|\mu|+|\nu|-|\pi|$ corresponds to the number of edges of the hypermap which are not incident to a marked face.
It is straightforward from the definitions that the series $H^{(1)}$ and $H^{(2)}$ are respectively the homogeneous parts of $\widetilde H^{(1)}$ and $\widetilde H^{(2)}$. Conversely, $\widetilde H^{(1)}$ and $\widetilde H^{(2)}$ can be obtained from $H^{(1)}$ and $H^{(2)}$ by simple operations (see \cref{eq c-g 2}).

\subsection{Jack characters}\label{ssec Jack characters}
In order to define a deformed series which interpolates between the series $\widetilde H^{(1)}$ and $\widetilde H^{(2)}$, we use \textit{Jack polynomials}.
Jack polynomials $\J_\lambda$ are symmetric functions introduced by Jack \cite{Jack1970}. They are indexed by an integer partition $\lambda$ and depend on a deformation parameter $\alpha$ (See \cref{thm Jack polynomials} for a precise definition). They can be seen as generalization of Schur functions which are obtained by setting $\alpha=1$.  Jack polynomials have a rich combinatorial structure \cite{Stanley1989,KnopSahi1997,ChapuyDolega2022,Moll2023} and are related to various models of statistical and quantum mechanics \cite{LapointeVinet1995,DumitriuEdelman2002,For}. 

Jack characters are an $\alpha$-deformation of the characters of symmetric group introduced by Lassalle in \cite{Lassalle2008b}.
They have been a useful tool to understand asymptotic behavior of large Young diagrams under a Jack deformation of the Plancherel measure \cite{DolegaFeray2016,Sniady2019,CuencaDolegaMoll2023}.
The Jack character $\Jch_\mu$ is the function on Young diagrams defined by:
$$ \theta^{(\alpha)}_\mu(\lambda):=\left\{   \begin{array}{ll}0 & \mbox{ if } |\lambda|<|\mu|,\\\binom{|\lambda|-|\mu|+m_1(\mu)}{m_1(\mu)}[p_{\mu,1^{|\lambda|-|\mu|}}]J^{(\alpha)}_\lambda &\mbox{ if }|\lambda|\geq|\mu|,   \end{array}\right.$$
where $m_1(\mu)$ is the number of parts equal to 1 in the partition $\mu$, and $p_\mu$ is the power-sum symmetric function indexed by $\mu$ (see \cref{ssec symmetric functions}).

\medskip In \cite{ChapuyDolega2022}, Chapuy and Do\l{}e\k{}ga have introduced  a new family of differential operators $\B_n$ which can be used to construct an $\alpha$ deformation of the generating series of hypermaps\footnote{Actually, they use bipartite maps which can be obtained from hypermaps by duality (a bijection which exchanges the vertices of the map with its faces).} with controlled vertex degrees, $(+)$ face degrees, and the \textit{number} of $(-)$ faces. This corresponds to generating series in which we keep two alphabets $\bfp$ and $\bfq$ and replace each $r_i$ by the same variable $u$ for $i\geq 1$. The definition of operators $\B_n(\bfp,u)$ uses catalytic variables and will be recalled in \cref{ssec op Bn}. These operators have also a combinatorial interpretation for general $\alpha$ which we recall in \cref{ssec C alpha 1} for $\alpha=1$. As an example, we give here the first two operators;
$$\B_1(\bfp,u)=\frac{u p_1}{\alpha}+\sum_{i\geq 1}p_{i+1}\frac{i \partial}{\partial p_i},$$
\begin{multline*}\B_2(\bfp,u)=\frac{u^2 p_2}{\alpha}+\sum_{i\geq 1}\left(\big(2 u+(i+1)(\alpha-1)\big) p_{i+2} +\sum_{j+k=i+2\atop{j,k\geq 1}}p_{j}p_k\right)\frac{i \partial}{\partial p_i}\\
  +\frac{u}{\alpha}\big( (\alpha-1) p_2+p_{1,1}\big)+\alpha\sum_{i,j\geq 1}p_{i+j+2}\frac{i\partial}{\partial p_i} \frac{j\partial}{\partial p_j}.  
\end{multline*}

It turns out that Jack characters can be constructed using these differential operators and, consequently, they have a combinatorial interpretation in terms of a family of weighted maps
(see \cite[Theorem 1.4 and Proposition 4.6]{BenDaliDolega2023}).
\begin{thm}[\cite{BenDaliDolega2023}]\label{thm Jch}
    For any partitions $\mu$ and $\lambda=[\lambda_1,\lambda_2,\dots,\lambda_s]$,
    $$\Jch_\mu(\lambda)=[t^{|\mu|}p_\mu]\exp\left(\Binf(-t,\bfp,-\alpha \lambda_1)\right)\dots\exp\left(\Binf(-t,\bfp,-\alpha \lambda_s)\right)\cdot 1.$$
where 
$$\Binf(t,\bfp,u):=\sum_{n\geq 1}\frac{t^n}{n}\B_n(\bfp,u).$$
\end{thm}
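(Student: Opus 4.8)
The plan is to first turn the coefficient identity into an identity of formal series. Since $\tfrac{t^n}{n}\B_n$ raises simultaneously the $t$-degree and the power-sum degree (with $\deg p_i=i$) by $n$, the series $\exp(\Binf(-t,\bfp,u))\cdot 1$ has the feature that its coefficient of $t^k$ is $p$-homogeneous of degree $k$. Consequently the extraction $[t^{|\mu|}p_\mu]$ discards nothing, and the statement is equivalent to the closed identity
$$G_\lambda(\bfp,t):=\exp\!\big(\Binf(-t,\bfp,-\alpha\lambda_1)\big)\cdots\exp\!\big(\Binf(-t,\bfp,-\alpha\lambda_s)\big)\cdot 1=\sum_{\mu}\Jch_\mu(\lambda)\,t^{|\mu|}p_\mu.$$
I would then record the elementary dictionary hidden in the definition of $\Jch_\mu$: the binomial factor $\binom{|\lambda|-|\mu|+m_1(\mu)}{m_1(\mu)}$ together with the $t$-grading is exactly the bookkeeping that reinstates parts equal to $1$. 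Thus, read at the top degree $t^{|\lambda|}$, the right-hand side is literally the power-sum expansion of $\J_\lambda$, and at lower $t$-degrees it records its $1$-stripped truncations. This reduces the theorem to a statement about $\J_\lambda$ itself and the operators $\B_n$.

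I would prove this identity by induction on the number of parts $s$, peeling off the leftmost exponential so that one row is adjoined at a time:
$$G_{[\lambda_1,\dots,\lambda_s]}=\exp\!\big(\Binf(-t,\bfp,-\alpha\lambda_1)\big)\cdot G_{[\lambda_2,\dots,\lambda_s]}.$$
Because $\lambda$ is a partition, $\lambda_1$ is its largest part, so all intermediate shapes $[\lambda_2,\dots,\lambda_s]\subseteq[\lambda_1,\dots,\lambda_s]$ are genuine Young diagrams and the newly created row sits on top; this is precisely the regime in which the adjunction of a row to a Jack polynomial is governed by a creation (Rodrigues-type) operator. The base case $s=0$ is $G_\emptyset=1=\sum_\mu\Jch_\mu(\emptyset)\,t^{|\mu|}p_\mu$, which holds since $\Jch_\mu(\emptyset)=\delta_{\mu,\emptyset}$. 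The inductive step then reduces to a single key lemma: for $m\geq\rho_1$, acting by $\exp(\Binf(-t,\bfp,-\alpha m))$ on $\sum_\mu\Jch_\mu(\rho)\,t^{|\mu|}p_\mu$ yields $\sum_\mu\Jch_\mu([m,\rho])\,t^{|\mu|}p_\mu$.

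The hard part will be this row-adjoining lemma: identifying the catalytic-variable operator family $\B_n(\bfp,u)$, under the spectral specialization $u=-\alpha m$, with the representation-theoretic operation of prepending a row of length $m$ at the level of Jack characters. The obstruction is one of bases: $\B_n$ is transparent in terms of catalytic variables and the $\bfp$-derivations in which it is written, whereas the target operation is transparent only in the Jack basis, the two being linked by the $\alpha$-deformed Hall pairing $\langle p_\lambda,p_\mu\rangle_\alpha=\delta_{\lambda\mu}\,\alpha^{\ell(\lambda)}z_\lambda$. My plan to bridge them is to compute the action of $\Binf(-t,\bfp,u)$ directly on the Jack basis, to check that $u=-\alpha m$ turns $\exp(\Binf)$ into a creation operator whose matrix elements reproduce the coefficients of the Rodrigues formula $\J_{[m,\rho]}\propto B^{+}_m\,\J_{\rho}$ of Lapointe--Vinet \cite{LapointeVinet1995}, and then to transport this through the $1$-stripping dictionary of the first step. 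A useful internal consistency check, and a guard against sign and normalization errors in the specialization $u=-\alpha m$, is that the right-hand side is manifestly a function of the partition $\lambda$, so the construction must be insensitive to the order of the parts even though the operator product is not symmetric a priori. I expect essentially all the genuine difficulty to lie in establishing this matching between the catalytic operators $\B_n$ and the Jack creation operators.
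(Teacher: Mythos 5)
You should first be aware that the paper you are trying to match contains no proof of this statement: \cref{thm Jch} is imported verbatim from \cite{BenDaliDolega2023} (the paper cites its Theorem~1.4 and Proposition~4.6) and is used downstream as a black box, e.g.\ to derive the skew-character formulas of Section~4. So the relevant comparison is with the proof in that reference. Your preliminary reductions are correct and essentially cost-free: by the homogeneity of $\B_n$ (\cref{rmq C_ell defined}) the coefficient extraction loses no information, so the statement is equivalent to the closed identity
\begin{equation*}
\exp\left(\Binf(-t,\bfp,-\alpha\lambda_1)\right)\cdots\exp\left(\Binf(-t,\bfp,-\alpha\lambda_s)\right)\cdot 1=\sum_{\mu}\Jch_\mu(\lambda)\,t^{|\mu|}p_\mu=t^{|\lambda|}\,\J_\lambda\!\left(p_1+\tfrac1t,p_2,p_3,\dots\right),
\end{equation*}
where the last equality is your (correct) observation that the binomial in the definition of $\Jch_\mu$ is exactly the bookkeeping of parts equal to $1$; and the induction on the number of parts, with base case $\Jch_\mu(\emptyset)=\delta_{\mu,\emptyset}$, is a sound skeleton.

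The genuine gap is that your ``row-adjoining lemma'' \emph{is} the theorem, and you do not prove it: you only announce a plan to compute the action of $\Binf$ on the Jack basis and to match it with the Rodrigues-type creation operators of Lapointe--Vinet \cite{LapointeVinet1995}. Nothing in the catalytic definition $\B_n=\Theta_Y(\GY+uY_+)^n\,y_0/(1+b)$ (\cref{ssec op Bn}) gives access to matrix elements in the Jack basis; producing them is of the same order of difficulty as the theorem itself, and no evidence is offered that the specialization $u=-\alpha m$ of $\exp(\Binf)$ coincides with any such creation operator (note also that what must be acted on is not $\J_\rho$ but the $t$-graded, $p_1\mapsto p_1+1/t$ substituted series, an extra layer your plan ignores). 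The proof in \cite{BenDaliDolega2023} does not attempt this matching. Its key ingredients are, first, Chapuy--Do\l{}e\k{}ga's theorem \cite{ChapuyDolega2022} identifying $\exp(\Binf(t,\bfp,u))\cdot 1$ with the two-alphabet Jack series $\sum_\theta t^{|\theta|}\J_\theta(\bfp)\J_\theta(\bfq)/j^{(\alpha)}_\theta$ in which every $q_i$ is specialized to the single variable $u$; this is what makes the $u$-dependence computable, since $\J_\theta$ under that specialization becomes the content product $\prod_{(i,j)\in\theta}(u+\alpha(j-1)-(i-1))$, which at $u=-\alpha m$ vanishes precisely when $\theta_1>m$, and this vanishing is the actual mechanism by which the parameter $-\alpha m$ encodes a row of length $m$. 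Second, the characterization of Jack characters among shifted symmetric functions (\cref{thm Feray}) together with the commutation relations for the operators $\C_\ell$ (\cref{thm com C}), which is how one proves that the a priori order-dependent product of exponentials defines a function of the partition $\lambda$. On this last point your proposal also inverts the logic: you treat order-insensitivity as a consistency check, but it is a nontrivial property that any proof along your lines would have to establish, and in the actual proof it is where much of the algebraic work is concentrated. As written, your proposal is a correct reformulation plus an unproven conjecture standing in for the entire content of the theorem.
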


\subsection{Structure coefficients}\label{ssec structure coeff}
Jack characters form a linear basis of the space of shifted symmetric functions, see \cref{ssec shifted fcts}. Hence, their structure coefficients $g^\pi_{\mu,\nu}(\alpha)$ are well defined:
\begin{equation}\label{eq g Jch}
  \Jch_\mu\Jch_\nu=\sum_{\pi}g^\pi_{\mu,\nu}(\alpha)\Jch_\pi.  
\end{equation}

\medskip We now explain the connection between these coefficients and the enumeration of hypermaps. We introduce the series of the structure coefficients;
\begin{equation}\label{eq def G}
  G^{(\alpha)}(t,\bfp,\bfq,\bfr):=\sum_{\pi,\mu,\nu}\frac{g^\pi_{\mu,\nu}(\alpha)}{z_\pi \alpha^{\ell(\pi)}}t^{|\mu|+|\nu|-|\pi|}p_\pi q_\mu r_\nu.  
\end{equation}
\noindent Moreover, the coefficients $g^\pi_{\mu,\nu}$ satisfy the following property (see \cref{lem deg bound}),
\begin{equation}\label{eq upper bound}
  g^\pi_{\mu,\nu}=0 \text{ if }|\pi|>|\mu|+|\nu|.  
\end{equation}
Hence,  $G^{(\alpha)}$ is well defined in the algebra $\mcA:=\mathbb Q(\alpha)[t,\bfp]\llbracket \bfq,\bfr\rrbracket$. 
This is the space of formal power series in the variables  $q_i$ and $r_i$, whose coefficients are polynomial in $t$ and~$p_i$. 

The series $G^{(\alpha)}$ is actually an $\alpha$-deformation of the generating series of hypermaps with marked faces (see \cref{ssec proof prop H-G}).
\begin{prop}\label{prop H-G}
    For $\alpha \in \left\{1,2\right\}$, we have
    $$G^{(\alpha)}(t,\bfp,\bfq,\bfr)=\widetilde H^{(\alpha)}(t,\bfp,\bfq,\bfr).$$
\end{prop}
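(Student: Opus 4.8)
The plan is to deduce the statement from the Schur/zonal expansion of the hypermap series (\cref{thm coef c b=0}) together with the combinatorial meaning of the binomial normalization in the definition of Jack characters. Comparing the coefficient of $t^{|\mu|+|\nu|-|\pi|}p_\pi q_\mu r_\nu$ in $G^{(\alpha)}$ and in $\widetilde H^{(\alpha)}$, and using that $\alpha^{\ell(\pi)}$ equals $1$ when $\alpha=1$ and $2^{\ell(\pi)}$ when $\alpha=2$ (which matches the two prefactors in the definitions of $\widetilde H^{(1)}$ and $\widetilde H^{(2)}$), the proposition is equivalent to the two identities $g^\pi_{\mu,\nu}(1)=|\ohypermap^\pi_{\mu,\nu}|$ and $g^\pi_{\mu,\nu}(2)=|\hypermap^\pi_{\mu,\nu}|$. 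It is precisely because $\J_\lambda$ specializes to a multiple of the Schur function at $\alpha=1$ and to the zonal polynomial at $\alpha=2$ that these two values, and only these, admit such a combinatorial reading.

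The core of the argument is an algebraic identity turning the structure coefficients into coefficients of the three--alphabet Jack series. Writing $j_\lambda^{(\alpha)}:=\langle\J_\lambda,\J_\lambda\rangle_\alpha$ for the squared norm in the Jack scalar product, and using $\langle p_\rho,\J_\lambda\rangle_\alpha=z_\rho\alpha^{\ell(\rho)}[p_\rho]\J_\lambda$ in the orthogonal expansion $f=\sum_{\lambda\vdash n}\langle f,\J_\lambda\rangle_\alpha\,\J_\lambda/j_\lambda^{(\alpha)}$, I would first establish, for every $n\ge|\mu|$,
\[
\sum_{\lambda\vdash n}\frac{\Jch_\mu(\lambda)}{j_\lambda^{(\alpha)}}\,\J_\lambda(\bfp)=\frac{p_\mu\,p_1^{n-|\mu|}}{z_\mu\,(n-|\mu|)!\,\alpha^{\ell(\mu)+n-|\mu|}},
\]
where the binomial factor in the definition of $\Jch_\mu$ combines with $z_{\mu\cup1^{n-|\mu|}}$ into the clean constant above. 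Feeding $\Jch_\mu\Jch_\nu=\sum_\pi g^\pi_{\mu,\nu}\Jch_\pi$ into this identity on one side, and expanding $\Jch_\mu,\Jch_\nu$ through their definition on the other, expresses $\sum_\pi g^\pi_{\mu,\nu}\,p_\pi p_1^{n-|\pi|}/(\dots)$ as a double coefficient extraction $[p_{\mu\cup1^{n-|\mu|}}(\bfq)\,p_{\nu\cup1^{n-|\nu|}}(\bfr)]$ applied to $\Theta_n:=\sum_{\lambda\vdash n}\J_\lambda(\bfp)\J_\lambda(\bfq)\J_\lambda(\bfr)/j_\lambda^{(\alpha)}$.

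By \cref{thm coef c b=0}, the series $\sum_n t^n\Theta_n$ coincides with $H^{(\alpha)}$ for $\alpha\in\{1,2\}$, so the previous step rewrites $g^\pi_{\mu,\nu}(\alpha)$ as a count of (unmarked) hypermaps of profile $(\pi,\mu\cup1^{|\pi|-|\mu|},\nu\cup1^{|\pi|-|\nu|})$, weighted by the two binomials $\binom{|\pi|-|\mu|+m_1(\mu)}{m_1(\mu)}$ and $\binom{|\pi|-|\nu|+m_1(\nu)}{m_1(\nu)}$. I would then read these binomials combinatorially: they count the choices of $|\pi|-|\mu|$ marked $(+)$ faces of degree $1$ and $|\pi|-|\nu|$ marked $(-)$ faces of degree $1$ among all degree-$1$ faces of such a hypermap. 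This is exactly the passage from the unmarked count to the marked count $|\ohypermap^\pi_{\mu,\nu}|$ (resp. $|\hypermap^\pi_{\mu,\nu}|$), so the two sides agree.

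The main obstacle is the bookkeeping of parts equal to $1$. Since $p_\pi p_1^{n-|\pi|}$ only records the parts of $\pi$ that are $\ge2$, isolating a single $g^\pi_{\mu,\nu}$ from the identity above requires inverting a triangular system indexed by the number of adjoined $1$'s; organizing everything by the grading $t^{|\mu|+|\nu|-|\pi|}$, which is built into $G^{(\alpha)}$ and $\widetilde H^{(\alpha)}$, is what makes this inversion transparent. The most delicate point is that the plain binomial count over-counts exactly the configurations forbidden in \cref{def hypermaps}, namely marking both faces of an isolated loop. I expect this to be the algebraic shadow of the trivial relation $\Jch_\emptyset=1$: for instance it forces $g^{[1]}_{\emptyset,\emptyset}=0$, matching $\ohypermap^{[1]}_{\emptyset,\emptyset}=\varnothing$, and I would check that the same cancellation accounts for the isolated-loop exclusion in general.
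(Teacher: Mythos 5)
Your algebraic setup is correct and, in substance, it re-derives \cref{prop_c-g_2}, which the paper simply quotes: the identity $\sum_{\lambda\vdash n}\Jch_\mu(\lambda)\,\J_\lambda(\bfp)/j^{(\alpha)}_\lambda=p_\mu p_1^{n-|\mu|}/\bigl(z_\mu(n-|\mu|)!\,\alpha^{\ell(\mu)+n-|\mu|}\bigr)$ checks out, and extracting $[q_{\bar\mu}r_{\bar\nu}p_\rho]$ from $\Theta_n$ as you propose yields exactly \cref{eq c-g},
\begin{equation*}
\sum_{i=0}^{m_1(\pi)}\binom{m_1(\pi)}{i}g^{\tilde{\pi}\cup1^i}_{\mu,\nu}
=\binom{m_1(\mu)+n-|\mu|}{m_1(\mu)}\binom{m_1(\nu)+n-|\nu|}{m_1(\nu)}c^\pi_{\mu\cup1^{n-|\mu|},\nu\cup1^{n-|\nu|}},
\end{equation*}
with $n=|\pi|$ (the paper proves this by evaluating Jack characters on partitions of size $n$; your scalar-product derivation is an equivalent and equally valid route, and it even bypasses the citation of \cref{prop_c-g_1}). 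Your reduction of the proposition to $g^\pi_{\mu,\nu}(1)=|\ohypermap^\pi_{\mu,\nu}|$ and $g^\pi_{\mu,\nu}(2)=|\hypermap^\pi_{\mu,\nu}|$, and your reading of the two binomials as choices of marked degree-one faces, also match the paper. Since this system is triangular — the $i=m_1(\pi)$ term is $g^\pi_{\mu,\nu}$ itself and the others involve strictly smaller partitions — it determines the $g$'s uniquely, so all that remains is to show that the numbers $|\ohypermap^{\tilde{\pi}\cup1^i}_{\mu,\nu}|$ satisfy the \emph{same} system.

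That last step is where your proposal has a genuine gap: you verify only the case $(\pi,\mu,\nu)=([1],\emptyset,\emptyset)$ and then write that you ``would check that the same cancellation accounts for the isolated-loop exclusion in general,'' attributing it to an algebraic shadow of $\Jch_\emptyset=1$. This deferred step is precisely the non-formal content of the paper's proof, and the mechanism is not a cancellation but a bijective decomposition: given a marked hypermap of profile $(\pi,\mu\cup1^{n-|\mu|},\nu\cup1^{n-|\nu|})$ with \emph{unrestricted} markings, let $j$ be its number of isolated loops with both faces marked; removing these loops leaves a map in $\ohypermap^{\tilde{\pi}\cup1^{m_1(\pi)-j}}_{\mu,\nu}$, while the labels of the $j$ removed degree-one vertices can be chosen in $\binom{m_1(\pi)}{j}$ ways. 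Summing over $j$ produces exactly the left-hand side of the display above with $i=m_1(\pi)-j$, which is what closes the proof. In other words, the forbidden doubly-marked loops do not cancel against anything — they account exactly for the lower terms $i<m_1(\pi)$ of the triangular system, and this is why the restriction in \cref{def hypermaps} is the correct one. Until you supply this decomposition (or an equivalent argument), the identification of $g^\pi_{\mu,\nu}$ with the restricted counts is not established.
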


% In the following table we summarize the results related to the series $\tau_\alpha$

The hope is to generalize the previous proposition for any $\alpha$ by considering maps counted with $\alpha$-weights as in \cite{LaCroix2009,ChapuyDolega2022,BenDaliDolega2023}. However, such a question seems to be out of reach when we consider generating series with three alphabets $\bfp$, $\bfq$ and $\bfr$. The main goal of this paper is to shed some light on the combinatorics of the series $G^{(\alpha)}(t,\bfp,\bfq,\bfr)$ by giving a family of differential equations which characterize it.

\subsection{Main theorem}\label{ssec main thm}

%For any integer $\ell\geq 0$, we consider the operators $\C_{\ell}(t,\bfp)$ introduced in \cite{BenDaliDolega2023}
%$$\C_\ell(t,\bfp):=[u^\ell]\Binf(t,\bfp,u),$$
%where $[u^\ell]$ denotes the extraction of the coefficient of $u^{\ell}$. 

 Let $\left(\Binf(t,\bfp,u)\right)^\perp$ denote the adjoint operator of $\Binf(t,\bfp,u)$ with respect to the usual scalar product of $\mathbb{Q}(\alpha)[\bfp]$, see \cref{ssec symmetric functions}. In \cref{ssec dual operators}, we provide a differential expression for this dual operator .

We are interested in the operators $\Binf(-t,\bfq,u)$, $\Binf(-t,\bfr,u)$ as well as $\left(\Binf(-t,\bfp,u)\right)^\perp$. These three operators are well defined as operators from  $\mcA$ to $\mcA\llbracket u\rrbracket$,  see \cref{rmq C_ell defined}. We now state the main theorem of the paper.

 \begin{thm}\label{thm diff eq}
The function $G^{(\alpha)}(t,\bfp,\bfq,\bfr)$ satisfies the following equation
\begin{equation}\label{eq diff eq}
  \left(\Binf(-t,\bfq,u)+\Binf(-t,\bfr,u)\right)\cdot G^{(\alpha)}(t,\bfp,\bfq,\bfr)= {\Binf}^{\perp}(-t,\bfp,u) \cdot G^{(\alpha)}(t,\bfp,\bfq,\bfr).  
\end{equation}
\end{thm}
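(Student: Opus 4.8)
The plan is to reduce the statement to the construction formula of \cref{thm Jch} by encoding $G^{(\alpha)}$ through a single Cauchy-type kernel. First I would record the following consequence of \cref{thm Jch}: since each operator $\tfrac{(-t)^n}{n}\B_n(\bfp,u)$ multiplies by $t^n$ while raising the degree in $\bfp$ by exactly $n$, the series $\Phi_\lambda(\bfp):=\exp(\Binf(-t,\bfp,-\alpha\lambda_1))\cdots\exp(\Binf(-t,\bfp,-\alpha\lambda_s))\cdot 1$ is supported on monomials whose $\bfp$-degree equals their $t$-degree, so that
\[
\Phi_\lambda(\bfp)=\sum_{\mu}\Jch_\mu(\lambda)\,t^{|\mu|}p_\mu .
\]
Expanding $\Phi_\lambda(\bfq)\Phi_\lambda(\bfr)$ and using $\Jch_\mu\Jch_\nu=\sum_\pi g^\pi_{\mu,\nu}\Jch_\pi$ together with \eqref{eq upper bound} gives, for every partition $\lambda$, the identity $\Phi_\lambda(\bfq)\Phi_\lambda(\bfr)=\sum_\pi\Jch_\pi(\lambda)\,\widetilde G^\pi$ with $\widetilde G^\pi:=\sum_{\mu,\nu}g^\pi_{\mu,\nu}t^{|\mu|+|\nu|}q_\mu r_\nu$; comparison with \eqref{eq def G} then shows that $G^{(\alpha)}$ is the image of the shifted symmetric function $\mathcal F(\lambda):=\Phi_\lambda(\bfq)\Phi_\lambda(\bfr)$ under the linear map $\Psi$ sending $\Jch_\pi$ to $t^{-|\pi|}p_\pi/(z_\pi\alpha^{\ell(\pi)})$.

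The key point is that $\Psi$ inverts the kernel attached to $\Phi$. Writing $\langle\cdot,\cdot\rangle$ for the scalar product with $\langle p_\sigma,p_\rho\rangle=\delta_{\sigma\rho}z_\sigma\alpha^{\ell(\sigma)}$, one gets $\langle\Phi_\lambda,p_\rho\rangle=t^{|\rho|}z_\rho\alpha^{\ell(\rho)}\Jch_\rho(\lambda)$, so that $K\colon h\mapsto(\lambda\mapsto\langle\Phi_\lambda,h\rangle)$ is a degree-preserving bijection with $\Psi=K^{-1}$. I would then feed the operators into this picture. On the $\bfq$ and $\bfr$ alphabets, \cref{thm Jch} shows that for an integer $m\ge\lambda_1$ and $k\ge 0$, prepending $k$ parts equal to $m$ gives $\Phi_{(m^k,\lambda)}(\bfq)=\exp\!\big(k\Binf(-t,\bfq,-\alpha m)\big)\Phi_\lambda(\bfq)$ and similarly for $\bfr$; as the two actions commute, $\mathcal F(m^k,\lambda)=\exp\!\big(k\Binf(-t,\bfq,u)+k\Binf(-t,\bfr,u)\big)\mathcal F(\lambda)$ with $u=-\alpha m$. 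On the $\bfp$ alphabet, the same instance of \cref{thm Jch} together with the adjunction $\langle\exp(k\Binf(-t,\bfp,u))\,\cdot\,,\cdot\rangle=\langle\,\cdot\,,\exp(k{\Binf}^{\perp}(-t,\bfp,u))\,\cdot\rangle$ yields $K\circ\exp(k{\Binf}^{\perp}(-t,\bfp,u))=P_{m^k}\circ K$, where $P_{m^k}$ prepends $k$ parts $m$; hence $\Psi$ intertwines $P_{m^k}$ with $\exp(k{\Binf}^{\perp}(-t,\bfp,u))$. Applying $\Psi$ to the previous display gives, for every large enough $m$ and every $k\in\mathbb Z_{\ge 0}$,
\[
\exp\!\big(k{\Binf}^{\perp}(-t,\bfp,u)\big)\cdot G^{(\alpha)}
=\exp\!\big(k\Binf(-t,\bfq,u)+k\Binf(-t,\bfr,u)\big)\cdot G^{(\alpha)} .
\]

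Finally I would extract \eqref{eq diff eq} from this exponentiated identity. Each side is, coefficientwise in $t,\bfp,\bfq,\bfr$, polynomial in $u$ and in $k$—at a fixed total $t$-degree only finitely many terms of each exponential contribute, since every $\Binf$ raises the $t$-valuation by at least one—so the identity, holding for infinitely many $(m,k)$, holds for formal $u$ and $k$; the coefficient of $k^1$ is then exactly \eqref{eq diff eq}. The main obstacle lies in the middle paragraph: one must transport the action from the $\bfp$-alphabet to the $\lambda$-side through the adjoint ${\Binf}^{\perp}$ and the kernel $K$, and must cope with the constraint $m\ge\lambda_1$ required to invoke \cref{thm Jch}. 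I would handle the latter degree by degree, since at a fixed $\bfp$-degree $d$ the functions $\Jch_\rho$ with $|\rho|\le d$ are separated by evaluations at partitions $\lambda$ with $\lambda_1\le m$ once $m\ge d$, which legitimates matching coefficients of $\Jch_\rho(\lambda)$ before passing to formal $u$. The step that genuinely requires care, and the reason for introducing the auxiliary multiplicity $k$, is that a single prepended part only produces the exponentiated equation, whose linearisation in $k$ is the desired first-order identity \eqref{eq diff eq}.
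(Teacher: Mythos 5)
Your proposal is correct and is essentially the paper's own argument in different packaging: the paper encodes the effect of prepending a row to $\lambda$ via skew Jack characters $\Jch_{\mu/\nu}(v)$ and establishes precisely your two transport identities (\cref{eq 1 def skew Jch} on the $\bfq,\bfr$-side, and \cref{eq 2 def skew Jch}, by adjointness, on the $\bfp$-side), arriving at the same exponentiated identity \cref{eq exp on G}. The remaining differences are cosmetic: the paper keeps the prepended part as a formal variable $v$, so it needs neither your integer specialization $u=-\alpha m$ with the Knop--Sahi separation argument nor interpolation over infinitely many $m$, and it linearizes by raising \cref{eq exp on G} to powers of the operators and taking a logarithm, which amounts to your extraction of the coefficient of $k^{1}$ since $\exp(A)^{k}=\exp(kA)$.
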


The proof of this result is based on the differential construction of Jack characters given in \cref{thm Jch}. We also prove in \cref{prop characterization G} that \cref{eq diff eq} characterizes the series~$G^{(\alpha)}$.

By extracting coefficients in the variable $u$, \cref{eq diff eq} can be alternatively written as a family of equations (independent of $u$) which are indexed by non-negative integers; see \cref{ssec C l}.

Furthermore, we solve this differential equation and give an explicit expression of coefficients $g^\pi_{\mu,\nu}(\alpha)$ using some coefficients $a^\lambda_\mu$ which are obtained from the operator $\Binf$ and which are known to count maps (see \cref{thm g-a}).

It may be more convenient to think of the differential equation \cref{eq diff eq} as a commutation relation that we now explain. Let $\G(t,\bfp,\bfq,\bfr)$ be the operator defined by 
    \begin{alignat*}{3}
        \G(t,\bfp,\bfq,\bfr):\quad &\mathbb Q(\alpha)[ \bfp] \quad & \longrightarrow \quad& \mathbb Q(\alpha)[ \bfq,\bfr]\llbracket t\rrbracket&\\
            &p_\pi \quad&\longmapsto \quad &\sum_{\mu,\nu}t^{|\mu|+|\nu|-|\pi|}g^\pi_{\mu,\nu}(\alpha)q_\mu r_\nu.
    \end{alignat*}

 Actually,  $\G\cdot p_\pi$ is polynomial in $t$ of degree at most $|\pi|$ (see \cref{lem deg bound}). It turns out that for $\alpha=1$, $\G$ is a hypermap construction operator, which acts on a map by adding edges and coloring faces (see \cref{ssec op G}). The following is a variant of the main theorem (the equivalence between the two results follows from the definitions, see also \cref{lem g skew Jch}).
    \begin{thm}\label{thm com C-G}
    We have the following relation
    \begin{equation}\label{eq com B-G}
    \left(\Binf(-t,\bfq,u)+\Binf(-t,\bfr,u)\right)\cdot\G(t,\bfp,\bfq,\bfr)=\G(t,\bfp,\bfq,\bfr)\cdot \Binf(-t,\bfp,u)
    \end{equation}
    between operators from $\mathbb Q (\alpha)[\bfp]$ to $\mathbb Q(\alpha)[ \bfq,\bfr]\llbracket t,u\rrbracket$.
    \end{thm}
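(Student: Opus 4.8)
The plan is to derive \cref{thm com C-G} as an operator-valued reformulation of \cref{thm diff eq}, since the excerpt signals that the two are equivalent ``from the definitions.'' First I would unwind the definition of $\G$ and compare it with the definition of $G^{(\alpha)}$ in \cref{eq def G}. Observe that $\G(t,\bfp,\bfq,\bfr)\cdot p_\pi$ produces exactly the $p_\pi$-coefficient of $G^{(\alpha)}$ up to the normalizing factor $z_\pi\alpha^{\ell(\pi)}$; concretely, if we expand an arbitrary input in the power-sum basis and apply $\G$ term by term, then $\G$ is the linear operator whose ``generating kernel'' over the $\bfp$-variables is $G^{(\alpha)}$. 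So acting with $\G$ on an element of $\mathbb{Q}(\alpha)[\bfp]$ and acting by multiplication with $G^{(\alpha)}$ inside $\mcA$ are two encodings of the same bilinear pairing between the $\bfp$-alphabet and the $(\bfq,\bfr)$-alphabets.

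The key technical point is how the adjoint operator ${\Binf}^\perp(-t,\bfp,u)$ interacts with this encoding. Acting by $\Binf(-t,\bfp,u)$ on the argument $p_\pi$ on the right-hand side of \cref{eq com B-G} is, after pairing against $G^{(\alpha)}$, precisely the same as acting by the \emph{adjoint} ${\Binf}^\perp(-t,\bfp,u)$ on the $\bfp$-variables of $G^{(\alpha)}$: this is the defining property of the adjoint with respect to the scalar product on $\mathbb{Q}(\alpha)[\bfp]$ for which the power sums $p_\lambda$ form an orthogonal basis with norms $z_\lambda \alpha^{\ell(\lambda)}$ (the factor matching the normalization in the definition of $\G$ and of $G^{(\alpha)}$). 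So the step is to verify that the normalization in the scalar product is exactly the one built into the coefficients $1/(z_\pi\alpha^{\ell(\pi)})$, so that moving $\Binf$ across the pairing turns it into ${\Binf}^\perp$ with no stray constants. Meanwhile, the operators $\Binf(-t,\bfq,u)$ and $\Binf(-t,\bfr,u)$ on the left act only on the $\bfq$- and $\bfr$-variables and are untouched by this transposition, so they pass unchanged from one formulation to the other.

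Concretely, I would extract the $p_\pi q_\mu r_\nu$-coefficient of both sides of \cref{eq diff eq}: the term $\left(\Binf(-t,\bfq,u)+\Binf(-t,\bfr,u)\right)\cdot G^{(\alpha)}$ matches the left-hand side of \cref{eq com B-G} applied to $p_\pi$ (read off as the coefficient series in $\bfq,\bfr$), while the term ${\Binf}^\perp(-t,\bfp,u)\cdot G^{(\alpha)}$ matches $\G\cdot\Binf(-t,\bfp,u)$ after transferring the adjoint onto the $p_\pi$ argument. Since \cref{eq diff eq} holds as an identity in $\mcA\llbracket u\rrbracket$, equality of all such coefficients yields the desired operator identity between maps $\mathbb{Q}(\alpha)[\bfp]\to\mathbb{Q}(\alpha)[\bfq,\bfr]\llbracket t,u\rrbracket$. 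I would also quote \cref{lem g skew Jch}, referenced in the excerpt, to justify the exact bookkeeping of the adjoint and normalization, since that lemma presumably records the skew/adjoint action of $\Jch$ that makes the transposition clean.

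The main obstacle I anticipate is purely the bookkeeping of normalizations and signs: ensuring that the scalar product chosen in \cref{ssec symmetric functions} assigns to $p_\lambda$ the norm $z_\lambda\alpha^{\ell(\lambda)}$ (and not $z_\lambda$), so that the $\alpha^{-\ell(\pi)}$ in \cref{eq def G} is absorbed correctly when $\Binf$ is transposed to ${\Binf}^\perp$. If the convention instead uses norm $z_\lambda$, then the transposition of $\Binf(-t,\bfp,u)$ would produce an extra $\alpha$-power and one must check that the chosen normalizations of $\G$ and of the adjoint reconcile this. Once the pairing is set up with matching normalizations, the equivalence is a formal transposition and the verification is routine; no genuinely new combinatorial or analytic input beyond \cref{thm diff eq} is required.
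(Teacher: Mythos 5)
Your proposal is correct, and it is not the route the paper itself takes. You derive \cref{thm com C-G} as a formal consequence of \cref{thm diff eq}: you identify $\G$ with the kernel $G^{(\alpha)}$ through the $\alpha$-deformed Hall pairing in the $\bfp$-variables, i.e.\ $\G\cdot f=\langle G^{(\alpha)},f\rangle_\alpha$, which is exact precisely because the normalization $1/(z_\pi\alpha^{\ell(\pi)})$ in \cref{eq def G} matches the norms $\langle p_\pi,p_\pi\rangle_\alpha=z_\pi\alpha^{\ell(\pi)}$; you then transpose $\Binf(-t,\bfp,u)$ across this pairing into ${\Binf}^{\perp}(-t,\bfp,u)$, while $\Binf(-t,\bfq,u)$ and $\Binf(-t,\bfr,u)$ pass through unchanged since they do not touch the $\bfp$-alphabet. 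This is sound: the needed manipulations of formal series are legitimate coefficient-by-coefficient (using \cref{lem deg bound} and \cref{rmq C_ell defined}), there is no circularity since the paper's proof of \cref{thm diff eq} does not invoke \cref{thm com C-G}, and the normalization issue you flag is resolved exactly as you suspect (the paper's ``usual scalar product'' is the $\alpha$-deformed one, so no stray powers of $\alpha$ appear). The paper instead proves \cref{thm com C-G} directly and in parallel with the main theorem rather than as a corollary of it: it rewrites \cref{lem g skew Jch} via \cref{eq 1 def skew Jch} as the exponentiated commutation relation $\exp\big(\Binf(-t,\bfq,-\alpha v)+\Binf(-t,\bfr,-\alpha v)\big)\cdot\G=\G\cdot \exp\left(\Binf(-t,\bfp,-\alpha v)\right)$ and then ``takes the logarithm'' of operators exactly as in the proof of \cref{thm diff eq}. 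Your route is more economical once \cref{thm diff eq} is in hand, and it makes rigorous the claim in the introduction that the equivalence of the two theorems ``follows from the definitions''; the paper's route keeps both theorems on equal logical footing (each deduced from the skew-character lemma) and never needs to extend the adjunction identity from polynomials to the formal series $G^{(\alpha)}$, at the cost of repeating the exp/log argument.
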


\noindent In \cref{sec alpha 1}, we give a combinatorial proof of this commutation relation for $\alpha=1$.

\subsection{Goulden--Jackson and \'{S}niady conjectures}
The Matching-Jack conjecture, introduced by Goulden and Jackson in \cite{GouldenJackson1996},  suggests that when $|\pi|=|\mu|=|\nu|$, coefficients $g^\pi_{\mu,\nu}$ satisfy  positivity and integrality  properties, and that they count hypermaps with "non-orientability weights"; see also \cref{conj MJ}. This conjecture is still open, despite many partial results \cite{DolegaFeray2016, ChapuyDolega2022,BenDali2022,BenDali2023}.

 The following conjecture due, to \'{S}niady, can be thought of as a generalization of the Matching-Jack conjecture to coefficients $g^\pi_{\mu,\nu}$ indexed by partitions of arbitrary sizes.

\begin{conj}$($\cite[Conjecture 2.2]{Sniady2019}$)$\label{conj g}
    For any $\pi,\mu$ and $\nu$ partitions, $g^\pi_{\mu,\nu}$ is a polynomial in $b:=\alpha-1$ with non-negative integer coefficients.
\end{conj}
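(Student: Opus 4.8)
The plan is to read the main differential equation \cref{eq diff eq} as a triangular recursion and combine it with the explicit solution in terms of the map-counting coefficients $a^\lambda_\mu$ provided by \cref{thm g-a}. Extracting the coefficient of $u^n$ in \cref{eq diff eq} and of the monomial $p_\pi q_\mu r_\nu$ turns the equation into a recursion for the $g^\pi_{\mu,\nu}(\alpha)$: the contributions of $\Binf(-t,\bfq,u)$ and $\Binf(-t,\bfr,u)$ relate $g^\pi_{\mu,\nu}$ to coefficients indexed by strictly smaller partitions in the $\bfq$- and $\bfr$-variables, while the contribution of ${\Binf}^{\perp}(-t,\bfp,u)$ relates it to coefficients with a smaller $\bfp$-index. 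Since the operators $\B_n$ depend polynomially on $\alpha$, the recursion expresses each $g^\pi_{\mu,\nu}$ through operations with coefficients rational in $\alpha$; combined with the degree control of \cref{lem deg bound} and the normalization $\alpha^{-\ell(\pi)}$ built into $G^{(\alpha)}$, I would expect this to yield that $g^\pi_{\mu,\nu}$ is a polynomial in $\alpha$, hence in $b$. This polynomiality is plausibly the more tractable half of the statement, although clearing the $\alpha$-denominators coming from the adjoint and controlling integrality already requires care.

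The genuinely substantive part is non-negativity and integrality. Here I would first record that at $b=0$ (that is $\alpha=1$) the coefficients $g^\pi_{\mu,\nu}(1)$ count oriented hypermaps by \cref{prop H-G}, so they are non-negative integers, and that the building blocks $a^\lambda_\mu$ are polynomials in $b$ with non-negative integer coefficients, because $\Binf$ admits a combinatorial interpretation as a map-construction operator in which each gluing carries a non-orientability weight that is a power of $b$. The goal is then to show that assembling the $a^\lambda_\mu$ into $g^\pi_{\mu,\nu}$ through the formula of \cref{thm g-a} preserves this property. The cleanest route is to upgrade the $\alpha=1$ combinatorial interpretation of the operator $\G(t,\bfp,\bfq,\bfr)$ described in \cref{sec alpha 1} to general $\alpha$, by attaching a non-orientability weight to every edge addition; then \cref{thm com C-G} would become a weight-preserving identity and the $b$-expansion of $g^\pi_{\mu,\nu}$ would be manifestly the generating series of weighted hypermaps, giving both positivity and integrality simultaneously.

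The main obstacle is exactly this last step, and it is the reason the conjecture is open. The adjoint operator ${\Binf}^{\perp}(-t,\bfp,u)$ on the right-hand side of \cref{eq diff eq} is not sign-definite, so the recursion, and hence the formula of \cref{thm g-a}, expresses $g^\pi_{\mu,\nu}$ as a \emph{signed} combination of smaller quantities. Converting this into a manifestly non-negative sum demands a sign-reversing involution on the underlying weighted maps, equivalently a combinatorial model for the non-orientability weight that is compatible with the edge-removal encoded in ${\Binf}^{\perp}$; producing such an involution is precisely the combinatorial heart of the Matching--Jack and $b$-conjectures. For this reason I would expect the differential equation to \emph{reduce} \'{S}niady's conjecture to this combinatorial statement rather than to settle it outright, and a realistic partial target would be the diagonal $|\pi|=|\mu|=|\nu|$, where the ${\Binf}^{\perp}$-contributions simplify and one can hope to recover and refine the previously known cases.
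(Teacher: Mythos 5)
The statement you were asked to prove is \cref{conj g}, which is \'{S}niady's conjecture: it is \emph{open}, and the paper does not prove it either. Your proposal is honest on this point and correctly identifies the genuine obstruction — \cref{thm g-a} expresses $g^\pi_{\mu,\nu}$ as a \emph{signed} sum of the map-counting coefficients $a^\lambda_\xi$, so positivity cannot be read off without a sign-reversing involution, and upgrading the $\alpha=1$ combinatorial model of $\G$ to general $\alpha$ is exactly what the paper declares out of reach for three alphabets. So there is no real disagreement about the heart of the matter. What can be compared is the partial results, and there your sketch diverges from the paper in two ways that matter.

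First, integrality. You propose to get polynomiality and integrality out of the recursion encoded in \cref{eq diff eq}, while conceding that clearing $\alpha$-denominators "requires care"; in fact the operators $\B_n$ carry explicit $1/\alpha$ factors, so integrality is not at all automatic along that route. The paper instead takes a detour that avoids this entirely: polynomiality in $b$ is quoted from Do{\l}{\k{e}}ga--F\'eray, and integrality (\cref{cor integrality g}) is deduced from the integrality of the Matching-Jack coefficients $c^\pi_{\mu,\nu}$ (\cref{thm integrality c}) via the inversion formula \cref{eq G-tau} relating $G^{(\alpha)}$ to $\tau^{(\alpha)}$, which expresses each $g^\pi_{\mu,\nu}$ as a $\mathbb{Z}$-combination of coefficients $c^{\widetilde\pi\cup 1^i}_{\cdot,\cdot}$. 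Second, and more seriously, your proposed "realistic partial target" — the diagonal $|\pi|=|\mu|=|\nu|$ — is a misjudgment: by \cref{prop_c-g_1}, on that diagonal $g^\pi_{\mu,\nu}=c^\pi_{\mu,\nu}$, so positivity there \emph{is} the full Matching-Jack conjecture (\cref{conj MJ}), itself open; moreover the co-degree $|\mu|+|\nu|-|\pi|$ is maximal on the diagonal, which is the regime where the signed contributions you worry about are largest, not smallest. The paper's actual partial positivity result goes in the transverse direction: for $|\pi|\geq|\mu|+|\nu|-2$, positivity follows by combining \cref{cor integrality g} with \cref{thm low terms}, whose explicit differential expressions for $\G_0,\G_1,\G_2$ are manifestly positive in $b$. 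Finally, note that your premise that the $a^\lambda_\xi$ are non-negative integer polynomials in $b$ is plausible but is not established in the paper (only a combinatorial interpretation via layered maps is invoked), so even the ingredients of your positivity strategy would need independent justification.
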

\noindent
In \cite{DolegaFeray2016}, Do\l{}e\k{}ga and Féray have proved  that the coefficients   $g^\pi_{\mu,\nu}$ are polynomial in the deformation parameter $b$.  In \cref{sec g properties}, we deduce the integrality part in \cref{conj g} from a similar result for the coefficients of the Matchings-Jack conjecture, see \cref{cor integrality g}.

 Unfortunately, we have not been able to use the explicit expression of coefficients $g^\pi_{\mu,\nu}(\alpha)$ obtained in \cref{thm g-a} to prove their positivity in $b$ (the remaining part in  \cref{conj g}). It is however possible to use a variant of the main theorem to give a positive differential expression for the low degree terms of the operator $\G$.

% We deduce from the proof of \cref{thm g-a} a proof for $\cref{prop characterization G}$.
\subsection{Low degree terms of \texorpdfstring{$\G$}{G}}
We consider the homogeneous parts of the operators $\G$ defined for any $k\geq 0$ by
\begin{equation}\label{eq:def:G_k}
  \G_k\cdot p_\pi=\sum_{|\mu|+|\nu|=|\pi|+k}g^\pi_{\mu,\nu}(\alpha)q_\mu r_\nu.  
\end{equation}

\noindent Note that from \cref{eq upper bound}, we have
$$\G(t,\bfp,\bfq,\bfr)=\sum_{k\geq 0}t^k\G_k(\bfp,\bfq,\bfr).$$
\noindent In \cref{ssec op G} we prove that for $\alpha=1$, the operator $\Gone_k$ is an operator which acts on a map by adding $k$ edges satisfying some conditions.

We use \cref{prop characterization G} (a variant of the main theorem) to give a differential expression for the operators $\G_0,\G_1$ and $\G_2$. First, we introduce the operator
\begin{alignat*}{3}
\Psi(\bfp,\bfq,\bfr):
\mathbb Q(\alpha)[ &\bfp]\quad &\longrightarrow \quad & \mathbb Q(\alpha)[\bfq,\bfr]\\
&p_\pi \quad &\longmapsto \quad& \prod_{1\leq i\leq \ell(\pi)}(q_{\pi_i}+r_{\pi_i}).
\end{alignat*}
Combinatorially, when we think of $p_i$ (resp. $q_i$, $r_i$) as the weight of an uncolored face (reps $(+)$ face, $(-)$ face) of degree $i$ in a map, $\Psi$ is the operator which chooses a color   $(+)$ or $(-)$ for each face.
% where 
% $$\binom{\pi}{\xi,\rho}=\prod_{i\geq 1}\binom{m_i(\pi)}{m_i(\rho),m_i(\xi)},$$
% $m_i(\xi)$ being the number of parts of size $i$ in $\xi$.

% Equivalently, $\Psi$ is defined by the commutation relations;
% $$\Psi \cdot p_i=(q_i+r_i)\cdot \Psi.$$

\begin{thm}\label{thm low terms}
We have the following differential expressions for $\G_0,\G_1$ and $\G_2$.
\begin{equation}\label{eq G_0}
  \G_0=\Psi,  
\end{equation}

\begin{equation}\label{eq G_1}
  \G_1=\sum_{m\geq 1}\sum_{\genfrac..{0pt}{2}{m_1+m_2=m+1}{m_1,m_2\geq 1}}q_{m_1}r_{m_2}\cdot \Psi\cdot \frac{m\partial}{\partial p_m},  
\end{equation}

\noindent and
\begin{align}
  \G_2=&\frac{1}{2}\sum_{m\geq1}\sum_{m_1+m_2=m+2\atop{m_1,m_2\geq 1}}b(m_1-1)(m_2-1)q_{m_1}r_{m_2}\Psi\frac{m\partial}{\partial p_m}\label{eq G_2}\\ 
  % +\sum_{m\geq 1}(i_1-1)q_{i_1}r_{i_2}r_{i_3}\Psi\frac{m\partial}{\partial p_m}\\
  &\nonumber+\frac{1}{2}\sum_{m\geq 1}\sum_{m_1+m_2+m_3=m+2\atop{m_1,m_2,m_3\geq 1}}(m_1-1)(q_{m_1}r_{m_2}r_{m_3}+r_{m_1}q_{m_2}q_{m_3})\Psi\frac{m\partial}{\partial p_m}\\\nonumber
  &+\frac{1}{2}\sum_{k,m\geq1}\sum_{i_1+i_2=k+m+2\atop{i_1,i_2\geq1}}\alpha\min(m,k,i_1-1,i_2-1)q_{i_1}r_{i_2}\Psi\frac{m\partial}{\partial p_m}\frac{k\partial}{\partial p_k}\\\nonumber
  &+\frac{1}{2}\sum_{m,k\geq1}\sum_{m_1+m_2=m+1\atop{m_1,m_2\geq1}}\sum_{k_1+k_2=k+1\atop{k_1,k_2\geq1}}q_{m_1}q_{k_1}r_{m_2}r_{k_2}\Psi
  \frac{m\partial}{\partial p_m}\frac{k\partial}{\partial p_k}.\nonumber
\end{align}
\end{thm}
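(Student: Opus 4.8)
The plan is to read off \cref{thm low terms} from the commutation relation \cref{eq com B-G} of \cref{thm com C-G}, using the fact (\cref{prop characterization G}) that this relation together with a normalisation determines $\G$, by extracting coefficients in the two formal parameters $t$ and $u$. Writing the $u$-expansion $\B_n(\bfp,u)=\sum_{j=0}^{n}u^{j}\,\B_{n,j}(\bfp)$ — so that the top coefficient $\B_{n,n}(\bfp)=p_n/\alpha$ is multiplication by $p_n/\alpha$, as one reads off from the leading term of $\Binf$ — both sides of \cref{eq com B-G} become double series in $t,u$ whose coefficients are $u$-free operators. Grading $\G=\sum_{k\ge 0}t^{k}\G_k$ as in \cref{eq:def:G_k}, I would compare the coefficients of $t^{N}u^{N-s}$ on both sides, for each fixed $s\ge 0$ and all $N$.

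The structural point is that this coefficient isolates $\G_s$. Since $\B_n$ has $u$-degree at most $n$, the term carrying $u^{N-s}$ at order $t^{N}$ forces the index of $\B$ to be $\ge N-s$, so only $\G_0,\dots,\G_s$ occur, and the unique appearance of $\G_s$ is through the multiplication operator $\B_{N-s,N-s}$, on both sides, with the same scalar prefactor. Cancelling it and setting $j:=N-s$, the identity rearranges into a creation relation
\begin{equation*}
  (q_j+r_j)\,\G_s(f)-\G_s(p_j f)=R^{(j)}_s(f),
\end{equation*}
where $R^{(j)}_s$ is an \emph{explicit} operator built from $\G_0,\dots,\G_{s-1}$ and from the subleading coefficients $\B_{n,n-1},\dots,\B_{n,n-s}$ supplied by the definition of $\B_n$ in \cref{ssec op Bn}. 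Combined with the normalisation $\G_s(1)=\delta_{s,0}$ — which follows from $g^\emptyset_{\mu,\nu}=\delta_{\mu,\emptyset}\delta_{\nu,\emptyset}$, itself obtained by evaluating $\Jch_\mu\Jch_\nu=\sum_\pi g^\pi_{\mu,\nu}\Jch_\pi$ at the empty diagram and using $\Jch_\pi(\emptyset)=\delta_{\pi,\emptyset}$ — this relation determines $\G_s$ uniquely by induction on the number of parts of $\pi$ (peeling off one part $p_j$ at a time). Equivalently, one may just verify that the closed forms in \cref{thm low terms} satisfy the recursion and the normalisation.

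I would then run the three cases. For $s=0$ there is no lower term, $R^{(j)}_0=0$, and $\G_0(p_j f)=(q_j+r_j)\G_0(f)$ with $\G_0(1)=1$ gives $\G_0=\Psi$, which is \cref{eq G_0}. For $s=1$ only $\G_0=\Psi$ enters $R^{(j)}_1$, via $\B_{n,n-1}$; substituting $\Psi$ and simplifying the ``$(q+r)$ versus $q,r$ separately'' cross terms makes the $(\alpha-1)$-contributions cancel and leaves exactly the single-derivative operator of \cref{eq G_1}. The case $s=2$ is the substantial one: $R^{(j)}_2$ couples $\G_0=\Psi$ against $\B_{n,n-2}$ and $\G_1$ against $\B_{n,n-1}$, and solving the recursion produces the four families of terms in \cref{eq G_2}.

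The main obstacle is precisely this last assembly. The single-derivative terms (first two lines of \cref{eq G_2}) come from the $(\alpha-1)$-weighted and the quadratic-creation parts of $\B_{n,n-2}$ together with the composition of $\G_1$ with the single-derivative part of $\B_{n,n-1}$; tracking the $q/r$ colourings produces the weights $b(m_1-1)(m_2-1)$ and $(m_1-1)$ and the symmetrised shape $q_{m_1}r_{m_2}r_{m_3}+r_{m_1}q_{m_2}q_{m_3}$. The genuinely delicate terms are the double-derivative ones: the last line is essentially one half of the ``square'' of the $\G_1$-operator and reflects two independent single-edge additions, whereas the third line, carrying $\alpha\min(m,k,i_1-1,i_2-1)$, arises from the double-derivative part of $\B_{n,n-2}$ (of the form $\alpha\sum_{i,i'}p_{i+i'+n}\,\tfrac{i\,\partial}{\partial p_i}\tfrac{i'\,\partial}{\partial p_{i'}}$) and its $\bfq,\bfr$ cross-terms. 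Summing these contributions over the internal degree that is created and then reabsorbed collapses the count to the minimum of the four quantities, and establishing this summation identity — while confirming that the iterated and genuine double-derivative contributions recombine with the correct factor $\tfrac12$ — is where the real work lies.
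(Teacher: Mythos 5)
Your proposal is correct and is essentially the paper's own proof: extracting the coefficient of $t^{N}u^{N-s}$ from \cref{eq com B-G} is precisely the paper's reduction (via \cref{prop characterization G}) to the identities $\sum_{0\leq j\leq i}\frac{(-1)^{\ell+j}}{\ell+j}\left[\G_{i-j},\C_{\ell,j}\right]_{\bfp}^{\bfq,\bfr}=0$, your creation relation is the $j=0$ term of these identities, your coefficients $\B_{n,n-j}$ are the paper's operators $\C_{\ell,j}$ whose explicit differential forms (\cref{lem C 0-1-2}, proved in the appendix) constitute exactly the computational work you defer, and solving the recursion forward is interchangeable with the paper's verification of the closed forms, as you yourself note. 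One bookkeeping correction: the $\min$-term in \cref{eq G_2} does \emph{not} arise from the double-derivative part of $\B_{n,n-2}$, since that part has vanishing three-alphabet commutator with $\Psi$ (this is part of \cref{lem Psi commmutator}); it arises instead from the single-derivative part of $\C_{\ell,1}$ interacting with $\G_1$ (composition plus index-shifts of the $q$, $r$ prefactors), combined with the $p_{i_1}p_{i_2}\frac{m\partial}{\partial p_m}$ part of $\C_{\ell,2}$, the pieces being collapsed by the identity $m-\mathbbm{1}_{i_1\geq \ell+2}(i_1-\ell-1)-\mathbbm{1}_{i_2\geq \ell+2}(i_2-\ell-1)=\min(m,\ell,i_1-1,i_2-1)$.
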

This theorem can actually be obtained combinatorially from the cases $\alpha=1$, $\alpha=2$ and a polynomiality argument (all coefficients are polynomials of degree at most 1 in $\alpha$). However, this argument works only for $k\leq 2$ while the approach based on \cref{thm diff eq} which we present here can be used, with more computations, to understand the operators $\G_k$ for higher $k$.

\medskip Combining \cref{thm low terms} and \cref{cor integrality g}, we deduce the following special case of \cref{conj g}.
\begin{cor}
    Fix three  $\pi,\mu$ and $\nu$ partitions such that $|\pi|\geq |\mu|+|\nu|-2$. Then $g^\pi_{\mu,\nu}$ is a polynomial in $b:=\alpha-1$ with non-negative integer coefficients.
\end{cor}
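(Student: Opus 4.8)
The plan is to combine the explicit differential expressions for the low-degree operators $\G_0$, $\G_1$, $\G_2$ from \cref{thm low terms} with the integrality statement of \cref{cor integrality g}. The hypothesis $|\pi|\geq|\mu|+|\nu|-2$ together with the vanishing \cref{eq upper bound} confines us to the three cases $k:=|\mu|+|\nu|-|\pi|\in\{0,1,2\}$, where the coefficient in question is recovered as $g^\pi_{\mu,\nu}(\alpha)=[q_\mu r_\nu]\,\G_k\cdot p_\pi$.

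First I would read off from \cref{eq G_0,eq G_1,eq G_2} that each such $g^\pi_{\mu,\nu}$ is a polynomial in $b=\alpha-1$ with non-negative rational coefficients. The three ingredients all preserve positivity: the operator $\Psi$ merely expands each factor $p_{\sigma_i}$ as $q_{\sigma_i}+r_{\sigma_i}$ and hence has $0/1$ coefficients; each operator $m\partial/\partial p_m$ sends $p_\pi$ to a non-negative integer multiple of a power-sum monomial; and every scalar prefactor, after substituting $\alpha=b+1$, becomes a polynomial in $b$ with non-negative coefficients (for instance $b(m_1-1)(m_2-1)$ is a monomial in $b$, while $\alpha\min(\dots)=b\min(\dots)+\min(\dots)$ splits into two non-negative pieces). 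Since extracting $[q_\mu r_\nu]$ only adds up such non-negative contributions, $g^\pi_{\mu,\nu}$ is a polynomial in $b$ whose coefficients lie in $\tfrac12\mathbb Z_{\geq0}$, the denominators coming solely from the overall factor $\tfrac12$ in front of $\G_2$.

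Second I would invoke \cref{cor integrality g}, which asserts that $g^\pi_{\mu,\nu}$ is a polynomial in $b$ with integer coefficients. A polynomial whose coefficients are simultaneously non-negative half-integers and integers must have non-negative integer coefficients, and this is exactly the claim.

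The only genuinely delicate point is the positivity check of the first step for $\G_2$: one has to confirm, term by term in \cref{eq G_2}, that rewriting $\alpha$ as $b+1$ produces no negative coefficient of any power of $b$ and that the $\tfrac12$ prefactors cause no harm once integrality is available. Each of the four summands is visibly a product of a non-negative polynomial in $b$ with non-negative combinatorial factors, so this amounts to a short inspection rather than a real obstruction; the essential external input is \cref{cor integrality g}, which is what upgrades half-integer positivity to the full statement.
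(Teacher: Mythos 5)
Your proposal is correct and follows exactly the paper's (very terse) argument: the paper deduces this corollary precisely by "combining \cref{thm low terms} and \cref{cor integrality g}," i.e., reading off non-negativity in $b$ from the explicit expressions for $\G_0,\G_1,\G_2$ and upgrading to non-negative integrality via \cref{cor integrality g}. Your treatment of the half-integer issue coming from the $\tfrac12$ prefactors in $\G_2$, resolved by the integrality statement, is the natural way to fill in the one detail the paper leaves implicit.
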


We hope that a better understanding of the differential structure of the operator $\Binf$ could allow one to generalize \cref{thm low terms} in order to obtain a differential formula of $\G_k$ for any $k$. This would eventually give the missing positivity  part in \cref{conj g} and in Goulden--Jackson's Matching-Jack conjecture.

\subsection{\texorpdfstring{$k$}{k}-constellations and Hurwitz numbers}
\textit{$k$-constellations} represent a family of maps whose vertices are colored in $k+1$ colors and from which hypermaps can be obtained by setting $k=1$. In the orientable case, $k$-constellations are related to the factorizations of the identity in the symmetric group into $k+2$ permutations \cite{BousquetSchaeffer2000}. Recently, a model of constellations on non-orientable surfaces have been introduced in \cite{ChapuyDolega2022}. In these two cases, generating series of constellations have been a useful tool to understand \textit{Hurwitz numbers} by considering some particular specializations (see also \cite{BonzomChapuyDolega2022,BonzomNador2023}).

Moreover, orientable constellations with control of all color types are in bijection with the ramified coverings of the sphere above an arbitrary number of points with the  full ramification profiles; see \cite[Section 1.2]{LandoZvonkin2004} (see also \cite[Section 2.2]{ChapuyDolega2022} for the non-orientable case).

The approach used here to prove the main theorem applies to the case of constellations and allows us to extend the differential equation to series with $k$ alphabets; see \cref{thm constellations}.

\subsection{Equations for connected series}
In this paper, we are considering generating series of  hypermaps not necessarily connected. Nevertheless, it is possible to obtain the generating series of connected hypermaps by taking a logarithm. More precisely, the series 
$$\hG^{(\alpha)}(t,\bfp,\bfq,\bfr)=\alpha\cdot\log(G^{(\alpha)}(t,\bfp,\bfq,\bfr))$$
is an $\alpha$-deformation of the generating series of connected hypermaps with marked faces. The homogeneous part of this series is the object of Goulden--Jackson's $b$-conjecture (known also as the hypermap-Jack conjecture) \cite[Conjecture 6.3]{GouldenJackson1996}. 

In \cref{sec connected}, we derive from the main theorem a differential equation for $\hG^{(\alpha)}$, see \cref{thm diff for connected}.

\subsection{Outline of the paper}
The paper is organized as follows. In \cref{sec preliminaries}, we give some preliminaries related to partitions, symmetric and shifted symmetric functions. In \cref{sec g properties}, we establish some useful properties of structure coefficients $g^\pi_{\mu,\nu}$, we discuss their connection to the Matching-Jack conjecture and we prove \cref{prop H-G}. \cref{sec main thm} is dedicated to the proof of the main theorem as well as its generalized version \cref{thm constellations} related to Hurwitz numbers. In \cref{sec alpha 1}, we give a combinatorial proof of \cref{thm com C-G} for $\alpha=1$ (this section is quite independent from the rest of the paper).  We use the main theorem in \cref{sec resolution} to give an explicit expression for coefficients $g^\pi_{\mu,\nu}$ and to prove \cref{thm low terms}.  Finally, we prove \cref{thm diff for connected} in \cref{sec connected}.

\section{Preliminaries}\label{sec preliminaries}

\subsection{Partitions}\label{subsec Partitions}

A \textit{partition} $\lambda=[\lambda_1,...,\lambda_s]$ is a weakly decreasing sequence of positive integers $\lambda_1\geq...\geq\lambda_s>0$. We denote by $\mathbbm{Y}$ the set of all integer partitions. The integer $s$ is called the \textit{length} of $\lambda$ and is denoted $\ell(\lambda)$. The size of $\lambda$ is the integer $|\lambda|:=\lambda_1+\lambda_2+...+\lambda_s.$ If $n$ is the \textit{size} of $\lambda$, we say that $\lambda$ is a partition of $n$ and we write $\lambda\vdash n$. The integers $\lambda_1$,...,$\lambda_s$ are called the \textit{parts} of $\lambda$. For $i\geq 1$, we denote $m_i(\lambda)$ the number of parts of size $i$ in $\lambda$. We then set 
$$z_\lambda:=\prod_{i\geq1}m_i(\lambda)!i^{m_i(\lambda)}.$$ 
We denote by $\leq$ the \textit{dominance partial} order on partitions, defined by 
$$\mu\leq\lambda \iff |\mu|=|\lambda| \text{ and }\hspace{0.3cm} \mu_1+...+\mu_i\leq \lambda_1+...+\lambda_i \text{ for } i\geq1.$$

\noindent Finally, we identify a partition  $\lambda$ with its \textit{Young diagram}, defined by 
$$\lambda:=\{(i,j),1\leq i\leq \ell(\lambda),1\leq j\leq \lambda_i\}.$$
% \textit{The conjugate partition} of $\lambda$, denoted $\lambda^t$, is the partition associated to the Young diagram obtained by reflecting the diagram of $\lambda$ with respect to the line $j=i$:
% $$\lambda^t:=\{(i,j),1\leq j\leq \ell(\lambda),1\leq i\leq
% \lambda_i\}.$$
% Fix a box $\Box:=(i,j)\in\lambda$. Its \textit{arm-length} is given by 
% $$a_\lambda(\Box):=|\{(i,r)\in\lambda,r>j\}|=\lambda_i-j,$$ 
% and its \textit{leg-length} is given by 
% $$\ell_\lambda(\Box):=|\{(r,j)\in\lambda,r>i\}|=(\lambda^t)_j-i.$$ 

% \noindent Two $\alpha$-deformations of the hook-length product were introduced in \cite{Stanley1989};
% $$\hook_\lambda^{(\alpha)}:=\prod_{\Box\in\lambda}\left(\alpha a_\lambda(\Box)+\ell_\lambda(\Box)+1\right),
% \qquad 
% \hook_\lambda'^{(\alpha)}:=\prod_{\Box\in\lambda}\left(\alpha(a_\lambda(\Box)+1)+\ell_\lambda(\Box)\right).$$
% Finally, we define the \textit{$\alpha$-content} of a box $\Box:=(i,j)$ by 

% \begin{equation}\label{eq alpha content}
%     c_\alpha(\Box):=\alpha(j-1)-(i-1).
% \end{equation}

\subsection{Symmetric functions}\label{ssec symmetric functions}
We fix an alphabet $\mathbf{x}:=(x_1,x_2,..)$. We denote by $\mathcal{S}_\alpha$ the algebra of symmetric functions in $\mathbf{x}$ with coefficients in $\mathbb Q(\alpha)$. For every partition $\lambda$, we denote $m_\lambda$ the monomial function 
$$m_\lambda(\bfx):=\sum_{\beta=(\beta_1,\dots,\beta_s)}\sum_{1 \leq i_1\leq\dots \leq i_s}x_{i_1}^{\beta_1}\dots x_{i_s}^{\beta_s},$$
where the sum is taken over all reorderings $\beta$ of the partition $\lambda$. Moreover, let $p_\lambda$ denote the power-sum symmetric function, defined as follows; if $n\geq 1$ then 
$$p_n(\bfx):=\sum_{i\geq 1}x_i^n,$$ and if $\lambda=[\lambda_1,\dots,\lambda_s]$ then 
$$p_\lambda(\bfx)=p_{\lambda_1}(\bfx)\dots p_{\lambda_s}(\bfx).$$
% For all $i\geq 1$ the power-sum function $p_i$ is defined by 
% $$p_i:=\sum_{1\leq j}x_j^i,$$
We  consider the associated alphabet of power-sum functions
$\mathbf{p}:=(p_1,p_2,..)$. 
% We recall that we have then that
%  for every $i\geq 1$, and for every partition $\mu$ the power-sum function $p_\mu$ is defined by 
% $$p_\mu:=\prod_{1\leq \ell(\mu)}p_\mu.$$
It is well known that monomial functions and the
power-sum functions both form bases of the symmetric function algebra;
therefore $\mathcal{S}_\alpha$ can be identified with the polynomial algebra $\mcP:=\Span_{\mathbb{Q}(\alpha)}\{p_\lambda\}_{\lambda\in\mathbb{Y}}$. If $f$ is a symmetric function in the alphabet $\bfx$, it will be convenient to denote with the same letter the function and the associated polynomial in $\bfp$;
$$f(\bfx)\equiv f(\bfp).$$

We denote by $\langle.,.\rangle_\alpha$ the $\alpha$-deformation of the Hall scalar product defined on $\mcS$ by 
$$\langle p_\lambda,p_\mu\rangle_\alpha=z_\lambda\alpha^{\ell(\lambda)}\delta_{\lambda,\mu},\text{ for any partitions }\lambda,\mu,$$
where $\delta_{\lambda,\mu}$ denotes the Kronecker delta.

Macdonald has established the following characterization theorem for Jack polynomials which we take as a definition; see \cite[Chapter VI, Section 10]{Macdonald1995}.
\begin{thm}[\cite{Macdonald1995}]\label{thm Jack polynomials}
    \textit{Jack polynomials} $(J_\lambda^{(\alpha)})_{\lambda\in\mathbb Y}$ are the unique family of symmetric functions  in $\mathcal{S}_\alpha$ indexed by partitions, satisfying the following properties:
    \begin{itemize}
        \item Orthogonality:  
        $$\langle J_\lambda^{(\alpha)},J_\mu^{(\alpha)}\rangle_\alpha=0, \text{ for }\lambda\neq\mu.$$
        \item Triangularity:
     $$[m_\mu]J_\lambda^{(\alpha)}=0, \text{ unless }\mu\leq\lambda.$$
     \item Normalization:
     \begin{equation}\label{eq normalization}
       [p_{1^n}]J_\lambda^{(\alpha)}=1, \text{ for }\lambda\vdash n,  
     \end{equation}
     where $1^n$ is the partition with $n$ parts equal to~1. 
    \end{itemize}
Moreover, Jack polynomials form a basis of $\mcS$. 
\end{thm}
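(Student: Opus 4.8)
The plan is to prove existence, uniqueness, and the basis property degree by degree, working inside the finite-dimensional degree-$n$ part $\mcS^{(n)}$ of $\mcS$ (legitimate since $\langle p_\lambda,p_\mu\rangle_\alpha=0$ whenever $|\lambda|\neq|\mu|$ and each $\J_\lambda$ is homogeneous). The starting observation is that $\langle\cdot,\cdot\rangle_\alpha$ is nondegenerate on $\mcS^{(n)}$: in the power-sum basis its Gram matrix is diagonal with entries $z_\lambda\alpha^{\ell(\lambda)}$, which are nonzero in $\mathbb Q(\alpha)$ and strictly positive under every specialization of $\alpha$ to a positive real. Consequently, for any nonzero $f\in\mcS^{(n)}$ whose coefficients are regular at some $\alpha_0>0$, one has $\langle f,f\rangle_\alpha>0$ at $\alpha_0$, so $\langle f,f\rangle_\alpha$ is a nonzero element of $\mathbb Q(\alpha)$. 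This positivity is exactly what is needed to divide by self-pairings in a Gram--Schmidt recursion.

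First I would produce the orthogonal family. Fix a total order $\prec$ on partitions of $n$ refining the dominance order $\le$, and apply ordinary Gram--Schmidt to the monomial basis in this order, defining $P_\lambda=m_\lambda-\sum_{\mu\prec\lambda}\frac{\langle m_\lambda,P_\mu\rangle_\alpha}{\langle P_\mu,P_\mu\rangle_\alpha}P_\mu$. Since $\prec$ is total, this automatically yields a fully orthogonal family $\{P_\lambda\}$ with $[m_\lambda]P_\lambda=1$, the denominators being nonzero by the positivity above. The one genuine point is to upgrade triangularity from the refining order $\prec$ to dominance itself, i.e. to prove $[m_\mu]P_\lambda=0$ unless $\mu\le\lambda$, ruling out contributions of monomials $m_\mu$ with $\mu\prec\lambda$ but $\mu$ incomparable to $\lambda$. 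This is the technical heart and rests on a compatibility lemma between $\langle\cdot,\cdot\rangle_\alpha$ and dominance, namely the $\le$-triangularity of the transition matrices relating the monomial basis to its dual; I would either import this from the theory of the Hall--Jack product or verify it directly from the power-sum expansion.

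It then remains to normalize and to settle uniqueness and the basis property. The normalization needs $[p_{1^n}]P_\lambda\neq0$; I would get this by specializing to $\alpha=1$, where the coefficients of $P_\lambda$ are regular (no Gram--Schmidt denominator vanishes for $\alpha>0$) and $P_\lambda$ becomes the Schur function $s_\lambda$, for which $[p_{1^n}]s_\lambda=f^\lambda/n!>0$; hence $[p_{1^n}]P_\lambda$ is a rational function not identically zero. Setting $\J_\lambda:=P_\lambda/[p_{1^n}]P_\lambda$ gives the desired family, and since $[m_\lambda]\J_\lambda\neq0$ the transition matrix from $\{m_\mu\}_{\mu\vdash n}$ to $\{\J_\mu\}_{\mu\vdash n}$ is triangular with nonzero diagonal, so the $\J_\lambda$ form a basis of $\mcS$. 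For uniqueness, given another family $\{K_\lambda\}$ with the three properties, I would induct along $\prec$: triangularity places $K_\lambda$ in $V_\lambda:=\Span\{m_\mu:\mu\le\lambda\}=\Span\{P_\mu:\mu\le\lambda\}$, so $K_\lambda=\sum_{\mu\le\lambda}b_\mu P_\mu$; pairing with $K_\nu=\J_\nu$ (known for $\nu<\lambda$ by induction, hence proportional to $P_\nu$) and using orthogonality forces $b_\nu=0$ for $\nu<\lambda$, whence $K_\lambda\propto P_\lambda$, and the normalization $[p_{1^n}]K_\lambda=1$ pins down the scalar.

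The main obstacle is precisely the passage from $\prec$-triangularity to genuine $\le$-triangularity in the Gram--Schmidt step: separating incomparable partitions is not formal. It is worth noting that the otherwise attractive route through a self-adjoint triangular operator does not resolve this cleanly for Jack polynomials, because the Laplace--Beltrami operator, with eigenvalue $c_\lambda=\sum_i\lambda_i^2-\tfrac2\alpha\sum_i(i-1)\lambda_i$ on $m_\lambda$, fails to have simple spectrum: for instance $c_{[3,3]}=c_{[4,1,1]}$ for every $\alpha$, and $[3,3]$ and $[4,1,1]$ are exactly an incomparable pair. Separating all partitions would require the full commuting family of higher-order operators, so the compatibility lemma above (or, equivalently, a degeneration from the generic Macdonald case, where the analogous operator does have simple spectrum) remains the crux.
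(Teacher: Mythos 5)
First, a point of comparison: the paper does not prove this statement at all --- it is quoted from Macdonald \cite{Macdonald1995} and explicitly ``taken as a definition'' --- so your proposal can only be judged on its own merits and against Macdonald's argument (existence for Macdonald polynomials via a self-adjoint, dominance-triangular operator with generically simple spectrum, then the degeneration $q=t^{\alpha}$, $t\to 1$). Your outer scaffolding is sound: the anisotropy of $\langle\cdot,\cdot\rangle_\alpha$ over $\mathbb{Q}(\alpha)$ (modulo a small fix --- you need some coefficient of $f$ to be nonzero \emph{at} $\alpha_0$, not merely $f\neq 0$ as an element of $\mathcal{S}_\alpha$), Gram--Schmidt along a total refinement $\prec$ of dominance, the normalization via the $\alpha=1$/Schur specialization, the basis property, and the uniqueness induction. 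But the step you defer is not a technical detail; it \emph{is} the theorem. Without dominance triangularity, Gram--Schmidt produces a family that a priori depends on the choice of $\prec$; the content of the statement is precisely that the output is independent of the refinement, and both your existence and your uniqueness arguments rely on it (uniqueness uses $\mathrm{Span}\{m_\mu:\mu\le\lambda\}=\mathrm{Span}\{P_\mu:\mu\le\lambda\}$, which presupposes the unproven triangularity).

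Moreover, the patch you propose would fail: the ``compatibility lemma'' asserting $\le$-triangularity of the transition matrices between the monomial basis and its dual is false. Already at $\alpha=1$ the dual basis of $\{m_\lambda\}$ is $\{h_\lambda\}$, and $h_\lambda=\sum_\mu M_{\lambda\mu}\,m_\mu$ where $M_{\lambda\mu}$ counts nonnegative-integer matrices with row sums $\lambda$ and column sums $\mu$; this is strictly positive for \emph{every} pair with $|\lambda|=|\mu|$, so there is no triangularity of any kind to import. Your diagnosis of the Laplace--Beltrami obstruction is correct and worth keeping ($c_{[3,3]}=c_{[4,1,1]}$ on an incomparable pair), but note it only blocks half of the argument: for $\alpha>0$ one has $c_\lambda\neq c_\mu$ whenever $\mu<\lambda$ (strict Schur convexity of $\sum_i\lambda_i^2$ together with monotonicity of $\sum_i(i-1)\lambda_i$ along dominance), so the dominance-triangular eigenfunction recursion does close, and self-adjointness yields $\langle P_\lambda,P_\mu\rangle_\alpha=0$ whenever $c_\lambda\neq c_\mu$. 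What is genuinely missing is orthogonality on incomparable pairs with colliding eigenvalues, and to get it one must actually execute one of the routes you only name: the full commuting Sekiguchi--Debiard family, whose joint spectrum separates partitions, or the limit from Macdonald polynomials, which is Macdonald's own proof. As written, the proposal is a correct plan whose crux is asserted rather than proven, and whose suggested shortcut is not available.
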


We denote by $j_\lambda^{(\alpha)}$ the squared-norm of $J_\lambda^{(\alpha)}$;
\begin{equation}\label{eq j alpha}
    j_\lambda^{(\alpha)}:=\langle J_\lambda^{(\alpha)} ,J_\lambda^{(\alpha)}\rangle_\alpha.
\end{equation}

\begin{rmq}\label{rmq empty g}
Let $\Jch_\emptyset$ denote the Jack character indexed by the empty partition. 
It follows from \cref{eq normalization} that 
    $\Jch_\emptyset(\lambda)=1$ for any $\lambda\in\mathbb Y$. Hence, $g^\emptyset_{\emptyset,\emptyset}(\alpha)=1$.
\end{rmq}

\subsection{Jack characters and shifted symmetric functions}\label{ssec shifted fcts}
\begin{defi}[\cite{Lassalle2008b}]
We say that a polynomial of degree $n$ in $k$ variables
$(s_1,\dots,s_k)$ with coefficients in $\mathbb Q (\alpha)$ is
\textit{$\alpha$-shifted symmetric} if it is symmetric in the variables $s_i-i/\alpha$.
An $\alpha$-shifted symmetric function (or simply a shifted symmetric function)  is a sequence
$(f_k)_{k\geq1}$ of shifted symmetric polynomials of bounded degrees, such that for every $k\geq 1$, the function $f_k$ is
an $\alpha$-shifted symmetric polynomial in $k$ variables and 
\begin{equation}\label{eq shifted functions}
  f_{k+1}(s_1,\dots,s_k,0)=f_k(s_1,\dots,s_k).  
\end{equation}
We denote by $\mcSstar$ the algebra of shifted symmetric functions.
\end{defi}
Let $f$ be a shifted symmetric function and let $\lambda =
(\lambda_1,\dots,\lambda_k)$ be a partition. Then we denote
$f(\lambda):=f(\lambda_1,\dots, \lambda_{k},0,\dots)$. 

\begin{thm}[\cite{KnopSahi1996}]\label{thm Knop Sahi}
    Let $n\geq 0$. And let $g$ be a function on Young diagrams. There exists a unique shifted symmetric function $f$ of degree less or equal than $n$ such that $f(\lambda)=g(\lambda)$ for any $|\lambda|\leq n$.
\end{thm}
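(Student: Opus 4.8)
The plan is to reduce the statement to the invertibility of a single square evaluation matrix assembled from interpolation polynomials. Set $\Pi_n := \{\lambda \in \mathbb{Y} : |\lambda| \leq n\}$ and let $N_n := |\Pi_n|$ be the number of partitions of size at most $n$. Write $V_n \subseteq \mcSstar$ for the subspace of shifted symmetric functions of degree at most $n$. The theorem asserts precisely that the evaluation map
$$\mathrm{ev}_n \colon V_n \longrightarrow \mathbb{Q}(\alpha)^{\Pi_n}, \qquad f \longmapsto \big(f(\lambda)\big)_{\lambda \in \Pi_n}$$
is a bijection, existence being its surjectivity and uniqueness its injectivity.

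First I would pin down $\dim V_n$. The degree filtration on $\mcSstar$ behaves well under taking leading terms: the top homogeneous component in the variables $s_i$ of a degree-$d$ shifted symmetric function is unchanged when one replaces $s_i$ by $s_i - i/\alpha$, hence it is an honest symmetric function of degree $d$. The resulting leading-term map embeds each graded piece $\mathrm{gr}_d \mcSstar$ into the degree-$d$ part of $\mcS$; summing over $d \leq n$ gives $\dim V_n \leq N_n$.

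The crux is to construct an interpolation basis that realizes the opposite inequality and makes $\mathrm{ev}_n$ triangular. For each partition $\mu$ I would produce a shifted symmetric function $P^*_\mu$ of degree $|\mu|$, whose leading term is proportional to the Jack polynomial $\J_\mu$, enjoying the extra-vanishing property
$$P^*_\mu(\lambda) = 0 \ \text{ whenever } \mu \not\subseteq \lambda, \qquad \text{and} \qquad P^*_\mu(\mu) \neq 0 .$$
These are the shifted Jack (Knop--Sahi interpolation) polynomials. Since their leading terms $\J_\mu$ are linearly independent, the family $\{P^*_\mu : \mu \in \Pi_n\}$ is linearly independent, so $\dim V_n \geq N_n$; combined with the previous bound this yields $\dim V_n = N_n$ and shows $\{P^*_\mu : \mu \in \Pi_n\}$ is a basis of $V_n$. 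Now order $\Pi_n$ by increasing size. If $|\lambda| \leq |\mu|$ and $\lambda \neq \mu$, then $\mu \not\subseteq \lambda$, so $P^*_\mu(\lambda) = 0$; hence the matrix $\big(P^*_\mu(\lambda)\big)_{\lambda,\mu \in \Pi_n}$ is lower triangular with nonzero diagonal entries $P^*_\mu(\mu)$, and is therefore invertible. This shows $\mathrm{ev}_n$ is an isomorphism: for a given $g$ the unique preimage is $\sum_{\mu \in \Pi_n} c_\mu P^*_\mu$, the coefficients $c_\mu$ being recovered by back-substitution from the equations $f(\lambda) = g(\lambda)$, $\lambda \in \Pi_n$. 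This gives existence and uniqueness simultaneously.

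The main obstacle, and the genuine content of \cite{KnopSahi1996}, is the construction of the $P^*_\mu$ together with their extra-vanishing property; crucially this must be carried out independently of the theorem we are proving, on pain of circularity. I would obtain them as the joint eigenfunctions of the commuting family of $\alpha$-difference operators of Cherednik--Sekiguchi type, reading off both the triangularity of the leading term (yielding $\J_\mu$) and the vanishing $P^*_\mu(\lambda) = 0$ for $\mu \not\subseteq \lambda$ from the spectrum of these operators; an alternative route is a direct combinatorial tableau formula extending Okounkov's binomial formula for the shifted Schur functions at $\alpha = 1$. Once the extra-vanishing is in hand, everything above is pure linear algebra.
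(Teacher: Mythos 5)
The paper offers no proof of this statement at all: it is imported wholesale from \cite{KnopSahi1996} (hence the citation in the theorem header), so there is nothing internal to compare your argument against. Your proposal is a correct reconstruction of the standard proof, essentially the route of the cited reference itself: the upper bound $\dim V_n\leq N_n$ via the leading-term embedding of each graded piece into symmetric functions, the lower bound via the linear independence of the interpolation polynomials, and the triangularity of the evaluation matrix are all sound. (Note that for the triangularity you only use the ``small'' vanishing $P^*_\mu(\lambda)=0$ for $|\lambda|\leq|\mu|$, $\lambda\neq\mu$, together with $P^*_\mu(\mu)\neq 0$; the full extra-vanishing statement is stronger than what the argument needs.) The one point on which everything hinges is the one you flag yourself: the nonvanishing $P^*_\mu(\mu)\neq 0$. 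If the $P^*_\mu$ were merely \emph{defined} by the interpolation conditions (a nonzero solution exists by counting equations against the dimension bound), then $P^*_\mu(\mu)\neq 0$ would be equivalent to the uniqueness assertion you are trying to prove, and the argument would be circular. The proof only closes with an independent construction -- Knop--Sahi's difference operators, or Okounkov's combinatorial formula, which exhibits $P^*_\mu(\mu)$ as an explicit nonzero product -- and that construction is precisely the content of \cite{KnopSahi1996}. So your write-up is correct as a reduction of the theorem to the main results of that reference, which is exactly the logical status the statement has in the paper.
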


In particular, a shifted symmetric function is completely determined by its evaluation on Young diagrams $(f(\lambda))_{\lambda\in\mathbb{Y}}$. The following theorem due to Féray gives a characterization of Jack characters as shifted symmetric functions satisfying some properties (see \cite{BenDaliDolega2023} for a proof).

\begin{thm}[Féray]\label{thm Feray}
Fix a partition $\mu$. The Jack character $\Jch_\mu$ is the unique $\alpha$-shifted symmetric function of degree $|\mu|$ with top homogeneous part $\alpha^{|\mu|-\ell(\mu)}/z_\mu\cdot p_\mu$, such that $\Jch_\mu(\lambda)=0$ for any partition $|\lambda|<|\mu|$.
\end{thm}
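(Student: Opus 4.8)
The plan is to split the statement into an existence part---$\Jch_\mu$ really is an $\alpha$-shifted symmetric function of degree $|\mu|$, with the prescribed top homogeneous part, that vanishes below level $|\mu|$---and a uniqueness part. For existence I would read everything off the differential construction of \cref{thm Jch}; for uniqueness I would invoke the interpolation theorem of Knop and Sahi (\cref{thm Knop Sahi}). Throughout, write $E(u):=\exp(\Binf(-t,\bfp,u))$, so that \cref{thm Jch} reads $\Jch_\mu(\lambda)=[t^{|\mu|}p_\mu]\,E(-\alpha\lambda_1)\cdots E(-\alpha\lambda_s)\cdot 1$.

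For the degree bound and the top part I would use the structural feature of the operators $\B_n$ visible on $\B_1,\B_2$ and valid in general: the top-$u$ term of $\B_n(\bfp,u)$ is the multiplication operator $\tfrac{u^n}{\alpha}p_n$, of $u$-degree $n$, while every other monomial has strictly smaller $u$-degree. Since in each $\B_n$ the $u$-degree never exceeds the associated $t$-degree $n$, on the slice $[t^{|\mu|}]$ the total degree in the variables $u_i=-\alpha\lambda_i$, hence in the parts $\lambda_i$, is at most $|\mu|$; this gives $\deg\Jch_\mu\le|\mu|$. The homogeneous component of top degree is obtained by forcing every factor to contribute its top-$u$ term, i.e.\ by replacing $\Binf(-t,\bfp,u)$ with $\tfrac1\alpha\sum_{n\ge1}\tfrac{(-tu)^n}{n}p_n$. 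These are commuting multiplication operators, so the corresponding product applied to $1$ is $\exp\!\big(\sum_{n\ge1}\tfrac{\alpha^{n-1}t^n}{n}\,p_n\,p_n(\lambda)\big)$, and extracting $[t^{|\mu|}p_\mu]$ gives $\prod_n\tfrac{\alpha^{(n-1)m_n(\mu)}}{m_n(\mu)!\,n^{m_n(\mu)}}\prod_n p_n(\lambda)^{m_n(\mu)}$, which is the evaluation at $\lambda$ of $\tfrac{\alpha^{|\mu|-\ell(\mu)}}{z_\mu}p_\mu$ since $\sum_n(n-1)m_n(\mu)=|\mu|-\ell(\mu)$ and $\prod_n m_n(\mu)!\,n^{m_n(\mu)}=z_\mu$. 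In particular this top part is nonzero, so $\deg\Jch_\mu=|\mu|$ exactly.

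It remains to certify that $\Jch_\mu$ is genuinely shifted symmetric. Vanishing below level $|\mu|$ is built into the definition. The stability relation $\Jch_\mu(\lambda_1,\dots,\lambda_{s-1},0)=\Jch_\mu(\lambda_1,\dots,\lambda_{s-1})$ follows from $E(0)\cdot 1=1$: at $u=0$ the operator $\B_n(\bfp,0)$ retains only terms carrying at least one derivative $\partial/\partial p_i$, so $\Binf(-t,\bfp,0)\cdot 1=0$ and hence $E(0)\cdot 1=1$. The genuinely delicate point---which I expect to be the main obstacle---is symmetry in the shifted variables $\lambda_i-i/\alpha$. In terms of the construction this is an exchange relation for the factors $E(u)$: transposing $\lambda_i-i/\alpha$ and $\lambda_{i+1}-(i+1)/\alpha$ corresponds, after setting $u_j=-\alpha\lambda_j$, to replacing the adjacent product $E(u_i)E(u_{i+1})$ by $E(u_{i+1}+1)E(u_i-1)$, and one must show this leaves $[t^{|\mu|}p_\mu](\cdots)\cdot 1$ invariant. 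Establishing such a commutation relation for $\exp(\Binf)$ is the heart of the matter; alternatively one may take as input the already-established fact that Jack characters lie in $\mcSstar$.

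Finally, uniqueness is formal. Suppose $f\in\mcSstar$ has degree $\le|\mu|$, the same top part $\tfrac{\alpha^{|\mu|-\ell(\mu)}}{z_\mu}p_\mu$, and vanishes on all $\lambda$ with $|\lambda|<|\mu|$. Then $f-\Jch_\mu$ has degree $\le|\mu|-1$ (the top parts cancel) and vanishes on every $\lambda$ with $|\lambda|\le|\mu|-1$. By the uniqueness clause of \cref{thm Knop Sahi} applied with $n=|\mu|-1$ (and $g=0$), the only shifted symmetric function of degree $\le|\mu|-1$ agreeing with $0$ on all such $\lambda$ is $0$; hence $f=\Jch_\mu$, which proves the claim.
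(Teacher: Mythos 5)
The paper itself contains no proof of this statement: it is imported from \cite{BenDaliDolega2023} (``see \cite{BenDaliDolega2023} for a proof''), so your attempt has to stand on its own. Its uniqueness half does: subtracting $\Jch_\mu$ from a competitor cancels the top homogeneous parts, leaving a shifted symmetric function of degree at most $|\mu|-1$ that vanishes on every $\lambda$ with $|\lambda|\leq|\mu|-1$, hence is zero by the uniqueness clause of \cref{thm Knop Sahi}; this is correct, complete, and is the standard argument. Your computations on the existence side are also individually sound: the top-$u$ coefficient of $\B_n(\bfp,u)$ is indeed $\tfrac{u^n}{\alpha}p_n$, the $u$-degree of $\B_n$ never exceeds $n$, and your extractions of the degree bound, of the top part $\alpha^{|\mu|-\ell(\mu)}p_\mu/z_\mu$, and of the stability relation via $\B_n(\bfp,0)\cdot 1=0$ are all correct manipulations.

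Nevertheless the existence half has a genuine gap, which you flag yourself: the symmetry of $\Jch_\mu$ in the shifted variables $\lambda_i-i/\alpha$ is never proved. This is not a side condition but the core assertion of the existence statement --- without it $\Jch_\mu$ is not an element of $\mcSstar$ at all, and the characterization has no content; the adjacent-factor exchange relation you correctly identify as equivalent to it (replacing $\exp(\Binf(-t,\bfp,u_i))\exp(\Binf(-t,\bfp,u_{i+1}))$ by $\exp(\Binf(-t,\bfp,u_{i+1}+1))\exp(\Binf(-t,\bfp,u_i-1))$ inside the coefficient extraction) is not easier than the theorem, and no argument for it is offered. Moreover, the route you chose is circular: \cref{thm Jch} is Theorem 1.4 of \cite{BenDaliDolega2023}, the very reference that the present paper points to for the proof of \cref{thm Feray}, and there the logical order is the reverse of yours --- the characterization is established first (its existence ingredients coming from the classical theory of Jack characters and shifted Jack polynomials, going back to Lassalle and Do\l{}e\k{}ga--F\'eray) and is then \emph{used} to prove the differential formula, by checking that its right-hand side is shifted symmetric, has the right degree, top term and vanishing, and concluding by uniqueness. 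So deriving the degree bound and top part of $\Jch_\mu$ from \cref{thm Jch} presupposes the statement being proved. The non-circular repair is to take all three existence ingredients --- shifted symmetry, degree $|\mu|$ with top homogeneous part $\alpha^{|\mu|-\ell(\mu)}p_\mu/z_\mu$, and the vanishing below level $|\mu|$ (this last one is indeed immediate from the definition) --- from that classical literature, and to retain only your Knop--Sahi uniqueness argument, which is the one part of the proposal that is both correct and independent of \cref{thm Jch}.
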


\section{Structure coefficients \texorpdfstring{$g^\pi_{\mu,\nu}(\alpha)$}{g} and proof of Proposition \texorpdfstring{\ref{prop H-G}}{}}\label{sec g properties}
The purpose of this section is to discuss some properties of coefficients $g^\pi_{\mu,\nu}$. In particular, we use their connection with the coefficients of the Matchings-Jack conjecture to establish integrality in \cref{conj g} and to prove \cref{prop H-G}.
\subsection{Some properties}
We start by proving some properties of the structure coefficients~$g^\pi_{\mu,\nu}$.
\begin{lem}\label{lem deg bound}
    The coefficient $g^\pi_{\mu,\nu}(\alpha)$ is $0$ unless $\max(|\mu|,|\nu|)\leq |\pi|\leq |\mu|+|\nu|.$
    \begin{proof} 
    The upper bound is a direct consequence of the fact that $\Jch_\mu$ is a shifted symmetric function of degree $|\mu|$ and the fact that $(\Jch_{\pi})_{|\pi|\leq d}$ is a basis of shifted symmetric functions of degree less or equal than $d$, see \cref{thm Feray}. In order to obtain the lower bound we use the vanishing properties of $\Jch_\mu$. Fix two partitions $\mu$ and $\nu$. Set $m:=\max(|\mu|,|\nu|)$ and 
    \begin{align}
      F &:=\Jch_\mu\Jch_\nu-\sum_{m\leq |\pi|\leq |\mu|+|\nu|}g^\pi_{\mu,\nu}\Jch_\pi\label{eq lem deg bound 1}\\
      &=\sum_{|\pi|< m}g^\pi_{\mu,\nu}\Jch_\pi.\label{eq lem deg bound 2}  
    \end{align}
    From \cref{eq lem deg bound 2}, the function $F$ is shifted symmetric with degree at most $m-1$. Moreover, using \cref{eq lem deg bound 1} and the definition of Jack characters, we get that $F(\lambda)=0$ for any $|\lambda|<m$. Applying \cref{thm Knop Sahi}, we deduce that $F=0$. By consequence $g^\pi_{\mu,\nu}=0$ for any $|\pi|<m$.
    \end{proof}
\end{lem}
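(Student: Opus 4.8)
The plan is to establish the two inequalities separately: the upper bound $|\pi|\leq|\mu|+|\nu|$ is essentially formal, while the lower bound $|\pi|\geq\max(|\mu|,|\nu|)$ relies on the vanishing properties of Jack characters combined with the Knop--Sahi uniqueness theorem. So I would first dispatch the easy half and then concentrate on the harder half.

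For the upper bound, I would use that the shifted symmetric functions form a filtered algebra in which $\Jch_\mu$ has degree $|\mu|$ (by \cref{thm Feray}), so the product $\Jch_\mu\Jch_\nu$ has degree at most $|\mu|+|\nu|$. Since the family $(\Jch_\pi)_{|\pi|\leq d}$ is a basis of the space of shifted symmetric functions of degree at most $d$, expanding $\Jch_\mu\Jch_\nu$ in this basis can only involve those $\Jch_\pi$ with $|\pi|\leq|\mu|+|\nu|$; hence $g^\pi_{\mu,\nu}=0$ once $|\pi|>|\mu|+|\nu|$.

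For the lower bound I would set $m:=\max(|\mu|,|\nu|)$ and isolate the low-degree tail of the expansion. Writing
$$F:=\Jch_\mu\Jch_\nu-\sum_{m\leq|\pi|\leq|\mu|+|\nu|}g^\pi_{\mu,\nu}\Jch_\pi=\sum_{|\pi|<m}g^\pi_{\mu,\nu}\Jch_\pi,$$
where the second equality uses the full expansion together with the upper bound just proved, the right-hand side shows that $F$ is shifted symmetric of degree at most $m-1$. The key step is to evaluate $F$ on small diagrams: for any $|\lambda|<m$ one has $|\lambda|<|\mu|$ or $|\lambda|<|\nu|$, so by the vanishing property of Jack characters (\cref{thm Feray}) the product $(\Jch_\mu\Jch_\nu)(\lambda)$ vanishes, while every $\Jch_\pi$ appearing in the first expression has $|\pi|\geq m>|\lambda|$ and hence also vanishes at $\lambda$. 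Thus $F(\lambda)=0$ for all $|\lambda|\leq m-1$. Since $F$ has degree at most $m-1$ and vanishes on all diagrams of size at most $m-1$, the Knop--Sahi uniqueness theorem (\cref{thm Knop Sahi}) forces $F=0$; linear independence of the $\Jch_\pi$ then yields $g^\pi_{\mu,\nu}=0$ for all $|\pi|<m$, which is the claimed lower bound.

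The main obstacle is the lower bound. The upper bound follows purely from the filtration, but the lower bound requires the nontrivial interplay of the vanishing property with Knop--Sahi rigidity. The subtle point is that one cannot read the vanishing of the low-$|\pi|$ coefficients directly off the product $\Jch_\mu\Jch_\nu$; the right move is to repackage the low-degree tail $F$ as an independent shifted symmetric function and exploit that such a function of bounded degree is completely determined by its values on small Young diagrams.
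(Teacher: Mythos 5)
Your proposal is correct and follows essentially the same route as the paper's proof: the upper bound via the filtration of $\mcSstar$ and the basis property of $(\Jch_\pi)_{|\pi|\leq d}$ from \cref{thm Feray}, and the lower bound via the same auxiliary function $F$, its vanishing on diagrams of size less than $\max(|\mu|,|\nu|)$, and the Knop--Sahi rigidity theorem (\cref{thm Knop Sahi}). The only cosmetic difference is that you invoke \cref{thm Feray} for the vanishing property where the paper appeals directly to the definition of Jack characters; both are valid.
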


As a consequence of this lemma, we get that the series $G^{(\alpha)}$ introduced in \cref{eq def G} is well defined in $\mathbb Q(\alpha)[t,\bfp]\llbracket \bfq,\bfr\rrbracket\cap \mathbb Q(\alpha)[t,\bfq,\bfr]\llbracket \bfp\rrbracket$.

\subsection{Goulden--Jackson's Matchings-Jack conjecture}\label{ssec MJ}
Goulden and Jackson have introduced in \cite{GouldenJackson1996} the coefficients $c^\pi_{\mu,\nu}(\alpha)$ indexed by three partitions of the same size, and defined by the following expansion:
\begin{equation}\label{eq:tau}
  \tau^{(\alpha)}(t,\mathbf{p},\mathbf{q},\mathbf{r}):=\sum_{n\geq0}t^n\sum_{\theta\vdash n}\frac{1}{j^{(\alpha)}_\theta}J^{(\alpha)}_\theta(\mathbf{p})J^{(\alpha)}_\theta(\mathbf{q})J^{(\alpha)}_\theta(\mathbf{r})=\sum_{n\geq 0}t^n\sum_{\pi,\mu,\nu\vdash n}\frac{c^\pi_{\mu,\nu}(\alpha)}{z_\pi\alpha^{\ell(\pi)}}p_\pi q_\mu r_\nu.  
\end{equation}

For $\alpha\in\{1,2\}$, this series is known to be the generating series of hypermaps. 
\begin{thm}[\cite{GouldenJackson1996a}]\label{thm coef c b=0}
Fix three partitions $\pi$, $\mu$ and $\nu$ of the same size.
For $\alpha=1$ (resp. $\alpha=2$), the coefficient $c^\pi_{\mu,\nu}(1)$ (resp. $c^\pi_{\mu,\nu}(2)$) counts the number of oriented (resp. orientable or not) vertex labelled hypermaps of profile $(\pi,\mu,\nu)$.
Equivalently, for $\alpha\in\left\{1,2\right\}$ we have
$$\tau^{(\alpha)}(t,\mathbf{p},\mathbf{q},\mathbf{r})=H^{(\alpha)}(t,\mathbf{p},\mathbf{q},\mathbf{r}),$$
\end{thm}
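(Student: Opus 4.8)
The plan is to establish the two identities $\tau^{(1)}=H^{(1)}$ and $\tau^{(2)}=H^{(2)}$ separately, but following the same template in each case: specialize the Jack polynomials to a classical family, read off the coefficient $c^\pi_{\mu,\nu}(\alpha)$ by expanding in the power-sum basis, rewrite the result through a Frobenius-type formula as a (suitably normalized) number of factorizations, and finally match that number to vertex labelled hypermaps by an orbit-counting argument. A convenient point is that since $H^{(1)}$ and $H^{(2)}$ enumerate possibly disconnected hypermaps, no transitivity/connectedness hypothesis is ever needed on the algebraic side.

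For $\alpha=1$ I would start from the classical identifications $J^{(1)}_\lambda=H_\lambda\, s_\lambda$ and $j^{(1)}_\lambda=H_\lambda^2$, where $s_\lambda$ is the Schur function and $H_\lambda=\prod_{\square\in\lambda}h(\square)$ is the product of hook lengths; these follow from the normalization $[p_{1^n}]J^{(1)}_\lambda=1$ together with the hook length formula $\dim\lambda=n!/H_\lambda$ and the orthonormality of the $s_\lambda$ for $\langle\cdot,\cdot\rangle_1$. Substituting into \cref{eq:tau} and expanding each Schur function as $s_\lambda=\sum_\mu z_\mu^{-1}\chi^\lambda_\mu\,p_\mu$, one gets for $|\pi|=|\mu|=|\nu|=n$ that
\[ c^\pi_{\mu,\nu}(1)=\frac{n!}{z_\mu z_\nu}\sum_{\theta\vdash n}\frac{\chi^\theta_\pi\chi^\theta_\mu\chi^\theta_\nu}{\dim\theta}. \]
By Frobenius's formula this equals $\tfrac{z_\pi}{n!}\,N(\pi,\mu,\nu)$, where $N(\pi,\mu,\nu)$ denotes the number of triples $(\sigma,\rho,\phi)$ of permutations of $\{1,\dots,n\}$ of respective cycle types $\pi,\mu,\nu$ satisfying $\sigma\rho\phi=\mathrm{id}$.

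The remaining step is combinatorial. Such triples are exactly the oriented hypermaps whose $n$ corners in $(+)$-faces are labelled $1,\dots,n$, with $\sigma,\rho,\phi$ recording the cyclic orders of these corners around the vertices, the $(+)$-faces and the $(-)$-faces (the standard permutation model of bipartite maps/hypermaps); a vertex of half-degree $d$ is a $\sigma$-cycle of length $d$, so $\sigma$ has cycle type $\pi$, and likewise for $\rho,\phi$. To convert this dart-labelled count into the vertex labelled count I would sum over genuine hypermaps $M$ of the given profile: the number of labellings of the $(+)$-corners is $n!/|\mathrm{Aut}(M)|$, whereas the number of vertex labellings together with a choice of root $(+)$-corner at each vertex is $\prod_i m_i(\pi)!\,i^{m_i(\pi)}/|\mathrm{Aut}(M)|=z_\pi/|\mathrm{Aut}(M)|$, using that an automorphism fixing every root fixes every corner and is therefore trivial, so $\mathrm{Aut}(M)$ acts freely on rooted vertex-labellings. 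Summing over $M$ gives $|\ohypermap^\pi_{\mu,\nu}\text{-type objects of profile }(\pi,\mu,\nu)|=\tfrac{z_\pi}{n!}N(\pi,\mu,\nu)=c^\pi_{\mu,\nu}(1)$, and the identity $\tau^{(1)}=H^{(1)}$ then follows by comparing definitions, the weight $1/z_\pi$ matching on both sides.

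For $\alpha=2$ the argument is structurally identical, with the symmetric group $S_n$ replaced by the Gelfand pair $(S_{2n},H_n)$, where $H_n$ is the hyperoctahedral group. Here $J^{(2)}_\lambda$ is the zonal polynomial $Z_\lambda$; the role of the normalized characters $\chi^\theta/\dim\theta$ is played by the zonal spherical functions of the pair (via Macdonald's identification of zonal polynomials with zonal spherical functions), and the role of permutation triples is played by triples of perfect matchings on $2n$ points whose pairwise superpositions have prescribed coset-types $\pi,\mu,\nu$. The scalar product $\langle\cdot,\cdot\rangle_2$, with its factor $2^{\ell(\lambda)}$, is precisely the one for which the $Z_\lambda$ are orthogonal, and the factors $2^{\ell(\pi)}$ appearing in \cref{eq gen series non oriented} and in the definition of $\tau^{(2)}$ are exactly the hyperoctahedral normalization factors; expanding $Z_\lambda$ in power sums through the zonal analogue of the Frobenius formula yields $c^\pi_{\mu,\nu}(2)$ as a count of matching configurations, which are the non-oriented vertex labelled hypermaps of profile $(\pi,\mu,\nu)$. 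I expect the main obstacle to lie precisely in this $\alpha=2$ bookkeeping: setting up the matching model of non-oriented hypermaps and checking that every $2^{\ell}$, $z$, and spherical-function normalization matches requires the full machinery of the Gelfand pair, in contrast to the elementary character theory of $S_n$ that suffices when $\alpha=1$.
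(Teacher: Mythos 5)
The paper does not actually prove \cref{thm coef c b=0}: it is quoted from the literature (the case $\alpha=1$ is classical, the case $\alpha=2$ is the main theorem of Goulden--Jackson's paper on the double coset algebra and zonal polynomials), so your proposal must be compared with those standard proofs rather than with an argument in the text. Your $\alpha=1$ half reconstructs that classical route correctly and essentially completely: the identifications $J^{(1)}_\lambda=H_\lambda s_\lambda$ and $j^{(1)}_\lambda=H_\lambda^2$ turn \cref{eq:tau} into $\sum_\theta H_\theta\, s_\theta(\bfp)s_\theta(\bfq)s_\theta(\bfr)$, the character expansion plus Frobenius's formula give $c^\pi_{\mu,\nu}(1)=\tfrac{z_\pi}{n!}N(\pi,\mu,\nu)$ with $N$ the number of factorizations of the identity into permutations of types $\pi,\mu,\nu$, and your orbit count converting dart-labelled triples into vertex labelled hypermaps ($n!/|\mathrm{Aut}(M)|$ versus $z_\pi/|\mathrm{Aut}(M)|$ per isomorphism class, both actions being free) is exactly the right way to match the normalization $1/z_{\lambda^\bullet(M)}$ in \cref{eq gen series oriented}. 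Your remark that disconnected objects are allowed on both sides, so no transitivity hypothesis is needed, is also correct and is the reason the bookkeeping closes.

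The caveat is the $\alpha=2$ half, which in your write-up is a (correct) roadmap rather than a proof. You identify the right machinery: $J^{(2)}_\lambda$ is the zonal polynomial, the Gelfand pair $(S_{2n},H_n)$ with its zonal spherical functions replaces the character theory of $S_n$, triples of perfect matchings on $2n$ points with prescribed pairwise coset types replace factorizations, and the factors $2^{\ell(\pi)}$ in \cref{eq gen series non oriented} are the hyperoctahedral normalizations. But the zonal analogue of the Frobenius formula, the correspondence between matching triples and non-oriented vertex labelled hypermaps, and the verification that every $2^{\ell}$ and $z_\lambda$ factor matches are precisely the substance of the cited Goulden--Jackson paper, and you defer all of it. This is not a wrong step, and it is the same approach as the source the paper relies on; just be aware that what you call ``bookkeeping'' is the actual mathematical content of the $\alpha=2$ statement, so as written your argument proves the theorem for $\alpha=1$ and reduces the $\alpha=2$ case to the known results it would be re-proving.
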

\noindent These two cases are obtained using representation theory tools (the case $\alpha=1$ is a classical result, while the case $\alpha=2$ is due to Goulden and Jackson \cite{GouldenJackson1996a}).

The Matching-Jack conjecture states that the coefficients $c^\pi_{\mu,\nu}(\alpha)$ still have a combinatorial interpretation in terms of maps for any $\alpha$.
\begin{conj}[\cite{GouldenJackson1996}]\label{conj MJ}
For any partitions  $\pi$, $\mu$ and $\nu$ of the same size, $c^\pi_{\mu,\nu}(\alpha)$ is a polynomial in the shifted parameter $b:=\alpha-1$ with non-negative integer coefficients. Equivalently, there exists a statistic  $\vartheta$ on non-oriented hypermaps with non-negative integer values, such that  
\begin{itemize}
    \item $\vartheta(M)=0$ if and only if $M$ is oriented,
    \item for any partitions $\pi$, $\mu$ and $\nu$ of the same size
    \begin{equation*}
  c^\pi_{\mu,\nu}=\sum_{M\in\hypermap ^\pi_{\mu,\nu}} b^{\vartheta(M)}.
\end{equation*}
\end{itemize}
\end{conj}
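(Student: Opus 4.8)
The statement is the Matchings--Jack conjecture, which is a well-known open problem; what follows is the route that the machinery developed in this paper makes most natural, together with the place where it stalls. First, the two formulations are equivalent up to a standard argument. If a statistic $\vartheta$ with the stated properties exists, then $c^\pi_{\mu,\nu}=\sum_{M\in\hypermap^\pi_{\mu,\nu}}b^{\vartheta(M)}$ is visibly a polynomial in $b$ with non-negative integer coefficients whose constant term counts the oriented hypermaps, matching \cref{thm coef c b=0} at $b=0$. Conversely, once one knows $c^\pi_{\mu,\nu}\in\mathbb{Z}_{\geq 0}[b]$ with $c^\pi_{\mu,\nu}(0)=|\ohypermap^\pi_{\mu,\nu}|$, producing $\vartheta$ amounts to distributing the non-oriented hypermaps of profile $(\pi,\mu,\nu)$ among the powers of $b$ so that the power $b^0$ collects exactly the oriented ones. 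I would therefore reduce everything to the analytic claim $c^\pi_{\mu,\nu}(\alpha)\in\mathbb{Z}_{\geq 0}[b]$.

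Plan for the analytic claim. The polynomiality of $c^\pi_{\mu,\nu}$ in $b$, and the integrality of its coefficients, are already available from the work cited in the introduction, so the missing content is non-negativity. For this I would exploit the identification, in the equal-size case $|\pi|=|\mu|=|\nu|=n$, of the Matchings--Jack coefficients with the structure coefficients $g^\pi_{\mu,\nu}$, hence with the top operator $\G_n$ (the value $k=|\pi|$ in \cref{eq:def:G_k}). The idea is to upgrade \cref{thm low terms}, which already presents $\G_0,\G_1,\G_2$ as manifestly $b$-positive differential operators, into a closed $b$-positive expression for every $\G_k$. Concretely one solves the differential equation of \cref{thm diff eq} (equivalently the commutation relation of \cref{thm com C-G}) recursively in the number of edges, so as to write $\G_k$ through the operators $\Binf(-t,\bfq,u)$, $\Binf(-t,\bfr,u)$, the adjoint $\Binf^{\perp}(-t,\bfp,u)$, and the map-counting coefficients of \cref{thm g-a}; since each building block $\B_n(\bfp,u)$ carries only the non-negative quantities $b=\alpha-1$ and $\alpha=1+b$ (as one sees already in $\B_1$ and $\B_2$), one hopes the recursion can be arranged without any cancellation.

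The main obstacle is exactly this last hope. Although the operators $\Binf$ and the coefficients of \cref{thm g-a} are individually non-negative in $b$, solving the main equation forces one to invert the action of $\Binf^{\perp}(-t,\bfp,u)$ on the $\bfp$-alphabet, and it is this inversion that introduces subtractions: the resulting expression for $g^\pi_{\mu,\nu}$ is an alternating-looking sum over intermediate maps, and there is no obvious reorganization of it into a sum with non-negative $b$-coefficients. This is precisely where the present approach halts --- the positive differential formula is produced only for $k\leq 2$ --- and is the reason the conjecture remains open. A more promising variant would be to keep the recursion combinatorial rather than algebraic: interpret each application of a $\B_n$-block as an edge-addition move on a partially built hypermap, in the spirit of the $\alpha=1$ construction of \cref{sec alpha 1}, and attach a factor $b$ to every ``non-orientable'' choice, reading off $\vartheta(M)$ as the number of such choices. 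Showing that this weight is well defined (independent of the construction order) and that the $\alpha$-weights introduced by the adjoint operator collapse to non-negative integer powers of $b$ would establish positivity and exhibit $\vartheta$ at once; controlling those $\alpha$-weights is, once more, the crux.
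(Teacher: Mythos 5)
The statement you were asked to prove is not a theorem of the paper: it is the Matchings--Jack conjecture of Goulden and Jackson, which the paper explicitly records as still open. The paper contains no proof of it, so there is no argument of the paper to compare yours against; what can be judged is whether your assessment of its status and of the partial results is accurate, and it is. You correctly isolate non-negativity as the only missing ingredient: polynomiality and integrality of the $c^\pi_{\mu,\nu}$ in $b$ are known (\cref{thm integrality c}, extended to arbitrary sizes in \cref{cor integrality g}), the identification with the structure coefficients $g^\pi_{\mu,\nu}$ in the equal-size case is \cref{prop_c-g_1}, and the paper's manifestly $b$-positive differential formulas stop at $\G_0,\G_1,\G_2$ (\cref{thm low terms}). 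Your diagnosis of where the natural route stalls --- the explicit formula of \cref{thm g-a} is an alternating sum coming from inverting the action of the adjoint operator, with no visible positive reorganization --- matches the paper's own admission that this expression could not be used to prove positivity, and your suggested combinatorial variant ($b$-weighted edge-addition in the spirit of \cref{sec alpha 1}) is exactly the kind of extension the paper points to as the missing step. So: no proof is claimed by you, none exists in the paper, and your account is faithful.

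One small repair to your reduction between the two formulations of the conjecture. Knowing $c^\pi_{\mu,\nu}\in\mathbb{Z}_{\geq0}[b]$ with constant term equal to $\lvert\ohypermap^\pi_{\mu,\nu}\rvert$ is not by itself enough to ``distribute the non-oriented hypermaps among the powers of $b$'': you also need the evaluation at $b=1$ to equal the total number of vertex labelled non-oriented hypermaps of profile $(\pi,\mu,\nu)$, so that the number of objects being distributed agrees with the sum of the coefficients. This second evaluation is the $\alpha=2$ case of \cref{thm coef c b=0}. With both evaluations in hand, the counting argument goes through: assign $\vartheta=0$ exactly to the oriented maps and spread the remaining $c^\pi_{\mu,\nu}\big|_{b=1}-c^\pi_{\mu,\nu}\big|_{b=0}$ maps across the positive powers according to the coefficients. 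This is a minor omission in an otherwise accurate (and appropriately non-conclusive) assessment of an open problem.
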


 In the next subsection, we use the following integrality result for coefficients $c^\pi_{\mu,\nu}$ in order to obtain a similar result for coefficients $g^\pi_{\mu,\nu}$.
\begin{thm}[\cite{BenDali2023}]\label{thm integrality c}
    The coefficients $c^\pi_{\mu,\nu}$ are polynomials in $b=\alpha-1$ with integer coefficients.
\end{thm}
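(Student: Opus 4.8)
\emph{Proof proposal.} The plan is to pass from the definition of $c^\pi_{\mu,\nu}$ through a character–sum formula to a purely local (prime–by–prime) integrality question. Expanding each Jack polynomial in the power–sum basis as $J^{(\alpha)}_\lambda=\sum_{\pi\vdash n}\theta^{(\alpha)}_\pi(\lambda)\,p_\pi$ — which is exactly the $|\pi|=|\lambda|$ case of the definition of $\theta^{(\alpha)}$ recalled before \cref{thm Jch} — substituting into \cref{eq:tau}, and identifying the coefficient of $p_\pi q_\mu r_\nu$, I would first record
\[
c^\pi_{\mu,\nu}(\alpha)=z_\pi\,\alpha^{\ell(\pi)}\sum_{\lambda\vdash n}\frac{\theta^{(\alpha)}_\pi(\lambda)\,\theta^{(\alpha)}_\mu(\lambda)\,\theta^{(\alpha)}_\nu(\lambda)}{j^{(\alpha)}_\lambda},\qquad n=|\pi|=|\mu|=|\nu|.
\]
By the polynomiality theorem of Do\l{}e\k{}ga and F\'eray \cite{DolegaFeray2016}, $c^\pi_{\mu,\nu}$ is a polynomial in $b=\alpha-1$ with rational coefficients. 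Since $\mathbb Z[b]=\bigcap_p\mathbb Z_{(p)}[b]$ inside $\mathbb Q[b]$, it then suffices to prove that, for every prime $p$, the coefficients of $c^\pi_{\mu,\nu}$ lie in $\mathbb Z_{(p)}$.

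Next I would isolate where denominators can enter the displayed formula. There are two sources. The factor $1/j^{(\alpha)}_\lambda$ is controlled by the hook–product formula, which writes $j^{(\alpha)}_\lambda$ as a product over the cells $s$ of $\lambda$ of the linear factors $\alpha a(s)+l(s)+1$ and $\alpha(a(s)+1)+l(s)$ (with $a(s)$, $l(s)$ the arm and leg lengths of $s$), hence as an explicit integer polynomial in $\alpha$. The values $\theta^{(\alpha)}_\pi(\lambda)=[p_\pi]J^{(\alpha)}_\lambda$ carry the denominators of the change of basis, through the coefficients $[p_\pi]m_\mu$ expressing the monomial functions in the power–sum basis; to tame these I would use the Knop--Sahi integrality of the integral form, namely $J^{(\alpha)}_\lambda=\sum_\mu v_{\lambda\mu}(\alpha)\,m_\mu$ with $v_{\lambda\mu}(\alpha)\in\mathbb Z_{\ge 0}[\alpha]$ (the normalization $[p_{1^n}]J^{(\alpha)}_\lambda=1$ of \cref{thm Jack polynomials} is precisely this integral form). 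This pushes all arithmetic onto the explicit factors of $j^{(\alpha)}_\lambda$ and onto the rational numbers $[p_\pi]m_\mu$.

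The crux is that the individual summands above genuinely have poles in $\alpha$, so $p$–integrality must come from cancellation in the sum over $\lambda$. The route I would pursue is combinatorial: by the partial results toward \cref{conj MJ}, and the marginal–sum analysis of non-oriented constellations in \cite{BenDali2022}, suitable aggregates of the $c^\pi_{\mu,\nu}$ — those obtained by summing over one of the three partitions, or equivalently by introducing marked faces of degree one as in \cref{def hypermaps} — are generating functions of non-oriented maps weighted by a measure of non-orientability, and are therefore manifestly in $\mathbb Z[b]$. I would then recover each individual coefficient from these aggregates by M\"obius inversion over the refinement order on the marked partition; since inversion produces integer linear combinations, integrality is preserved. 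A parallel, more arithmetic route would be to bound a priori the denominators of the coefficients of $c^\pi_{\mu,\nu}(b)$ from the explicit shape of $j^{(\alpha)}_\lambda$ and the Do\l{}e\k{}ga--F\'eray recursion, and then to clear the residual denominators using the integer specializations at $\alpha=1$ and $\alpha=2$ furnished by \cref{thm coef c b=0} together with the $\alpha\leftrightarrow 1/\alpha$ duality of Jack polynomials.

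The main obstacle is exactly this cancellation step. Neither the hook–product denominators nor the $m$–to–$p$ transition is $p$–integral term by term, so the whole content lies in showing that the weighted triple sum over $\lambda$ has no pole at any prime. In the combinatorial route the difficulty is to prove that the inclusion–exclusion over markings genuinely inverts the marginal sums, and that the degree–one marking of \cref{def hypermaps} is compatible with the multiplicative structure of the $\theta^{(\alpha)}$; in the arithmetic route the difficulty is to make the denominator bound sharp enough that finitely many integral specializations force every coefficient into $\mathbb Z$. I expect the combinatorial route, anchored in the measure of non-orientability, to be the more robust, with the inversion/compatibility lemma forming the real technical heart of the argument.
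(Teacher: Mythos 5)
This statement is not proved in the paper at all: it is quoted verbatim from \cite{BenDali2023} and used as an external input (its only role here is to feed into \cref{cor integrality g}). So there is no internal proof to compare against, and your proposal has to stand on its own as a proof of the cited result. It does not: it is a plan whose two proposed routes both break down at precisely the step you yourself flag as the ``technical heart.'' Your opening reduction is fine --- the character-sum formula for $c^\pi_{\mu,\nu}$ obtained from \cref{eq:tau} is correct, as is the reduction of integrality to $p$-integrality for every prime --- but everything after that is a sketch of two strategies, neither of which can be completed.

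The combinatorial route fails for an information-theoretic reason. The marginal-sum results of \cite{BenDali2022} concern aggregates $\sum_{\nu} c^\pi_{\mu,\nu}\,w(\nu)$ where the weight $w$ depends only on coarse statistics of $\nu$ (its length, in the relevant specialization of the alphabet $\bfr$). Distinct partitions of $n$ with the same length are merged by such sums, so no M\"obius inversion over any poset of ``markings'' can recover the individual coefficients; marginalization is simply not invertible. Note that the inversion that \emph{does} work in this paper --- passing between $c^\pi_{\mu,\nu}$ and $g^\pi_{\mu,\nu}$ via \cref{eq c-g} in \cref{cor integrality g} --- is of a very different nature: it is a unitriangular integer system indexed by the number of parts equal to $1$, i.e.\ it only relates coefficients whose indexing partitions differ in $1$-parts, and that is exactly why it inverts. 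The arithmetic fallback is also not viable: a polynomial in $b$ with rational coefficients taking integer values at $b=0$ and $b=1$ (the specializations $\alpha=1,2$ supplied by \cref{thm coef c b=0}) need not have integer coefficients --- $\binom{b}{2}=b(b-1)/2$ vanishes at both points and is even integer-valued at every integer --- so finitely many integer specializations, with or without the $\alpha\leftrightarrow 1/\alpha$ duality, cannot force integrality unless your denominator bound from the hook products $j^{(\alpha)}_\lambda$ were already strong enough to settle the question by itself, which is the whole difficulty. In short, the required cancellation over $\lambda$ in the character sum is never established, and the actual proof in \cite{BenDali2023} does not proceed through marginal sums or specializations; it requires a genuinely different, structural integrality argument that is absent from the proposal.
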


\subsection{Links between coefficients \texorpdfstring{$c^\pi_{\mu,\nu}$}{c} and \texorpdfstring{$g^\pi_{\mu,\nu}$}{g}}
The following proposition has been proved by Do\l{}e\k{}ga and Féray \cite[Proposition B.1]{DolegaFeray2016}.
\begin{prop}\label{prop_c-g_1}
If $\pi,\mu$ and $\nu$ are of the same size then 
  $$c^\pi_{\mu,\nu}(\alpha)=g^\pi_{\mu,\nu}(\alpha).$$  
\end{prop}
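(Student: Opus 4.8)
The plan is to prove the equality $c^\pi_{\mu,\nu}(\alpha)=g^\pi_{\mu,\nu}(\alpha)$ for partitions of the same common size $n:=|\pi|=|\mu|=|\nu|$ by comparing two expansions in the Jack basis. Recall that $g^\pi_{\mu,\nu}$ is defined through the product of Jack characters $\Jch_\mu\Jch_\nu=\sum_\pi g^\pi_{\mu,\nu}\Jch_\pi$ in the algebra of shifted symmetric functions, while $c^\pi_{\mu,\nu}$ is defined in \cref{eq:tau} through the expansion of the triple Jack product $\tau^{(\alpha)}$ in the power-sum basis. The two objects live in different algebras, so the first task is to connect the \emph{top-degree} behaviour of $\Jch_\mu$ with the power-sum expansion of $J^{(\alpha)}_\theta$.

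First I would isolate the top homogeneous component. By \cref{thm Feray}, the Jack character $\Jch_\mu$ has degree $|\mu|$ with top homogeneous part $\alpha^{|\mu|-\ell(\mu)}/z_\mu\cdot p_\mu$. Since $|\mu|=|\nu|=n$, the product $\Jch_\mu\Jch_\nu$ has degree $2n$, but the constraint $|\pi|=n$ (from \cref{lem deg bound}, which gives $|\pi|\le|\mu|+|\nu|$ and $|\pi|\ge\max(|\mu|,|\nu|)$) means we only extract the coefficient $g^\pi_{\mu,\nu}$ with $|\pi|=n$. The key observation is that the structure constant $g^\pi_{\mu,\nu}$ for partitions of equal size is governed entirely by the top-degree parts: extracting the degree-$n$ piece in the shifted-symmetric grading, one gets that $g^\pi_{\mu,\nu}$ equals the structure constant of the \emph{top-degree} functions, which are precisely the normalized power-sums $\alpha^{n-\ell(\mu)}/z_\mu\,p_\mu$.

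Next I would translate this into the power-sum product and match it to $\tau^{(\alpha)}$. The top-degree structure constants of the $\alpha^{|\mu|-\ell(\mu)}/z_\mu\,p_\mu$ reproduce, after accounting for the normalizing factors $z_\pi\alpha^{\ell(\pi)}$ appearing in \cref{eq:tau}, exactly the coefficients $c^\pi_{\mu,\nu}$. Concretely, one writes $J^{(\alpha)}_\theta=\sum_\mu \theta^{(\alpha),\mathrm{top}}_\mu(\theta)\,(\text{normalized }p_\mu)$ using the duality between Jack characters and the power-sum expansion coefficients of Jack polynomials, substitutes into the left-hand side of \cref{eq:tau}, and compares the resulting $p_\pi q_\mu r_\nu$ coefficient with the right-hand side. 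The scalar product $\langle\cdot,\cdot\rangle_\alpha$ and the identity $\langle p_\lambda,p_\mu\rangle_\alpha=z_\lambda\alpha^{\ell(\lambda)}\delta_{\lambda,\mu}$ are what make the normalizing constants $z_\pi\alpha^{\ell(\pi)}$ in both definitions line up. This is essentially a bookkeeping computation once the top-degree identification is in place.

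The main obstacle, and the step requiring genuine care, is the \emph{top-degree} reduction: justifying that for partitions of equal size the shifted-symmetric structure constant $g^\pi_{\mu,\nu}$ coincides with the ordinary-symmetric structure constant of the leading terms. This requires that lower-degree corrections to $\Jch_\mu$ and $\Jch_\nu$ cannot contribute to a degree-$n$ target $\Jch_\pi$ — which follows because the product of the correction terms has strictly lower degree, and because the map sending a shifted-symmetric function to its top homogeneous part is an algebra homomorphism onto $\mcS$ in the appropriate graded sense. Since this is cited as \cite[Proposition B.1]{DolegaFeray2016} of Do\l{}e\k{}ga and Féray, I would ultimately invoke their argument rather than reprove it; the sketch above indicates why the identification holds and which gradings and normalizations must be tracked.
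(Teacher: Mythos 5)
The paper itself offers no proof of this proposition beyond the citation: it is quoted directly from Do\l{}e\k{}ga--F\'eray \cite[Proposition B.1]{DolegaFeray2016}, so your decision to ultimately invoke that reference matches the paper's treatment. The problem is the sketch you present as the reason the identification holds: its key step is wrong, and it is not the argument of the cited reference. You claim that for $|\pi|=|\mu|=|\nu|=n$ the coefficient $g^\pi_{\mu,\nu}$ is ``governed entirely by the top-degree parts'' of $\Jch_\mu$ and $\Jch_\nu$, the lower-degree corrections being negligible because taking top homogeneous parts is multiplicative. This is backwards. The top-degree map $\mcSstar\to\mcS$ is indeed multiplicative, but what it computes are the structure constants at the \emph{maximal} size: comparing degree-$2n$ components of $\Jch_\mu\Jch_\nu=\sum_\kappa g^\kappa_{\mu,\nu}\Jch_\kappa$ yields only the coefficients $g^\kappa_{\mu,\nu}$ with $|\kappa|=2n$ (essentially the single term $\kappa=\mu\cup\nu$). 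The proposition concerns the opposite extreme $|\pi|=n$, which by \cref{lem deg bound} is the \emph{minimal} admissible size. There, no extraction of a homogeneous component isolates $g^\pi_{\mu,\nu}$: every $\Jch_\kappa$ with $|\kappa|\geq n$ has a nonzero degree-$n$ component in general, and the lower-degree corrections of $\Jch_\mu$ and $\Jch_\nu$ contribute as well --- these are exactly the terms your sketch discards.

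The mechanism that actually proves the statement is evaluation plus orthogonality, the same pattern the paper uses to prove the adjacent \cref{prop_c-g_2}. Evaluate $\Jch_\mu\Jch_\nu=\sum_\kappa g^\kappa_{\mu,\nu}\Jch_\kappa$ at an arbitrary $\theta\vdash n$: terms with $|\kappa|>n$ vanish because $\Jch_\kappa(\theta)=0$ when $|\theta|<|\kappa|$, and terms with $|\kappa|<n$ vanish by the lower bound of \cref{lem deg bound}, leaving $\Jch_\mu(\theta)\Jch_\nu(\theta)=\sum_{\kappa\vdash n}g^\kappa_{\mu,\nu}\Jch_\kappa(\theta)$. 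Next, when $|\mu|=|\theta|=n$ the definition of Jack characters reduces to $\Jch_\mu(\theta)=[p_\mu]J^{(\alpha)}_\theta$, i.e. $J^{(\alpha)}_\theta=\sum_{\mu\vdash n}\Jch_\mu(\theta)\,p_\mu$, so the character values at size-$n$ partitions are precisely the power-sum coefficients entering \cref{eq:tau}. Finally, expanding $\langle J^{(\alpha)}_\theta,J^{(\alpha)}_\sigma\rangle_\alpha=j^{(\alpha)}_\theta\delta_{\theta,\sigma}$ in power sums and dualizing (the transition matrix is square) gives
$$\sum_{\theta\vdash n}\frac{\Jch_\mu(\theta)\Jch_\pi(\theta)}{j^{(\alpha)}_\theta}=\frac{\delta_{\mu,\pi}}{z_\pi\alpha^{\ell(\pi)}};$$
multiplying the evaluated identity by $\Jch_\pi(\theta)/j^{(\alpha)}_\theta$ and summing over $\theta\vdash n$ produces $g^\pi_{\mu,\nu}/(z_\pi\alpha^{\ell(\pi)})$ on one side, while the other side is by \cref{eq:tau} the coefficient of $p_\pi q_\mu r_\nu$ in $\tau^{(\alpha)}$, namely $c^\pi_{\mu,\nu}/(z_\pi\alpha^{\ell(\pi)})$. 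This evaluation--orthogonality argument, not a top-degree reduction, is the content of \cite[Proposition B.1]{DolegaFeray2016}.
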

\noindent As a consequence, \cref{conj g} is a generalization of \cref{conj MJ}. 

If $\pi$ is a partition, we denote by $\widetilde \pi:=\pi\backslash 1^{m_1(\pi)}$ the partition obtained by deleting all parts of size 1. The following proposition is a generalization of \cite[Equation (19)]{DolegaFeray2016}. The proof is quite the same. 
\begin{prop}\label{prop_c-g_2}
For every partitions $\pi,\mu$ and $\nu$ such that $\pi\vdash n\geq |\mu|,|\nu|$, we have
\begin{equation}\label{eq c-g}
  \sum_{i=0}^{m_1(\pi)}\binom{m_1(\pi)}{i}g^{\tilde{\pi}\cup1^i}_{\mu,\nu}=\binom{m_1(\mu)+n-|\mu|}{m_1(\mu)}\binom{m_1(\nu)+n-|\nu|}{m_1(\nu)}c^\pi_{\mu\cup1^{n-|\mu|},\nu\cup1^{n-|\nu|}},  
\end{equation}
Equivalently, 
\begin{equation}\label{eq c-g 2}
  \exp\left(\frac{p_1}{t\alpha}\right)G^{(\alpha)}(t,\bfp,\bfq,\bfr)=\exp\left(\frac{\partial}{t\partial q_1}+\frac{\partial}{t
  \partial r_1}\right)\tau^{(\alpha)}(t,\bfp,\bfq,\bfr),
\end{equation}
where the last equality holds in $\mathbb Q(\alpha) [t,1/t,\bfq,\bfr] \llbracket \bfp\rrbracket.$

\begin{proof}
     To simplify expressions, we denote $\bar\mu:=\mu\cup1^{n-|\mu|}$ and $\bar \nu:=\nu\cup1^{n-|\nu|}$. We get from the definition of Jack characters and \cref{lem deg bound} that for any partition $\lambda$ of size $n$
    \begin{align*}
      \Jch_\mu(\lambda)\Jch_\nu(\lambda)
      &=\binom{n-|\mu|+m_1(\mu)}{m_1(\mu)}\binom{n-|\nu|+m_1(\nu)}{m_1(\nu)}
      \Jch_{\bar\mu}(\lambda)
      \Jch_{\bar\nu}(\lambda)\\
      &=\binom{n-|\mu|+m_1(\mu)}{m_1(\mu)}\binom{n-|\nu|+m_1(\nu)}{m_1(\nu)}\sum_{n\leq |\rho|\leq 2n}g^\rho_{\bar\mu,\bar\nu}(\alpha)\Jch_\rho(\lambda).
    \end{align*}

    Terms corresponding to $|\rho|>n$ vanish since characters are evaluated at a partition of size $n$. Hence
    \begin{align*}
      \Jch_\mu(\lambda)\Jch_\nu(\lambda)
      =\binom{n-|\mu|+m_1(\mu)}{m_1(\mu)}\binom{n-|\nu|+m_1(\nu)}{m_1(\nu)}\sum_{\rho\vdash n}g^\rho_{\bar\mu,\bar\nu}(\alpha)\Jch_\rho(\lambda).
    \end{align*}

    Using \cref{prop_c-g_1} we get that,
    \begin{equation}\label{eq 1 prop c-g}
      \Jch_\mu(\lambda)\Jch_\nu(\lambda)=\binom{n-|\mu|+m_1(\mu)}{m_1(\mu)}\binom{n-|\nu|+m_1(\nu)}{m_1(\nu)}\sum_{\rho\vdash n}c^\rho_{\bar\mu,\bar\nu}\Jch_\rho(\lambda).  
    \end{equation}

On the other hand, for any $\lambda\vdash n$,
\begin{align}
\Jch_\mu(\lambda)\Jch_\nu(\lambda)
&=\sum_{|\kappa|\leq n}g^\kappa_{\mu,\nu}(\alpha)\Jch_\kappa(\lambda) \nonumber \\
&=\sum_{|\kappa|\leq n}g^\kappa_{\mu,\nu}(\alpha)\binom{m_1(\kappa)+n-|\kappa|}{m_1(\kappa)}\Jch_{\kappa\cup1^{n-|\kappa|}}(\lambda)\nonumber\\
&=\sum_{\rho\vdash n}\Jch_\rho(\lambda)\left(\sum_{i=0}^{m_1(\rho)}g^{\widetilde \rho\cup 1^i}_{\mu,\nu}(\alpha)\binom{m_1(\rho)}{i}\right).\label{eq 2 prop c-g}
\end{align}
The last equation is obtained by regrouping terms with respect to $\rho:=\kappa\cup 1^{n-|\kappa|}.$
We obtain \cref{eq c-g} by comparing the coefficient of $\Jch_\pi$ in Eqs. \eqref{eq 1 prop c-g} and \eqref{eq 2 prop c-g}. 

Let us now prove  \cref{eq c-g 2}. Let $\pi$, $\mu$ and $\nu$ be three partitions. We want to prove that the coefficient of $t^{|\mu|+|\nu|-|\pi|}p_\pi q_\mu r_\nu/(z_\pi \alpha^{\ell(\pi)})$ is the same in both sides of \cref{eq c-g 2}. It is easy to check that this is given by \cref{eq c-g} if $|\pi|\geq \max(|\mu|,|\nu|)$. Otherwise, each one of these coefficients is 0; 
this is a consequence of \cref{lem deg bound} and the fact that $\tau^{(\alpha)}(t,\bfp,\bfq,\bfr)$ is homogeneous in the three alphabets $\bfp$, $\bfq$ and $\bfr$.
\end{proof}
\end{prop}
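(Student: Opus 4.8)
The plan is to prove the scalar identity \cref{eq c-g} first and then repackage it as the operator identity \cref{eq c-g 2}. The whole argument rests on a single \emph{reduction identity} for Jack characters which follows directly from the definition of $\Jch_\mu$: for any $\lambda\vdash n$ with $n\geq|\mu|$,
\[
\Jch_\mu(\lambda)=\binom{n-|\mu|+m_1(\mu)}{m_1(\mu)}\,\Jch_{\mu\cup 1^{\,n-|\mu|}}(\lambda),
\]
because the binomial prefactor in the definition of $\Jch_{\mu\cup 1^{n-|\mu|}}(\lambda)$ collapses to $1$ while both characters read off the same coefficient $[p_{\mu,1^{n-|\mu|}}]\J_\lambda$. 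First I would fix $\mu,\nu$, set $m:=\max(|\mu|,|\nu|)$, and evaluate the product $\Jch_\mu(\lambda)\Jch_\nu(\lambda)$ at an arbitrary $\lambda\vdash n$ (for $n\geq m$) in two different ways.

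For the first computation I would apply the reduction identity to each factor, writing $\bar\mu:=\mu\cup1^{n-|\mu|}$ and $\bar\nu:=\nu\cup1^{n-|\nu|}$, expand $\Jch_{\bar\mu}\Jch_{\bar\nu}=\sum_\rho g^\rho_{\bar\mu,\bar\nu}\Jch_\rho$, and evaluate at $\lambda\vdash n$. Since $\Jch_\rho(\lambda)=0$ for $|\rho|>n$, and by the lower bound of \cref{lem deg bound} all surviving $\rho$ have $|\rho|\geq n$, only the terms with $\rho\vdash n$ remain; on those, \cref{prop_c-g_1} lets me replace $g^\rho_{\bar\mu,\bar\nu}$ by $c^\rho_{\bar\mu,\bar\nu}$. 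For the second computation I would instead expand $\Jch_\mu\Jch_\nu=\sum_\kappa g^\kappa_{\mu,\nu}\Jch_\kappa$, promote each $\Jch_\kappa$ to $\Jch_{\kappa\cup1^{n-|\kappa|}}$ via the reduction identity, and regroup according to the padded partition $\rho=\kappa\cup1^{n-|\kappa|}$. The preimages of a fixed $\rho\vdash n$ are exactly $\kappa=\widetilde{\rho}\cup1^{i}$ for $0\le i\le m_1(\rho)$, with reduction binomial $\binom{m_1(\rho)}{i}$, so the coefficient of $\Jch_\rho(\lambda)$ becomes $\sum_{i}\binom{m_1(\rho)}{i}g^{\widetilde{\rho}\cup1^i}_{\mu,\nu}$.

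To conclude \cref{eq c-g} I would match the two expressions coefficient-by-coefficient in $\Jch_\pi$ for $\pi\vdash n$. The point that needs care is that this matching is legitimate: restricted to $\lambda\vdash n$ one has $\Jch_\rho(\lambda)=[p_\rho]\J_\lambda$ for $\rho\vdash n$, and $([p_\rho]\J_\lambda)_{\rho,\lambda\vdash n}$ is the transition matrix between the bases $\{\J_\lambda\}$ and $\{p_\rho\}$ of degree-$n$ symmetric functions (invertible by \cref{thm Jack polynomials}); hence the family $(\Jch_\rho)_{\rho\vdash n}$ is linearly independent on partitions of size $n$, which justifies identifying coefficients. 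This is the first place I expect to have to be careful, though it is bookkeeping rather than a genuine difficulty.

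Finally I would translate \cref{eq c-g} into \cref{eq c-g 2} by comparing the coefficient of $\dfrac{t^{|\mu|+|\nu|-|\pi|}}{z_\pi\alpha^{\ell(\pi)}}p_\pi q_\mu r_\nu$ on both sides. On the left, multiplication by $\exp(p_1/(t\alpha))$ inserts parts equal to $1$ into the $\bfp$-index; collecting all $\kappa$ with $\widetilde{\kappa}=\widetilde{\pi}$ and using $z_{\widetilde{\pi}\cup1^i}=i!\,z_{\widetilde{\pi}}$ together with $z_\pi=m_1(\pi)!\,z_{\widetilde{\pi}}$ reproduces exactly $\sum_i\binom{m_1(\pi)}{i}g^{\widetilde{\pi}\cup1^i}_{\mu,\nu}$, with the powers of $t$ and $\alpha$ aligning automatically. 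On the right, $\exp(\partial/(t\partial q_1))$ and $\exp(\partial/(t\partial r_1))$ strip parts equal to $1$ from the $\bfq$- and $\bfr$-indices; since $\tau^{(\alpha)}$ is homogeneous in each alphabet, the only term contributing to $q_\mu r_\nu$ is the one indexed by $(\pi,\bar\mu,\bar\nu)$ with all three of size $n=|\pi|$, and the differentiations produce precisely the factors $\binom{m_1(\mu)+n-|\mu|}{m_1(\mu)}$ and $\binom{m_1(\nu)+n-|\nu|}{m_1(\nu)}$. Thus both coefficients reduce to the two sides of \cref{eq c-g} whenever $|\pi|\ge\max(|\mu|,|\nu|)$, and both vanish otherwise: on the left by \cref{lem deg bound}, on the right because homogeneity of $\tau^{(\alpha)}$ forbids the required negative number of padding $1$'s. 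The main obstacle here is purely the combinatorial bookkeeping of the $z$-factors and binomials through the exponential operators; once the reduction identity and \cref{prop_c-g_1} are in hand there is no conceptual difficulty.
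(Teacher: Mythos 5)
Your proposal is correct and follows essentially the same route as the paper's proof: the two evaluations of $\Jch_\mu(\lambda)\Jch_\nu(\lambda)$ at $\lambda\vdash n$ via the reduction identity built into the definition of Jack characters, the replacement of $g^\rho_{\bar\mu,\bar\nu}$ by $c^\rho_{\bar\mu,\bar\nu}$ through \cref{prop_c-g_1}, the regrouping by the padded partition $\rho=\kappa\cup 1^{n-|\kappa|}$, and the coefficient extraction argument for \cref{eq c-g 2}. The only genuine addition is that you justify explicitly the linear independence of the functions $(\Jch_\rho)_{\rho\vdash n}$ restricted to partitions of size $n$ (via the invertible transition matrix $([p_\rho]\J_\lambda)_{\rho,\lambda\vdash n}$), a step the paper leaves implicit when comparing coefficients of $\Jch_\pi$; this is a worthwhile clarification but not a different argument.
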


We deduce the following corollary.
\begin{cor}\label{cor integrality g}
The coefficients $g^\pi_{\mu,\nu}$ are polynomials in $b$ with integer coefficients.
\begin{proof}
Inverting \cref{eq c-g 2}, we get
\begin{equation}\label{eq G-tau}
  G^{(\alpha)}(t,\bfp,\bfq,\bfr)=\exp\left(-\frac{p_1}{t\alpha}\right)\exp\left(\frac{\partial}{t\partial q_1}+\frac{\partial}{t\partial r_1}\right)\tau^{(\alpha)}(t,\bfp,\bfq,\bfr).  
\end{equation}
By extracting the coefficient of $t^{|\mu|+|\nu|-|\pi|}p_\pi q_\mu r_\nu/(z_\pi \alpha^{\ell(\pi)})$, we get
\begin{multline*}
  g^\pi_{\mu,\nu}=\sum_{e\leq i\leq m_1(\pi)}(-1)^{m_1(\pi)-i}\binom{m_1(\pi)}{i}\binom{|\widetilde \pi|+i-|\mu|+m_1(\mu)}{m_1(\mu)}\\
  \binom{|\widetilde \pi|+i-|\nu|+m_1(\nu)}{m_1(\nu)}c^{\widetilde \pi \cup 1^{i}}_{\mu\cup 1^{|\widetilde\pi|+i-|\mu|},\nu\cup 1^{|\widetilde\pi|+i-|\nu|}},
\end{multline*}
where $\widetilde \pi:=\pi\backslash 1^{m_1(\pi)}$
and 
$e:=\max(0,|\mu|-|\widetilde \pi|,|\nu|-|\widetilde \pi|).$
We conclude using \cref{thm integrality c}.
\end{proof}
% Let $\pi,\mu,\nu$. Then 
% $$g^\pi_{\mu,\nu}=\sum_{i=k}^{m_1(\pi)}(-1)^{m_1(\pi)-i}\binom{m_1(\pi)}{i}\binom{m_1(\mu)+|\widetilde \pi|+i-|\mu|}{m_1(\mu)}\binom{m_1(\nu)+|\widetilde \pi|+i-|\nu|}{m_1(\nu)}c^{\widetilde \pi\cup1^i}_{\mu\cup1^{|\widetilde \pi|+i-|\mu|},\mu\cup1^{|\widetilde \pi|+i-|\nu|}}$$
\end{cor}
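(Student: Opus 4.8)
The plan is to convert the operator identity \cref{eq c-g 2} of \cref{prop_c-g_2} into an explicit finite formula expressing each $g^\pi_{\mu,\nu}$ as a $\mathbb Z$-linear combination of the coefficients $c^\rho_{\bar\mu,\bar\nu}$, and then to invoke the integrality of the latter supplied by \cref{thm integrality c}. First I would invert \cref{eq c-g 2}: the multiplication operator $\exp(p_1/(t\alpha))$ and the differential operator $\exp(\partial/(t\partial q_1)+\partial/(t\partial r_1))$ are both invertible on the relevant ring of series, their inverses being obtained by flipping the sign in the exponent. Solving for $G^{(\alpha)}$ yields
\[
G^{(\alpha)}(t,\bfp,\bfq,\bfr)=\exp\left(-\frac{p_1}{t\alpha}\right)\exp\left(\frac{\partial}{t\partial q_1}+\frac{\partial}{t\partial r_1}\right)\tau^{(\alpha)}(t,\bfp,\bfq,\bfr).
\]

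Next I would extract the coefficient of $t^{|\mu|+|\nu|-|\pi|}p_\pi q_\mu r_\nu/(z_\pi\alpha^{\ell(\pi)})$ on both sides. On the right, the differential exponential in $q_1,r_1$ reindexes $\mu\mapsto\mu\cup 1^{\cdots}$ and $\nu\mapsto\nu\cup 1^{\cdots}$ and produces two binomial factors counting the placements of these unit parts, while multiplication by $\exp(-p_1/(t\alpha))$ appends $i$ parts equal to $1$ to $\widetilde\pi$, contributing a sign $(-1)^{m_1(\pi)-i}$, a binomial $\binom{m_1(\pi)}{i}$, and a factor $\alpha^{-(m_1(\pi)-i)}$. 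The crucial bookkeeping observation is that whenever $p_{\widetilde\pi}\,p_1^{\,m_1(\pi)-i}=p_\pi$ one has $\ell(\widetilde\pi\cup 1^i)+\big(m_1(\pi)-i\big)=\ell(\pi)$, so the extra powers of $1/\alpha$ cancel exactly against the normalizations $\alpha^{\ell(\cdot)}$ on the two sides. This leaves an identity of the shape
\begin{multline*}
g^\pi_{\mu,\nu}=\sum_{e\leq i\leq m_1(\pi)}(-1)^{m_1(\pi)-i}\binom{m_1(\pi)}{i}\binom{|\widetilde\pi|+i-|\mu|+m_1(\mu)}{m_1(\mu)}\\
\times\binom{|\widetilde\pi|+i-|\nu|+m_1(\nu)}{m_1(\nu)}\,c^{\widetilde\pi\cup 1^i}_{\mu\cup 1^{|\widetilde\pi|+i-|\mu|},\,\nu\cup 1^{|\widetilde\pi|+i-|\nu|}},
\end{multline*}
with $\widetilde\pi=\pi\backslash 1^{m_1(\pi)}$ and $e=\max(0,|\mu|-|\widetilde\pi|,|\nu|-|\widetilde\pi|)$; the lower summation bound $e$ and the finiteness of the sum follow from \cref{lem deg bound}, which forces the relevant $c$-coefficients to vanish outside this range.

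Finally I would conclude: every combining coefficient above is an ordinary integer binomial, whereas each $c^\rho_{\bar\mu,\bar\nu}$ is a polynomial in $b=\alpha-1$ with integer coefficients by \cref{thm integrality c}; a $\mathbb Z$-linear combination of such polynomials is again an integer polynomial in $b$, giving the claim. I expect the main obstacle to be precisely the coefficient extraction in the second step, and within it the verification that the factors $\alpha^{-(m_1(\pi)-i)}$ produced by $\exp(-p_1/(t\alpha))$ are exactly absorbed by the normalizations $\alpha^{\ell(\cdot)}$, so that the combining coefficients are genuine integers rather than elements of $\mathbb Q(\alpha)$. Once this cancellation is confirmed, the integrality statement is immediate from \cref{thm integrality c}.
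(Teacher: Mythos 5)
Your proposal is correct and follows essentially the same route as the paper: invert \cref{eq c-g 2} to get \cref{eq G-tau}, extract the coefficient of $t^{|\mu|+|\nu|-|\pi|}p_\pi q_\mu r_\nu/(z_\pi\alpha^{\ell(\pi)})$ to obtain the same signed-binomial expansion of $g^\pi_{\mu,\nu}$ in terms of the $c$-coefficients, and conclude via \cref{thm integrality c}. Your extra verification that the powers $\alpha^{-(m_1(\pi)-i)}$ cancel against the normalizations $\alpha^{\ell(\cdot)}$ (using $\ell(\widetilde\pi\cup 1^i)+(m_1(\pi)-i)=\ell(\pi)$) is exactly the bookkeeping the paper leaves implicit, and it is carried out correctly.
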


\subsection{Proof of Proposition \texorpdfstring{\ref{prop H-G}}{}}\label{ssec proof prop H-G}
\begin{proof}[Proof of Proposition \texorpdfstring{\ref{prop H-G}}{}]
We prove the proposition for $\alpha=1$. The proof is exactly the same for $\alpha=2$. Fix three partitions $\pi,\mu$ and $\nu$. We want to prove that
\begin{equation}\label{eq prop H-G}
    g^\pi_{\mu,\nu}(1)=\left|\ohypermap^\pi_{\mu,\nu}\right|,
\end{equation}
where $\ohypermap^\pi_{\mu,\nu}$ is the set of hypermaps defined in \cref{def hypermaps}.
Since \cref{eq c-g} fully characterizes the coefficients $g^\pi_{\mu,\nu}$, it is enough to prove that
\begin{equation*}\label{eq tilde g c}
  \sum_{i=0}^{m_1(\pi)}\binom{m_1(\pi)}{i}\left|\ohypermap^{\tilde{\pi}\cup1^i}_{\mu,\nu}\right|=\binom{m_1(\mu)+n-|\mu|}{m_1(\mu)}\binom{m_1(\nu)+n-|\nu|}{m_1(\nu)}c^\pi_{\mu\cup1^{n-|\mu|},\nu\cup1^{n-|\nu|}}(1),  
\end{equation*}
holds for any partitions $\pi$, $\mu$ and $\nu$, with $n=|\pi|$. Using \cref{thm coef c b=0}, we know that the right-hand side of the last equation counts vertex labelled oriented hypermaps of profile $(\pi,\mu\cup1^{|\pi|-|\mu|},\nu\cup1^{|\pi|-|\nu|})$, with $|\pi|-|\mu|$ marked $(+)$ faces of degree 1 and $|\pi|-|\nu|$ marked $(-)$ faces of degree 1 (unlike in \cref{def hypermaps}, it is possible here to have both faces of isolated loops marked). 

On the other hand, the set of such maps $M$ with a fixed number $j$ of isolated loops with both $(+)$ and $(-)$ faces marked, can be obtained as follows:
\begin{itemize}
    \item choose the labels of black vertices of degree 2, which form the isolated loops with two marked faces; there are $\binom{m_1(\pi)}{j}$ such possible choices,
    \item choose a hypermap in $\ohypermap^{\widetilde\pi\cup1^{|\pi|-j}}_{\mu,\nu}$ and associate to black vertices of degree 2 the labels not chosen in the first step.
\end{itemize}
Summing over all $i:=m_1(\pi)-j$ between 0 and $m_1(\pi)$, we obtain the left hand-side. This finishes the proof of the proposition.
\end{proof}

We conclude this section by the following table which summarizes the different results proved or recalled in this section.

\begin{center}
    \setlength\arrayrulewidth{0.75pt}
    \begin{tabular}{|c|c|c|}
        \hline
        \parbox[][1.5cm][c]{5cm}{\centering \textcolor{red}{Series}}& \textcolor{red}{$\tau^{(\alpha)}$ (homogeneous)} & \textcolor{red}{$G^{(\alpha)}$ (non-homogeneous)}\\
       \hline
       \parbox[][1.5cm][c]{5cm}{\centering Combinatorial interpretation for $\alpha\in\{1,2\}$}
       & \parbox[][1.5cm][c]{4.5cm}{\centering{\cite{GouldenJackson1996} (see also \cref{thm coef c b=0}})}& 
       \parbox[][1.5cm][c]{5cm}{\centering{\cref{prop H-G} (see also \cref{prop g0-pre hyper} for $\alpha=1$)}}\\
       \hline
       \parbox[][1.5cm][c]{5cm}{\centering 
       Relations between the series}&
       \multicolumn{2}{c|}{Propositions \ref{prop_c-g_1} and \ref{prop_c-g_2}}\\
        \hline 
       \parbox[][1.5cm][c]{5cm}{\centering 
        Integrality of the coefficients (\cref{conj MJ})}& \parbox[][1.5cm][c]{4.5cm}{\centering{\cite{BenDali2023} (see also \cref{thm integrality c})}}&
       \cref{cor integrality g}\\
       \hline
    \end{tabular}
    % \caption*{Caption}
\end{center}

% \subsection{A conjecture for the combinatorial interpretation.}

% \begin{defi}
% Fix $\pi,\mu$ and $\nu$ such that $|\pi|\geq |\mu|,|\nu|.$
% We call a \textit{partial map} of profile $(\pi,\mu,\nu)$ a face-labelled map of profile $(\pi,\mu\cup1^{|\pi|-|\mu|},\nu\cup1^{|\pi|-|\nu|}),$ such that
% \begin{itemize}
%     \item $|\pi|-|\mu|$  white leaves are marked.
%     \item $|\pi|-|\nu|$ black leaves are marked.
%     \item an isolated edge can not have both its leaves marked.
% \end{itemize}

% \begin{conj}\label{conj g}
% There exists a statistic of non orientability $\vartheta$ on partial maps such that for any partitions $\pi,\mu$ and $\nu$ we have 
% \begin{equation*}
%   g^\pi_{\mu,\nu}=\sum_M b^{\vartheta(M)},  
% \end{equation*}
% where the sum is taken over partial maps of profile $(\pi,\mu,\nu).$
% \end{conj}

% \begin{prop}
% Let $\vartheta$ be a statistic of non-orientability which is invariant by adding isolated edges.
% If the Matching-Jack conjecture holds for $\vartheta$ than \cref{conj g} also holds for $\vartheta$.
% \begin{proof}
% This is a consequence of \cref{eq c-g}.
% \end{proof}
% \end{prop}
% \end{defi}

\section{Proof of the main theorem}\label{sec main thm}
% The purpose of this section is to prove the main result \cref{thm diff eq}. We use this result to give an explicit expression of coefficients $g^\pi_{\mu,\nu}$ in \cref{ssec explicit formula}. We also obtain a new algebra isomorphism between the spaces of symmetric functions $\mcS$ and shifted symmetric functions $\mcSstar$, see \cref{ssec isomorphism}.

\subsection{Differential operators}\label{ssec op Bn}
The purpose of this subsection is to recall the definition of operators $\B_n$ from \cite{ChapuyDolega2022}. These operators are defined using an extra catalytic alphabet $Y:=(y_i)_{i\geq 0}$. We recall that $\mcP:=\Span_{\mathbb{Q}(\alpha)}\{p_\lambda\}_{\lambda\in\mathbb{Y}}=\mathbb Q(\alpha)[\bfp]$.
We also consider the spaces
$$\PY:=\Span_{\mathbb{Q}(\alpha)}\left\{y_ip _\lambda\right\}_{i\in \mathbb{N},\lambda\in\mathbb{Y}}\text{ and }\quad \tildePY:=\mcP\oplus \PY.$$
We recall that $b:=\alpha-1$.
 Set the catalytic operators;
\begin{equation*}
Y_+=\sum_{i\geq 1}y_{i+1}\frac{\partial}{\partial y_i}: \mcP_Y\rightarrow\PY,
\end{equation*}
\begin{equation*}
\GY=(1+b)\cdot\sum_{i,j\geq1}y_{i+j}\frac{j\partial^2}{\partial y_{i-1}\partial p_{j}}+\sum_{i,j\geq1} y_{i}p_j\frac{\partial}{\partial y_{i+j-1}} +b\cdot \sum_{i\geq1}y_{i+1}\frac{i\partial}{\partial y_i}: \PY\rightarrow\PY,
\end{equation*}
$$\text{and }\quad \Theta_Y:=\sum_{i\geq1}p_i\frac{\partial}{\partial y_i}: \PY\rightarrow\mcP.$$

For $n\geq0$, and $u$ a variable, the operator $\mathcal B^{(\alpha)}_n(\bfp,u)$ is defined by
\begin{equation}\label{eq def Bn}
  \B_n(\bfp,u):=\Theta_Y\left(\GY+u Y_+\right)^{n}\frac{y_0}{1+b}\colon \mcP \rightarrow \mcP[u]
\end{equation}
and  the operator $\Binf$ by 
$$\Binf(t,\bfp,u):=\sum_{n\geq 1}\frac{t^n}{n}\B_n(\bfp,u):\mcP\rightarrow \mathbb \mcP\llbracket u,t\rrbracket.$$

% $\mcA:=\mathbb Q\llbracket \bfp,\bfq,\bfr\rrbracket$, the space of formal power series in the three alphabets $\bfp$, $\bfq$ and $\bfr$. 

% For any $\ell,k\geq 0$, we denote by $\C_{\ell,k}$ the operator which adds a black vertex to a map using $\ell$ new white vertices and $k$ non-leaf edges. 
% $$\C_{\ell,k}:=(-1)^{\ell+k}[u^\ell]\B_{\ell+k}(\bfp,u).$$
%Finally, we consider for $\ell \geq0$ the operator $\C_{\ell}(t,\bfp)$ is given by 
%$$\C_\ell(t,\bfp):=[u^\ell]\Binf(t,\bfp,u): \mcP\rightarrow\mcP\llbracket t\rrbracket.$$
\begin{rmq}\label{rmq C_ell defined}
    Note that the operator $\B_n$ is homogeneous of degree $n$; namely for any $\lambda$, we have that $\B_n\cdot p_\lambda$ is a linear combination of $u^\ell p_{\mu}$ for $\mu$ of size $|\lambda|+n$ and $\ell\leq n$. Similarly, if $n\leq |\lambda|$, then $\left({\B_n}\right)^\perp \cdot p_\lambda$ is a linear combination of $u^\ell p_{\mu}$ for $\mu$ of size $|\lambda|-n$ and $\ell\leq n$.

    Hence, $\Binf(t,\bfp,u)\cdot p_{\lambda}$ is a combination of $t^{|\mu|-|\lambda|}u^\ell p_{\mu}$ for $|\mu|\geq |\lambda|$ and $\ell\geq 0$.  Moreover,  ${\Binf}^\perp(t,\bfp,u)\cdot p_{\lambda}$ is a linear combination of $t^{|\lambda|-|\mu|}p_{\mu}u^\ell$ for $|\mu|\leq |\lambda|$ and $\ell\geq 0$. Consequently, operators 
  %  $$\Binf(t,\bfp,u): \mathbb Q(\alpha)[ \bfp]\llbracket t\rrbracket \longrightarrow \mathbb Q(\alpha)[ \bfp]\llbracket t,u\rrbracket  $$
    $$\Binf(t,\bfp,u): \mathbb Q(\alpha)[ t]\llbracket \bfp\rrbracket \longrightarrow \mathbb Q(\alpha)[ t]\llbracket \bfp,u\rrbracket  $$    
    $${\Binf}^\perp(t,\bfp,u): \mathbb Q(\alpha)[t,\bfp] \longrightarrow\mathbb Q(\alpha)[t,\bfp]\llbracket u\rrbracket  $$
    are well defined.
    We deduce that operators $\Binf(t,\bfq,u), \Binf(t,\bfr,u)$ and ${\Binf}^\perp(t,\bfp,u)$ are well defined from $\mcA$ to $\mcA\llbracket u\rrbracket$.   
\end{rmq}
% =\sum_{k\geq1}\frac{t^{\ell+k}}{\ell+k}\C_{\ell,k}({\bfp})+\mathbbm{1}_{\ell>0}\frac{t^\ell}{\ell}\C_{\ell,0}(\bfp).$$

% In the following we denote 
% $$\C_\ell(\bfp)\equiv \C_\ell(-1,\bfp).$$

% \begin{rmq}
% Note that given \cref{thm com C}, the operators $\C_\ell$ are all generated by $\C_0$ and $\C_1$. By consequence, Equations \eqref{eq diff eq} for $\ell=0$ and $\ell=1$ imply the equations for any $\ell\geq0$.
% \end{rmq}

% We deduce the following corollary of \cref{thm com C-G}.
% \begin{cor}\label{cor diff eq}
% For every partition $\lambda$, the following holds
% $$\prod_{1\leq i\leq \ell(\lambda)}\left(\C_{\lambda_i}(-t,\bfq)+\C_{\lambda_i}(-t,\bfr)\right)\cdot \G(t,\bfp,\bfq,\bfr)=\prod_{1\leq i\leq \ell(\lambda)}\G(t,\bfp,\bfq,\bfr)\cdot \C_{\lambda_i}(-t,\bfp).$$
% \end{cor}

% We are particularly interested in operators $\Binf$ and $\C_\ell$ evaluated at $t=-1$. By \cref{lem deg bound}, we have $G\in\mcA.$ 
% Note also that the operators $\Binf^{\perp}(-1,\bfp,u),\Binf(\bfq,u)$ and $\Binf(-1,\bfr,u)$ are well defined in $\mathcal{O}(\mcA)\llbracket u\rrbracket$.

\subsection{Skew Jack characters}
Before proving \cref{thm diff eq}, we introduce a skew\footnote{This is not the usual definition of skew characters in which we consider skew diagrams in the argument of the character; $\Jch_\mu(\lambda/ \rho)$.} version of Jack characters.
\begin{defi}
We consider a sequence of variables $u_1,u_2 \dots$.
For any partitions $\mu$ and $\nu$ satisfying $|\mu|\geq|\nu|$, we define the coefficient $\Jch_{\mu/\nu}$ which depends on one variable~$v$, by
$$\Jch_\mu(v,u_1,u_2\dots)=\sum_{\nu}\Jch_{\mu/\nu}(v)\Jch_\nu(u_1,u_2\dots).$$
This expansion is well defined, since $\Jch_\mu(v,u_1,u_2\dots)$ is a shifted symmetric function in $u_1,u_2\dots$, and $(\Jch_\nu)_{\nu\in\mathbb Y}$ is a basis of $\mcSstar$.
\end{defi}

We then have the following lemma.
\begin{lem}\label{lem g skew Jch}
For any partitions $\mu,\nu,\pi$ one has
$$\sum_{\kappa}g^\kappa_{\mu,\nu}\Jch_{\kappa/\pi}(v)=\sum_{\rho,\xi}g^\pi_{\rho,\xi}\Jch_{\mu/\rho}(v)\Jch_{\nu/\xi}(v).$$
\begin{proof}
 We have 
\begin{align*}
    \sum_{\pi}\Jch_\pi(u_1,u_2\dots) \left(\sum_{\kappa} g^\kappa_{\mu,\nu} \Jch_{\kappa/ \pi}(v)\right)
    &=\sum_{\kappa}g^{\kappa}_{\mu,\nu} \Jch_\kappa(v,u_1,u_2\dots)\\
    &=\Jch_{\mu}(v,u_1,u_2\dots)\Jch_{\nu}(v,u_1,u_2\dots)\\
    &=\sum_{\rho,\xi}\Jch_{\mu/\rho}(v)\Jch_\rho(u_1,u_2\dots)\Jch_{\nu/\xi}(v)\Jch_\xi(u_1,u_2\dots)\\
    &=\sum_{\pi} \Jch_\pi(u_1,u_2\dots)\left(\sum_{\rho,\xi}g^\pi_{\rho,\xi}\Jch_{\mu/\rho}(v)\Jch_{\nu/\xi}(v)\right).
\end{align*}
We conclude by extracting the coefficient of $\Jch_\pi(u_1,u_2,\dots).$
\end{proof}
\end{lem}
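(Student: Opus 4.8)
The plan is to evaluate the product $\Jch_\mu(v,u_1,u_2,\dots)\,\Jch_\nu(v,u_1,u_2,\dots)$ in two different ways, regarding both factors as shifted symmetric functions in the variables $u_1,u_2,\dots$ with $v$ playing the role of a distinguished extra variable, and then to compare the coefficients of $\Jch_\pi(u_1,u_2,\dots)$. The two ingredients are the structure-coefficient relation $\Jch_\mu\Jch_\nu=\sum_\kappa g^\kappa_{\mu,\nu}\Jch_\kappa$ of \cref{eq g Jch}, and the defining expansion $\Jch_\lambda(v,u_1,u_2,\dots)=\sum_\rho\Jch_{\lambda/\rho}(v)\,\Jch_\rho(u_1,u_2,\dots)$ of the skew characters introduced just above.

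First I would expand the product by multiplying before splitting: applying the structure-coefficient relation gives $\Jch_\mu(v,\mathbf u)\,\Jch_\nu(v,\mathbf u)=\sum_\kappa g^\kappa_{\mu,\nu}\,\Jch_\kappa(v,\mathbf u)$, and then splitting each term via $\Jch_\kappa(v,\mathbf u)=\sum_\pi \Jch_{\kappa/\pi}(v)\,\Jch_\pi(\mathbf u)$ produces $\sum_\pi\Jch_\pi(\mathbf u)\big(\sum_\kappa g^\kappa_{\mu,\nu}\,\Jch_{\kappa/\pi}(v)\big)$, which is exactly the left-hand side of the identity, packaged as the coefficient of $\Jch_\pi(\mathbf u)$.

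Second, I would expand the same product the other way, splitting before multiplying: writing $\Jch_\mu(v,\mathbf u)=\sum_\rho\Jch_{\mu/\rho}(v)\,\Jch_\rho(\mathbf u)$ and $\Jch_\nu(v,\mathbf u)=\sum_\xi\Jch_{\nu/\xi}(v)\,\Jch_\xi(\mathbf u)$, multiplying the two expansions, and then applying the structure-coefficient relation to the $\mathbf u$-factors, $\Jch_\rho(\mathbf u)\,\Jch_\xi(\mathbf u)=\sum_\pi g^\pi_{\rho,\xi}\,\Jch_\pi(\mathbf u)$. Collecting the coefficient of $\Jch_\pi(\mathbf u)$ yields $\sum_{\rho,\xi}g^\pi_{\rho,\xi}\,\Jch_{\mu/\rho}(v)\,\Jch_{\nu/\xi}(v)$, the right-hand side. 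Since the family $(\Jch_\pi(\mathbf u))_{\pi\in\mathbb Y}$ is a basis of $\mcSstar$, comparing the two expressions coefficientwise gives the claimed identity.

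The two rewritings are routine; the only point that needs care is the legitimacy of treating $\Jch_\lambda(v,u_1,u_2,\dots)$ as a shifted symmetric function in $\mathbf u$ depending polynomially on $v$, so that the skew expansion, the interchange of the two summations, and the coefficient extraction against the basis $(\Jch_\pi(\mathbf u))$ are all justified. I expect this bookkeeping — checking that in each fixed homogeneous degree only finitely many contributions occur, so that the double sums may be reorganized freely — to be the main (and essentially the only) obstacle; it follows from the degree bound $|\pi|\le|\mu|+|\nu|$ of \cref{lem deg bound} together with the grading of $\mcSstar$.
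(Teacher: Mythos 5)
Your proof is correct and is essentially identical to the paper's: both evaluate $\Jch_\mu(v,\mathbf{u})\Jch_\nu(v,\mathbf{u})$ in two ways (multiply-then-split versus split-then-multiply) and compare coefficients in the basis $(\Jch_\pi(u_1,u_2,\dots))_{\pi}$ of $\mcSstar$. Your extra remark on finiteness/reorganization of the sums via \cref{lem deg bound} is a harmless elaboration of what the paper leaves implicit in its definition of the skew characters.
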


The following proposition gives a differential construction for skew characters.
\begin{prop}
    For any partitions $\mu$ and $\nu$ one has
    \begin{equation}\label{eq 1 def skew Jch}
  \Jch_{\mu/\nu}(v)=[t^{|\mu|-|\nu|}p_{\mu}]\exp\left(\B_\infty(-t,\bfp,-\alpha v)\right)\cdot p_\nu,  
\end{equation}
\begin{equation}\label{eq 2 def skew Jch}
\hspace{-0.5cm}\text{and }\quad \frac{\alpha^{\ell(\mu)}z_\mu}{\alpha^{\ell(\nu)}z_\nu}\Jch_{\mu/\nu}(v)=[t^{|\mu|-|\nu|}p_{\nu}]\exp\left({\Binf}^\perp(-t,\bfp,-\alpha v)\right)\cdot p_\mu.
\end{equation}
\begin{proof}
Fix $k\geq 0$. From \cref{thm Jch}, we have
$$\sum_{\nu}t^{|\nu|}\Jch_\nu(u_1,\dots,u_k) p_\nu= \exp\left(\Binf(-t,\bfp,-\alpha u_1)\right)\dots\exp\left(\Binf\left(-t,\bfp,-\alpha u_k\right)\right)\cdot 1.$$
By applying $\exp\left(\Binf(-t,\bfp,-\alpha v)\right)$ and using \cref{thm Jch} for $k+1$, we get 
$$\sum_{\nu}\left(\exp\left(\Binf(-t,\bfp,-\alpha v)\right)\cdot t^{|\nu|} p_\nu\right) \Jch_\nu(u_1,\dots,u_k)= \sum_{\mu}t^{|\mu|}\Jch_\mu(v,u_1,\dots,u_k) p_\mu.$$
Taking the limit over $k$ and extracting the coefficient of $p_\mu$, we get
$$[p_\mu]\sum_{\nu}\left(\exp\left(\Binf(-t,\bfp,-\alpha v)\right)\cdot t^{|\nu|}p_\nu\right) \Jch_\nu(u_1,u_2,\dots)=t^{|\mu|}\Jch_\mu(v,u_1,u_2,\dots).$$
We obtain \cref{eq 1 def skew Jch} by extracting the coefficient of $\Jch_\nu(u_1,u_2\dots)$. This equation can be rewritten as follows
\begin{align*}
  t^{|\mu|-|\nu|}\Jch_{\mu/\nu}(v)
  &=\left\langle\exp\left(\B_\infty(-t,\bfp,-\alpha v)\right)\cdot p_\nu,\frac{p_\mu}{z_\mu \alpha^{\ell(\mu)}}\right\rangle\\
  &=\left\langle p_\nu,\exp\left({\Binf}^\perp(-t,\bfp,-\alpha v)\right)\cdot\frac{p_\mu}{z_\mu \alpha^{\ell(\mu)}}\right\rangle\\
  &=[p_\nu]\exp\left({\B_\infty}^\perp(-t,\bfp,-\alpha v)\right)\cdot\frac{z_\nu \alpha^{\ell(\nu)}}{z_\mu \alpha^{\ell(\mu)}}p_\mu.\qedhere  
\end{align*}
\end{proof}
\end{prop}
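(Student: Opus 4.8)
The plan is to derive both formulas from \cref{thm Jch} by reading it as an identity between formal power series rather than as an evaluation at individual Young diagrams. First I would fix $k\geq 0$ and replace the parts of $\lambda$ by free variables $u_1,\dots,u_k$, rewriting \cref{thm Jch} in the form
\[
\sum_{\nu}t^{|\nu|}\Jch_\nu(u_1,\dots,u_k)\,p_\nu=\exp\left(\Binf(-t,\bfp,-\alpha u_1)\right)\cdots\exp\left(\Binf(-t,\bfp,-\alpha u_k)\right)\cdot 1.
\]
This reformulation is justified because each $\Jch_\nu$ is a shifted symmetric function, hence a polynomial in the $u_i$, so the identity at all integer points provided by \cref{thm Jch} upgrades to an identity of polynomials; moreover the homogeneity recorded in \cref{rmq C_ell defined} forces the monomial $p_\nu$ to appear precisely with the power $t^{|\nu|}$, so extracting $[t^{|\nu|}p_\nu]$ recovers $\Jch_\nu$.

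The key step is then to apply one further factor $\exp\left(\Binf(-t,\bfp,-\alpha v)\right)$ on the left. Since this operator acts only on the alphabet $\bfp$, it passes through the coefficients $\Jch_\nu(u_1,\dots,u_k)$, while on the right it rebuilds the $(k+1)$-variable product, which by the same reformulation equals $\sum_\mu t^{|\mu|}\Jch_\mu(v,u_1,\dots,u_k)\,p_\mu$. I would then extract the coefficient of $p_\mu$, let $k\to\infty$, and substitute the defining expansion $\Jch_\mu(v,u_1,u_2,\dots)=\sum_\nu\Jch_{\mu/\nu}(v)\Jch_\nu(u_1,u_2,\dots)$. Comparing the coefficients of $\Jch_\nu(u_1,u_2,\dots)$ on both sides---legitimate because the Jack characters form a basis of $\mcSstar$---gives $t^{|\mu|}\Jch_{\mu/\nu}(v)=t^{|\nu|}[p_\mu]\exp\left(\Binf(-t,\bfp,-\alpha v)\right)\cdot p_\nu$, which is \eqref{eq 1 def skew Jch} after dividing by $t^{|\nu|}$ and using the homogeneity above.

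To obtain \eqref{eq 2 def skew Jch} I would reinterpret the coefficient extraction through the $\alpha$-deformed Hall product. Using $[p_\mu]f=\langle f,p_\mu\rangle_\alpha/(z_\mu\alpha^{\ell(\mu)})$, the previous identity becomes $t^{|\mu|-|\nu|}\Jch_{\mu/\nu}(v)=\big\langle\exp(\Binf(-t,\bfp,-\alpha v))\cdot p_\nu,\,p_\mu/(z_\mu\alpha^{\ell(\mu)})\big\rangle_\alpha$. Transferring the operator across the scalar product by its adjoint ${\Binf}^\perp$ and then re-extracting the coefficient of $p_\nu$ via $\langle p_\nu,g\rangle_\alpha=z_\nu\alpha^{\ell(\nu)}[p_\nu]g$ produces the stated formula, the prefactor $\alpha^{\ell(\mu)}z_\mu/(\alpha^{\ell(\nu)}z_\nu)$ arising exactly from the two normalization constants.

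The main obstacle I anticipate is not any single computation but the careful handling of the infinitely many catalytic variables $u_i$: one must check that applying the extra exponential, extracting $[p_\mu]$, and passing to the limit $k\to\infty$ are all well defined in the appropriate completed ring, and that the stability relation \eqref{eq shifted functions} guarantees the limit exists and matches the skew expansion. Once this generating-function set-up is in place, the remainder is a change of basis together with the defining property of the adjoint.
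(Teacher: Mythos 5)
Your proposal is correct and follows essentially the same route as the paper's proof: rewriting \cref{thm Jch} as a generating-function identity in free variables $u_1,\dots,u_k$, applying one more factor $\exp\left(\Binf(-t,\bfp,-\alpha v)\right)$, passing to $k\to\infty$, extracting the coefficient of $\Jch_\nu$, and then converting to the adjoint via the $\alpha$-deformed Hall product. If anything, you are slightly more explicit than the paper about why the evaluation at integer partitions upgrades to a polynomial identity in the $u_i$ and why homogeneity pins down the power of $t$, but these are refinements of the same argument, not a different one.
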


\subsection{Proof of the main theorem}\label{ssec proof of the main thm}

\begin{proof}[Proof of \cref{thm diff eq}]
We have from \cref{eq 1 def skew Jch}
\begin{multline*}
  \exp\left(\B_\infty(-t,\bfq,-\alpha v)\right) \exp\left(\B_\infty(-t,\bfr,-\alpha v)\right)\cdot G^{(\alpha)}(t,\bfp,\bfq,\bfr)\\
  =\sum_{\pi,\mu,\nu}\left(\sum_{\rho,\xi}\frac{g^\pi_{\rho,\xi}(\alpha)}{z_\pi \alpha^{\ell(\pi)}}\Jch_{\mu/\rho}(v)\Jch_{\nu/\xi}(v)\right)t^{|\mu|+|\nu|-|\pi|}p_\pi q_\mu r_\nu.  
\end{multline*}
On the other hand, \cref{eq 2 def skew Jch} gives
$$\exp\left({\Binf}^\perp(-t,\bfp,-\alpha v)\right)\cdot G^{(\alpha)}(t,\bfp,\bfq,\bfr)=\sum_{\pi,\mu,\nu}\left(\sum_{\kappa}\frac{g^\kappa_{\mu,\nu}}{z_\kappa \alpha^{\ell(\kappa)}} \Jch_{\kappa/\pi}(v)\right) t^{|\mu|+|\nu|-|\pi|}p_\pi q_\mu r_\nu.$$
Combining these two equations with \cref{lem g skew Jch}, we deduce that
\begin{multline}
  \exp\big(\Binf(-t,\bfq,-\alpha v)+\Binf(-t,\bfr,-\alpha v)\big)\cdot G^{(\alpha)}(t,\bfp,\bfq,\bfr)
  \\=\exp\left({\Binf}^\perp(-t,\bfp,-\alpha v)\right)\cdot G^{(\alpha)}(t,\bfp,\bfq,\bfr).\label{eq exp on G}
\end{multline}
Since ${\Binf}^{\perp}(-t,\bfp,-\alpha v)$ commutes with each one of the operators $\Binf(-t,\bfq,-\alpha v)$ and $\Binf(-t,\bfr,-\alpha v)$ as operators in $\mathcal{O}(\mcA)\llbracket v\rrbracket$, we obtain by induction that for any $\ell\geq 1$ 
\begin{multline*}
  \exp\big(\Binf(-t,\bfq,-\alpha v)+\Binf(-t,\bfr,-\alpha v)\big)^\ell\cdot G^{(\alpha)}(t,\bfp,\bfq,\bfr)
  \\=\exp\left({\Binf}^\perp(-t,\bfp,-\alpha v)\right)^\ell\cdot G^{(\alpha)}(t,\bfp,\bfq,\bfr).
\end{multline*}
This allows to take the logarithm of the operators in \cref{eq exp on G}. We get that
$$\left(\Binf(-t,\bfq,-\alpha v)+\Binf(-t,\bfr,-\alpha v)\right)\cdot G^{(\alpha)}(t,\bfp,\bfq,\bfr)
={\Binf}^\perp(-t,\bfp,-\alpha v)\cdot G^{(\alpha)}(t,\bfp,\bfq,\bfr).$$
We conclude by replacing $v$ by $-u/\alpha$.
\end{proof}

Let us now prove \cref{thm com C-G}.
\begin{proof}[Proof of \cref{thm com C-G}]
    Using \cref{eq 1 def skew Jch},  \cref{lem g skew Jch} can be rewritten as the following commutation relation 
    \begin{multline*}
    \exp\big(\Binf(-t,\bfq,-\alpha v)+\Binf(-t,\bfr,-\alpha v)\big)\cdot \G(t,\bfp,\bfq,\bfr)
    \\=\G(t,\bfp,\bfq,\bfr)\cdot \exp\left(\Binf(-t,\bfp,-\alpha v)\right).    
    \end{multline*}
   Finally, we  "take the logarithm" of operators  as in the proof of \cref{thm diff eq}. 
\end{proof}

\subsection{Generalization to constellations}
The purpose of this subsection is to briefly explain how to generalize the differential equation of the main theorem to series with finitely many alphabets, and how these are related to constellations.

Fix an integer $k\geq 1$. 
We consider $k+2$ alphabets $\bfp$, $\bfq^{(0)}$, $\bfq^{(1)}$, \dots $\bfq^{(k)}$
and we introduce the following generalization of the series $G^{(\alpha)}$;
$$G^{(\alpha)}_k(t,\bfp,\bfq^{(0)},\dots,\bfq^{(k)}):=\sum_{\pi,\mu^{(0)},\dots ,\mu^{(k)}}\frac{g^{\pi}_{\mu^{(0)},\dots ,\mu^{(k)}}(\alpha)}{z_{\pi}\alpha^{\ell(\pi)}}t^{|\mu^{(0)}|+\dots |\mu^{(k)}|-|\pi|}p_\pi q^{(0)}_{\mu^{(0)}}\dots q^{(k)}_{\mu^{(k)}},$$
where $g^{\pi}_{\mu^{(0)},\dots ,\mu^{(k)}}$ are defined as the structure coefficients;
$$\Jch_{\mu^{(0)}}\dots \Jch_{\mu^{(k)}}=\sum_{\pi}g^\pi_{\mu^{(0)},\dots,{\mu^{(k)}}}(\alpha)\Jch_\pi.  
$$

We emphasize that the series $G^{(\alpha)}_k$ (or
more precisely the closely related series $\tau^{(\alpha)}_k$ below) gives access to
all Hurwitz numbers (and their $b$-deformation in the sense of \cite{ChapuyDolega2022})
with control on their full ramification profile. 

Using the same arguments as in the case of three alphabets, we obtain the following theorem.
\begin{thm}\label{thm constellations}
For any $k\geq1$, we have,
\begin{multline*}\label{eq constellations}
\left(\Binf(-t,\bfq^{(0)},u)+\dots+\Binf(-t,\bfq^{(k)},u)\right)\cdot G^{(\alpha)}_k(t,\bfp,\bfq^{(0)},\dots,\bfq^{(k)})
\\= {\Binf}^{\perp}(-t,\bfp,u) \cdot G^{(\alpha)}_k(t,\bfp,\bfq^{(0)},\dots,\bfq^{(k)}).  
\end{multline*}
\end{thm}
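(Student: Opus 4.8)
The plan is to run the $k$-alphabet version of the proof of \cref{thm diff eq} essentially verbatim, since the three-alphabet argument only used the bilinear structure $\Jch_\mu\Jch_\nu=\sum_\kappa g^\kappa_{\mu,\nu}\Jch_\kappa$ together with the differential constructions of skew characters in \cref{eq 1 def skew Jch} and \cref{eq 2 def skew Jch}. First I would record the multilinear analogue of \cref{lem g skew Jch}: expanding the product $\Jch_{\mu^{(0)}}\cdots\Jch_{\mu^{(k)}}$ evaluated at $(v,u_1,u_2,\dots)$ in two ways and matching coefficients of $\Jch_\pi(u_1,u_2,\dots)$ yields the identity
\begin{equation*}
\sum_{\kappa}g^\kappa_{\mu^{(0)},\dots,\mu^{(k)}}\,\Jch_{\kappa/\pi}(v)=\sum_{\rho^{(0)},\dots,\rho^{(k)}}g^\pi_{\rho^{(0)},\dots,\rho^{(k)}}\prod_{j=0}^{k}\Jch_{\mu^{(j)}/\rho^{(j)}}(v),
\end{equation*}
where the skew characters $\Jch_{\mu/\nu}(v)$ are exactly the ones already introduced. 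The proof is the same telescoping computation: evaluate at one extra variable $v$, use associativity of the product, and extract the coefficient of $\Jch_\pi$.

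Next I would translate both sides of this identity into operator language using \cref{eq 1 def skew Jch} and \cref{eq 2 def skew Jch}. Applying $\exp\bigl(\Binf(-t,\bfq^{(j)},-\alpha v)\bigr)$ in the variable $\bfq^{(j)}$ reproduces a factor $\Jch_{\mu^{(j)}/\rho^{(j)}}(v)$ for each $j$, so the product $\prod_j\exp\bigl(\Binf(-t,\bfq^{(j)},-\alpha v)\bigr)$ acting on $G^{(\alpha)}_k$ generates the right-hand side of the skew identity; applying $\exp\bigl({\Binf}^\perp(-t,\bfp,-\alpha v)\bigr)$ in the $\bfp$ variable generates the left-hand side via $\Jch_{\kappa/\pi}(v)$. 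This gives the exponentiated equation
\begin{equation*}
\exp\Bigl(\sum_{j=0}^{k}\Binf(-t,\bfq^{(j)},-\alpha v)\Bigr)\cdot G^{(\alpha)}_k={\exp}\bigl({\Binf}^\perp(-t,\bfp,-\alpha v)\bigr)\cdot G^{(\alpha)}_k,
\end{equation*}
where I have used that the operators $\Binf(-t,\bfq^{(j)},-\alpha v)$ act on pairwise distinct alphabets and hence commute, so the product of exponentials is the exponential of the sum.

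To descend from the exponentiated identity to the stated linear equation I would repeat the logarithm argument from the proof of \cref{thm diff eq}: the dual operator ${\Binf}^\perp(-t,\bfp,-\alpha v)$ acts on $\bfp$ while each $\Binf(-t,\bfq^{(j)},-\alpha v)$ acts on a disjoint alphabet, so all $k+2$ operators commute pairwise in $\mathcal O(\mcA)\llbracket v\rrbracket$; by induction the equality of the two exponentials holds for every power $\ell\geq 1$, which legitimizes taking logarithms and equating the linear terms. Replacing $v$ by $-u/\alpha$ then yields the theorem. The main obstacle to check carefully is the well-definedness and commutativity of these operators in the appropriate completed space: I must verify that the degree bounds analogous to \cref{rmq C_ell defined} and to the multilinear version of \cref{lem deg bound} (i.e. $g^\pi_{\mu^{(0)},\dots,\mu^{(k)}}=0$ unless $\max_j|\mu^{(j)}|\le|\pi|\le\sum_j|\mu^{(j)}|$) still guarantee that $G^{(\alpha)}_k$ lives in a space where $\sum_j\Binf(-t,\bfq^{(j)},u)$ and ${\Binf}^\perp(-t,\bfp,u)$ are defined and the formal manipulations converge. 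Everything else is a routine transcription of the $k=1$ argument.
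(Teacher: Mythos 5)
Your proposal is correct and follows exactly the route the paper intends: the paper's proof of this theorem is simply the remark that "the same arguments as in the case of three alphabets" apply, and your write-up is a faithful transcription of the proof of \cref{thm diff eq} to $k+2$ alphabets (multilinear version of \cref{lem g skew Jch}, translation via \cref{eq 1 def skew Jch,eq 2 def skew Jch}, commutation of operators acting on disjoint alphabets, the logarithm argument, and the substitution $v\mapsto -u/\alpha$). The points you flag for verification — the multilinear degree bound generalizing \cref{lem deg bound} and the well-definedness of the operators on the completed space as in \cref{rmq C_ell defined} — are indeed the only things to check, and they go through by the same arguments as in the three-alphabet case.
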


As mentioned in the introduction, such an equation
seems to be new even in the classical case $b=0$. We now explain the connection of $G^{(\alpha)}_k$ to the series of $k$-constellations. As in \cref{ssec MJ}, we consider the series 
$$\tau^{(\alpha)}_k(t,\bfp,\bfq^{(0)},\dots,\bfq^{(k)}):=\sum_{n\geq0}t^n\sum_{\theta\vdash n}\frac{1}{j^{(\alpha)}_\theta}J^{(\alpha)}_\theta(\mathbf{p})J^{(\alpha)}_\theta(\mathbf{q}^{(0)})\dots J^{(\alpha)}_\theta(\bfq^{(k)}).$$ 

It turns out that $\tau^{(\alpha)}_k$ corresponds to the series of orientable $k$-constellations when $\alpha=1$ \cite{JacksonVisentin1990}, and to the series of all $k$-constellations (orientable or not) introduced in \cite{ChapuyDolega2022} when $\alpha=2$; see \cite{BenDali2022}.
On the other hand, extending the proof of \cref{prop_c-g_2} to $k+2$ alphabets, we get 
$$\exp\left(\frac{p_1}{t\alpha}\right)G^{(\alpha)}_k(t,\bfp,\bfq^{(0)},\dots,\bfq^{(k)})
=\exp\left(\frac{\partial}{t\partial q^{(0)}_1}+\dots+\frac{\partial}{t
  \partial q^{(k)}_1}\right)\tau^{(\alpha)}_k(t,\bfp,\bfq^{(0)},\dots,\bfq^{(k)}).$$

% \begin{defi}
% Let $\lambda$ be a partition.
% \textit{A $\lambda$-layer map} is an $\ell(\lambda)$-layer map such that\begin{itemize}
%     \item Each layer has exactly one black vertex.
%     \item The layer $i$ contains $\lambda_i$ white vertices.  
% \end{itemize}
% \end{defi}

\subsection{Operators \texorpdfstring{$\C_\ell$}{C}}\label{ssec C l}
For $\ell \geq0$ we consider the operator $\C_{\ell}(t,\bfp)$ given by 
$$\C_\ell(t,\bfp):=[u^\ell]\Binf(t,\bfp,u): \mcP\rightarrow\mcP\llbracket t\rrbracket.$$

The differential equation \cref{eq diff eq} of the main theorem  is then equivalent to the equations 
\begin{equation*}\label{eq diff eq l}
  \left(\C_\ell(-t,\bfq)+\C_\ell(-t,\bfr)\right)\cdot G^{(\alpha)}(t,\bfp,\bfq,\bfr)= {\C_\ell}^{\perp}(-t,\bfp) \cdot G^{(\alpha)}(t,\bfp,\bfq,\bfr), \quad \text{\hspace{-0.2cm}for } \ell \geq 0.  
\end{equation*}
Similarly, \cref{thm com C-G} is equivalent to 
\begin{equation}\label{eq com C-G}
    \left(\C_\ell(-t,\bfq)+\C_\ell(-t,\bfr)\right)\cdot\G(t,\bfp,\bfq,\bfr)=\G(t,\bfp,\bfq,\bfr)\cdot \C_\ell(-t,\bfp) \quad \text{\hspace{-0.2cm}for } \ell \geq 0.
    \end{equation}
    
We deduce the following corollary which will be useful in the solution of the differential equation of the main theorem in \cref{ssec explicit formula}.
\begin{cor}\label{cor com C-G}
    For any partition $\lambda=[\lambda_1,\dots,\lambda_s]$, we have
    \begin{equation}\label{eq com C-G 2}
    \prod_{1\leq i\leq s}\left(\C_{\lambda_i}(-t,\bfq)+\C_{\lambda_i}(-t,\bfr)\right)\cdot \G(t,\bfp,\bfq,\bfr)=\G(t,\bfp,\bfq,\bfr)\cdot \prod_{1\leq i\leq s}\C_{\lambda_i}(-t,\bfp).    
\end{equation}
\end{cor}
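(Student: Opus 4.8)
The plan is to prove the identity by induction on the length $s=\ell(\lambda)$, taking the single-index commutation relation \cref{eq com C-G} as both the base case and the engine of the induction step. The base case $s=1$ is exactly \cref{eq com C-G} applied with $\ell=\lambda_1$. The content of the corollary is thus simply that \cref{eq com C-G} can be iterated, the only thing to organize being the order of composition and the fact that operators acting on different alphabets commute.

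For the inductive step, I would fix $\lambda=[\lambda_1,\dots,\lambda_s]$ and peel off the first factor, writing
$$\prod_{1\leq i\leq s}\left(\C_{\lambda_i}(-t,\bfq)+\C_{\lambda_i}(-t,\bfr)\right)=\left(\C_{\lambda_1}(-t,\bfq)+\C_{\lambda_1}(-t,\bfr)\right)\prod_{2\leq i\leq s}\left(\C_{\lambda_i}(-t,\bfq)+\C_{\lambda_i}(-t,\bfr)\right).$$
Applying this composite operator to $\G$ and invoking the induction hypothesis on the length-$(s-1)$ product $\prod_{2\leq i\leq s}$, the inner product passes through $\G$ and turns into $\G\cdot\prod_{2\leq i\leq s}\C_{\lambda_i}(-t,\bfp)$. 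It then remains to move the leftover factor $\C_{\lambda_1}(-t,\bfq)+\C_{\lambda_1}(-t,\bfr)$ through $\G$.

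The observation that legitimizes this last move is that the operators $\C_{\lambda_i}(-t,\bfp)$ act only on the source alphabet $\bfp$, whereas $\C_{\lambda_1}(-t,\bfq)+\C_{\lambda_1}(-t,\bfr)$ acts only on the target alphabet $(\bfq,\bfr)$; operating on disjoint sets of variables, they commute. Hence I can slide the leftover factor past $\prod_{2\leq i\leq s}\C_{\lambda_i}(-t,\bfp)$ so that it acts directly on $\G$, apply \cref{eq com C-G} once more with $\ell=\lambda_1$ to rewrite $\left(\C_{\lambda_1}(-t,\bfq)+\C_{\lambda_1}(-t,\bfr)\right)\cdot\G$ as $\G\cdot\C_{\lambda_1}(-t,\bfp)$, and recombine the $\bfp$-factors into $\prod_{1\leq i\leq s}\C_{\lambda_i}(-t,\bfp)$. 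This produces the right-hand side of \cref{eq com C-G 2} and closes the induction.

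The argument is essentially formal, so the only point requiring genuine care — and what I would regard as the main obstacle — is the bookkeeping of composition order together with the well-definedness of all the compositions involved. By \cref{rmq C_ell defined}, each $\C_\ell$ raises or lowers the relevant degree in a controlled way and $\G$ maps $\mathbb Q(\alpha)[\bfp]$ into $\mathbb Q(\alpha)[\bfq,\bfr]\llbracket t\rrbracket$, so all the operators above compose as maps between the appropriate completed spaces and no convergence issue arises. Once this framework is fixed, the commutation of source- and target-alphabet operators is immediate and the induction goes through with no further input.
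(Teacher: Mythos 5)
Your proof is correct and follows essentially the same route as the paper, which deduces the corollary precisely by iterating the single-index relation \cref{eq com C-G}; your induction on $\ell(\lambda)$ merely makes that iteration explicit. The only cosmetic remark is that the ``sliding'' step needs no commutation argument at all: once the induction hypothesis gives $\left(\C_{\lambda_1}(-t,\bfq)+\C_{\lambda_1}(-t,\bfr)\right)\cdot\G\cdot \prod_{2\leq i\leq s}\C_{\lambda_i}(-t,\bfp)$, associativity of composition lets you apply \cref{eq com C-G} to the left two factors directly, so the disjoint-alphabet observation is harmless but redundant.
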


Actually, the product in \cref{eq com C-G 2} can be taken in any order, since the operators $\C_\ell$ satisfy the following commutation relations; see \cite[Theorem~6.6]{BenDaliDolega2023}.
\begin{thm}\cite{BenDaliDolega2023}\label{thm com C}
Let $m>0$. Then 
$$\left[\C_\ell,\C_m\right]=\left\{\begin{array}{cc}
     0 & \text{ if } \ell>0,  \\
    (m+1)\C_{m+1} & \text{ if }  \ell=0.
\end{array}\right.$$
\end{thm}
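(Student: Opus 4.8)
The plan is to package the whole family of relations into a single identity for the generating operator $\Binf(t,\bfp,u)=\sum_{\ell\geq0}u^\ell\C_\ell(t,\bfp)$ and then prove that identity. First I would record that $\Binf(t,\bfp,0)=\C_0(t,\bfp)$ and, more importantly, that the theorem is \emph{equivalent} to the master commutation relation
\begin{equation}\label{eq master}
\bigl[\Binf(t,\bfp,u),\Binf(t,\bfp,u')\bigr]=\frac{\partial}{\partial u'}\Binf(t,\bfp,u')-\frac{\partial}{\partial u}\Binf(t,\bfp,u).
\end{equation}
Indeed, extracting the coefficient of $u^\ell u'^m$ from \eqref{eq master} returns exactly $[\C_\ell,\C_m]=0$ when $\ell,m\geq1$ (the right-hand side carries no monomial involving both $u$ and $u'$), $[\C_0,\C_m]=(m+1)\C_{m+1}$ when $\ell=0<m$, and the antisymmetric counterpart $[\C_\ell,\C_0]=-(\ell+1)\C_{\ell+1}$ when $m=0<\ell$; conversely, summing the relations of the theorem against $u^\ell u'^m$ reproduces \eqref{eq master}. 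So it suffices to prove \eqref{eq master}.

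Next I would reduce \eqref{eq master} to two more economical families, exploiting its Lie-theoretic shape. Concretely, I would prove only (i) $[\C_0,\C_m]=(m+1)\C_{m+1}$ for all $m\geq1$ (equivalently the $u=0$ specialization $[\C_0,\Binf(t,\bfp,u')]=\partial_{u'}\Binf(t,\bfp,u')-\C_1$), and (ii) $[\C_1,\C_m]=0$ for all $m\geq1$, and then deduce $[\C_\ell,\C_m]=0$ for all $\ell,m\geq1$ by induction on $\ell$. The induction step is pure Jacobi: writing $\C_\ell=\tfrac1\ell[\C_0,\C_{\ell-1}]$ from (i), one gets
$$[\C_\ell,\C_m]=\tfrac1\ell\bigl([\C_0,[\C_{\ell-1},\C_m]]-[\C_{\ell-1},[\C_0,\C_m]]\bigr)=\tfrac1\ell\bigl(0-(m+1)[\C_{\ell-1},\C_{m+1}]\bigr)=0,$$
the two inner brackets vanishing by the induction hypothesis (and the base case $\ell=1$ being (ii)). This leaves only (i) and (ii) to be proved from scratch.

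For (i) and (ii) I would return to the catalytic definition, writing $\Binf(t,\bfp,u)=\Theta_Y\,\phi\bigl(t(\GY+u\,Y_+)\bigr)\,\tfrac{y_0}{1+b}$ with $\phi(x)=-\log(1-x)=\sum_{n\geq1}x^n/n$, and noting the convenient special case $\C_0=\Theta_Y\,\phi(t\GY)\,\tfrac{y_0}{1+b}$ in which $Y_+$ does not appear at all (while $\C_m$ for $m\geq1$ collects the words with exactly $m$ factors of $Y_+$). The product $\C_\ell\C_m$, read as a composition $\mcP\to\mcP$, factors through the contraction operator $\tfrac{y_0}{1+b}\Theta_Y$ sitting between the two copies of $\phi$; this contraction acts on $\PY$ by $y_kp_\lambda\mapsto\tfrac{1}{1+b}\,y_0\,p_k p_\lambda$ for $k\geq1$ and annihilates $y_0p_\lambda$. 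The inputs to be established are therefore the commutation relations among the generators $\GY$, $Y_+$, $\Theta_Y$ and multiplication by $y_0$ on $\tildePY$ — in particular a usable closed form for $[\GY,Y_+]$ — together with how these propagate through the resolvent $\phi\bigl(t(\GY+uY_+)\bigr)$.

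I expect the main obstacle to be exactly this last step: controlling the middle contraction term and the non-commutativity of $\GY$ and $Y_+$ as they are threaded through the power series $\phi$, and then exhibiting the precise telescoping that turns the leftover into the single shift factor $(m+1)$ in (i) and into outright cancellation in (ii). The cleanest way to keep this finite at each stage is to argue by induction on the total $t$-degree $n$, building on a recursion for $\Theta_Y(\GY+uY_+)^n\tfrac{y_0}{1+b}$ obtained by peeling off one factor of $\GY+uY_+$ and commuting it past $\Theta_Y$; at each order the catalytic computation is then bounded, and the claimed identities (i)–(ii) can be checked degree by degree before being summed back into \eqref{eq master}.
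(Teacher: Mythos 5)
Your formal reductions are correct: the master identity
\begin{equation*}
\bigl[\Binf(t,\bfp,u),\Binf(t,\bfp,u')\bigr]=\frac{\partial}{\partial u'}\Binf(t,\bfp,u')-\frac{\partial}{\partial u}\Binf(t,\bfp,u)
\end{equation*}
is indeed equivalent to the statement (the $u^0u'^0$ contributions $\C_1-\C_1$ cancel, mixed monomials $u^\ell u'^m$ with $\ell,m\geq1$ get coefficient $0$ on the right, and the pure-$u'$ and pure-$u$ coefficients give the $\ell=0$ case and its antisymmetric counterpart), and your Jacobi-identity induction, writing $\C_\ell=\tfrac1\ell[\C_0,\C_{\ell-1}]$ and expanding $[[\C_0,\C_{\ell-1}],\C_m]$, validly reduces the whole theorem to the two families (i) $[\C_0,\C_m]=(m+1)\C_{m+1}$ and (ii) $[\C_1,\C_m]=0$. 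Note also, for the comparison you were asked to survive, that the paper under review contains no proof of this statement at all: it is quoted from \cite[Theorem~6.6]{BenDaliDolega2023}, so the only standard your write-up can be held to is self-containedness.

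And there it falls short: items (i) and (ii), which carry essentially all the content of the theorem, are never proved. What you give for them is a plan, not an argument --- write $\Binf(t,\bfp,u)=\Theta_Y\,\phi\bigl(t(\GY+uY_+)\bigr)\tfrac{y_0}{1+b}$ with $\phi(x)=-\log(1-x)$ (correct), identify the contraction $\tfrac{y_0}{1+b}\Theta_Y$ between the two resolvents (correct), then find ``a usable closed form for $[\GY,Y_+]$'' and ``exhibit the precise telescoping.'' You yourself label this last step the expected obstacle rather than resolving it, and it is exactly where the theorem lives: $[\GY,Y_+]$ does not visibly close on any small algebra containing $\GY$, $Y_+$, $\Theta_Y$ and multiplication by $y_0$ (the $(1+b)$-term of $\GY$ alone produces index-shifted second-order operators mixing the $y$- and $p$-alphabets, plus separate $b$-weighted terms), so there is no a priori reason the conjugation through the resolvent $\phi$ telescopes, and establishing that it does is a substantial computation, not bookkeeping. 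Likewise, ``checking degree by degree'' verifies the identity in any fixed $t$-degree but is not a proof for all $n$ unless the recursion you allude to --- peeling one factor of $\GY+uY_+$ past $\Theta_Y$ --- is actually set up and closed, which it is not. In sum: a clean and correct reduction of the theorem to (i) and (ii), followed by a roadmap whose core step is missing; as it stands the proposal does not prove the statement.
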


\section{Combinatorial proof of the differential equation for \texorpdfstring{$\alpha=1$}{alpha=1}}\label{sec alpha 1}
The purpose of this section is to give a combinatorial proof of the commutation relation \cref{eq com C-G} (equivalently the differential equation \cref{eq diff eq}) for $\alpha=1$. To this purpose, we start by recalling the combinatorial interpretation of the operators $\Cone_\ell$ given in \cite{BenDaliDolega2023}, see \cref{prop C op}. We then use \cref{prop H-G} to obtain a combinatorial interpretation of the operator $\Gone$, see \cref{cor G op}.

We believe that the combinatorial constructions of Subsections \ref{subs pre-hypermpas} and \ref{ssec op G} are of independent interest and might be useful to shed some light on the combinatorics of hypermaps with controlled profile.

All maps considered in this section are orientable. 

% Moreover, we introduce \textit{\BFC} maps which have both faces and vertices colored, see \cref{def FV-colored}.  

\subsection{Interpretation of the operator \texorpdfstring{$\C_\ell$}{C} for \texorpdfstring{$\alpha=1$}{alpha=1}}\label{ssec C alpha 1}
It will be more convenient at some steps of the proof to work with maps with labelled edges rather than labelled vertices.
\begin{defi}\label{def labelled maps}
We say that a map $M$ is \textit{labelled} if  its edges are numbered $1,2,\dots, |M|$. We say that a map is \textit{bipartite} if its vertices are colored in white and black and each edge connects two vertices of different colors. If $M$ is a bipartite map of size $n$, then its \textit{face-type} is the partition of $n$ obtained by reordering the face degrees divided by 2.
\end{defi}

The following proposition is a special case of \cite[Proposition 4.5]{BenDaliDolega2023}. Since we consider here a different convention of map labelling, we briefly explain the main ideas of the proof.

\begin{prop}[{\cite[Proposition 4.5]{BenDaliDolega2023}}]\label{prop C op}
    Fix an integer $\ell\geq 0$, a partition $\pi$, and let $N$ be a labelled orientable bipartite map of face-type $\pi$. Then, 
    \begin{equation}\label{eq_prop_Cop}
        \ell!\Cone_\ell(-t,\bfp)\cdot \frac{p_\pi}{|\pi|!}=\sum_{n\geq \ell}(-t)^n\sum_{M}\frac{p_{\textup{face-type}(M)}}{|M|!},
    \end{equation}
    where the second sum is taken over labelled orientable maps $M$ obtained from $N$ as follows:
    \begin{itemize}
        \item we add a black vertex $v$ and $\ell$ new white vertices $w_1$, $w_2$,\dots,$w_\ell$. 
        \item we add $n$ edges all incident to $v$ and such that each new white vertex $w_i$ is connected to $v$ by at least one edge (we do not put any restriction on the degrees of the new vertices $w_i$).
        \item we relabel the edges of $M$ in any way. 
    \end{itemize}
\end{prop}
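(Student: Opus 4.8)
The plan is to set $\alpha=1$ (so $b=0$) throughout, read off the combinatorial meaning of each elementary operator, and then match the bookkeeping factors. At $b=0$ the operators simplify to $\frac{y_0}{1+b}=y_0$, $Y_+=\sum_{i\ge1}y_{i+1}\,\partial/\partial y_i$, $\Theta_Y=\sum_{i\ge1}p_i\,\partial/\partial y_i$, and
$$\GY=\sum_{i,j\ge1}y_{i+j}\frac{j\,\partial^2}{\partial y_{i-1}\partial p_j}+\sum_{i,j\ge1}y_i p_j\frac{\partial}{\partial y_{i+j-1}},$$
the key point being that the third, $b$-proportional summand of $\GY$ (the ``twisted'' edge insertion responsible for non-orientability) drops out, so the whole construction stays orientable. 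Since $\Binf(t,\bfp,u)=\sum_n \frac{t^n}{n}\B_n$ and $\B_n=\Theta_Y(\GY+uY_+)^n y_0$, proving \eqref{eq_prop_Cop} amounts to giving a combinatorial model for the monomials $y_i p_\lambda$ that appear in the expansion of $(\GY+uY_+)^n y_0$ and checking that each operator factor is one step of a map-building process.

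First I would fix the intermediate model: a monomial $y_i p_\lambda$ (applied to the fixed $N$) encodes the map $N$ together with a partially built black vertex $v$ carrying a marked root corner, where $y_i$ records that the root corner lies in a distinguished \emph{root face} of half-degree $i$ and $p_\lambda$ records the half-degrees of the remaining faces. Initialization by $y_0$ creates the isolated black vertex $v$ with a single root corner sitting in a degenerate root face of half-degree $0$. I would then interpret one factor of $(\GY+uY_+)$ as the insertion of one new edge incident to $v$, added next to the current root corner in the rotation around $v$ (so the $n$ factors reveal the edges around $v$ in cyclic order): the first term of $\GY$, with its multiplicity $j$, attaches $v$ to one of the $j$ white corners of an existing face of half-degree $j$, merging that face into the root face ($y_{i-1}p_j\mapsto j\,y_{i+j}$); the second term attaches $v$ to a white corner of the root face itself, splitting it ($y_{i+j-1}\mapsto y_i p_j$); and $uY_+$ attaches $v$ by a pendant edge to a brand new white vertex, raising the root half-degree by one and recording the new vertex through $u$ ($y_i\mapsto u\,y_{i+1}$). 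Finally $\Theta_Y$ seals the construction, turning the root face back into an ordinary recorded face ($y_i\mapsto p_i$). Because a new white vertex can only ever be born as a pendant edge (it is absent from the partial map until its first edge to $v$ is inserted) and may subsequently grow via the $\GY$ terms, extracting $[u^\ell]$ selects exactly the maps in which $\ell$ new white vertices $w_1,\dots,w_\ell$ were created, each joined to $v$ by at least one edge, which is the class described in the statement.

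It then remains to match the numerical factors, and this is where I expect the real work to lie. The sign and power $(-t)^n$ come directly from the $n$ inserted edges after the substitution $t\mapsto -t$. The factor $\frac1n$ in $\frac{t^n}{n}\B_n$ un-roots the construction: revealing the edges of $v$ in rotation order amounts to choosing one of the $n$ edges at $v$ as the ``first'', so each resulting map is produced exactly $n$ times. The factor $\ell!$ accounts for the operator creating the new white vertices in the canonical order in which they are first encountered around $v$, whereas the statement labels them $w_1,\dots,w_\ell$; multiplying by $\ell!$ passes from the canonically ordered to the labelled count. Finally the edge-relabelling step together with the normalizations $p_\pi/|\pi|!$ and $p_{\textup{face-type}(M)}/|M|!$ reconcile the edge-labelling convention used here with the catalytic (unlabelled) construction and make the identity independent of the chosen labelling of $N$. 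The main obstacle is thus not the topological dictionary for the operators but the careful verification that this dictionary is a genuine weight-preserving bijection between ordered operator-sequences and relabelled maps, and that the multiplicities $j$ together with the three normalizing factors $\frac1n$, $\ell!$ and $1/|M|!$ combine exactly as claimed; I would organize this as a single bijection and check it degree by degree in $t$.
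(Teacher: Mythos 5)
Your proposal is correct and follows essentially the same route as the paper's proof: interpret the catalytic operators at $\alpha=1$ as an edge-by-edge construction of the new black vertex at a root corner ($uY_+$ adds a pendant edge to a new white vertex, the two terms of $\GY$ add an edge to an existing white corner by merging or splitting the root face, $\Theta_Y$ unroots), then reconcile the factors $1/n$, $\ell!$ and the edge labellings. One small caveat: your intermediate claim that each resulting map is ``produced exactly $n$ times'' fails literally for maps with automorphisms (e.g.\ $N=\emptyset$ with all $n$ edges joining $v$ to a single new white vertex, which arises once, not $n$ times, as a rooted map); the paper sidesteps this by carrying out the entire count with edge-labelled maps, where automorphisms disappear --- which is precisely the ``single bijection'' between operator sequences and relabelled maps that you defer to at the end.
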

An example of maps $N$ and $M$ is given in \cref{fig Cl action}.

\begin{figure}[t]
\centering
\begin{subfigure}{0.45\linewidth}
    \centering
    \includegraphics[width=0.8\textwidth]{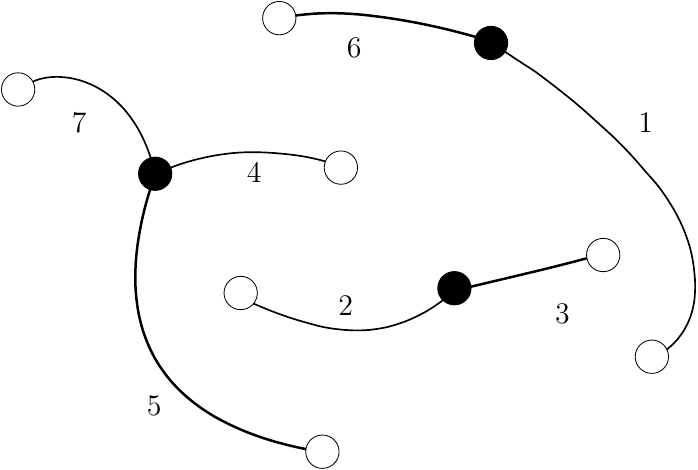}
    \begin{minipage}{2cm}
            \vfill 
    \end{minipage}
\end{subfigure}
\hfill
\begin{subfigure}{0.45\linewidth}
    \centering
    \includegraphics[width=0.8\textwidth]{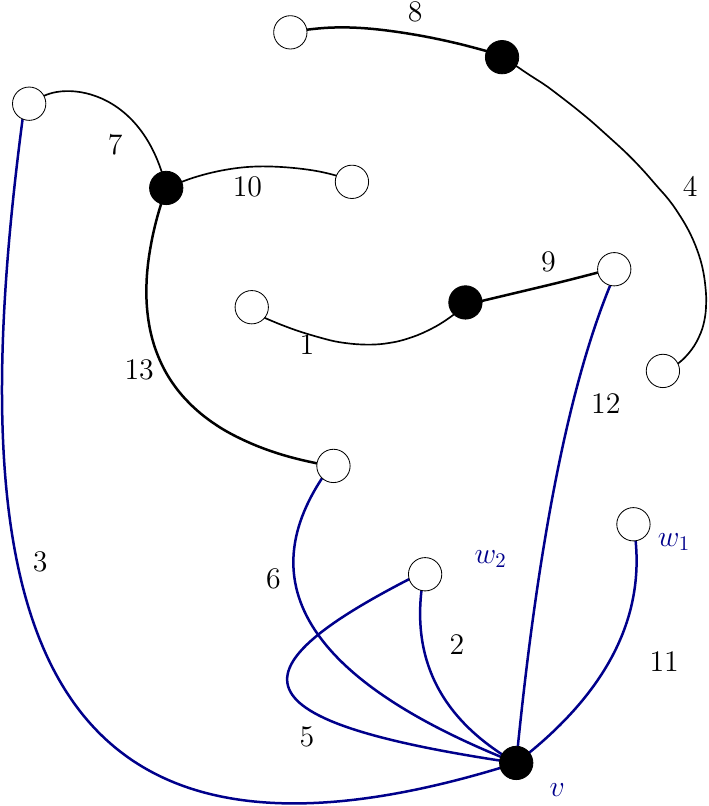}
\end{subfigure}
    \caption{Example of the action of $\Cone_2$ on a map $N$. On the left the map $N$, and on the right a map $M$ obtained by adding one black vertex $v$, two white vertices $w_1$ and $w_2$ and 6 edges (represented in blue).}
    \label{fig Cl action}
\end{figure}
\begin{proof}
    We recall that the operator $\C_\ell$ is defined by 
\begin{equation}\label{eq Bone-Cone}
  \Cone_\ell(-t,\bfp)=[u^\ell]\sum_{n\geq 1}(-t)^n\frac{\Bone_n(\bfp,u)}{n}.  
\end{equation}
where
\begin{equation*}
  \Bone_n(\bfp,u):=\Theta_Y\left(\GY^{(\alpha=1)}+u Y_+\right)^{n}y_0
\end{equation*}
see \cref{ssec op Bn} for the definitions of the operators $\Theta_Y$, $\GY$ and $Y_+$. 

In order to understand the combinatorics of these operators, we consider \textit{rooted maps}. A map is rooted if it has a marked corner $c$ called the root corner. We associate to a rooted map the weight
$$\weight(M,c):=y_{\deg(f_c)}\prod_{f\neq f_c} p_{\deg(f)}\in\PY,$$
where $\deg(f_c)$ denotes the degree of the root face, and the  the product runs over all faces of $M$ different from $f_c$. When $\alpha=1$, the operator $\Bone$ can be interpreted as follows:
$$\Bone_n(\bfp,u)\cdot p_{\textup{face-type}(N)} u^{|\Vcirc(N)|}=\sum_{M} p_{\textup{face-type}(M)}u^{|\Vcirc(M)|},$$
where $|\Vcirc(.)|$ denotes the number of white vertices and where the sum is taken over orientable maps $M$ obtained from $N$ by adding a black vertex $v$ of degree $n$ with a rooted corner.

Let us briefly explain this equation.
First, we add an isolated root vertex, this corresponds to multiplication by $y_0$. We then add consecutively $n$ edges to the root corner: the added edge can be connected to a new white vertex (this corresponds to the term $u Y_+$) or to an existing white vertex in the map (ensured by the operator $\GY$). Finally we apply $\Theta_Y$ to obtain the weight of an unrooted map; $\Theta_Y \cdot \weight(M,c)=p_{\textup{face-type(M)}}$. We refer to \cite[Section 4.2]{BenDaliDolega2023} for more details about the combinatorics of these operators. 

If $N$ and $M$ are now labelled maps then the previous equation becomes
$$\frac{\Bone_n(\bfp,u)}{n}\cdot \frac{p_{\tf(N)}}{|N|!} u^{|\Vcirc(N)|}=\sum_{M} \frac{p_{\tf(M)}}{|M|!}u^{|\Vcirc(M)|}.$$
Indeed, we have $|M|!$ ways to choose new labels for the edges of $M$, and then we divide by $|N|!$ to "forget" the old labels in $N$, and by $n$ to forget the root of the added black vertex $v$. 

% Combining this equation with \cref{eq Bone-Cone} concludes the proof of the proposition.

Since in \cref{eq Bone-Cone} we take the coefficient of $[u^{\ell}]$, then the operator $\mathcal{C}_\ell(-t,\bfp)$ acts on a map by adding a black vertex using $\ell$ new white vertices with an extra weight $-t$ for each added edge. Finally, multiplying by $\ell!$ in \cref{eq_prop_Cop} corresponds to having a total order on the new added white vertices $w_1$, $w_2$, \dots $w_\ell$.
\end{proof}

\subsection{\texorpdfstring{\textbf{BFC}}{BFC} maps and pre-hypermaps}\label{subs pre-hypermpas}
We start by introducing a family of maps which allows to encode the hypermaps with marked faces defined in \cref{def hypermaps}.  

\begin{defi}\label{def FV-colored}
We say that a map is \emph{bipartite face-colored} $($\BFC map$)$ if its vertices are colored in black and white, its faces are colored in two colors $(+)$ and $(-)$, and such that each edge connects two vertices of different colors (but it does not necessarily separate two faces of different colors).

Moreover, a \BFC  map will be called a \textit{pre-hypermap} if it satisfies the following additional conditions:
\begin{enumerate}
    \item white vertices have degree at most 2.
    \item all white vertices of degree 2 are incident to two faces of different colors.
\end{enumerate}
\end{defi}

\begin{rmq}\label{rmq hyper pre hyper}
Notice that a hypermap can be seen as a pre-hypermap; we color all the vertices of the hypermap in black and add in the middle of each edge a white vertex of degree 2. Hence, hypermaps are pre-hypermaps maps with only white vertices of degree 2. Conversely, if we delete all white vertices of degree 1 in a pre-hypermap we obtain a hypermap.
\end{rmq}
We distinguish two types of edges in a \BFC map.
\begin{defi}
An edge is said \textit{bicolor} if it is incident to two faces of different colors. We have two types of bicolor edges in a \BFC map $($see \cref{fig:types_defi}$)$:
\begin{itemize}
    \item Type 1: on the $(+)$ side-face, we see the white vertex and then the black one when we travel along the edge-side in the direct orientation. 
    \item Type 2: on the $(+)$ side-face, we see the black vertex and then the white one when we travel along the edge-side in the direct orientation. 
\end{itemize}
By definition, all non bicolor edges will be considered of type 1.
\end{defi}
\begin{figure}
    \centering
    \includegraphics[width=0.4\textwidth]{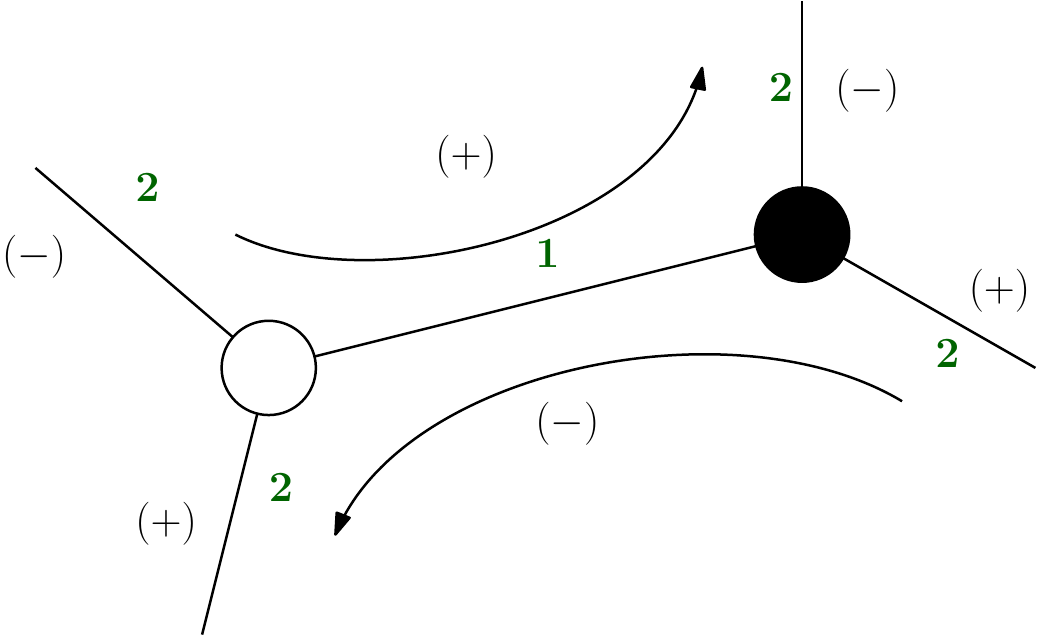}
    \caption{Types of bicolor edges in a \BFC map.}
    \label{fig:types_defi}
\end{figure}
\begin{rmq}\label{rmq bicolored and type 2}
    Note that the types can be equivalently defined by conditions around vertices. For instance, when we turn around a black vertex in the direct orientation, then edges of type 2 are exactly those who separate faces $(-)/(+)$ in this order. Consequently, a vertex which is not monochromatic (\textit{i.e.} incident at least to one $(+)$ and one $(-)$ face) is necessarily incident to a type 2 edge.
\end{rmq}
 It is easy to check that each black vertex of a pre-hypermap has even number of white neighbors of degree 2 (see \textit{e.g} \cref{rmq hyper pre hyper}). This allows to define the degree of a black vertex $v$ in a pre-hypermap as follows
\begin{align}\label{eq deg pre hypermap}
  \deg(v)&=\frac{1}{2}\lvert\left\{\text{neighbors of $v$ of degree 2}\right\}\rvert+\lvert\left\{\text{neighbors of $v$ of degree 1}\right\}\rvert\\
  &=\lvert\left\{\text{incident edges to $v$ of type 1}\right\}\rvert\label{eq deg pre hypermap 2}.  
\end{align}

We now extend the notion of profile to pre-hypermaps.
\begin{defi}\label{def profile pre hypermap}
Let $M$ be a pre-hypermap. We denote by $\tblack(M)$ the partition given by black vertices degrees $($as defined in \cref{eq deg pre hypermap}$)$. We also denote by $\tplus(M)$ $($resp. $\tminus(M)$ the partition given by the $(+)$ face $($resp. the $(-)$ face$)$ degrees divided by 2.
We call \textit{the profile} of $M$ the triplet of partitions $(\tblack(M),\tplus(M),\tminus(M))$.
\end{defi}

One can check that the profile of a hypermap as defined in \cref{def profile hypermap} coincides with its pre-hypermap profile as defined in \cref{def profile pre hypermap}.

\begin{defi}\label{def vertex labelled pre hypermaps}
We say that a pre-hypermap is \textit{vertex labelled} if:
\begin{itemize}
    \item for each $d\geq 1$, vertices of same degree $d$ are numbered 1,2,\dots. 
    \item each black vertex has a marked corner, oriented in the direct orientation and followed by an edge  of type 1.
\end{itemize}   
Fix three partitions $\pi$, $\mu$ and $\nu$. We define $\oprehypermap^\pi_{\mu,\nu}$ as the set of vertex-labelled oriented pre-hypermaps of profile $(\pi,\mu,\nu)$.
\end{defi}

The following lemma connects pre-hypermaps to hypermaps with marked faces.
\begin{lem}\label{lem bij ohyp-oprehyp}
    Fix three partitions $\pi$, $\mu$ and $\nu$. There is a bijection between $\ohypermap^\pi_{\mu,\nu}$ (defined in \cref{def hypermaps}) and $\oprehypermap^\pi_{\mu,\nu}$.
    \begin{proof}
    First, notice that each degree 2 face of a pre-hypermap (degree 1 face in the associated hypermap) is incident exactly to one edge of type 1 and one edge of type 2. Notice also that  the only case in which an edge is incident to two faces of degree 2 is the case of an isolated loop.

    As in \cref{rmq hyper pre hyper}, we can think of $M$ as a pre-hypermap which we denote $M'$. First, notice that each degree 2 face in $M'$ (degree 1 face in $M$) is incident exactly to one edge of type 1 and one edge of type 2. Notice also that  the only case in which an edge of type 2 is incident to two faces of degree 2 is the case of an isolated loop.
    
    By deleting in $M'$ all edges of type 2 incident to a marked face and forgetting the colors of these faces, we obtain a map $N$ in $\oprehypermap^\pi_{\mu,\nu}$, see \cref{fig hyper to prehyper}. Indeed, in this procedure the degree of a black vertex (as defined in \cref{eq deg pre hypermap}) is unchanged. Moreover, each one of the faces of $N$ inherits a color from $M'$ and its degree is unchanged. 

    Conversely, from $N\in\oprehypermap^\pi_{\mu,\nu}$ we obtain a map $M\in\ohypermap^\pi_{\mu,\nu}$ as follows; first we transform each white leaf into a loop, we mark the formed 2-degree face and we color it so that the added edge is bicolor. 
    \begin{figure}[t]
        \centering
        \begin{subfigure}{0.45\textwidth}
        \includegraphics[width=1\textwidth]{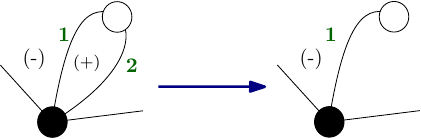} 
        \subcaption{The marked face is colored $(+)$}
        \end{subfigure}
        \begin{subfigure}{0.45\textwidth} 
        \includegraphics[width=1\textwidth]{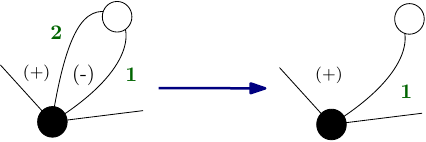}
        \subcaption{The marked face is colored $(-)$}
        \end{subfigure}
        \caption{Deleting edges of a hypermap with marked faces to obtain a pre-hypermap.}
        \label{fig hyper to prehyper}
    \end{figure}
    \end{proof}
\end{lem}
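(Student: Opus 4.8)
The plan is to exhibit explicit maps in both directions and to check that they are mutually inverse while preserving profiles and labellings. The starting point is \cref{rmq hyper pre hyper}, which lets me regard a hypermap as a pre-hypermap all of whose white vertices have degree $2$; so, given $M\in\ohypermap^\pi_{\mu,\nu}$, I view it as such a pre-hypermap $M'$. Its $|\pi|-|\mu|$ marked $(+)$ faces and $|\pi|-|\nu|$ marked $(-)$ faces have degree $1$ in $M$, hence degree $2$ in $M'$. The preliminary observation is that in $M'$ every edge is bicolor (each edge of a hypermap separates two faces of opposite color, and this persists when we subdivide), so by \cref{rmq bicolored and type 2} each degree-$2$ face of $M'$ is bounded by two distinct edges, exactly one of type $1$ and one of type $2$. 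The forward map deletes, for each marked face, the unique type-$2$ edge on its boundary and then forgets the color of that face; I will call the result $N$.

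I would then check that this map is well defined and lands in $\oprehypermap^\pi_{\mu,\nu}$. The only way two marked faces could request deletion of the same edge is when a single edge is the common type-$2$ edge of two degree-$2$ faces; this is exactly the isolated loop with both its faces marked, which is forbidden in \cref{def hypermaps}, so the deletion is unambiguous. Deleting the boundary type-$2$ edge of a marked face turns the incident degree-$2$ white vertex into a leaf, which is admissible, and every surviving degree-$2$ white vertex still separates two faces of opposite color, so $N$ is a pre-hypermap. For the profile I use \cref{eq deg pre hypermap 2}: a black vertex's degree counts only its incident type-$1$ edges, so deleting type-$2$ edges leaves all black degrees unchanged and $\tblack(N)=\pi$. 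Each marked degree-$2$ face is absorbed into its neighbour across the deleted type-$2$ edge; since removing an edge shared by a degree-$2$ face and a face of degree $d$ produces a face of degree $d$, the unmarked faces keep both their color and their degree, whence $\tplus(N)=\mu$ and $\tminus(N)=\nu$. Finally the vertex labels and the marked corners of $M'$, each followed by a type-$1$ edge, descend to $N$ as required by \cref{def vertex labelled pre hypermaps}.

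For the inverse I would start from $N\in\oprehypermap^\pi_{\mu,\nu}$ and double each white leaf $w$: if $w$ is joined to a black vertex $v$ by an edge $e$ bordering a single face $f$ on both sides, I insert a second edge joining $v$ and $w$, creating a digon, which I mark and color. The color is forced, not chosen: the second condition of \cref{def FV-colored} demands that the two faces around the now degree-$2$ vertex $w$ have opposite colors, so the digon must receive the color opposite to that of $f$, which is precisely the color making both incident edges bicolor. Consequently a leaf lying in a $(-)$ face produces a marked $(+)$ face and vice versa, and a short count via \cref{eq deg pre hypermap}, relating the number of leaves, of degree-$2$ white vertices and the black degrees, shows that one recovers exactly $|\pi|-|\mu|$ marked $(+)$ faces and $|\pi|-|\nu|$ marked $(-)$ faces; the doubly-marked isolated loop is again the single excluded configuration, matching the constraint in \cref{def hypermaps}.

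The two constructions are manifestly inverse to one another: doubling the leaves produced by the forward deletion reinserts exactly the type-$2$ edges that were removed and recovers $M'$, hence $M$, while deleting the freshly inserted type-$2$ edges recovers $N$. I expect the main difficulty to lie not in the global argument but in the local analysis of how faces merge and split under a single edge deletion or insertion, namely verifying that the forgotten (respectively created) color is uniquely determined and that degrees behave as claimed; the genuinely delicate case is the isolated loop, where a single edge is incident to two degree-$2$ faces, and this is exactly the configuration that the non-marking condition of \cref{def hypermaps} is designed to exclude. These local checks are illustrated in \cref{fig hyper to prehyper}.
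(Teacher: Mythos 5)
Your proposal is correct and follows essentially the same route as the paper's proof: identify $M$ with its subdivided pre-hypermap $M'$, delete the type-2 edges incident to marked faces (using the observation that only an isolated loop could make this ambiguous), and invert by doubling white leaves into marked digons whose color is forced by bicolarity. Your write-up supplies somewhat more detail than the paper (well-definedness of the deletion, the count of marked faces of each color, descent of roots), but the bijection and all key observations are the same.
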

An example of the correspondence described above is given in \cref{fig map hyper to prehyper}.
\begin{figure}[t]
        \centering
        \begin{subfigure}{0.45\textwidth}
        \includegraphics[width=1\textwidth]{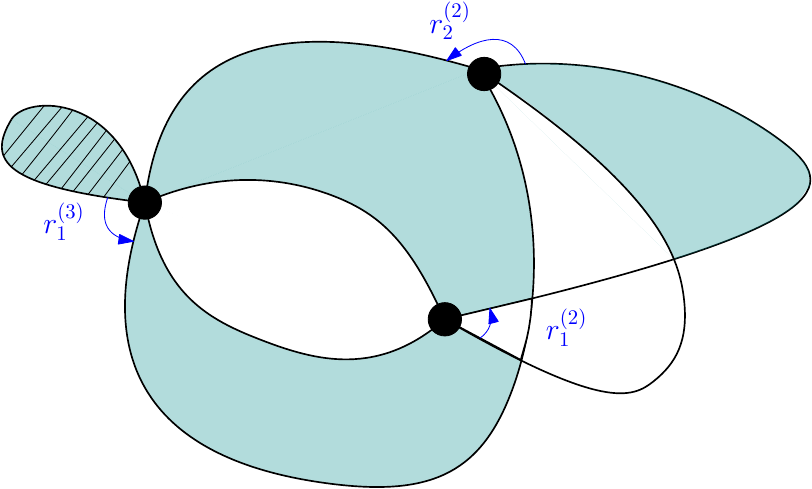}    
        \end{subfigure}
        \begin{subfigure}{0.45\textwidth}
        \includegraphics[width=0.85\textwidth]{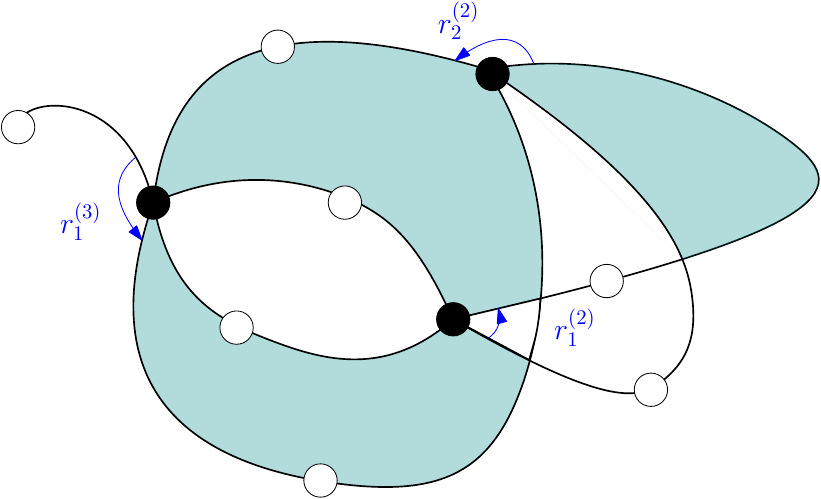}    
        \end{subfigure}
        \caption{On the left a labelled hypermap with one marked face; $(-)$ faces are represented in blue and the marked face is crossed. On the right the associated  pre-hypermap.}
        \label{fig map hyper to prehyper}
    \end{figure}

\begin{prop}\label{prop g0-pre hyper}
    For any partitions $\pi$,$\mu$ and $\nu$, we have
    $$g^\pi_{\mu,\nu}(1)=\lvert\oprehypermap^\pi_{\mu,\nu}\rvert.$$
    Equivalently, $(|\mu|+|\nu|)!/z_\pi g^\pi_{\mu,\nu}(1)$ is the number of labelled orientable pre-hypermaps of profile $(\pi,\mu,\nu)$ (see \cref{def labelled maps}). 
    \begin{proof}
        We know from \cref{lem bij ohyp-oprehyp} that $\lvert\oprehypermap^\pi_{\mu,\nu}\rvert=\lvert\ohypermap^\pi_{\mu,\nu}\rvert$.  On the other hand $g^\pi_{\mu,\nu}(1)=\lvert\ohypermap^\pi_{\mu,\nu}\rvert$ by \cref{eq prop H-G}, this gives the first part of the proposition.

        To obtain the second part, we start by noticing that a pre-hypermap of profile $(\pi,\mu,\nu)$ has $|\mu|+|\nu|$ edges. Moreover, to pass from a vertex-labelled hypermap of vertex type $\pi$ to a labelled hypermap, we start by labelling edges and then we forget vertex labels which corresponds to multiplying by $(|\mu|+|\nu|)!/z_\pi$.
    \end{proof}
\end{prop}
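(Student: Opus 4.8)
The plan is to prove \cref{prop g0-pre hyper} in two steps, matching the two equivalent statements. The bulk of the work is already done: \cref{lem bij ohyp-oprehyp} gives a bijection between $\ohypermap^\pi_{\mu,\nu}$ and $\oprehypermap^\pi_{\mu,\nu}$, so $\lvert\oprehypermap^\pi_{\mu,\nu}\rvert=\lvert\ohypermap^\pi_{\mu,\nu}\rvert$. For the first identity I would then simply invoke \cref{prop H-G} in the form of \cref{eq prop H-G}, namely $g^\pi_{\mu,\nu}(1)=\lvert\ohypermap^\pi_{\mu,\nu}\rvert$, and chain the two equalities. So the first part reduces to quoting two results established earlier in the paper.

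For the second, combinatorial reformulation I would pass from the vertex-labelled count $\lvert\oprehypermap^\pi_{\mu,\nu}\rvert$ to the edge-labelled count. First I note that a pre-hypermap of profile $(\pi,\mu,\nu)$ has exactly $|\mu|+|\nu|$ edges: by \cref{def profile pre hypermap} each $(+)$ face contributes $\mu_i$ to the edge count (its degree divided by $2$) and each $(-)$ face contributes $\nu_j$, and every edge is incident to one $(+)$ side and one $(-)$ side in the bicolor case, giving $\sum_i\mu_i+\sum_j\nu_j=|\mu|+|\nu|$ edges after accounting for the degree-over-two convention. The key passage is then relating the two labelling conventions. Given a labelled orientable pre-hypermap (edges numbered $1,\dots,|\mu|+|\nu|$), I would choose edge labels in one of $(|\mu|+|\nu|)!$ ways and then forget the vertex labels: since vertices of the same degree are interchangeable, forgetting the labels identifies orbits of size $\prod_i m_i(\pi)!\,i^{m_i(\pi)}=z_\pi$ worth of distinctions, so the number of labelled pre-hypermaps equals $(|\mu|+|\nu|)!/z_\pi$ times $\lvert\oprehypermap^\pi_{\mu,\nu}\rvert$, which is the claimed formula.

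The main subtlety is bookkeeping the relation between the vertex-root data and the edge labels, and confirming that the normalization factor is precisely $z_\pi$ rather than a proper divisor of it. The point is that in a vertex-labelled pre-hypermap each vertex carries both a label (item (1) of \cref{def vertex labelled pre hypermaps}) and a distinguished root corner following a type-$1$ edge, whereas in a labelled pre-hypermap the combinatorial structure is rigidified by the edge numbering alone. I would argue that the map from (edge-labelled) to (vertex-labelled with forgotten edge labels) is exactly $z_\pi$-to-one because the root corner of each black vertex is canonically recoverable from the surrounding edge labels once edges are numbered, so no extra symmetry factor enters beyond $z_\pi=\prod_i m_i(\pi)!\,i^{m_i(\pi)}$ coming from permuting the $m_i(\pi)$ vertices of each degree $i$ together with the $i$-fold cyclic choice of root per vertex.

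Thus the heart of the argument is not a computation but the careful verification that the vertex-labelling conventions of \cref{def vertex labelled pre hypermaps} translate into exactly the factor $z_\pi$; everything else follows by combining \cref{lem bij ohyp-oprehyp} with \cref{eq prop H-G}. I expect no serious obstacle, since this is the standard passage between the ``labelled'' and ``unlabelled up to automorphism'' conventions for rooted maps, but I would state the $z_\pi$-to-one correspondence explicitly to avoid the usual off-by-a-symmetry pitfalls.
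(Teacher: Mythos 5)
Your proposal is correct and follows essentially the same route as the paper: the first identity is obtained by chaining \cref{lem bij ohyp-oprehyp} with \cref{eq prop H-G}, and the second is the same vertex-labelled-to-edge-labelled conversion, with the factor $z_\pi=\prod_i m_i(\pi)!\,i^{m_i(\pi)}$ correctly split into permutations of same-degree black vertices and the $i$ admissible root corners (those followed by a type-1 edge, cf.\ \cref{eq deg pre hypermap 2}) per degree-$i$ vertex. Two of your phrasings are off but harmless: not every edge of a pre-hypermap is bicolor, so the edge count $|\mu|+|\nu|$ should come from the total face degree $2|\mu|+2|\nu|=2|E|$ rather than from pairing $(+)$ and $(-)$ sides, and the root corner is not ``canonically recoverable'' from the edge labels---rather, the $i$-fold root freedom is precisely what contributes the $i^{m_i(\pi)}$ part of $z_\pi$, as your final accounting in fact reflects.
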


\subsection{Combinatorial interpretation of \texorpdfstring{$\Gone$}{G}}\label{ssec op G}

In this subsection, we give a second combinatorial interpretation for $g^\pi_{\mu,\nu}(1)$ which generalizes \cref{prop g0-pre hyper}. This interpretation consists in seeing the series of hypermaps as an operator (see \cref{cor G op}) rather than a "static" generating series.

Fix a partition $\pi$. We call $\pi$\textit{-star} map the unique bipartite map with only white vertices of degree 1 and black vertices of type $\pi$. Notice that labelled $\pi$-star maps are in bijection with permutations of cyclic type $\pi$. In particular, there are $|\pi|!/z_\pi$ such maps.
% The purpose of this subsection is to give a dynamic interpretation of the operator $\G^{(\alpha)}$ for $\alpha=1$. Indeed, \cref{prop g0-pre hyper} allows to see $\G^{(1}\cdot p_{\pi}$ as a generating series of maps. In \cref{cor G op} we prove that $\G$ can be seen as an operator which acts on maps by adding edges, which will play a key role in the combinatorial interpretation of \cref{eq diff eq}.

\begin{lem}\label{lem star to pre-hyper}
    Let $M$ be a pre-hypermap of profile $(\pi,\mu,\nu)$. Then the map obtained by deleting all edges of type 2 is the $\pi$-star map. 
    
    Conversely, let $M$ be a \BFC map  such that $\tplus(M)=\mu$ and $\tminus(M)=\nu$. Assume that $M$ is obtained by adding $|\mu|+|\nu|-|\pi|$ edges to the $\pi$-star map and by coloring the faces, such that the added edges are bicolor of type 2. Then all white vertices of $M$ have degree 1 or 2, white vertices of degree 2 are incident to two faces of different colors, and $\lambda^\bullet(M)=\pi$. In other terms, $M$ is a pre-hypermap of profile $(\pi,\mu,\nu)$.

    \begin{figure}[t]
        \centering
        \begin{subfigure}{0.45\textwidth}
        \includegraphics[width=1\textwidth]{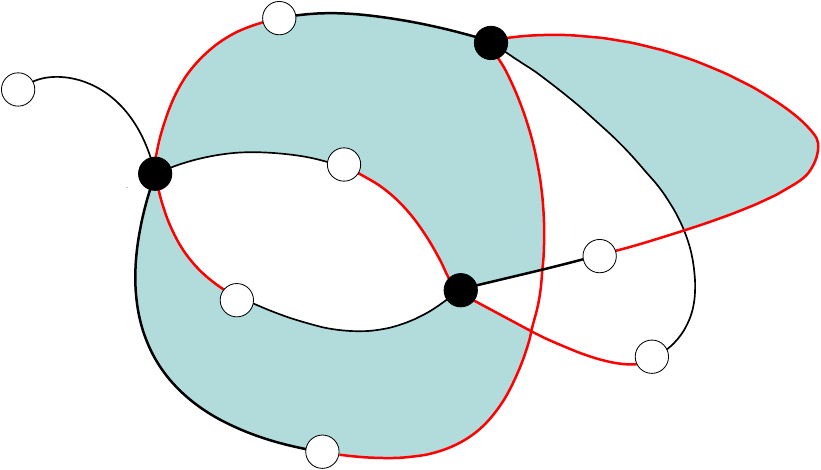}    
        \end{subfigure}
        \begin{subfigure}{0.45\textwidth}
        \includegraphics[width=0.85\textwidth]{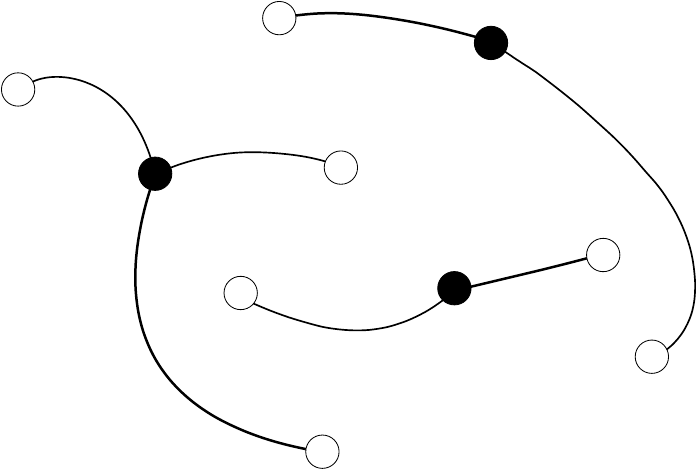}    
        \end{subfigure}
        \caption{On the left an oriented pre-hypermap, faces colored in $(-)$ are represented in blue. Bicolor edges of type 2 are represented in red. On the right the $[3,2,2]$-star map obtained by deleting edges of type 2.}
        \label{fig deleting type 2}
    \end{figure}
    \begin{proof}
        We start by proving the first assertion. Let $M$ be a pre-hypermap of profile $(\pi,\mu,\nu)$ and let $N$ be the map obtained by deleting all edges of type 2 (an example is given in \cref{fig deleting type 2}). By definition all white vertices of $M$ have degree 1 or 2. Moreover, each white vertex of degree two is incident to one edge of type 1 and one edge of type 2. Hence, $N$ is a star map. Furthermore, the degree of a black vertex in a pre-hypermap is given by the number of incident edges of type 1 (see \cref{eq deg pre hypermap 2}), and is then unchanged by deleting edges of type 2. By consequence the type of black vertices in $N$ is the same as in $M$, that is $\pi$. 

        Let us now prove the second assertion. Let $M$ be a \BFC map obtained from the $\pi$-star map as above. Since all added edges are of type 2, we can not add two edges incident to the same white corner. Hence, all white vertices in $M$ have degree 1 or 2. Moreover, white vertices of degree 2 are incident to two faces of different colors since the added edges are bicolor. This proves that $M$ is a pre-hypermap. The type of black vertices of $M$ is $\pi$ by the same argument as above. This finishes the proof of the lemma.
    \end{proof}
\end{lem}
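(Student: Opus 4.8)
The plan is to prove the two directions separately, the crucial ingredient being a purely local statement: in a pre-hypermap, a white vertex of degree $2$ is incident to exactly one edge of type $1$ and one edge of type $2$. I would establish this first. A white vertex of degree $2$ has, by definition of a pre-hypermap, its two corners in faces of opposite colors, so both incident edges are bicolor; turning around the vertex in the direct orientation and reading off the color transitions as in \cref{rmq bicolored and type 2} then forces one of the two edges to realize a $(+)/(-)$ transition and the other a $(-)/(+)$ transition, i.e.\ exactly one is of type $2$. In parallel I would note that a white vertex of degree $1$ carries a pendant edge, which borders a single face and is therefore non-bicolor, hence of type $1$ by convention.

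With this in hand the forward direction is bookkeeping. First I would recall that all white vertices of a pre-hypermap have degree $1$ or $2$. Deleting all type-$2$ edges then leaves every degree-$1$ white vertex untouched and turns every degree-$2$ white vertex into a leaf, so in the resulting map $N$ every white vertex has degree $1$. For the black vertices I would invoke \cref{eq deg pre hypermap 2}: the pre-hypermap degree of a black vertex equals its number of incident type-$1$ edges, which is exactly the number of edges surviving the deletion. Hence each black vertex keeps its degree, the degree sequence of the black vertices is still $\pi$, and a map with only white leaves and black vertices of type $\pi$ is by definition the $\pi$-star map.

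For the converse I would run the same analysis backwards. Starting from the $\pi$-star map (white leaves, black vertices of type $\pi$) we insert $|\mu|+|\nu|-|\pi|$ bicolor edges of type $2$ and color the faces. The key point is that no white vertex can absorb two added edges: a white leaf already carries its star edge, which becomes type $1$ once the faces are colored, and the local color-transition constraint of \cref{rmq bicolored and type 2} forbids attaching more than one type-$2$ edge at a given white corner. Thus every white vertex ends with degree $1$ or $2$, and a degree-$2$ white vertex carries exactly one original (type-$1$) edge together with one added bicolor edge, so its two corners lie in faces of different colors. This verifies both defining conditions of a pre-hypermap; that the black vertices still have type $\pi$ follows once more from the type-$1$ count of \cref{eq deg pre hypermap 2}, since the added edges are all of type $2$ and leave \cref{eq deg pre hypermap} unchanged.

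The main obstacle, and the only step beyond degree counting, is the local orientation argument: rigorously pinning down, through the direct orientation, that a degree-$2$ white vertex decomposes into one type-$1$ and one type-$2$ edge, and dually that a white corner can host at most one added type-$2$ edge. Everything else reduces to tracking vertex degrees through \cref{eq deg pre hypermap,eq deg pre hypermap 2} and the uniqueness of the $\pi$-star map, so I would invest the care there, using \cref{rmq bicolored and type 2} to convert the type distinction into statements about $(+)/(-)$ transitions around each vertex.
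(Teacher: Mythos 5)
Your proposal is correct and follows essentially the same route as the paper's proof: delete type-2 edges and track black degrees via \cref{eq deg pre hypermap 2} for the forward direction, and for the converse rule out two added edges at one white vertex so that white degrees stay at most 2. The only difference is that you spell out the local color-transition arguments (exactly one type-1 and one type-2 edge at a degree-2 white vertex, and the impossibility of consecutive type-2 edges around a white vertex) that the paper states without detailed justification, which is a welcome but not divergent addition.
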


% The previous lemma establishes a bijection between vertex-labelled pre-hypermaps and different ways of adding edges on a vertex-labelled star map. Actually, we chose here to work with vertex-labelled maps 

% \begin{defi}
% Let $M$ be a bicolor map. Let $\mathcal{T}_2(M)$ denotes the set of edges of Type 2 in $M$.    
% \end{defi}

We deduce a combinatorial interpretation of $g^\pi_{\mu,\nu}(1)$ which generalizes \cref{prop g0-pre hyper}. 
\begin{prop}\label{prop const hypermaps}
Fix three partitions $\pi$, $\mu$ and $\nu$, and set $m:=|\pi|$ and $n:=|\mu|+|\nu|-|\pi|$.
Let $N$ be a labelled orientable bipartite map of face-type $\pi$. 
Then $\frac{n!}{m!}g^\pi_{\mu,\nu}(1)$ is the number of ways of obtaining a labelled \BFC map $M$ from $N$ by:
\begin{enumerate}
    \item adding $|\mu|+|\nu|-|\pi|$ edges to $N$  to obtain a map of face type $\mu\cup\nu$,
    \item coloring the faces such that the added edges are bicolor of type 2, and such that the obtained \BFC map $M$ satisfies $\lambda^+(M)=\mu$ and $\lambda^-(M)=\nu$,
    \item relabelling all the edges.
\end{enumerate}
\end{prop}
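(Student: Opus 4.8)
The plan is to read \cref{prop const hypermaps} as a combinatorial description of the operator $\Gone$ acting on the face-type monomial $p_\pi$, and to prove it in two stages: first when $N$ is the $\pi$-star map, and then for an arbitrary bipartite map $N$ of face-type $\pi$, the point being that the count depends on $N$ only through its face-type.

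For the base case I would take $N$ to be the $\pi$-star map. By the converse direction of \cref{lem star to pre-hyper}, any \BFC map $M$ obtained from the star map by adding $|\mu|+|\nu|-|\pi|$ type-$2$ bicolor edges and coloring the faces so that $\tplus(M)=\mu$ and $\tminus(M)=\nu$ is automatically a pre-hypermap of profile $(\pi,\mu,\nu)$; conversely, deleting the type-$2$ edges of such a pre-hypermap recovers the star map. Hence these constructions are in bijection with $\oprehypermap^\pi_{\mu,\nu}$, which has cardinality $g^\pi_{\mu,\nu}(1)$ by \cref{prop g0-pre hyper}. The remaining work is purely a labelling count: I would reconcile the vertex-labelled cardinality $g^\pi_{\mu,\nu}(1)$, the edge-labelled count $\tfrac{(|\mu|+|\nu|)!}{z_\pi}g^\pi_{\mu,\nu}(1)$ from \cref{prop g0-pre hyper}, the number $\tfrac{|\pi|!}{z_\pi}$ of labelled star maps, and the relabelling of all $m+n$ edges in step $(3)$; these factors should combine to give exactly $\tfrac{n!}{m!}g^\pi_{\mu,\nu}(1)$ with $m=|\pi|$ and $n=|\mu|+|\nu|-|\pi|$.

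To pass from the star map to a general $N$ of face-type $\pi$, I would argue that the enumeration is insensitive to the actual map $N$ and sees only its face-type. This is exactly the phenomenon already isolated in \cref{prop C op}: the number of maps obtained from $N$ by adjoining edges is computed by operators acting on $\tfrac{p_\pi}{m!}$, which therefore read off $N$ only through the monomial $p_{\textup{face-type}(N)}=p_\pi$. Building the $(+)$-faces and the $(-)$-faces by successive additions of type-$2$ edges corresponds to applying such $\Cone$-type operators in the alphabets $\bfq$ and $\bfr$ respectively (this is the vertex/face-dual form of \cref{prop C op}), and since each operator depends only on the current face-type, the total count for $N$ agrees with the total count for the star map, namely $\tfrac{n!}{m!}g^\pi_{\mu,\nu}(1)$.

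The hard part will be this second stage, i.e. making the $N$-independence rigorous. Two points need care: (i) the vertex/face duality, since in \cref{prop const hypermaps} the faces of $N$ play the role that black vertices played in \cref{prop C op}, so the correspondence with the operators $\Cone$ must be set up in the dual frame; and (ii) the two-color bookkeeping, ensuring that the $\bfq$- and $\bfr$-alphabets record $\tplus(M)=\mu$ and $\tminus(M)=\nu$ separately while the type-$2$ orientation constraint on every added edge is preserved throughout. Once these are settled, $N$-independence is automatic from the operator formalism, and the base-case normalization completes the proof.
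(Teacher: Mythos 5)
Your stage 1 (the star-map base case) follows the paper's own proof: the structural bijection is \cref{lem star to pre-hyper}, the cardinality input is \cref{prop g0-pre hyper}, and the rest is labelling bookkeeping. One caution: you leave that bookkeeping as ``these factors should combine,'' but it is exactly where care is needed. Carrying it out, the number of ways comes out to $\tfrac{(|\mu|+|\nu|)!}{m!}\,g^\pi_{\mu,\nu}(1)$; this agrees with the paper's proof only under the reading that the $n!$ there means $(|\mu|+|\nu|)!$, whereas the statement as printed sets $n:=|\mu|+|\nu|-|\pi|$. You should resolve this normalization explicitly (the consistency check is \cref{cor G op}, which forces the count $(|\mu|+|\nu|)!\,g^\pi_{\mu,\nu}(1)/m!$), rather than assume the factors will fall into place.

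The genuine gap is your stage 2, the reduction from a general $N$ to the star map. You propose to obtain the $N$-independence from a ``vertex/face-dual form of \cref{prop C op},'' realizing the addition of type-2 edges as $\Cone$-type operators in the alphabets $\bfq$ and $\bfr$. No such dual statement is available, and it is not a routine transcription: \cref{prop C op} describes a very different operation (a new black vertex and new white vertices, with all added edges incident to that single new vertex), while here the added edges join \emph{existing} black vertices to \emph{existing} white vertices and are scattered over the whole map. Worse, an operator description of precisely this type-2-edge-addition process is the content of \cref{cor G op}, which the paper \emph{deduces from} the proposition you are proving; using it (or developing an equivalent of it) at this point is circular, or at least requires a substantial independent construction you do not supply. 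That a naive ``build the $(+)$ and $(-)$ faces with $\C$-type operators in $\bfq,\bfr$'' cannot simply output $g^\pi_{\mu,\nu}$ is also visible later in the paper: in \cref{thm g-a} the quantity built that way is $d^\lambda_{\mu,\nu}$, which differs from $g^\lambda_{\mu,\nu}$ by an alternating-sum correction.

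The step you are missing is elementary and needs no operators. If $N$ and $N'$ are bipartite maps with the same face-type, one can choose a bijection between their corners which preserves the face structure: corners consecutive along a face of $N$ (in the direct orientation) are sent to consecutive corners of $N'$. All the data of steps (1)--(3) — which corners the new edges attach to, the face colorings, the type-2 condition, and the resulting $(+)/(-)$ face types — is expressed purely in terms of corners and face incidences, so every admissible configuration on $N$ transfers to exactly one admissible configuration on $N'$ and conversely. Hence the count depends on $N$ only through its face-type, which reduces the general case to the star map and completes the proof. You should replace your operator-based sketch by this direct argument (or something equivalent).
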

Note that the number of ways of obtaining a \BFC $M$ by the three operations described above from a bipartite map $N$ depends only on the face-type of $N$ but does not depend on its structure as will be shown in the proof.
\begin{proof}
We start by proving the result when $N$ is a labelled $\pi$-star map. 
We know from \cref{prop g0-pre hyper} that the number of labelled pre-hypermaps of profile $(\pi,\mu,\nu)$ is $n!/z_\pi g^\pi_{\mu,\nu}(1)$. We now count in a different way the number of labelled pre-hypermaps of profile $(\pi,\mu,\nu)$.

Let $f^\pi_{\mu,\nu}$ be the number of ways of realizing the steps $(1)$, $(2)$ and $(3)$ described above starting from a fixed labelled star map of face-type $\pi$.

In order to obtain a pre-hypermap of profile $(\pi,\mu,\nu)$, we start by choosing a labelled $\pi$-star map (we have $m!/z_\pi$ choices), we then have $f^\pi_{\mu,\nu}$ ways to add edges to obtain a labelled pre-hypermap of profile  $(\pi,\mu,\nu)$  (we use here \cref{lem star to pre-hyper}).  Hence 
$$n!/z_\pi g^\pi_{\mu,\nu}(1)=m!/z_\pi f^\pi_{\mu,\nu}.$$
We deduce that $f^\pi_{\mu,\nu}=\frac{n!}{m!}g^\pi_{\mu,\nu}(1)$. This finishes the proof for star maps.
    % Indeed, we know  that there is a bijection between the different ways of adding such edges  $\oprehypermap^\pi_{\mu,\nu}$ and  of type 2 to a vertex labelled $\pi$-star map as described in item (1) and (2). We deduce by applying \cref{prop g0-pre hyper} that .

In order to obtain the assertion for any bipartite labelled map $N$ of face-type $\pi$, we prove that the number of ways to realize the steps  \textit{(1), (2)} and \textit{(3)} on a map $N$ depends only on the face-type of the map and not on its structure. Indeed, when we have two maps of the same face-type, one can always find a bijection between the corners of the two maps which preserves the face structures; two corners are consecutive when we travel along a face (in the direct orientation) in the first map, if and only if their images in the second map satisfy the same property. Once such bijection is fixed, each way of adding edges and coloring faces on one map can be copied on the second map in a unique way which respects the bijection of the corners. The two maps obtained have necessarily the same $(+)$ and $(-)$ types (but not necessarily the same vertex degrees). We give an example in~\cref{fig faces isomorphism}. 
   \end{proof}
 % Let $N$ be a labelled $\pi$-star map. There are $z_\pi/n!$ choices for a vertex labelling of $N$ and to "forget" the edges labels, in order to obtain a vertex labelled map. We then have $g^\pi_{\mu,\nu}(1)$ different ways of adding edges to obtain a vertex labelled pre-hypermap of profile $(\pi,\mu,\nu)$ and we have $n!$ to label the edges. Finally, we divide by $z_\pi$ to forget the vertices labels.
    
    % It is easy to see that the number of ways realizing these three steps depends only on the face-type of $M$ and not the structure of the map. In particular, it is enough to prove the result when $N$ is a $\pi$-star map. 

\begin{figure}[t]
\centering
\begin{subfigure}{0.45\linewidth}
    \centering
    \includegraphics[width=0.8\textwidth]{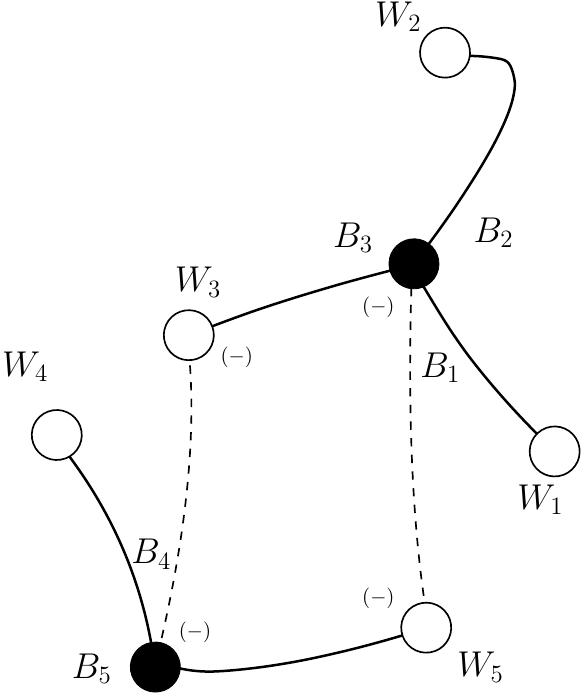}
    \begin{minipage}{2cm}
            \vfill 
    \end{minipage}
\end{subfigure}
\hfill
\begin{subfigure}{0.45\linewidth}
    \centering
    \includegraphics[width=0.8\textwidth]{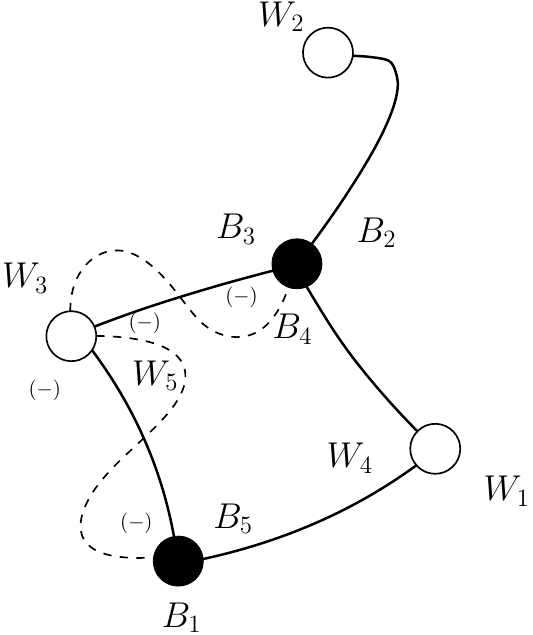}
\end{subfigure}
    \caption{The plain edges represent two initial maps with the same face-type. The labels $W_i$ and $B_i$ give a bijection between the corners of the two maps which preserves the face structure. The two dashed edges are added and faces colored with respect to this bijection; an edge between corners $(B_4,W_3)$ and an edge between $(B_1,W_5)$. Corners incident to a face of color $(-)$ are marked with a sign $(-).$}
    \label{fig faces isomorphism}
\end{figure}

% \begin{figure}[t]
%     \centering
%     \begin{minipage}[c]{0.5\textwidth}
% \includegraphics[width=0.8\textwidth]{}
% \end{minipage}\hfill
%   \begin{minipage}[c]{0.5\textwidth}
%     \caption{ A representation of the labelled pre-hypermap of \cref{fig deleting type 2} obtained by adding edges of type 2 on glued polygons; the gluing matching is illustrated in green and edges of type 2 in red (for the clarity of the figure, only some edges of type 2 are represented). The number of ways of adding the red edges is independent from the gluing matching.
%        } \label{fig map from gluing matching}
%   \end{minipage}
% \end{figure}

Given \cref{prop const hypermaps}, it is possible to think of pre-hypermaps as partially constructed hypermaps. Indeed, hypermaps are obtained by adding a maximal number of edges on $\pi$-star maps. 
We now deduce a combinatorial interpretation of $\Gone$.
\begin{cor}\label{cor G op}
Fix a partition $\pi$ of size $m\geq 0$.
    Let $N$ be a labelled orientable bipartite map of face-type $\pi$. Then, 
$$\Gone(t,\bfp,\bfq,\bfr)\cdot \frac{p_{\pi}}{m!}=\sum_{M}t^{|M|-|N|}\frac{q_{\lambda^{+}(M)}r_{\lambda^{-}(M)}}{|M|!}$$
where the sum is taken over all ways to add edges to $N$ and to color faces, in order to obtain a \BFC orientable map $M$ such that the added edges are of type 2.
\end{cor}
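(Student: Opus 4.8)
The plan is to prove the claimed operator identity by fixing a pair of partitions $(\mu,\nu)$ and comparing, on each side, the coefficient of the monomial $t^{|\mu|+|\nu|-|\pi|}q_\mu r_\nu$, showing they agree for every such pair.

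First I would unfold the left-hand side from the definition of $\G$ specialized to $\alpha=1$: since $\Gone\cdot p_\pi=\sum_{\mu,\nu}t^{|\mu|+|\nu|-|\pi|}g^\pi_{\mu,\nu}(1)\,q_\mu r_\nu$, dividing by $m!=|\pi|!$ gives
$$\Gone(t,\bfp,\bfq,\bfr)\cdot\frac{p_\pi}{m!}=\sum_{\mu,\nu}t^{|\mu|+|\nu|-|\pi|}\,\frac{g^\pi_{\mu,\nu}(1)}{m!}\,q_\mu r_\nu,$$
where $m=|\pi|$. Hence the coefficient of $t^{|\mu|+|\nu|-|\pi|}q_\mu r_\nu$ on the left is $g^\pi_{\mu,\nu}(1)/m!$.

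Next I would reorganize the right-hand sum by grouping the maps $M$ according to the pair $(\lambda^+(M),\lambda^-(M))$. A map $M$ occurring in the sum is a \BFC map whose $(+)$ and $(-)$ faces have total degree $2(|\lambda^+(M)|+|\lambda^-(M)|)$, so that $|M|=|\lambda^+(M)|+|\lambda^-(M)|$; likewise $|N|=|\pi|=m$, since $N$ has face-type $\pi$. Writing $E:=|\mu|+|\nu|$, every $M$ with $\lambda^+(M)=\mu$ and $\lambda^-(M)=\nu$ therefore contributes the term $t^{E-m}q_\mu r_\nu/E!$, so the coefficient of $t^{E-m}q_\mu r_\nu$ on the right equals $N_{\mu,\nu}/E!$, where $N_{\mu,\nu}$ is the number of labelled maps $M$ with $\lambda^+(M)=\mu$ and $\lambda^-(M)=\nu$ produced from $N$ by the three operations (adding type-$2$ edges, coloring faces, and relabelling). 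The key input is \cref{prop const hypermaps}, which identifies this count as $N_{\mu,\nu}=\tfrac{E!}{m!}\,g^\pi_{\mu,\nu}(1)$ and, just as importantly, guarantees that it depends only on the face-type $\pi$ of $N$ and not on the particular structure of $N$. Substituting, the right-hand coefficient becomes $\tfrac{1}{E!}\cdot\tfrac{E!}{m!}\,g^\pi_{\mu,\nu}(1)=g^\pi_{\mu,\nu}(1)/m!$, which matches the left-hand side; since this holds for every $(\mu,\nu)$, the two series coincide.

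The step demanding the most care is bookkeeping rather than conceptual: one must confirm that the exponent $|M|-|N|=|\mu|+|\nu|-|\pi|$ records the number of added edges while the denominator $|M|!=E!$ records the total edge count, and that the factor $1/|M|!$ is precisely what absorbs the relabelling step $(3)$ of \cref{prop const hypermaps}. Once these identities are in place the factors of $E!$ cancel and the term-by-term comparison is immediate; no genuine obstacle remains beyond invoking \cref{prop const hypermaps}, on which the whole argument rests.
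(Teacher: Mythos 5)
Your proof is correct and is essentially the paper's own (implicit) deduction: \cref{cor G op} is just \cref{prop const hypermaps} recast as a series identity, obtained exactly as you do by extracting the coefficient of $t^{|\mu|+|\nu|-|\pi|}q_\mu r_\nu$ on both sides and matching the count of construction ways against $g^\pi_{\mu,\nu}(1)/m!$. One remark: your key input $N_{\mu,\nu}=\frac{(|\mu|+|\nu|)!}{m!}g^\pi_{\mu,\nu}(1)$ is the \emph{correct} reading of \cref{prop const hypermaps} --- its statement literally sets $n:=|\mu|+|\nu|-|\pi|$ in the factor $n!/m!$, but its proof (via \cref{prop g0-pre hyper}, where the pre-hypermap has $|\mu|+|\nu|$ edges) establishes the count with $n=|\mu|+|\nu|$, so you have silently corrected a typo in the paper rather than introduced an error.
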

%%%%%%%%%%%%%%%%%%%%%%%%%%%%%%%%%%%%%%%%%%%

\subsection{End of the combinatorial proof}
Through this subsection, we fix once and for all an integer $\ell \geq 0$, a partition $\pi$  and a labelled bipartite map $N$ of face-type $\pi$. Our goal is to use \cref{prop C op} and \cref{cor G op} to prove that, for $\alpha=1$, both sides of \cref{eq com C-G} act in the same way on the weight of $N$ given by $\frac{p_\pi}{|\pi|!}=\frac{p_{face-type(N)}}{|N|!}$. This would imply the commutation relation of \cref{eq com C-G} since power-sum functions are a basis of $\mcS$.  

The following definition will be useful in the combinatorial proof of \cref{eq com C-G}; all along this subsection, $\mathcal{M}$ will be the (infinite) set of labelled \BFC maps which are be obtained from $N$ by

\begin{itemize}
    \item adding one black vertex $v$ and $\ell$ white vertex $w_1,w_2,\dots, w_\ell$,
    \item adding some edges, such that each one of the new vertices is incident at least to one edge,
    \item choosing a color for each face,
    \item relabelling edges.
\end{itemize}
Moreover, in such a map we the added edges are marked.

Fix a \BFC map $M$ in $\mathcal{M}$. We denote by $\mathcal{E}_v(M)$ the set of edges incident to $v$ in $M$, and we denote $\mathcal{T}_2(M\backslash N)$ the set of edges of type 2 in $M$ not contained in $N$.

\medskip

On one hand, we have
\begin{multline}
    \label{eq C-G on map}\ell!\left(\Cone_\ell(-t,\bfq)+\Cone_\ell(-t,\bfr)\right)\cdot \Gone(t,\bfp,\bfq,\bfr)\cdot \frac{p_\pi}{|N|!}\\
  =\sum_{n\geq \ell}(-1)^n\sum_{M} t^{|M|-|N|}\frac{q_{\tplus(M)}r_{\tminus(M)}}{|M|!}, 
\end{multline}
where the second sum is taken over \BFC maps in $\mathcal{M}$ obtained from $N$ as follows
\begin{itemize}
    \item \textbf{Step 1:} We add edges to $N$ and we color the faces of the obtained map, such that the added edges are of type 2.
     \item \textbf{Step 2:} We start by choosing either the $(+)$ or the $(-)$ part of the map, and we add a black vertex $v$ of degree $n$ and $\ell$ white vertices only connected to $v$, such that all added edges are incident to faces of the chosen color.
\end{itemize}
After each one of these operations we relabel all the edges.

Note that in this construction we can not obtain a map $M\in \mathcal{M}$ in two different ways, since all edges added in \textbf{Step 1} are bicolor while those added in \textbf{Step 2} are not. More precisely, the right-hand side of \cref{eq C-G on map} can be rewritten as follows
$$\sum_{M\in\mathcal{M}^{(1)}}(-1)^{\lvert\mathcal{E}_v(M)\rvert} \frac{q_{\tplus(M)}r_{\tminus(M)}}{|M|!},$$
where 
\begin{align*}
\mathcal{M}^{(1)}:=&\left\{M\in\mathcal{M} \text{ such that  an added white vertex $w_i$ is only connected to $v$}\right.\\
&\left.\text{ and such that }\mathcal{E}(M\backslash N)=\mathcal{T}_2(M\backslash N)\uplus \mathcal{E}_v(M), \text{ the union being disjoint.} \right\}    
\end{align*}
Actually, the following lemma allows us to simplify the definition of $\mathcal{M}^{(1)}$.  

\begin{lem}
    Fix a \BFC map $M\in\mathcal{M}$. If  
    \begin{equation}\label{eq sufficient condition}
      \mathcal{E}(M\backslash N)=\mathcal{T}_2(M\backslash N)\uplus \mathcal{E}_v(M)  
    \end{equation}
    then the new white vertices $w_i$ are all only connected to $v$.
    \begin{proof}
        Let us suppose that there exists an added white vertex $w_i$ which is incident to a black vertex different from $v$. Since added edges are either incident to $v$ or of type 2 (see \cref{eq sufficient condition}), this implies that $w_i$ is incident to a bicolor edge $e$, and by consequence is incident to faces of different colors. As $e$ is of type 2, $w_i$ can not have degree 1. Let $e_1$ and $e_2$ denote the edges forming a corner in $w_i$ with $e$ ($e_1$ and $e_2$ are not necessarily distinct). But since we can not have two consecutive edges of type 2 around a vertex, then $e_1$ and $e_2$ are both incident to $v$, and are besides  incident to faces of different colors. We deduce that  $v$ is also incident to faces of different colors. By \cref{rmq bicolored and type 2} we get that $v$ is incident to an edge of type 2, this contradicts the fact that the union is disjoint in  \cref{eq sufficient condition}.
    \end{proof}
\end{lem}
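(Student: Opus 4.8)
The plan is to argue by contradiction: I would assume that some added white vertex $w_i$ is incident to a black vertex other than $v$, and derive a violation of the disjointness asserted in \cref{eq sufficient condition}. Let $e$ be an edge realizing such a connection. Since $w_i$ is a newly added vertex, $e$ belongs to $\mathcal{E}(M\backslash N)$, and it is not incident to $v$; by the decomposition \cref{eq sufficient condition} it must therefore lie in $\mathcal{T}_2(M\backslash N)$, i.e.\ $e$ is of type $2$ and in particular bicolor. Hence the two faces flanking $e$ carry opposite colors, and $w_i$ is incident to both a $(+)$ and a $(-)$ face.

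Next I would pin down the local picture around $w_i$. A degree-one vertex has its unique edge bordered by a single face on both sides, so that edge would be non-bicolor (type $1$ by convention); this contradicts $e$ being of type $2$, so $w_i$ has degree at least two and $e$ has two neighbouring edges $e_1,e_2$ (not necessarily distinct) around $w_i$. The key structural input is that two edges of type $2$ cannot be consecutive around a vertex, a direct consequence of the definition of the types; thus neither $e_1$ nor $e_2$ is of type $2$. Being added edges that are not of type $2$, the decomposition \cref{eq sufficient condition} forces both $e_1$ and $e_2$ to be incident to $v$, so both join $w_i$ to $v$.

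Finally I would transfer the bicolour structure from $e$ to $v$. The two corners at $w_i$ on either side of $e$ lie in the two differently coloured faces flanking $e$; since $e_1$ and $e_2$ border these same two faces and are incident to $v$, the vertex $v$ is incident to both a $(+)$ and a $(-)$ face. Applying \cref{rmq bicolored and type 2} to $v$, it must be incident to an edge of type $2$, which is then an added edge lying simultaneously in $\mathcal{E}_v(M)$ and in $\mathcal{T}_2(M\backslash N)$ --- contradicting the disjointness of the union in \cref{eq sufficient condition}. I expect the only delicate point to be the careful tracking of face colours around $w_i$: one must ensure that the two faces adjacent to $e$ are genuinely distinct and that, through $e_1$ and $e_2$, both of them really touch $v$. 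Once this is in place, the remaining steps are direct applications of the type definitions and of \cref{rmq bicolored and type 2}.
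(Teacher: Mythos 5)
Your proof is correct and follows essentially the same route as the paper's: you force the edge $e$ joining $w_i$ to another black vertex to be of type 2 via the disjoint decomposition, rule out degree 1, use the fact that two type 2 edges cannot be consecutive around a vertex to place $e_1,e_2$ in $\mathcal{E}_v(M)$, transfer the two face colours to $v$, and invoke \cref{rmq bicolored and type 2} to produce a type 2 edge incident to $v$, contradicting disjointness. There are no gaps; the delicate point you flag (tracking the two distinct face colours through $e_1,e_2$ to $v$) is handled exactly as in the paper.
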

We deduce that
$$\mathcal{M}^{(1)}:=\left\{M\in\mathcal{M} \text{ such that }
\mathcal{E}(M\backslash N)=\mathcal{T}_2(M\backslash N)\uplus \mathcal{E}_v(M)\right\}.$$

On the other hand, we have
\begin{equation}\label{eq G-C on map}
 \ell! \Gone(t,\bfp,\bfq,\bfr)\cdot \Cone_\ell(-t,\bfp)\cdot \frac{p_\pi}{|N|!}=\sum_{n\geq \ell}(-1)^n \sum_{M} t^{|M|-|N|}\frac{q_{\tplus(M)}r_{\tminus(M)}}{|M|!}  
\end{equation}
where the second sum runs over the set of labelled \BFC maps obtained from $N$ as follows;
\begin{itemize}
    \item \textbf{Step 1'}: we add a black vertex $v$ of degree $n$ and $\ell$ white vertices only connected to $v$.
    \item \textbf{Step 2'}: we add edges and color faces such that the edges added in this step are bicolor and of type 2.
\end{itemize}
Note that all these maps are in the set
\begin{align*}
\mathcal{M}^{(2)}:=&\left\{M\in\mathcal{M} \text{ such that $\mathcal{E}(M\backslash N)=\mathcal{E}_v(M)\cup\mathcal{T}_2(M\backslash N)$,}\right.\\
&\hspace{6cm}\left.\text{ the union not necessarily disjoint}\right\}.   
\end{align*}
It is straightforward that $\mathcal{M}^{(1)}\subseteq \mathcal{M}^{(2)}$. Our goal is to prove that the total contribution of maps in  $\mathcal{M}^{(2)}\backslash \mathcal{M}^{(1)}$ in \cref{eq G-C on map} is 0.  Indeed, any map in $M\in\mathcal{M}^{(2)}$ contributes in \cref{eq G-C on map} with a coefficient
\begin{equation}\label{eq contribution M}
  \sum_{\mathcal{E}(M\backslash N)=\mathcal{I}^{(1)}\uplus \mathcal{I}^{(2)}}(-1)^{\lvert\mathcal{I}^{(1)}\rvert}\frac{q_{\tplus(M)}r_{\tminus(M)}}{|M|!},  
\end{equation}
where the sum runs over all possible ways to decompose  the set edges of $M\backslash N$ into $\mathcal{I}^{(1)}$ and $\mathcal{I}^{(2)}$
such that $\mathcal{I}^{(1)}\subseteq \mathcal{E}_v(M)$ and $\mathcal{I}^{(2)}\subseteq \mathcal{T}_2(M)$. The only edges for which we have a choice (they can be either in $\mathcal{I}^{(1)}$ or in $\mathcal{I}^{(2)}$) are exactly the edges in $\mathcal{T}_2(M\backslash N)\cap \mathcal{E}_v(M)$. Let $r(M):=\lvert\mathcal{T}_2(M\backslash N)\cap \mathcal{E}_v(M)\rvert$. Then, \cref{eq contribution M} can be rewritten as follows
$$\sum_{i=0}^{r(M)} (-1)^{i+(|M|-|N|-\mathcal{T}_2(M\backslash N))}\binom{r(M)}{i}\frac{q_{\tplus(M)}r_{\tminus(M)}}{|M|!}=\left\{
\begin{array}{cc}
     0& \text{if $r(M)>0$ } \\
     (-1)^{\lvert\mathcal{E}_v(M)\rvert} \frac{q_{\tplus(M)}r_{\tminus(M)}}{|M|!}& \text{ if $r(M)=0$}.
\end{array}\right.$$
This finishes the combinatorial proof of \cref{thm com C-G} for $\alpha=1$.

\section{Solution of the differential equation}\label{sec resolution}
The main purpose of this section is to solve the differential equation of the main theorem to give an explicit expression of the structure coefficients $g^\pi_{\mu,\nu}(\alpha)$, see \cref{thm g-a}. As a byproduct of this result we construct an algebra isomorphism between the space of symmetric functions and space of shifted symmetric functions (\cref{cor iso}). Finally, we prove \cref{thm low terms} in \cref{ssec low terms}.
\subsection{Explicit expression of coefficients \texorpdfstring{$g^\pi_{\mu,\nu}$}{g}}\label{ssec explicit formula}
We define the coefficients $a^\lambda_\xi$ for any partitions $\lambda$ and $\xi$ by 
\begin{align*}
  a^\lambda_\xi:&=[t^{|\xi|}p_\xi]\left(\prod_{i=1}^{\ell(\lambda)}\alpha\lambda_i \C_{\lambda_i}(t,\bfp)\right)\cdot 1.
\end{align*}
% \\&=(-1)^{|\xi|}\left(\prod_{i=1}^{\ell(\lambda)}\alpha\lambda_i\right)[q_1 (-\alpha r_1^{\lambda_1})\dots q_{\ell(\lambda)}(-\alpha r_{\ell(\lambda)}^{\lambda_{\ell(\lambda)}})] \Jch_\xi(\bfq,\bfr).  
% $a^\lambda_\xi$ counts the number of $\lambda$-layer maps of face type $\xi$.
Note that by \cref{thm com C}, the product in the last equation can be taken in any order.
Using the combinatorial interpretation of the operators $\C_\ell$ given in \cite{BenDaliDolega2023} (see \cref{ssec C alpha 1} for the case $\alpha=1$), it is possible to give a combinatorial interpretation for the coefficients $a^\lambda_{\xi}$ in terms of layered maps introduced in \cite{BenDaliDolega2023}.

% These coefficients are polynomials in $b:=\alpha-1$ with non-negative coefficients, and $a^\lambda_\xi=0$ if $|\xi|<|\lambda|$. Moreover, if $|\lambda|=|\xi|$ then $a^\lambda_\xi=\delta_{\lambda,\xi}$. 

We also consider the coefficients $d^\lambda_{\mu,\nu}$ defined by
$$d^\lambda_{\mu,\nu}:=\sum_{\xi\cup\pi=\lambda}a^\xi_\mu a^\pi_\nu=[t^{|\mu|+|\nu|}q_\mu r_\nu]\prod_{1\leq i\leq \ell(\mu)}\alpha \lambda_i\left(\C_{\lambda_i}(t,\bfq) +\C_{\lambda_i}(t,\bfr)\right)\cdot 1,$$
where the sum is taken over all possible ways of grouping the parts of $\lambda$ into two partitions $\xi$ and $\pi$.

It follows from the definition of operators $\C_\ell$ that 
\begin{equation}
  [t^k]\C_\ell(t,\bfp)=\left\{
\begin{array}{cc}
   0  & \text{if }k<\ell \\
 p_\ell/(\alpha\ell)& \text{if } k=\ell,
\end{array}
\right.  
\end{equation}
see also \cref{eq C_0} for details. We deduce that,
\begin{equation}\label{eq top a coef}
  a^\lambda_\xi=\left\{
\begin{array}{cc}
   0  & \text{if }|\xi|<|\lambda| \\
    \delta_{\lambda,\xi} & \text{if } |\xi|=|\lambda|.
\end{array}
\right.  
\end{equation}

\noindent Similarly, $d^\lambda_{\mu,\nu}=0$ if $|\mu|+|\nu|-|\lambda|<0$.
% This coefficient counts the $\lambda$-layer maps of face type $\mu\cup\nu$, and such that faces are colored in two colors,  for which $\mu$-faces and $\nu$-faces are in different connected components. 

% Note that the product in \cref{eq com C-G 2} can be taken in any order thanks to \cref{thm com C}.
\noindent We now state the main result of this section.

\begin{thm}\label{thm g-a}
For any partitions $\lambda,\mu$ and $\nu$ we have
\begin{equation}\label{eq g-a}
  g^\lambda_{\mu,\nu}=(-1)^{|\mu|+|\nu|-|\lambda|}\sum_{m\geq 0}(-1)^m\sum_{\pi_1,\dots,\pi_m\atop{|\lambda|<|\pi_1|<\dots<|\pi_m|}} a^\lambda_{\pi_1}a^{\pi_1}_{\pi_2}a^{\pi_2}_{\pi_3}\dots a^{\pi_{m-1}}_{\pi_m}d^{\pi_m}_{\mu,\nu},  
\end{equation}
The term $m=0$ in the second sum is interpreted as $d^\lambda_{\mu,\nu}$.
\begin{proof}
We fix $\mu$ and $\nu$, and we proceed by induction on $|\mu|+|\nu|-|\lambda|$. If $|\mu|+|\nu|-|\lambda|<0$ then the equality holds since $g^\lambda_{\mu,\nu}=0$ (see \cref{lem deg bound}).
Using \cref{cor com C-G}, we write
$$\prod_{1\leq i\leq s}\left(\C_{\lambda_i}(-t,\bfq)+\C_{\lambda_i}(-t,\bfr)\right)\cdot \G(t,\bfp,\bfq,\bfr) \cdot 1=\G(t,\bfp,\bfq,\bfr)\prod_{1\leq i\leq s}\C_{\lambda_i}(-t,\bfp)\cdot 1.$$
But from \cref{rmq empty g} and \cref{lem deg bound}, we know that 
\begin{equation}\label{eq G.1}
\G(t,\bfp,\bfq,\bfr) \cdot 1=1.
\end{equation}
Hence,
$$\prod_{1\leq i\leq s}\left(\C_{\lambda_i}(-t,\bfq)+\C_{\lambda_i}(-t,\bfr)\right)\cdot 1=\G(t,\bfp,\bfq,\bfr)\prod_{1\leq i\leq s}\C_{\lambda_i}(-t,\bfp)\cdot 1.$$
We multiply by $\prod_{1\leq i\leq 1}\alpha\lambda_i$ and we extract the coefficient of $[t^{|\mu|+|\nu|}q_\mu r_\nu]$. Using \cref{eq top a coef} and \cref{lem deg bound} we obtain
\begin{align}
  (-1)^{|\mu|+|\nu|}d^\lambda_{\mu,\nu}
  &=\sum_{\kappa\atop{|\lambda|\leq|\kappa|\leq |\mu|+|\nu|}}(-1)^{|\kappa|}a^\lambda_\kappa g^\kappa_{\mu,\nu}\label{eq a structure coef}\\
  &=(-1)^{|\lambda|}g^\lambda_{\mu,\nu}+\sum_{\kappa\atop{|\lambda|<|\kappa|\leq |\mu|+|\nu|}}(-1)^{|\kappa|}a^\lambda_\kappa g^\kappa_{\mu,\nu}.\nonumber
\end{align}

Using the induction hypothesis we get 
\begin{align}
  g^\lambda_{\mu,\nu}
  &=(-1)^{|\mu|+|\nu|-|\lambda|}d^\lambda_{\mu,\nu}-\sum_{\kappa\atop{|\lambda|<|\kappa|\leq |\mu|+|\nu|}}(-1)^{|\kappa|-|\lambda|} a^\lambda_\kappa g^\kappa_{\mu,\nu}\label{eq recursive formula}\\
  &=(-1)^{|\mu|+|\nu|-|\lambda|}d^\lambda_{\mu,\nu}-\sum_{\kappa\atop{|\lambda|<|\kappa|\leq |\mu|+|\nu|}}(-1)^{|\mu|+|\nu|-|\lambda|} a^\lambda_\kappa \sum_{m\geq 0}(-1)^m\sum_{\pi_1,\dots,\pi_m\atop{|\kappa|<|\pi_1|<\dots<|\pi_m|}} a^\kappa_{\pi_1}a^{\pi_1}_{\pi_2}a^{\pi_2}_{\pi_3}\dots a^{\pi_{m-1}}_{\pi_m}d^{\pi_m}_{\mu,\nu}\nonumber\\
  &=(-1)^{|\mu|+|\nu|-|\lambda|}\sum_{m\geq 0}(-1)^m\sum_{\pi_1,\dots,\pi_m\atop{|\lambda|<|\pi_1|<\dots<|\pi_m|}} a^\lambda_{\pi_1}a^{\pi_1}_{\pi_2}a^{\pi_2}_{\pi_3}\dots a^{\pi_{m-1}}_{\pi_m}d^{\pi_m}_{\mu,\nu}\nonumber.\qedhere
\end{align}
\end{proof}
\end{thm}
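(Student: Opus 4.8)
The plan is to turn the commutation relation \cref{cor com C-G} into a closed recursion for the structure coefficients and then solve that recursion by induction on $|\mu|+|\nu|-|\lambda|$. First I would apply both sides of \cref{eq com C-G 2} to the constant function $1$. Since $\G(t,\bfp,\bfq,\bfr)\cdot 1=1$ --- which follows from \cref{rmq empty g} and \cref{lem deg bound}, because $g^\emptyset_{\mu,\nu}$ vanishes unless $\mu=\nu=\emptyset$ --- the right-hand side collapses to $\G(t,\bfp,\bfq,\bfr)\cdot\prod_i\C_{\lambda_i}(-t,\bfp)\cdot 1$. Multiplying through by $\prod_i\alpha\lambda_i$ and extracting the coefficient of $t^{|\mu|+|\nu|}q_\mu r_\nu$, the left-hand side gives $(-1)^{|\mu|+|\nu|}d^\lambda_{\mu,\nu}$ by the very definition of $d^\lambda_{\mu,\nu}$; on the right-hand side, $\prod_i\alpha\lambda_i\C_{\lambda_i}(-t,\bfp)\cdot 1$ expands as $\sum_\kappa(-1)^{|\kappa|}a^\lambda_\kappa\,t^{|\kappa|}p_\kappa$ by the definition of $a^\lambda_\kappa$, after which $\G$ converts each $p_\kappa$ into the generating series of the $g^\kappa_{\mu,\nu}$. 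The vanishing statements $a^\lambda_\kappa=0$ for $|\kappa|<|\lambda|$ from \cref{eq top a coef} and $g^\kappa_{\mu,\nu}=0$ for $|\kappa|>|\mu|+|\nu|$ from \cref{lem deg bound} confine $\kappa$ to the range $|\lambda|\leq|\kappa|\leq|\mu|+|\nu|$. This yields the base identity $(-1)^{|\mu|+|\nu|}d^\lambda_{\mu,\nu}=\sum_{|\lambda|\leq|\kappa|\leq|\mu|+|\nu|}(-1)^{|\kappa|}a^\lambda_\kappa\,g^\kappa_{\mu,\nu}$.

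The next step is to isolate $g^\lambda_{\mu,\nu}$. By \cref{eq top a coef} the diagonal coefficient is $a^\lambda_\lambda=1$, so the $\kappa=\lambda$ term contributes exactly $(-1)^{|\lambda|}g^\lambda_{\mu,\nu}$; solving the base identity for this term produces a recursion expressing $g^\lambda_{\mu,\nu}$ through $d^\lambda_{\mu,\nu}$ together with the coefficients $g^\kappa_{\mu,\nu}$ for $|\kappa|>|\lambda|$. Because $|\kappa|>|\lambda|$ forces $|\mu|+|\nu|-|\kappa|<|\mu|+|\nu|-|\lambda|$, and because $g^\lambda_{\mu,\nu}=0$ whenever $|\mu|+|\nu|-|\lambda|<0$ (again \cref{lem deg bound}), I can fix $\mu$ and $\nu$ and run an induction on the nonnegative quantity $|\mu|+|\nu|-|\lambda|$, taking the negative case as the base step.

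Finally I would unfold the recursion through the induction. Substituting the claimed chain formula (the induction hypothesis) for each $g^\kappa_{\mu,\nu}$ appearing in the tail and tracking the signs, the factor $(-1)^{|\kappa|-|\lambda|}$ coming from the recursion combines with the factor $(-1)^{|\mu|+|\nu|-|\kappa|}$ coming from the hypothesis to produce the uniform global sign $(-1)^{|\mu|+|\nu|-|\lambda|}$. Reindexing by prepending $\pi_1:=\kappa$ to each chain $|\kappa|<|\pi_2|<\cdots$ furnished by the hypothesis turns a chain of length $m$ into a chain of length $m+1$ starting at $\lambda$, while the extra minus sign of the recursion promotes $(-1)^m$ to $(-1)^{m+1}$. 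Together with the term $(-1)^{|\mu|+|\nu|-|\lambda|}d^\lambda_{\mu,\nu}$, which is exactly the empty chain $m=0$, this reassembles the full alternating sum over chains $|\lambda|<|\pi_1|<\cdots<|\pi_m|$ of all lengths $m\geq 0$. The main obstacle I anticipate is precisely this bookkeeping: one must check that the strict size inequalities make the reindexing a bijection onto the chains of length at least one, so that no chain is counted twice and each occurs with exactly the weight $a^\lambda_{\pi_1}a^{\pi_1}_{\pi_2}\cdots a^{\pi_{m-1}}_{\pi_m}d^{\pi_m}_{\mu,\nu}$. The argument is elementary but delicate, since this is where the triangularity encoded in \cref{eq top a coef} and \cref{lem deg bound} is genuinely exploited.
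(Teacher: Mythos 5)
Your proposal is correct and follows essentially the same route as the paper's own proof: applying \cref{cor com C-G} to $1$, using $\G\cdot 1=1$, extracting the coefficient of $t^{|\mu|+|\nu|}q_\mu r_\nu$ to obtain the triangular identity $(-1)^{|\mu|+|\nu|}d^\lambda_{\mu,\nu}=\sum_{|\lambda|\leq|\kappa|\leq|\mu|+|\nu|}(-1)^{|\kappa|}a^\lambda_\kappa g^\kappa_{\mu,\nu}$, isolating the $\kappa=\lambda$ term via $a^\lambda_\lambda=1$, and unfolding the recursion by induction on $|\mu|+|\nu|-|\lambda|$ with the same sign and chain bookkeeping. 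The only cosmetic difference is that you derive the triangular identity before setting up the induction rather than inside it, which changes nothing of substance.
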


We recall that we define $\G_i$ is the homogeneous part of degree $i$ in $\G$ (see also \cref{eq:def:G_k}).
We deduce from the last proof the following proposition.
\begin{prop}\label{prop characterization G}
Fix $n\geq0$. Then, \cref{eq G.1} and equations 
\begin{equation}\label{eq com C-G k}
    [t^k]\left(\C_{\ell}(-t,\bfq)+\C_{\ell}(-t,\bfr)\right)\cdot \G(t,\bfp,\bfq,\bfr)=[t^k]\G(t,\bfp,\bfq,\bfr)\cdot \C_{\ell}(-t,\bfp),
\end{equation}
for $\ell\geq 1 $ and $\ell\leq k\leq n+\ell$, characterize the operators $\G_i$ for $i\leq n$.
\end{prop}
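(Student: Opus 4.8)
The plan is to read the statement as a uniqueness assertion: I must show that \cref{eq G.1} together with the equations \cref{eq com C-G k} (for $\ell\ge 1$ and $\ell\le k\le n+\ell$) force every coefficient $g^\pi_{\mu,\nu}$ with $|\mu|+|\nu|-|\pi|\le n$ — equivalently the operators $\G_0,\dots,\G_n$, which are built from these coefficients — to take prescribed values. I would argue by induction on the integer $i:=|\mu|+|\nu|-|\pi|$, reusing the recursion that already drives the proof of \cref{thm g-a}: formula \cref{eq recursive formula} expresses each $g^\lambda_{\mu,\nu}$ of degree $i$ in terms of the $\G$-independent quantity $d^\lambda_{\mu,\nu}$, the coefficients $a^\lambda_\kappa$, and coefficients $g^\kappa_{\mu,\nu}$ with $|\kappa|>|\lambda|$, i.e. of strictly smaller degree $i-(|\kappa|-|\lambda|)$. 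Since $d^\lambda_{\mu,\nu}$ and $a^\lambda_\kappa$ are assembled purely from the operators $\C_\ell$ and involve no values of $\G$, this recursion determines all degree-$i$ coefficients once the degree-$(<i)$ ones are known, the base case $i=0$ reading $g^\lambda_{\mu,\nu}=d^\lambda_{\mu,\nu}$.

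The first real step is to re-derive the key identity \cref{eq a structure coef} using only the hypotheses of the proposition, rather than the full commutation relation \cref{cor com C-G}. I would apply the operator $\prod_{j}\alpha\lambda_j\bigl(\C_{\lambda_j}(-t,\bfq)+\C_{\lambda_j}(-t,\bfr)\bigr)$ to $\G\cdot 1$; invoking \cref{eq G.1} collapses this to $\prod_j\alpha\lambda_j\bigl(\C_{\lambda_j}(-t,\bfq)+\C_{\lambda_j}(-t,\bfr)\bigr)\cdot 1$, whose coefficient of $t^{|\mu|+|\nu|}q_\mu r_\nu$ is exactly $(-1)^{|\mu|+|\nu|}d^\lambda_{\mu,\nu}$. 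On the other side I would commute $\G$ leftward across the factors one at a time, converting each $\C_{\lambda_j}$ acting on $\bfq,\bfr$ into a $\C_{\lambda_j}$ acting on $\bfp$; after all factors are crossed the expression is $\G\cdot\prod_j\alpha\lambda_j\C_{\lambda_j}(-t,\bfp)\cdot 1$, whose relevant coefficient is $\sum_\kappa(-1)^{|\kappa|}a^\lambda_\kappa g^\kappa_{\mu,\nu}$. The normalization \cref{eq top a coef}, giving $a^\lambda_\lambda=1$ and $a^\lambda_\kappa=0$ for $|\kappa|<|\lambda|$, together with the vanishing from \cref{lem deg bound}, isolates the term $g^\lambda_{\mu,\nu}$ and reproduces precisely \cref{eq a structure coef}, hence \cref{eq recursive formula}.

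The heart of the matter — and the step I expect to be most delicate — is checking that each crossing is needed only in the truncated form \cref{eq com C-G k} with $k\le n+\ell$. Ordering the parts as $\lambda=[\lambda_1,\dots,\lambda_s]$ and moving $\G$ across them from right to left, when $\G$ is moved past the factor indexed by $m$ I replace the sub-operator $\bigl(\C_{\lambda_m}(-t,\bfq)+\C_{\lambda_m}(-t,\bfr)\bigr)\,\G$ by $\G\,\C_{\lambda_m}(-t,\bfp)$, which is exactly \cref{eq com C-G k} at $\ell=\lambda_m$. I would then track $t$-degrees: the factors still to the right of position $m$, applied to $1$, contribute $t$-degree at least $\lambda_{m+1}+\dots+\lambda_s$, and the factors to the left contribute at least $\lambda_1+\dots+\lambda_{m-1}$. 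Since the coefficient is ultimately extracted at $t$-power $|\mu|+|\nu|=|\lambda|+i$, the crossing step can only contribute terms of $t$-degree at most $(|\lambda|+i)-(\lambda_1+\dots+\lambda_{m-1})-(\lambda_{m+1}+\dots+\lambda_s)=\lambda_m+i\le\lambda_m+n$ to the surviving coefficient. Thus each crossing invokes \cref{eq com C-G k} solely for $\ell=\lambda_m$ and $\lambda_m\le k\le\lambda_m+n$, exactly the prescribed range; any contribution requiring a larger $k$ would push the total $t$-degree above $|\lambda|+i$ and therefore cannot survive the extraction of $[t^{|\lambda|+i}]$ when $i\le n$.

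Putting these together, an induction on $i$ from $0$ to $n$ shows that \cref{eq G.1} and the listed equations \cref{eq com C-G k} determine every $g^\pi_{\mu,\nu}$ with $|\mu|+|\nu|-|\pi|\le n$, and hence the operators $\G_0,\dots,\G_n$ (the base case uses $g^\emptyset_{\emptyset,\emptyset}=1$ from \cref{rmq empty g}). The only genuinely technical part is the degree accounting of the previous paragraph; the algebraic core is a truncated replay of the proof of \cref{thm g-a}, requiring no input beyond \cref{eq top a coef}, \cref{lem deg bound}, and \cref{eq G.1} to close the induction.
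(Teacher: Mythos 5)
Your proof is correct and follows essentially the same route as the paper's: the paper's (very terse) proof likewise derives the truncated multi-factor commutation relations by induction from the single-factor ones and then invokes the recursion from the proof of \cref{thm g-a} to pin down $g^\lambda_{\mu,\nu}$ for $0\leq|\mu|+|\nu|-|\lambda|\leq n$. The only difference is that you spell out the $t$-degree bookkeeping (each crossing needs \cref{eq com C-G k} only for $\ell=\lambda_m$, $\lambda_m\leq k\leq\lambda_m+n$), which the paper asserts without detail.
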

Note that by definition, the lowest term in $\C_\ell$ as a formal power-series in $t$ has degree $\ell$. Hence, the previous equations involve only operators $\G_i$ for $i\leq n$. 

\begin{proof}
% [Proof of \cref{prop characterization G}]
Fix $n\geq 0$.
% We want to prove that the equations
% \begin{equation*}
%     [t^k]\left(\C_{\ell}(-t,\bfq)+\C_{\ell}(-t,\bfr)\right)\cdot \G(t,\bfp,\bfq,\bfr)=[t^k]\G(t,\bfp,\bfq,\bfr)\cdot \C_{\ell}(-t,\bfp).    
% \end{equation*}
% for $\ell\geq 1 $ and $\ell\leq k\leq n+\ell$, characterize the operators $\G_i$ for $i\leq n$.
First notice that Eqs. \eqref{eq com C-G k} imply by induction that for any partition $\lambda=[\lambda_1,\dots,\lambda_s]$ for any $|\lambda|\leq k\leq |\lambda|+n$, we have
    \begin{equation*}
    [t^k]\prod_{1\leq i\leq s}\left(\C_{\lambda_i}(-t,\bfq)+\C_{\lambda_i}(-t,\bfr)\right)\cdot \G(t,\bfp,\bfq,\bfr)
    =[t^k]\G(t,\bfp,\bfq,\bfr)\cdot \prod_{1\leq i\leq s}\C_{\lambda_i}(-t,\bfp).
\end{equation*}
 In the proof of \cref{thm g-a} these equations are used to obtain the explicit formula  \eqref{eq g-a} of $g^\lambda_{\mu,\nu}$ for $0\leq |\mu|+|\nu|-|\lambda|\leq n$. In particular, they characterize operators $\G_0$, \dots ,$\G_n$. 
\end{proof}
Actually, the operators $\C_\ell$ for $\ell\geq 1$, can be generated using only operators $\C_0$ and $\C_1$ (see \cref{thm com C}). One can use this result to show that Equations \eqref{eq com C-G k} for $\ell\in\{0,1\}$ and $1\leq k\leq n+1$ also characterize the operators $\G_i$ for $i\leq n$. In particular, each operator $\G_i$ is characterized by finitely many equations.

\subsection{\texorpdfstring{$g^\pi_{\mu,\nu}$}{g} as structure coefficients of symmetric functions}\label{ssec isomorphism}

We denote for every $\mu$
$$A_\mu^{(\alpha)}:=\sum_{\lambda}(-1)^{|\mu|}a^\lambda_{\mu} p_\lambda.$$
% =[q_\mu]\exp\left(\sum_{\ell\geq1} p_\ell \C_\ell(\bfq)\right)\cdot 1
With this definition, multiplying \cref{eq a structure coef} by $p_\lambda$ and summing over all $\lambda$ gives
$$A_\mu^{(\alpha)}\cdot A^{(\alpha)}_\nu=\sum_{\kappa}g^\kappa_{\mu,\nu}A^{(\alpha)}_\kappa.$$
When $p_\lambda$ is thought of as a power-sum symmetric function in an underlying alphabet $\bfx$ (see \cref{ssec symmetric functions}), $A^{(\alpha)}_\mu$ becomes a symmetric function. We then have the following corollary.
\begin{cor}\label{cor iso}
The map 
$$\begin{array}{ccc}
    \mathcal S_\alpha & \longrightarrow&\mathcal S_\alpha^* \\
    A^{(\alpha)}_\mu & \longmapsto & \theta_\mu^{(\alpha)} 
\end{array}$$
is an algebra isomorphism between $\mcS$ and $\mcSstar$.
\end{cor}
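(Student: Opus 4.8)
The plan is to observe that the corollary is essentially a formal consequence of two facts already in hand: the multiplicative relation $A^{(\alpha)}_\mu A^{(\alpha)}_\nu = \sum_\kappa g^\kappa_{\mu,\nu} A^{(\alpha)}_\kappa$ derived just above the statement (by multiplying \cref{eq a structure coef} by $p_\lambda$ and summing), and the defining relation $\Jch_\mu \Jch_\nu = \sum_\kappa g^\kappa_{\mu,\nu} \Jch_\kappa$ of \cref{eq g Jch}. These say that the two families $(A^{(\alpha)}_\mu)_\mu$ and $(\Jch_\mu)_\mu$ carry \emph{identical} structure constants, so the linear map matching one family to the other should automatically be an algebra homomorphism.

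First I would check that $(A^{(\alpha)}_\mu)_{\mu\in\mathbbm{Y}}$ is a linear basis of $\mcS$. By \cref{eq top a coef} we have $a^\lambda_\mu = 0$ whenever $|\lambda| > |\mu|$, and $a^\lambda_\mu = \delta_{\lambda,\mu}$ when $|\lambda| = |\mu|$; hence
\[
A^{(\alpha)}_\mu = (-1)^{|\mu|} p_\mu + \sum_{|\lambda| < |\mu|} (-1)^{|\mu|} a^\lambda_\mu\, p_\lambda,
\]
so $A^{(\alpha)}_\mu$ equals $(-1)^{|\mu|}p_\mu$ plus a combination of $p_\lambda$ of strictly smaller size. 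Ordering partitions by size, the transition matrix from $(p_\mu)$ to $(A^{(\alpha)}_\mu)$ is triangular with diagonal entries $\pm 1$, hence invertible; since $(p_\mu)$ is a basis of $\mcS$, so is $(A^{(\alpha)}_\mu)$. On the shifted side, $(\Jch_\mu)$ is a basis of $\mcSstar$ (as recalled at the start of \cref{ssec structure coeff}, and guaranteed by \cref{thm Feray}).

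Consequently the linear map $\Phi\colon \mcS \to \mcSstar$ defined on bases by $\Phi(A^{(\alpha)}_\mu) = \Jch_\mu$ is a well-defined linear bijection. To verify multiplicativity it suffices to check it on basis elements:
\[
\Phi\bigl(A^{(\alpha)}_\mu A^{(\alpha)}_\nu\bigr) = \Phi\Bigl(\sum_\kappa g^\kappa_{\mu,\nu} A^{(\alpha)}_\kappa\Bigr) = \sum_\kappa g^\kappa_{\mu,\nu}\Jch_\kappa = \Jch_\mu \Jch_\nu = \Phi\bigl(A^{(\alpha)}_\mu\bigr)\Phi\bigl(A^{(\alpha)}_\nu\bigr),
\]
where the outer equalities use the two structure-constant relations. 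Extending by bilinearity yields $\Phi(fg) = \Phi(f)\Phi(g)$ for all $f,g\in\mcS$. Finally $\Phi$ preserves units: $A^{(\alpha)}_\emptyset = (-1)^0 a^\emptyset_\emptyset\, p_\emptyset = 1$ since $a^\emptyset_\emptyset = 1$, while $\Jch_\emptyset = 1$ by \cref{rmq empty g}. A unit-preserving, multiplicative linear bijection is an algebra isomorphism, proving the corollary.

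I do not expect a genuine obstacle here; the only point requiring care is the basis claim for $(A^{(\alpha)}_\mu)$, which rests entirely on the triangularity \cref{eq top a coef}, together with the standard observation that a linear map respecting products on a basis respects all products. The essential combinatorial content has already been packaged into the relation $A^{(\alpha)}_\mu A^{(\alpha)}_\nu = \sum_\kappa g^\kappa_{\mu,\nu} A^{(\alpha)}_\kappa$ established immediately before the statement, so this final step is purely formal.
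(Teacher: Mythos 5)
Your proof is correct and takes essentially the same route as the paper: the paper establishes the structure-constant relation $A^{(\alpha)}_\mu\cdot A^{(\alpha)}_\nu=\sum_{\kappa}g^\kappa_{\mu,\nu}A^{(\alpha)}_\kappa$ immediately before the statement and leaves the remaining (purely formal) argument implicit, which is exactly what you spell out. The one detail the paper omits---that $(A^{(\alpha)}_\mu)$ is a basis of $\mcS$, which you deduce from the triangularity in \cref{eq top a coef}---is indeed the right verification, so your write-up is a faithful, slightly more detailed version of the intended proof.
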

\noindent For $\alpha=1$, such an isomorphism has been obtained in \cite{CorteelGoupilSchaeffer2004} using a different approach.

\subsection{A differential expression for the lower terms of \texorpdfstring{$\G$}{}}\label{ssec low terms}
In this section we prove \cref{thm low terms}. 
This proof represents no difficulty but involves some lengthy computation. 
For any $\ell,k\geq 0$, we consider the operator 
$$\C_{\ell,k}(\bfp)=(\ell+k)[t^{k+\ell}]\C_\ell(t,\bfp).$$
If $\mathcal{X}$ and $\mathcal{X'}$ are two vector spaces, we denote by $\mathcal{O}(\mathcal{X}$,$\mathcal{X'})$ the space of linear operators from $\mathcal{X}$ to $\mathcal{X'}$. We also set  $\mathcal{O}(\mathcal{X}):=\mathcal{O}(\mathcal{X},\mathcal{X})$.
Let $O$ be an operator in one alphabet such that $O(\bfp)\in\mathcal{O}\left(\mathbb Q(\alpha)[\bfp]\right)$, and let $P(\bfp,\bfq,\bfr)\in\mathcal{O}\left(\mathbb Q(\alpha)[\bfp],\mathbb Q(\alpha)[\bfq,\bfr]\right)$. We introduce their \textit{three alphabet commutator} $\left[O,P\right]_{\bfp}^{\bfq,\bfr}\in\mathcal{O}\left(\mathbb Q(\alpha)[\bfp],\mathbb Q(\alpha)[\bfq,\bfr]\right)$ defined by
$$\left[P,O\right]_{\bfp}^{\bfq,\bfr}:=P(\bfp,\bfq,\bfr)\cdot O(\bfp)-(O(\bfq)+O(\bfr))\cdot P(\bfp,\bfq,\bfr).$$

If $Q_1\in\mathcal{O}\left(\mathbb Q(\alpha)[\bfp]\right)$ and $Q_2\in\mathcal{O}\left(\mathbb Q(\alpha)[\bfq,\bfr]\right)$, it is easy to check that this commutator satisfies the relation 
\begin{align}\label{eq com 3 relation}
  \left[Q_2\cdot P\cdot Q_1,O\right]_{\bfp}^{\bfq,\bfr}
  =&[Q_2(\bfq,\bfr),O(\bfq)]\cdot P\cdot Q_1+[Q_2(\bfq,\bfr),O(\bfr)]\cdot P\cdot Q_1
\\
&+Q_2(\bfq,\bfr)\cdot \left[P,O\right]_{\bfp}^{\bfq,\bfr}\cdot Q_1(\bfp)+Q_2(\bfq,\bfr)\cdot P\cdot \left[Q_1(\bfp) ,O(\bfp)\right].\nonumber
\end{align}

We denote by $\tG_0, \tG_1,\tG_2$ the differential operators given respectively by the right-hand sides of \cref{eq G_0,eq G_1,eq G_2}. Our goal is to prove that $\G_i=\tG_i$ for $0\leq i\leq 2$. Applying \cref{prop characterization G}, it is enough to show that for any 
$\ell\geq 1$ and $0\leq i\leq 2$
\begin{equation}\label{eq com tilde G-C}
\sum_{0\leq j\leq i} \frac{(-1)^{\ell+j}}{\ell+j}\left[\tG_{i-j},\C_{\ell,j}\right]_{\bfp}^{\bfq,\bfr}=0
\end{equation}

The following lemma gives a differential  expression for the operators $\C_{\ell,j}$ for $j\leq 2$, which does not involve the alphabet $Y$. The proof is given in \cref{appendix}.
\begin{lem}\label{lem C 0-1-2}
For any $\ell\geq 1$, we have
\begin{subequations}
\begin{equation}\label{eq C_0}
  \C_{\ell,0}=p_\ell/{\alpha},  
\end{equation}
\begin{equation}\label{eq C_1}
  \C_{\ell,1}=\binom{\ell+1}{2}\frac{b}{\alpha}\cdot p_{\ell+1}+(\ell+1)\sum_{m\geq 1} p_{m+\ell+1}\frac{m\partial}{\partial p_m}+\frac{\ell+1}{2\alpha}\sum_{1\leq i\leq \ell} p_i p_{\ell+1-i},  
\end{equation}    
\begin{align}\label{eq C_2}
  \C_{\ell,2}&=
  \frac{1}{3\alpha}\binom{\ell+2}{2}\sum_{i_1+i_2+i_3=\ell+2\atop{i_1,i_2,i_3\geq 1}}p_{i_1}p_{i_2}p_{i_3}+\binom{\ell+2}{3}\frac{(3\ell+5)b^2}{4\alpha}p_{\ell+2}\\
  &+\alpha\binom{\ell+2}{2}\sum_{k,m\geq 1}p_{\ell+k+m+2}\frac{m\partial}{\partial p_m}\frac{k\partial}{\partial p_k}+\binom{\ell+2}{2}\sum_{m\geq1}b(\ell+m+1)p_{m+\ell+2}\frac{m\partial}{\partial p_m}\nonumber\\
  &+\sum_{i_1+i_2=\ell+2\atop{i_1,i_2\geq 1}}\frac{b\cdot (\ell+2)\left((\ell+1)^2-i_1i_2\right)}{2\alpha}p_{i_1}p_{i_2}+\binom{\ell+3}{4}p_{\ell+2}\nonumber \\
  &+\binom{\ell+2}{2}\sum_{m\geq 1}\sum_{i_1+i_2=\ell+m+2\atop{i_1,i_2\geq1}}p_{i_1}p_{i_2}\frac{m\partial}{\partial p_m}.\nonumber
\end{align}

\end{subequations}
% \begin{align*}
%   C_{\ell,2}=
%   &\frac{1}{2\alpha}\sum_{i_1+i_2+i_3=\ell+2\atop{i_1,i_2,i_3\geq 1}}i_3(2i_2+i_3-1)p_{i_1}p_{i_2}p_{i_3}+\binom{\ell+2}{3}\frac{(3\ell+5)b^2}{4\alpha}p_{\ell+2}\\
%   &+\alpha\binom{\ell+2}{2}\sum_{k,m\geq 1}p_{\ell+k+m+2}\frac{m\partial}{\partial p_m}\frac{k\partial}{\partial p_k}+\binom{\ell+2}{2}\sum_{m\geq1}b(\ell+m+1)p_{m+\ell+2}\frac{m\partial}{\partial p_m}\\
%   &+\sum_{i_1+i_2=\ell+2\atop{i_1,i_3\geq 1}}\frac{b\cdot i_2\left((\ell+1)^2-i_1i_2\right)}{\alpha}p_{i_1}p_{i_2}+\binom{\ell+3}{4}p_{\ell+2} \\
%   &+\binom{\ell+2}{2}\sum_{m\geq 1}\sum_{i_1+i_2=\ell+m+2\atop{i_1,i_2\geq1}}p_{i_1}p_{i_2}\frac{m\partial}{\partial p_m}
% \end{align*}
\end{lem}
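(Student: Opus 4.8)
The starting point is to unwind the definitions so that $\C_{\ell,k}$ becomes a single coefficient extraction from the catalytic construction of $\B_n$. Since $\C_\ell(t,\bfp)=[u^\ell]\Binf(t,\bfp,u)=\sum_{n\geq1}\frac{t^n}{n}[u^\ell]\B_n(\bfp,u)$, the definition $\C_{\ell,k}=(\ell+k)[t^{\ell+k}]\C_\ell$ collapses to the clean identity $\C_{\ell,k}=[u^\ell]\B_{\ell+k}(\bfp,u)$. Feeding this into \cref{eq def Bn} and expanding $(\GY+uY_+)^{\ell+k}$, the coefficient of $u^\ell$ is the sum over all words in the letters $\GY$ and $Y_+$ containing exactly $k$ copies of $\GY$ and $\ell$ copies of $Y_+$, namely
$$\C_{\ell,k}=\Theta_Y\Big(\textstyle\sum_{\text{words }w}w\Big)\tfrac{y_0}{1+b},$$
read as a differential operator in $\bfp$ acting from the right on a test element, so that the pieces $\partial/\partial p_j$ inside $\GY$ survive. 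For $k\leq 2$ there are respectively $1$, $\ell+1$ and $\binom{\ell+2}{2}$ such words, indexed by how the $Y_+$'s are distributed around the $\GY$'s.

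First I would record the elementary transfer rules for how the three catalytic operators of \cref{ssec op Bn} act on a ``state'' $y_mF$ with $F\in\mcP$; the crucial structural fact is that $\GY$, $Y_+$ and $\Theta_Y$ all preserve linearity in the $Y$-variables, so every intermediate expression is a sum of such states. A direct computation gives $Y_+(y_mF)=y_{m+1}F$, $\Theta_Y(y_mF)=p_mF$, and
$$\GY(y_mF)=(1+b)\sum_{j\geq1}j\,y_{m+1+j}\frac{\partial F}{\partial p_j}+\sum_{j=1}^{m}p_j\,y_{m+1-j}F+b\,m\,y_{m+1}F.$$
Thus $(Y_+)^a$ shifts the catalytic index by $a$, $\Theta_Y$ substitutes $y_m\mapsto p_m$ at the very end, and each $\GY$ offers three alternatives: create a derivative $\partial/\partial p_j$ (with shift $m\mapsto m+1+j$), create a multiplicative factor $p_j$ (with $m\mapsto m+1-j$), or contribute the scalar $b\,m$.

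The cases $k=0$ and $k=1$ are then immediate. For $k=0$ the only word is $(Y_+)^\ell$, yielding $\Theta_Y(Y_+)^\ell\frac{y_0}{1+b}=\frac{p_\ell}{\alpha}$, which is \cref{eq C_0}. For $k=1$ I would write $\C_{\ell,1}=\Theta_Y\sum_{a=0}^{\ell}(Y_+)^{\ell-a}\GY(Y_+)^{a}\frac{y_0}{1+b}$, apply the rule above with $F=\Phi/\alpha$ (using $(1+b)=\alpha$ to clear the derivative prefactor), shift by $\ell-a$, and substitute; summing over $a$ produces $\sum_a a=\binom{\ell+1}{2}$ for the $b$-term, the constant $\ell+1$ for the derivative term, and $\sum_a\sum_{j\leq a}$ for the quadratic term, which after the symmetrization $j\leftrightarrow\ell+1-j$ becomes $\frac{\ell+1}{2\alpha}\sum_i p_ip_{\ell+1-i}$, exactly \cref{eq C_1}.

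The real work, and the main obstacle, is \cref{eq C_2}. Here one sums over the $\binom{\ell+2}{2}$ words $(Y_+)^{e}\GY(Y_+)^{c}\GY(Y_+)^{a}$ with $a+c+e=\ell$, and for each word over the $3\times3$ choices of pieces from the two copies of $\GY$. The delicate point is that the derivative piece of the \emph{second} $\GY$ acts by the product rule on the $p_j$ just created by the \emph{first} $\GY$; this is precisely what converts three-$p$ products into the two-$p$ and single-$p$ terms of \cref{eq C_2}, and is the source of the factors $(\ell+1)^2-i_1i_2$ and $3\ell+5$. Once the nine contributions are collected as functions of $(a,c,e)$ and the internal indices, the remaining step is to evaluate the composition-sums $\sum_{a+c+e=\ell}1=\binom{\ell+2}{2}$, $\sum a$, $\sum\binom{a+1}{2}$, $\sum a(a+1+c)$, and so on; these standard hockey-stick identities produce the binomials $\binom{\ell+2}{2}$, $\binom{\ell+2}{3}$ and $\binom{\ell+3}{4}$ together with their polynomial prefactors. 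I expect no conceptual difficulty beyond careful bookkeeping: the computation is linear, and the provenance of each term is dictated by which of the three $\GY$-alternatives is selected at each of the two stages.
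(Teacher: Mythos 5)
Your proposal is correct and follows essentially the same route as the paper's appendix: both reduce $\C_{\ell,k}$ to $[u^\ell]\B_{\ell+k}(\bfp,u)$, expand over words with $k$ copies of $\GY$ and $\ell$ copies of $Y_+$ (equivalently, compositions $i_1+\dots+i_{k+1}=\ell$), push states $y_mF$ through the same transfer rules for $Y_+$, $\GY$ and $\Theta_Y$, and finish by evaluating composition sums and symmetrizing. One minor slip in your narrative for \cref{eq C_2}: the factors $3\ell+5$ and $(\ell+1)^2-i_1i_2$ actually come from the scalar--scalar ($b^2$) and scalar--multiplication ($b$) choices in the two copies of $\GY$, while the product-rule contraction you single out produces only the derivative-free term $\binom{\ell+3}{4}p_{\ell+2}$ --- a misattribution of provenance that does not affect the validity of the procedure you describe.
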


In the following lemma we establish some useful commutation relations for the operator~$\Psi$.
\begin{lem}\label{lem Psi commmutator}
    We have the following relations between operators in $\mathcal{O}(\mathbb Q(\alpha)[\bfp],\mathbb Q(\alpha)[\bfq,\bfr])$ 
    $$\Psi\cdot p_\ell=(q_\ell+r_\ell)\cdot \Psi,\quad \textit{i.e.}\quad  \left[\Psi,p_\ell\right]^{\bfq,\bfr}_\bfp=0$$
    $$\Psi\cdot \frac{\partial}{\partial p_\ell}=\frac{\partial}{\partial q_\ell}\cdot \Psi=\frac{\partial}{\partial r_\ell}\cdot \Psi.$$
    Moreover, 
    $$\left[\Psi,p_i\frac{\partial}{\partial p_j}\right]^{\bfq,\bfr}_\bfp=\left[\Psi,p_i\frac{\partial}{\partial p_{j_1}}\frac{\partial}{\partial p_{j_2}}\right]^{\bfq,\bfr}_\bfp=0,$$
    $$\left[\Psi,p_{i_1}p_{i_2}\right]^{\bfq,\bfr}_\bfp=(q_{i_1}r_{i_2}+r_{i_1}q_{i_2})\Psi,$$
    $$\left[\Psi,p_{i_1}p_{i_2}\frac{\partial}{\partial p_j}\right]^{\bfq,\bfr}_\bfp=q_{i_1}r_{i_2}\Psi \frac{\partial}{\partial p_j}+r_{i_1}q_{i_2}\Psi \frac{\partial}{\partial p_j}.$$
    \begin{proof}
        The first equation is immediate from the definition of the operator $\Psi$. Let us show the second equation.
        Fix a partition $\lambda$. If $m_\ell(\lambda)=0$ then 
        $$\Psi\cdot \frac{\partial}{\partial p_\ell}p_\lambda=\frac{\partial}{\partial q_\ell}\cdot \Psi p_\lambda=0.$$
        Otherwise, we denote by $\mu$ the partition obtained from $\lambda$ by erasing a part of size $\ell$.  Then 
        $$\Psi\cdot \frac{\partial}{\partial p_\ell}p_\lambda=m_\ell(\lambda)\Psi p_\mu= m_\ell(\lambda)\prod_{i\in\mu}(q_i+r_i).$$
        On the other hand,
        $$\frac{\partial}{\partial q_\ell}\cdot \Psi p_\lambda=\frac{\partial}{\partial q_\ell}\prod_{i\in\lambda}(q_i+r_i)=m_\ell(\lambda)\prod_{i\in\mu}(q_i+r_i).$$
       The last three equations follow from the first ones.  
    \end{proof}
\end{lem}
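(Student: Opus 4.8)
The plan is to identify $\Psi$ with an algebra homomorphism and then deduce all five relations from two elementary identities. The key observation is that, because $p_\pi p_\sigma = p_{\pi\cup\sigma}$ and $\Psi(p_\pi)=\prod_i(q_{\pi_i}+r_{\pi_i})$, the map $\Psi$ is exactly the $\mathbb Q(\alpha)$-algebra morphism $\mathbb Q(\alpha)[\bfp]\to\mathbb Q(\alpha)[\bfq,\bfr]$ sending $p_n\mapsto q_n+r_n$ for every $n\geq1$. Multiplicativity then gives $\Psi\cdot p_\ell=(q_\ell+r_\ell)\cdot\Psi$ at once, which is the first relation $\left[\Psi,p_\ell\right]^{\bfq,\bfr}_\bfp=0$.

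The only computational step is the derivation-type identity $\Psi\cdot\frac{\partial}{\partial p_\ell}=\frac{\partial}{\partial q_\ell}\cdot\Psi=\frac{\partial}{\partial r_\ell}\cdot\Psi$. I would check it on the basis $(p_\lambda)$: when $m_\ell(\lambda)=0$ both sides annihilate $p_\lambda$, while otherwise, writing $\mu$ for $\lambda$ with one part $\ell$ deleted, both $\Psi\big(\frac{\partial}{\partial p_\ell}p_\lambda\big)$ and $\frac{\partial}{\partial q_\ell}\big(\Psi p_\lambda\big)$ equal $m_\ell(\lambda)\prod_{i\in\mu}(q_i+r_i)$, and the identical computation for $r_\ell$ gives the third expression. (Alternatively, $\Psi\circ\frac{\partial}{\partial p_\ell}$ and $\frac{\partial}{\partial q_\ell}\circ\Psi$ are both $\Psi$-derivations that agree on the generators $p_n$, hence coincide.)

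The remaining three relations follow formally. For $O=p_i\frac{\partial}{\partial p_j}$ one pushes $\Psi$ past $p_i$ by the first identity and past $\frac{\partial}{\partial p_j}$ by the second, obtaining $\Psi\cdot p_i\frac{\partial}{\partial p_j}=(q_i+r_i)\Psi\frac{\partial}{\partial p_j}=q_i\frac{\partial}{\partial q_j}\Psi+r_i\frac{\partial}{\partial r_j}\Psi=(O(\bfq)+O(\bfr))\Psi$, so the commutator vanishes; the second-order case is identical with one extra application of the derivation identity. For $O=p_{i_1}p_{i_2}$, applying the first identity twice gives $(q_{i_1}+r_{i_1})(q_{i_2}+r_{i_2})\Psi$, and subtracting $O(\bfq)\Psi+O(\bfr)\Psi=(q_{i_1}q_{i_2}+r_{i_1}r_{i_2})\Psi$ leaves the cross terms $(q_{i_1}r_{i_2}+r_{i_1}q_{i_2})\Psi$. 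The relation for $p_{i_1}p_{i_2}\frac{\partial}{\partial p_j}$ is the same computation carried along the extra factor $\Psi\frac{\partial}{\partial p_j}$, which yields $q_{i_1}r_{i_2}\Psi\frac{\partial}{\partial p_j}+r_{i_1}q_{i_2}\Psi\frac{\partial}{\partial p_j}$.

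There is no genuine obstacle here: once $\Psi$ is recognized as the homomorphism $p_n\mapsto q_n+r_n$, only the derivation identity requires any calculation and the rest is bookkeeping with the two basic relations. The single point demanding care is the meaning of $O(\bfq)$ and $O(\bfr)$ in the three-alphabet commutator --- for $O=p_i\frac{\partial}{\partial p_j}$ these are $q_i\frac{\partial}{\partial q_j}$ and $r_i\frac{\partial}{\partial r_j}$ respectively, and it is precisely this substitution that makes the cancellations in the last three relations come out as stated.
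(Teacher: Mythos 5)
Your proposal is correct and follows essentially the same route as the paper: establish the multiplicativity relation $\Psi\cdot p_\ell=(q_\ell+r_\ell)\cdot\Psi$ and the derivation identity $\Psi\cdot\frac{\partial}{\partial p_\ell}=\frac{\partial}{\partial q_\ell}\cdot\Psi=\frac{\partial}{\partial r_\ell}\cdot\Psi$ (by the same basis computation on $p_\lambda$), then deduce the three commutator formulas formally from these two. The only difference is presentational --- you spell out the bookkeeping that the paper compresses into ``the last three equations follow from the first ones,'' and you package the first relation as the statement that $\Psi$ is the algebra morphism $p_n\mapsto q_n+r_n$, which is exactly what the paper's ``immediate from the definition'' amounts to.
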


We now prove \cref{thm low terms}.
\begin{proof}[Proof of \cref{thm low terms}]
    From \cref{lem Psi commmutator} and \cref{eq C_0,eq C_1}, we have
    $$\frac{1}{\ell}\left[\tG_0,\C_{\ell,0}\right]_{\bfp}^{\bfq,\bfr}=0,$$
    $$\frac{1}{\ell+1}\left[\tG_0,\C_{\ell,1}\right]_{\bfp}^{\bfq,\bfr}=\frac{1}{\alpha}\sum_{m_1+m_2=\ell+1\atop{m_1,m_2\geq 1}}q_{m_1}r_{m_2}\Psi=\frac{1}{\ell}\left[\tG_1,\C_{\ell,0}\right]_{\bfp}^{\bfq,\bfr}.$$
    % $$\frac{1}{\ell}\left[\widetilde \G_1,\C_{\ell,0}\right]_{\bfp}^{\bfq,\bfr}=\frac{1}{\alpha}\sum_{i_1+i_2=\ell+1\atop{i_1,i_2\geq 1}}q_{i_1}r_{i_2}.$$
These two equations together with \cref{prop characterization G} give \cref{eq G_0,eq G_1}. Let us now prove \cref{eq G_2}. Using \cref{lem Psi commmutator} and \cref{eq C_2}, we get
    \begin{align*}
      \frac{1}{\ell+2}\left[\tG_0,\C_{\ell,2}\right]_{\bfp}^{\bfq,\bfr}=
      &\frac{\ell+1}{2\alpha}\sum_{i_1+i_2+i_3=\ell+2\atop{i_1,i_2,i_3\geq1}}(q_{i_1}q_{i_2}r_{i_3}+q_{i_1}r_{i_2}r_{i_3})\cdot \Psi\\
      &+\sum_{i_1+i_2=\ell+2\atop{i_1,i_3\geq 1}}\frac{b\cdot \left((\ell+1)^2-i_1i_2\right)}{2\alpha}q_{i_1}r_{i_2}\cdot \Psi\\
      &+(\ell+1)\sum_{m\geq 1}\sum_{i_1+i_2=\ell+m+2\atop{i_1,i_2\geq1}}q_{i_1}r_{i_2}\cdot \Psi\cdot\frac{m\partial}{\partial p_m}.
    \end{align*}
Moreover, 
    \begin{align*}
      \frac{1}{\ell}\left[\tG_2,\C_{\ell,0}\right]_{\bfp}^{\bfq,\bfr}  
      =&\frac{1}{2\alpha}\sum_{m_1+m_2=\ell+2\atop{m_1,m_2\geq 1}}b(m_1-1)(m_2-1)q_{m_1}r_{m_2}\Psi\label{eq G_2}\\ 
  % +\sum_{m\geq 1}(i_1-1)q_{i_1}r_{i_2}r_{i_3}\Psi\frac{m\partial}{\partial p_m}\\
  &\nonumber+\frac{1}{2\alpha}\sum_{m_1+m_2+m_3=\ell+2\atop{m_1,m_2,m_3\geq 1}}(m_1-1)(q_{m_1}r_{m_2}r_{m_3}+r_{m_1}q_{m_2}q_{m_3})\Psi\\\nonumber
  &+\sum_{m\geq1}\sum_{i_1+i_2=m+\ell+2\atop{i_1,i_2\geq1}}\min(\ell,m,i_1-1,i_2-1)q_{i_1}r_{i_2}\Psi\frac{m\partial}{\partial p_m}\\\nonumber
&+\frac{1}{\alpha}\sum_{m\geq1}\sum_{k_1+k_2=\ell+1\atop{k_1,k_2\geq1}}\sum_{m_1+m_2=m+1\atop{m_1,m_2\geq1}}q_{m_1}q_{k_1}r_{m_2}r_{k_2}\Psi\frac{m\partial}{\partial p_m}.\nonumber
    \end{align*}
Applying \cref{eq com 3 relation}, we get 
\begin{align*}
  \left[\tG_1,\C_{\ell,1}\right]_{\bfp}^{\bfq,\bfr}
  =&\sum_{m\geq 1}\sum_{\genfrac..{0pt}{2}{m_1+m_2=m+1}{m_1,m_2\geq 1}}\left(
  [q_{m_1}r_{m_2},\C_{\ell,1}(\bfq)]\cdot\Psi\cdot\frac{m\partial}{\partial p_m}+[q_{m_1}r_{m_2},\C_{\ell,1}(\bfr)]\cdot\Psi\cdot\frac{m\partial}{\partial p_m}\right.\\
  &\left.+
  q_{m_1}r_{m_2}\cdot \left[\Psi,\C_{\ell,1}\right]_{\bfp}^{\bfq,\bfr}\cdot\frac{m\partial}{\partial p_m}
  +q_{m_1}r_{m_2}\cdot \Psi\cdot \left[\frac{m\partial}{\partial p_m},\C_{\ell,1}(\bfp)\right] \right).
  \end{align*}
  Hence,
  \begin{align*}
  \frac{1}{\ell+1}\left[\tG_1,\C_{\ell,1}\right]_{\bfp}^{\bfq,\bfr}
  =&\sum_{m\geq 1}\sum_{\genfrac..{0pt}{2}{m_1+m_2=m+1}{m_1,m_2\geq 1}}
  \left(-\left( m_1q_{m_1+\ell+1}r_{m_2}+m_2 q_{m_1}r_{m_2+\ell+1}\right)\cdot \Psi\cdot \frac{m\partial}{\partial p_m}\right.\\
  &\left.+\frac{1}{\alpha}q_{m_1}r_{m_2}\sum_{k_1+k_2=\ell+1\atop{k_1,k_2\geq 1}}q_{k_1}r_{k_2}\Psi\frac{m\partial}{\partial p_m}\right)+\frac{b(\ell+1)\ell}{2\alpha}\sum_{\genfrac..{0pt}{2}{m_1+m_2=\ell+2}{m_1,m_2\geq 1}}q_{m_1}r_{m_2}\cdot \Psi\\
  &+\sum_{m\geq 1}\sum_{\genfrac..{0pt}{2}{m_1+m_2=m+\ell+2}{m_1,m_2\geq 1}}(m+\ell+1)q_{m_1}r_{m_2}\cdot\Psi\cdot\frac{m\partial}{\partial p_m}\\
  &+\frac{1}{\alpha}\sum_{\genfrac..{0pt}{2}{m_1+m_2+m_3=\ell+2}{m_1,m_2,m_3\geq 1}}(m_1+m_2-1) (q_{m_1}r_{m_2}q_{m_3}+q_{m_1}r_{m_2}r_{m_3})\cdot \Psi.
\end{align*}
The last sum can be symmetrized as follows
$$\frac{1}{2\alpha}\sum_{\genfrac..{0pt}{2}{m_1+m_2+m_3=\ell+2}{m_1,m_2,m_3\geq 1}}(2m_1+m_2+m_3-2) (r_{m_1}q_{m_2}q_{m_3}+q_{m_1}r_{m_2}r_{m_3})\cdot \Psi.$$
One may also notice that for any $m,\ell,i_1,i_2\geq 1$, such that $i_1+i_2=m+\ell+2$ we have
$$m-\mathbbm 1_{i_1\geq \ell+2}(i_1-\ell-1)-\mathbbm 1_{i_2\geq \ell+2}(i_2-\ell-1)=\min(m,\ell,i_1-1,i_2-1).$$
Using these two remarks and combining the three equations above, we get
$$\frac{1}{\ell+2}\left[\tG_0,\C_{\ell,2}\right]_{\bfp}^{\bfq,\bfr}-\frac{1}{\ell+1}\left[\tG_1,\C_{\ell,1}\right]_{\bfp}^{\bfq,\bfr}+\frac{1}{\ell}\left[\tG_2,\C_{\ell,0}\right]_{\bfp}^{\bfq,\bfr}  =0.$$
which gives \cref{eq com tilde G-C} for $n=2$ and finishes the proof of the theorem.
\end{proof}

% \begin{proof}[Proof of \cref{thm low terms}]
%     We have by definition that 
%     $$$$
% \end{proof}

% \begin{exe}
% \begin{itemize}
%     \item If $|\lambda|=|\mu|+|\nu|$, then 
%     $$g^\lambda_{\mu,\nu}=d^\lambda_{\mu,\nu}=\binom{\lambda}{\mu,\nu}
%     :=\delta_{\lambda,\mu\cup\nu}\prod_{i\geq1}\frac{m_i(\mu\cup\nu)!}{m_i(\mu)!m_i(\nu)!}.$$
%     \item $|\lambda|=|\mu|+|\nu|-1$ then
%     $$g^\lambda_{\mu,\nu}=a^\lambda_{\mu\cup\nu} \binom{\mu\cup\nu}{\mu,\nu}-d^\lambda_{\mu,\nu}, $$
%     gives the number of $\lambda$-layer maps of face-type $\mu\cup\nu$, such that faces are colored ($\mu$ faces and $\nu$ faces), for which at least one connected component contains $\mu$ and $\nu$ faces.
% \item If  $|\lambda|=|\mu|+|\nu|-2$ :
% \begin{align*}
%     g^\lambda_{\mu,\nu}=
%     &\sum_{|\lambda|<|\pi_1|<|\pi_2|}a^\lambda_{\pi_1}a^{\pi_1}_{\pi_2}d^{\pi_2}_{\mu,\nu}\\
%     &-\sum_{|\lambda<|\pi|}a^\lambda_{\pi}d^{\pi}_{\mu,\nu}\\
%     &+d^\lambda_{\mu,\nu}.
% \end{align*}
% \end{itemize}

% Or equivalently
% \begin{align*}
%     d^\lambda_{\mu,\nu}
%     =&a^\lambda_{\mu\cup\nu}g^{\mu\cup\nu}_{\mu,\nu}\\
%     &-\sum_{\pi}a^\lambda_\pi g^\pi_{\mu,\nu}\\
%     &+g^\lambda_{\mu,\nu}.
% \end{align*}
% \end{exe}

\section{Equations for connected series}\label{sec connected}
In this section we consider a connected version $\hG^{(\alpha)}$ of the series $G^{(\alpha)}$. We establish some general properties about the series $\hG^{(\alpha)}$ and we derive from the main theorem a family of differential equation for this series.
\subsection{Connected series}
We introduce the two series
$$\hG\equiv\hG^{(\alpha)}(t,\bfp,\bfq,\bfr):=\alpha\cdot\log(G^{(\alpha)}(t,\bfp,\bfq,\bfr)),$$
and
$$ \htau \equiv\htau^{(\alpha)}(t,\bfp,\bfq,\bfr):=\alpha\cdot\log(\tau^{(\alpha)}(t,\bfp,\bfq,\bfr)),$$
where $\tau^{\alpha}$ is the series defined in \cref{eq:tau}.
The series $\hG$ and $\htau$ are well defined in $\mathbb Q(\alpha)[t,\bfp]\llbracket \bfq,\bfr\rrbracket\cap\mathbb Q(\alpha)[t,\bfq,\bfr]\llbracket \bfp\rrbracket$, since 
$$[p_\emptyset]G^{(\alpha)}=[p_\emptyset] \htau^{(\alpha)} =[q_\emptyset r_\emptyset ]G^{(\alpha)}=[q_\emptyset r_\emptyset ]\tau^{(\alpha)}=1,$$
where $\emptyset$ denotes the empty integer partition, see \cref{lem deg bound}. 

By \cref{prop H-G} and \cref{thm coef c b=0}, the series $\tau^{(\alpha)}$ (resp. $\hG^{(\alpha)}$)  is a generating series of connected hypermaps (resp. connected hypermaps with marked faces) when $\alpha\in\{1,2\}$. These two series are related by a variant of \cref{eq G-tau}.
\begin{lem}\label{lem hG-htau}
We have,
$$\hG(t,\bfp,\bfq,\bfr)=-p_1/t+\exp\left(\frac{\partial}{t\partial q_1}\right)\exp\left(\frac{\partial}{t\partial r_1}\right)\htau(t,\bfp,\bfq,\bfr).$$
\begin{proof}
First, notice that the operator $\exp\left(\frac{\partial}{t\partial q_1}\right)$  is well defined on $\mathbb  Q(\alpha)[\bfq,\bfr,t,1/t]\llbracket \bfp\rrbracket$, and for any $A,B\in\mathbb Q(\alpha)[\bfq,\bfr,t,1/t]\llbracket \bfp\rrbracket,$
    $$\exp\left(\frac{\partial}{t\partial q_1}\right)\cdot (AB)=\left(\exp\left(\frac{\partial}{t \partial q_1}\right)\cdot A\right)
    \left(\exp\left(\frac{\partial}{t\partial q_1}\right)\cdot B\right).$$
    % Since $\htau$ is homogeneous in the three alphabets $\bfp$, $\bfq$ and $\bfr$, we have $\hG\in \mathbb Q[\bfq,\bfr]\llbracket \bfp\rrbracket$. 
    Since $\htau(t,\bfp,\bfq,\bfr)\in \mathbb Q(\alpha)[\bfq,\bfr,t]\llbracket \bfp\rrbracket\subset \mathbb Q(\alpha)[\bfq,\bfr,t,1/t]\llbracket \bfp\rrbracket$, we get
    \begin{align*}
      \exp\left(\frac{\partial}{t\partial q_1}\right)\exp\left(\frac{\partial}{t\partial r_1}\right)\cdot \tau(t,\bfp,\bfq,\bfr)
      &=\exp\left(\frac{\partial}{t\partial q_1}\right)\exp\left(\frac{\partial}{t\partial r_1}\right)\cdot \sum_{k\geq 0}\frac{\htau(t,\bfp,\bfq,\bfr)^k}{\alpha^k k!}\\
      &=\sum_{k\geq 0}\frac{1}{k!} \left(\exp\left( \frac{\partial}{t\partial q_1}\right) \exp\left(\frac{\partial}{t\partial r_1}\right)\cdot \frac{\htau(t,\bfp,\bfq,\bfr)}{\alpha}\right)^k\\
      &=\exp\Bigg(\exp\left(\frac{\partial}{t\partial q_1}\right)\exp\left(\frac{\partial}{t\partial r_1}\right)\cdot \frac{\htau(t,\bfp,\bfq,\bfr)}{\alpha}\Bigg)
      .  
    \end{align*}
    
    We conclude by substituting this formula in \cref{eq G-tau}.
\end{proof}    
\end{lem}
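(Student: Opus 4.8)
The plan is to obtain the identity by taking logarithms in the multiplicative relation \eqref{eq G-tau} already established between $G^{(\alpha)}$ and $\tau^{(\alpha)}$. The decisive point is that the operators $\exp\!\left(\frac{\partial}{t\partial q_1}\right)$ and $\exp\!\left(\frac{\partial}{t\partial r_1}\right)$ are exactly the shift substitutions $q_1\mapsto q_1+1/t$ and $r_1\mapsto r_1+1/t$; being substitutions, they are algebra homomorphisms of $\mathbb Q(\alpha)[\bfq,\bfr,t,1/t]\llbracket\bfp\rrbracket$, whereas the factor $\exp\!\left(-\frac{p_1}{t\alpha}\right)$ in \eqref{eq G-tau} acts only by multiplication. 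Since these two derivations commute, the combined operator there equals $S:=\exp\!\left(\frac{\partial}{t\partial q_1}\right)\exp\!\left(\frac{\partial}{t\partial r_1}\right)$, and \eqref{eq G-tau} reads $G^{(\alpha)}=\exp\!\left(-\frac{p_1}{t\alpha}\right)\cdot\bigl(S\cdot\tau^{(\alpha)}\bigr)$.

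First I would record the definitional identities $\tau^{(\alpha)}=\exp(\htau/\alpha)$ and $G^{(\alpha)}=\exp(\hG/\alpha)$, legitimate because both series have constant term $1$ in the relevant gradings (as noted just before the statement). The heart of the argument is then the homomorphism identity
\begin{equation*}
  S\cdot\exp(\htau/\alpha)=\exp\bigl(S\cdot(\htau/\alpha)\bigr),
\end{equation*}
obtained by applying $S$ to the exponential series $\sum_{k\ge0}(\htau/\alpha)^k/k!$ and using that $S$ respects products; this upgrades the multiplicative relation above to an additive one. Taking $\log$ of $G^{(\alpha)}=\exp\!\left(-\frac{p_1}{t\alpha}\right)\cdot\bigl(S\cdot\tau^{(\alpha)}\bigr)$ then gives $\log G^{(\alpha)}=-\frac{p_1}{t\alpha}+\frac1\alpha\,S\cdot\htau$, and multiplying through by $\alpha$ converts $\log G^{(\alpha)}$ into $\hG$, $-\frac{p_1}{t\alpha}$ into $-p_1/t$, and $\frac1\alpha S\cdot\htau$ into $S\cdot\htau$, which is the asserted formula.

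The one step that deserves care --- the main (and essentially only) obstacle --- is justifying the interchange $S\circ\exp=\exp\circ S$ on formal power series, i.e. that the infinite-order operator $S$ distributes over the convergent product expansion of the exponential. This is handled by working $\bfp$-adically: since $[p_\emptyset]\tau^{(\alpha)}=1$, the series $\htau$ has vanishing $\bfp$-constant term, so $(\htau/\alpha)^k$ has $\bfp$-degree at least $k$ and $\exp(\htau/\alpha)$ converges in $\mathbb Q(\alpha)[\bfq,\bfr,t,1/t]\llbracket\bfp\rrbracket$; as $S$ acts only on the variables $q_1,r_1$ it preserves $\bfp$-degree and is continuous for this topology, so it commutes with the sum and with products. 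Once this continuity is granted, every remaining manipulation is purely formal.
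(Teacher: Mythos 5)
Your proof is correct and follows essentially the same route as the paper's: both arguments rest on the fact that $S:=\exp\left(\frac{\partial}{t\partial q_1}\right)\exp\left(\frac{\partial}{t\partial r_1}\right)$ is an algebra homomorphism, apply it to the expansion $\tau^{(\alpha)}=\exp(\htau/\alpha)=\sum_{k\geq 0}(\htau/\alpha)^k/k!$ to obtain $S\cdot\exp(\htau/\alpha)=\exp(S\cdot\htau/\alpha)$, and then substitute into \eqref{eq G-tau} and take logarithms. Your extra justifications (identifying $S$ as the shift $q_1\mapsto q_1+1/t$, $r_1\mapsto r_1+1/t$, and the $\bfp$-adic continuity argument for interchanging $S$ with the infinite sum) simply make explicit what the paper leaves implicit.
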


% As a consequence, we get that $\hG\in \mathbb Q[\bfq,\bfr]\llbracket \bfp\rrbracket$. 

The coefficients $h^\pi_{\mu,\nu}$, introduced in \cite{GouldenJackson1996} are defined by 
$$\htau(t,\bfp,\bfq,\bfr)=\sum_{n\geq 1}t^n\sum_{\pi,\mu,\nu\vdash n}\frac{h^\pi_{\mu,\nu}(\alpha)}{n}p_\pi q_\mu r_\nu.$$
These coefficients are the object of the hypermaps-Jack conjecture (known also as the $b$-conjecture), see  \cite[Conjecture 6.3]{GouldenJackson1996}. Similarly, we consider the coefficients $\hat{g}^\pi_{\mu,\nu}(\alpha)$
$$\hG(t,\bfp,\bfq,\bfr) =\sum_{\pi,\mu,\nu}\frac{\hat{g}^\pi_{\mu,\nu}(\alpha)}{|\pi|}t^{|\mu|+|\nu|-|\pi|}p_\pi q_\mu r_\nu.$$
The following polynomiality result is due to Féray and Do\l{}e\k{}ga.
\begin{thm}[\cite{DolegaFeray2017}]\label{thm h coef}
    For any partitions $\pi,\mu,\nu\vdash n\geq 1$, the coefficient $h^\pi_{\mu,\nu}$ is polynomial in $b$ and 
    $$\deg(h^\pi_{\mu,\nu})\leq n+2-\left(\ell(\pi)+\ell(\mu)+\ell(\nu)\right).$$
\end{thm}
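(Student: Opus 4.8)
The plan is to reduce the statement to the structure theory of Jack characters and then to read off the degree bound from the genus grading on the algebra of shifted symmetric functions. The guiding principle is that $\htau^{(\alpha)}$ is the connected ($=$ genus-organized) counterpart of $\tau^{(\alpha)}$, so polynomiality should come from the known polynomiality of the non-connected coefficients, while the improved degree bound should come from connectedness via Euler's formula.

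First I would rewrite everything in terms of Jack characters. For $\theta\vdash n$ one has the power-sum expansion $\J_\theta=\sum_{\pi\vdash n}\Jch_\pi(\theta)\,p_\pi$, since by definition $[p_\pi]\J_\theta=\Jch_\pi(\theta)$ when $|\pi|=|\theta|$. Substituting this expansion (and the analogous ones in $\bfq$ and $\bfr$) into \cref{eq:tau} gives
$$\frac{c^\pi_{\mu,\nu}(\alpha)}{z_\pi\alpha^{\ell(\pi)}}=\sum_{\theta\vdash n}\frac{\Jch_\pi(\theta)\,\Jch_\mu(\theta)\,\Jch_\nu(\theta)}{j^{(\alpha)}_\theta}.$$
The relation $\tau^{(\alpha)}=\exp(\htau^{(\alpha)}/\alpha)$ then expresses, through the exponential formula (Möbius inversion over set partitions), the coefficient $h^\pi_{\mu,\nu}$ as a signed $\mathbb Q(\alpha)$-combination of products of such quantities; equivalently, $h^\pi_{\mu,\nu}$ is a joint cumulant of the three Jack characters $\Jch_\pi,\Jch_\mu,\Jch_\nu$ for the functional $\theta\mapsto 1/j^{(\alpha)}_\theta$. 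Passing to the logarithm is precisely what isolates the connected configurations.

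For the polynomiality in $b$ I would take as input the polynomiality of the non-connected structure constants in $b=\alpha-1$, established by Do\l{}e\k{}ga and F\'eray \cite{DolegaFeray2016} (via \cref{prop_c-g_1} applied to equal-size partitions, and recorded integrally in \cref{thm integrality c} and \cref{cor integrality g}). A priori the cumulant expression above makes $h^\pi_{\mu,\nu}$ only a rational function of $\alpha$, the possible poles sitting at $\alpha=0$ because of the factors $\alpha^{-\ell(\cdot)}$ coming from the scalar product. The heart of the matter is to show that these poles cancel. This is cleanest inside the observable-algebra framework of \cite{DolegaFeray2016,Sniady2019}: one expands each $\Jch_\mu$ in the free-cumulant basis of shifted symmetric functions and uses the duality $\J^{(\alpha)}_\lambda\leftrightarrow\J^{(1/\alpha)}_{\lambda'}$ (which is visible on $\tau^{(\alpha)}$) to control the $\alpha\to0$ valuation, forcing $h^\pi_{\mu,\nu}\in\mathbb Q[b]$.

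Finally, the degree bound is a topological statement that I would obtain from the genus grading carried by the free-cumulant expansion of Jack characters. Assigning to each generator a weight equal to its Euler-characteristic defect, the top $b$-degree of a monomial of Jack characters evaluated on $\theta$ tracks the genus of the associated surface. Because $h^\pi_{\mu,\nu}$ is a \emph{cumulant}, Euler characteristics add over connected components and only the single-component term $\chi=2-2g$ survives; under the dictionary $V=\ell(\pi)$, $E=n$, $F=\ell(\mu)+\ell(\nu)$ and Euler's relation $V-E+F=2-2g$, this gives exactly
$$\deg_b\big(h^\pi_{\mu,\nu}\big)\le n+2-\big(\ell(\pi)+\ell(\mu)+\ell(\nu)\big).$$
The main obstacle throughout is the sharp degree control of a single Jack character in the free-cumulant basis, together with the verification that the pole cancellation and the ``$+2$'' produced by connectedness hold precisely; both are delicate and are what the measure-theoretic cumulant machinery of Do\l{}e\k{}ga--F\'eray is designed to supply.
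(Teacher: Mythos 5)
A preliminary remark: the paper never proves this statement. \cref{thm h coef} is imported as a black box from \cite{DolegaFeray2017} and is only used to bound $\deg(\hat g^\pi_{\mu,\nu})$, so there is no internal proof to compare against; your proposal has to stand on its own as a reconstruction of Do\l{}e\k{}ga--F\'eray's argument. Its setup is correct as far as it goes: for $|\pi|=|\theta|$ one indeed has $\Jch_\pi(\theta)=[p_\pi]J^{(\alpha)}_\theta$, hence $c^\pi_{\mu,\nu}/(z_\pi\alpha^{\ell(\pi)})=\sum_{\theta\vdash n}\Jch_\pi(\theta)\Jch_\mu(\theta)\Jch_\nu(\theta)/j^{(\alpha)}_\theta$; the $h$'s are the corresponding logarithmic (cumulant-type) coefficients; and, granting polynomiality of the $c$'s in $b$ (\cref{thm integrality c}, \cite{DolegaFeray2016}), the only possible poles of $h^\pi_{\mu,\nu}$ are at $\alpha=0$, i.e. $h^\pi_{\mu,\nu}\in\mathbb{Q}[\alpha,\alpha^{-1}]$.

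However, the two steps that actually constitute the theorem are asserted rather than proved. First, the pole cancellation at $\alpha=0$: you invoke the duality $J^{(\alpha)}_\lambda\leftrightarrow J^{(1/\alpha)}_{\lambda'}$ but never derive the functional equation it induces on the $h$'s; moreover such a relation only exchanges the valuation at $\alpha=0$ of one coefficient with the degree at $\alpha=\infty$ of a dual one, so by itself it cannot exclude a pole without an accompanying degree bound --- polynomiality and the degree bound cannot be decoupled the way your outline suggests. Second, and more seriously, the degree bound: the ``genus grading'' under which ``the top $b$-degree of a monomial of Jack characters tracks the genus of the associated surface'' is not an established fact. Attaching surfaces, and hence Euler characteristics, to these coefficients is precisely the content of the still-open $b$-conjecture, so the dictionary $V=\ell(\pi)$, $E=n$, $F=\ell(\mu)+\ell(\nu)$ explains why the bound $n+2-\ell(\pi)-\ell(\mu)-\ell(\nu)$ is \emph{expected}, but it cannot serve as a proof. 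The actual argument of \cite{DolegaFeray2017} is purely algebraic: it expresses the $h$'s through cumulants of Jack characters, works with gradations of the algebra of shifted symmetric functions together with degree bounds for Jack characters and their structure constants (building on \cite{DolegaFeray2016}), and uses general lemmas quantifying how the cumulant construction lowers degree in a filtered algebra; no surface appears anywhere. Since your plan explicitly defers both of these points to ``the machinery of Do\l{}e\k{}ga--F\'eray'', it is a roadmap toward the cited proof rather than a proof.
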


\noindent We deduce the following corollary.
\begin{cor}
    For any partitions $\pi,\mu$ and $\nu$, the coefficients $\hat{g}^\pi_{\mu,\nu}$ is polynomial in $b$, and 
    $$\deg(\hat{g}^\pi_{\mu,\nu})\leq 2+|\mu|-\ell(\mu)+|\nu|-\ell(\nu)-(|\pi|+\ell(\pi)).$$
    \begin{proof}
        From \cref{lem hG-htau}, we get that for any $|\pi|\geq \max(|\mu|,|\nu|)$, the coefficient $\hat{g}^\pi_{\mu,\nu}$ is given by 
        $$\hat{g}^{\pi}_{\mu,\nu} =\left\{
        \begin{array}{ll}
            0 & \text{ if } (\pi,\mu,\nu) =([1],[0],[0])  \\
            \binom{m_1(\mu)+|\pi|-|\mu|}{m_1(\mu)}\binom{m_1(\nu)+|\pi|-|\nu|}{m_1(\nu)}h^\pi_{\mu\cup 1^{|\pi|-|\mu|},\nu\cup1^{|\pi|-|\nu|}}, 
            &\text{ otherwise.} 
        \end{array}\right.
        $$
From \cref{thm h coef}, we get that $\hat{g}^\pi_{\mu,\nu}$ is polynomial and 
\begin{align*}
\deg\left(\hat{g}^\pi_{\mu,\nu}\right)
&=\deg(h^\pi_{\mu\cup 1^{|\pi|-|\mu|},\nu\cup1^{|\pi|-|\pi|}})\\
&\leq |\pi|+2-\Big(\ell(\pi)+\ell\left(\mu\cup 1^{|\pi|-|\mu|}\right)+\ell\left(\nu\cup1^{|\pi|-|\nu|}\right)\Big)\\
&=2+|\mu|-\ell(\mu)+|\nu|-\ell(\nu)-(|\pi|+\ell(\pi)).\qedhere
\end{align*}
    \end{proof}
\end{cor}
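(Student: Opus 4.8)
The plan is to reduce the statement to the Do\l{}e\k{}ga--F\'eray polynomiality theorem \cref{thm h coef} through the explicit passage from $\htau$ to $\hG$ provided by \cref{lem hG-htau}. First I would extract, from the identity
$$\hG=-p_1/t+\exp\left(\frac{\partial}{t\partial q_1}\right)\exp\left(\frac{\partial}{t\partial r_1}\right)\htau,$$
the coefficient of $t^{|\mu|+|\nu|-|\pi|}p_\pi q_\mu r_\nu$ on both sides and compare it with the defining expansions of $\hG$ and $\htau$ in terms of $\hat g^\pi_{\mu,\nu}$ and $h^\pi_{\mu,\nu}$ respectively.

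The decisive structural point is that $\htau$ is homogeneous: its $t^n$-component is supported on triples of partitions all of size $n$. Hence, in order to produce a monomial $p_\pi q_\mu r_\nu$ with $|\pi|=n$ inside $\exp(\cdots)\htau$, one must start from the single term $p_\pi\, q_{\mu\cup 1^{n-|\mu|}}\, r_{\nu\cup 1^{n-|\nu|}}$ of $\htau$ and then use $\exp\!\left(\frac1t\frac{\partial}{\partial q_1}\right)$ to lower exactly $n-|\mu|$ of the variables $q_1$ and likewise $n-|\nu|$ of the variables $r_1$. Counting these lowerings produces the two numerical factors $\binom{m_1(\mu)+n-|\mu|}{m_1(\mu)}$ and $\binom{m_1(\nu)+n-|\nu|}{m_1(\nu)}$, the powers of $t$ balance to $t^{|\mu|+|\nu|-|\pi|}$, and the common normalization $1/n=1/|\pi|$ on the two sides cancels. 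The additive correction $-p_1/t$ contributes only to the exceptional triple $(\pi,\mu,\nu)=([1],[0],[0])$. This gives, for all other triples,
$$\hat g^\pi_{\mu,\nu}=\binom{m_1(\mu)+|\pi|-|\mu|}{m_1(\mu)}\binom{m_1(\nu)+|\pi|-|\nu|}{m_1(\nu)}\,h^\pi_{\mu\cup 1^{|\pi|-|\mu|},\,\nu\cup 1^{|\pi|-|\nu|}},$$
and $\hat g^{[1]}_{\emptyset,\emptyset}=0$. Note that, unlike the disconnected relation \cref{eq G-tau}, no alternating signed sum over added $1$'s appears here, precisely because applying $\alpha\log$ replaces the multiplicative factor $\exp(-p_1/(t\alpha))$ by the single additive term $-p_1/t$.

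With this single-term formula secured, both conclusions are immediate. Polynomiality of $\hat g^\pi_{\mu,\nu}$ in $b$ follows from \cref{thm h coef}, since the two binomial prefactors are integers independent of $b$. For the degree bound I would write $\bar\mu:=\mu\cup 1^{|\pi|-|\mu|}$ and $\bar\nu:=\nu\cup 1^{|\pi|-|\nu|}$, substitute $\ell(\bar\mu)=\ell(\mu)+|\pi|-|\mu|$ and its analogue for $\nu$ into the estimate $\deg h^\pi_{\bar\mu,\bar\nu}\le |\pi|+2-\bigl(\ell(\pi)+\ell(\bar\mu)+\ell(\bar\nu)\bigr)$ of \cref{thm h coef}; the two occurrences of $|\pi|$ cancel and one is left with exactly $2+|\mu|-\ell(\mu)+|\nu|-\ell(\nu)-(|\pi|+\ell(\pi))$.

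I expect the only genuinely delicate step to be the bookkeeping in the coefficient extraction: one must confirm that $\exp\!\left(\frac1t\frac{\partial}{\partial q_1}\right)$ is well defined on the homogeneous series $\htau$ over $\mathbb Q(\alpha)[\bfq,\bfr,t,1/t]\llbracket\bfp\rrbracket$ (this is already granted in \cref{lem hG-htau}) and that the $t$-exponents match up to give $t^{|\mu|+|\nu|-|\pi|}$. Once the closed formula is in place, the polynomiality assertion and the degree computation are routine.
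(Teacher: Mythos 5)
Your proposal is correct and follows essentially the same route as the paper: both derive from \cref{lem hG-htau} the closed formula $\hat g^\pi_{\mu,\nu}=\binom{m_1(\mu)+|\pi|-|\mu|}{m_1(\mu)}\binom{m_1(\nu)+|\pi|-|\nu|}{m_1(\nu)}h^\pi_{\mu\cup 1^{|\pi|-|\mu|},\nu\cup 1^{|\pi|-|\nu|}}$ (with the exceptional vanishing at $(\pi,\mu,\nu)=([1],\emptyset,\emptyset)$), and then obtain polynomiality and the degree bound by substituting $\ell\left(\mu\cup 1^{|\pi|-|\mu|}\right)=\ell(\mu)+|\pi|-|\mu|$ and its analogue for $\nu$ into \cref{thm h coef}. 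The only difference is presentational: you spell out the coefficient extraction (homogeneity of $\htau$ forcing a unique source term, the binomials counting the lowered $q_1$'s and $r_1$'s, and the role of the additive correction $-p_1/t$), which the paper's proof leaves implicit.
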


% In particular, one can see that $\hat{g}^\pi_{\mu,\nu}=0$ unless $|\pi|+\ell(\pi)\leq 2+|\mu|-\ell(\mu)+|\nu|-\ell(\nu)$. As a consequence, $\hG\in \mcA:=\mathbb Q(\alpha)[\bfp]\llbracket \bfq,\bfr\rrbracket$.
    
\subsection{Dual operators}\label{ssec dual operators}
The purpose of this subsection is to give a differential expression of dual operators $\B_n$.
For this, we introduce the scalar product $\langle,\rangle_Y$ on
$\tildePY$ (see \cref{ssec op Bn}), defined by
$$\left\{
\begin{array}{l}
     \langle p_\lambda, p_\mu\rangle_Y=\delta_{\lambda,\mu}\alpha^{\ell(\lambda)}z_\lambda=\langle p_\lambda, p_\mu\rangle;  \\
     \langle p_\lambda, y_i p_\mu\rangle_Y=0;\\
     \langle y_i p_\lambda, y_j p_\mu\rangle_Y=\delta_{i,j}\delta_{\lambda,\mu}\alpha^{\ell(\lambda)+1}z_\lambda,
\end{array}
\right.$$
for any  $\lambda,\mu\in \mathbb Y$ and $i,j\geq 0$.

If $A_Y$ is an operator on $\PY$, then we denote by $A_Y^\perp$ its dual operator with respect to $\langle,\rangle_Y$.
We deduce from the definitions the following differential expressions for the catalytic operators; $$(y_i)^\perp=\frac{\alpha \partial}{\partial y_i}, \text{ for any }i\geq 0.$$
\begin{equation*}
    Y_+^\perp=\sum_{i\geq2} y_{i-1}\frac{\partial}{\partial y_i},
\end{equation*}

\begin{equation*}
    \Theta_Y^\perp(\bfp)=\sum_{i\geq 1}y_i\frac{i\partial}{\partial p_i},
\end{equation*}

\begin{equation*}
    \GY^\perp(\bfp)=\sum_{i,j\geq1}y_{i-1} p_{j}\frac{\partial}{\partial y_{i+j}}
    +(1+b)\cdot\sum_{i,j\geq1} y_{i+j-1}\frac{j\partial^2}{\partial y_i\partial p_{j}} 
    +b\cdot \sum_{i\geq2}y_{i-1}\frac{(i-1)\partial}{\partial y_i},
\end{equation*}

\begin{equation}\label{eq B_n dual}
    \mathcal{B}_n^\perp(\bfp,u)=\frac{\partial}{\partial y_0}\left(\GY^\perp+uY_+^\perp\right)^{n}\Theta_Y^\perp.
\end{equation}
We recall that $b:=\alpha-1$.

\subsection{Differential equation for the series of connected maps}
We denote for each $m\geq 1$
$$\hG^{[m]}_{\bfp}=\frac{m\partial}{\partial p_m}\hG,\qquad \hG^{[m]}_{\bfq}=\frac{m\partial}{\partial q_m}\hG,\qquad \text{}\hG^{[m]}_{\bfr}=\frac{m\partial}{\partial r_m}\hG.$$

\begin{prop}\label{prop diff expression}
Fix $n\geq 1$.
Then, we have the equality between operators in $\mathcal{O}(\mcA)$
\begin{equation*}
  \B_n(\bfq,u)\cdot G^{(\alpha)}
  =G^{(\alpha)}.\Theta_Y(\bfq)\left(\GY(\bfq)+u Y_++\sum_{i,j\geq 1}y_{i+j}\frac{\partial}{\partial y_{i-1}}\hG^{[j]}_{\bfq}\right)^{n}\frac{y_0}{1+b}.
\end{equation*}
Here, $G^{(\alpha)}$ acts on $\mcA$ by multiplication. Similarly,
\begin{multline*}
  {\B_n}^\perp(\bfp,u)\cdot G^{(\alpha)}
  \\=G^{(\alpha)}\cdot \frac{\partial}{\partial y_0}\left(\GY^\perp(\bfp)+u Y^\perp_++\sum_{i,j\geq 1}y_{i+j-1}\frac{\partial}{\partial y_{i}}\hG^{[j]}_{\bfp}\right)^{n} \left(\Theta^\perp_Y(\bfp)+\sum_{i\geq 1}y_i\hG^{[i]}_{\bfp}\right).
\end{multline*}

\begin{proof}
This is a consequence of the catalytic differential expressions of operators $\B_n$ and ${\B_n}^\perp$ given resp. in \cref{eq def Bn} and  \cref{eq B_n dual}.  We also use the fact that
\begin{equation*}
    \left[(1+b)\frac{j\partial}{\partial p_j},G^{(\alpha)}\right]=(1+b)\frac{j\partial G^{(\alpha)}}{\partial p_j}=G^{(\alpha)}.\hG^{[j]}_{\bfp}\quad
    \text{and}\quad \left[\frac{\partial}{\partial y_i},G^{(\alpha)}\right]=0.\qedhere
\end{equation*}
\end{proof}
\end{prop}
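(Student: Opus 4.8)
The plan is to derive both identities by conjugating the catalytic differential expressions \cref{eq def Bn} and \cref{eq B_n dual} by the multiplication operator $G^{(\alpha)}$. Since $[p_\emptyset]G^{(\alpha)}=[q_\emptyset r_\emptyset]G^{(\alpha)}=1$, the series $G^{(\alpha)}$ has invertible constant term, so multiplication by $G^{(\alpha)}$ is an invertible element of $\mathcal{O}(\mcA\llbracket u\rrbracket)$ and conjugation $X\mapsto (G^{(\alpha)})^{-1}\,X\,G^{(\alpha)}$ is an algebra automorphism. Both stated identities are exactly the computation of this conjugation: for instance the first one is equivalent to $(G^{(\alpha)})^{-1}\B_n(\bfq,u)\,G^{(\alpha)}=\Theta_Y(\bfq)\big(\GY(\bfq)+uY_++E\big)^{n}\tfrac{y_0}{1+b}$ with $E:=\sum_{i,j\geq1}y_{i+j}\frac{\partial}{\partial y_{i-1}}\hG^{[j]}_{\bfq}$. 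Because conjugation is multiplicative, it suffices to conjugate each elementary building block and then recombine, using $(G^{(\alpha)})^{-1}M^nG^{(\alpha)}=\big((G^{(\alpha)})^{-1}MG^{(\alpha)}\big)^{n}$.

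First I would record which building blocks commute with $G^{(\alpha)}$. As $G^{(\alpha)}$ is independent of the catalytic alphabet $Y$, it commutes with every $y_i$ and every $\frac{\partial}{\partial y_i}$, and also with multiplication by $p_i$, $q_i$ and $r_i$; hence $Y_+$, $Y_+^\perp$, $\tfrac{y_0}{1+b}$, $\frac{\partial}{\partial y_0}$ and $\Theta_Y(\bfq)=\sum_i q_i\frac{\partial}{\partial y_i}$ are fixed by conjugation. The only nontrivial commutators come from the $p$- (resp. $q$-) derivatives. The key fact is the Leibniz identity $\frac{\partial}{\partial q_j}(G^{(\alpha)}f)=G^{(\alpha)}\big(\frac{\partial}{\partial q_j}+\frac{\partial\log G^{(\alpha)}}{\partial q_j}\big)f$, which combined with $\hG^{(\alpha)}=\alpha\log G^{(\alpha)}$ and $\alpha=1+b$ gives the commutator quoted in the statement,
\[
\big[(1+b)\tfrac{j\partial}{\partial q_j},\,G^{(\alpha)}\big]=(1+b)\tfrac{j\partial G^{(\alpha)}}{\partial q_j}=G^{(\alpha)}\cdot\hG^{[j]}_{\bfq},
\]
so that $(G^{(\alpha)})^{-1}\big[(1+b)\tfrac{j\partial}{\partial q_j}\big]G^{(\alpha)}=(1+b)\tfrac{j\partial}{\partial q_j}+\hG^{[j]}_{\bfq}$.

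Next I would assemble the two identities. In $\B_n(\bfq,u)$ only $\GY(\bfq)$ contains $q$-derivatives, and they occur exactly in the combination $(1+b)\frac{j\partial}{\partial q_j}$ inside its first summand; pulling the commuting factor $y_{i+j}\frac{\partial}{\partial y_{i-1}}$ out of the conjugation and applying the commutator above turns $\GY(\bfq)$ into $\GY(\bfq)+E$, while $\Theta_Y(\bfq)$, $uY_+$ and $\tfrac{y_0}{1+b}$ are untouched; multiplicativity on the $n$-th power then yields the first identity. For the dual, in ${\B_n}^\perp(\bfp,u)=\frac{\partial}{\partial y_0}(\GY^\perp+uY_+^\perp)^n\Theta_Y^\perp$ the $p$-derivatives sit in two places: the middle summand $(1+b)\sum_{i,j}y_{i+j-1}\frac{j\partial^2}{\partial y_i\partial p_j}$ of $\GY^\perp$, which conjugates to add the correction $\sum_{i,j}y_{i+j-1}\frac{\partial}{\partial y_i}\hG^{[j]}_{\bfp}$, and the trailing operator $\Theta_Y^\perp(\bfp)=\sum_i y_i\frac{i\partial}{\partial p_i}$, which conjugates to $\Theta_Y^\perp(\bfp)$ plus a correction of the form $\sum_i y_i\,\hG^{[i]}_{\bfp}$; since $\frac{\partial}{\partial y_0}$ and $Y_+^\perp$ are fixed, recombining gives the second identity. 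Here one uses that $\hG^{[j]}_{\bfp}$, being $Y$-independent, is a multiplication operator commuting with all $y$-derivatives, so the correction terms may be written in the displayed positions.

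The routine but delicate point is the coefficient bookkeeping: the correction is clean (a bare $\hG^{[j]}$) precisely when the $p$-derivative appears dressed by the normalizing factor $(1+b)=\alpha$, as in $\GY$ and in the middle term of $\GY^\perp$, because that factor cancels the $1/\alpha$ coming from $\hG^{(\alpha)}=\alpha\log G^{(\alpha)}$. I would verify with particular care the constant carried by the trailing $\Theta_Y^\perp$ term, since there the derivative $\frac{i\partial}{\partial p_i}$ is \emph{not} dressed by $(1+b)$, and this is the only place where the interplay between the $(1+b)$-normalization of the catalytic operators and the factor $\alpha$ in the definition of $\hG^{(\alpha)}$ must be tracked exactly. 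Everything else reduces to the commutation facts isolated above, so no input beyond \cref{eq def Bn}, \cref{eq B_n dual} and the invertibility of $G^{(\alpha)}$ is required.
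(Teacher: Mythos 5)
Your plan is exactly the paper's own proof: conjugate the catalytic expressions \cref{eq def Bn} and \cref{eq B_n dual} by the (invertible) multiplication operator $G^{(\alpha)}$, use that all $Y$-operators and the multiplications by $p_i,q_i,r_i$ commute with $G^{(\alpha)}$, that conjugation is multiplicative (so it passes through the $n$-th power), and the commutator $\left[(1+b)\frac{j\partial}{\partial p_j},G^{(\alpha)}\right]=G^{(\alpha)}\cdot\hG^{[j]}_{\bfp}$. Your treatment of the first identity, and of the $\GY^\perp$-correction in the second, is complete and correct.

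The genuine problem is the one step you defer, namely ``the constant carried by the trailing $\Theta_Y^\perp$ term.'' This is not routine bookkeeping: carrying it out shows your argument does not yield the statement as printed. Since $\hG^{(\alpha)}=\alpha\log G^{(\alpha)}$ and $\alpha=1+b$, one has $\frac{i\partial G^{(\alpha)}}{\partial p_i}=\frac{1}{1+b}G^{(\alpha)}\hG^{[i]}_{\bfp}$, so conjugating the \emph{undressed} derivatives in $\Theta_Y^\perp(\bfp)=\sum_{i\geq1}y_i\frac{i\partial}{\partial p_i}$ gives
\[
\Theta_Y^\perp(\bfp)\cdot G^{(\alpha)}=G^{(\alpha)}\cdot\Big(\Theta_Y^\perp(\bfp)+\frac{1}{1+b}\sum_{i\geq 1}y_i\hG^{[i]}_{\bfp}\Big),
\]
i.e.\ the correction carries an extra factor $\frac{1}{1+b}$ relative to the displayed formula. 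A direct check confirms this: apply both sides of the second identity with $n=1$ to the constant $1$. The left-hand side is ${\B_1}^\perp(\bfp,u)\,G^{(\alpha)}=u\frac{\partial G^{(\alpha)}}{\partial p_1}+\sum_{i\geq1} p_i\frac{(i+1)\partial G^{(\alpha)}}{\partial p_{i+1}}=\frac{1}{1+b}G^{(\alpha)}\big(u\hG^{[1]}_{\bfp}+\sum_{i\geq1} p_i\hG^{[i+1]}_{\bfp}\big)$, whereas the right-hand side as printed evaluates to $G^{(\alpha)}\big(u\hG^{[1]}_{\bfp}+\sum_{i\geq1} p_i\hG^{[i+1]}_{\bfp}\big)$; the two agree only at $b=0$. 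So what your conjugation argument, once completed, actually proves is the identity with trailing factor $\Theta_Y^\perp(\bfp)+\frac{1}{1+b}\sum_i y_i\hG^{[i]}_{\bfp}$; the printed coefficient $1$ cannot be obtained (and the same factor should propagate to the first line of \cref{thm diff for connected}). In short: leaving the flagged constant unverified is a real gap in your write-up, and verifying it reveals that the delicate point you correctly isolated is precisely where the stated formula needs the factor $\frac{1}{1+b}$.
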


We deduce from \cref{thm diff eq} and \cref{prop diff expression} the following theorem.
\begin{thm}\label{thm diff for connected}
The series $\hG$ satisfies the following differential equation:
    \begin{multline*}
        \sum_{n\geq 1}\frac{(-1)^n}{n}\frac{\partial}{\partial y_0}\left(\GY^\perp(\bfp)+u Y^\perp_++\sum_{i,j\geq 1}y_{i+j-1}\frac{\partial}{\partial y_{i}}\hG^{[j]}_{\bfp}\right)^{n} \left(\sum_{i\geq 1}y_i\hG^{[i]}_{\bfp}\right)\cdot 1\\
        =\sum_{n\geq 1}\frac{(-1)^n}{n}\Theta_Y(\bfq)\left(\GY(\bfq)+u Y_++\sum_{i,j\geq 1}y_{i+j}\frac{\partial}{\partial y_{i-1}}\hG^{[j]}_{\bfq}\right)^{n}\frac{y_0}{1+b}\cdot 1\\
        +\sum_{n\geq 1}\frac{(-1)^n}{n}\Theta_Y(\bfr)\left(\GY(\bfr)+u Y_++\sum_{i,j\geq 1}y_{i+j}\frac{\partial}{\partial y_{i-1}}\hG^{[j]}_{\bfr}\right)^{n}\frac{y_0}{1+b}\cdot 1.
    \end{multline*}
\end{thm}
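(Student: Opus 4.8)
The plan is to read the asserted identity as the image of the main theorem \cref{thm diff eq} under the catalytic rewriting of \cref{prop diff expression}, and then to cancel the common factor $G^{(\alpha)}$. Concretely, I would begin from \cref{eq diff eq} and expand each operator through its $t$-series, writing $\Binf(-t,\bfq,u)=\sum_{n\geq1}\tfrac{(-t)^n}{n}\B_n(\bfq,u)$, and likewise for $\Binf(-t,\bfr,u)$ and for ${\Binf}^\perp(-t,\bfp,u)=\sum_{n\geq1}\tfrac{(-t)^n}{n}{\B_n}^\perp(\bfp,u)$.

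Next I would apply \cref{prop diff expression} termwise. For each fixed $n$ it rewrites $\B_n(\bfq,u)\cdot G^{(\alpha)}$, $\B_n(\bfr,u)\cdot G^{(\alpha)}$ and ${\B_n}^\perp(\bfp,u)\cdot G^{(\alpha)}$ as $G^{(\alpha)}$ times a catalytic operator, built from $\GY,Y_+,\Theta_Y$ and their adjoints and dressed by the logarithmic derivatives $\hG^{[j]}_{\bfq},\hG^{[j]}_{\bfr},\hG^{[j]}_{\bfp}$, applied to $1$. Summing against $\tfrac{(-t)^n}{n}$, both sides of \cref{eq diff eq} acquire $G^{(\alpha)}$ as a left multiplicative factor, so the identity takes the shape $G^{(\alpha)}\cdot A=G^{(\alpha)}\cdot B$, where $A$ gathers the two $\bfq$- and $\bfr$-contributions and $B$ the $\bfp$-contribution. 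The single simplification required is on the $\bfp$-side: in the trailing factor $\big(\Theta_Y^\perp(\bfp)+\sum_{i\geq1}y_i\hG^{[i]}_{\bfp}\big)\cdot 1$ one has $\Theta_Y^\perp(\bfp)\cdot 1=\sum_{i\geq1}y_i\tfrac{i\partial}{\partial p_i}\cdot 1=0$, so this factor collapses to $\sum_{i\geq1}y_i\hG^{[i]}_{\bfp}$, exactly as in the statement.

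To conclude I would cancel $G^{(\alpha)}$. Since $[q_\emptyset r_\emptyset]G^{(\alpha)}=1$ (and $[p_\emptyset]G^{(\alpha)}=1$) by \cref{lem deg bound}, the series $G^{(\alpha)}$ is a unit in the completed algebra $\mcA$, so multiplication by it is injective and $A=B$ follows. This $A=B$ is the asserted equation, with $\tfrac{(-t)^n}{n}$ in place of $\tfrac{(-1)^n}{n}$, the two being interchangeable through the common grading under which $G^{(\alpha)}$ and $\hG$ are homogeneous and each $\B_n$ (resp. ${\B_n}^\perp$) is homogeneous of degree $n$ (resp. $-n$); see \cref{rmq C_ell defined}.

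I do not expect a genuine obstacle, because the analytic heart is already packaged in \cref{prop diff expression}: that proposition is what turns the action of $\B_n$ and ${\B_n}^\perp$ on the disconnected series $G^{(\alpha)}$ into a closed catalytic expression in the connected series $\hG$. The only points demanding care are the legitimacy of summing the fixed-$n$ identities of \cref{prop diff expression} against $\tfrac{(-t)^n}{n}$ inside the completed operator algebra $\mathcal{O}(\mcA)\llbracket u\rrbracket$, which is ensured by the degree estimates of \cref{rmq C_ell defined}, and the cancellation of the invertible factor $G^{(\alpha)}$.
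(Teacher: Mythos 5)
Your proof is correct and is precisely the deduction the paper intends: the paper proves this theorem exactly by combining \cref{thm diff eq} with \cref{prop diff expression}, expanding $\Binf$ in its $t$-series, applying the operator identities to $1$ (where $\Theta_Y^\perp(\bfp)\cdot 1=0$ collapses the trailing factor), and cancelling the invertible factor $G^{(\alpha)}$. Your remark on $(-t)^n$ versus $(-1)^n$ is also the right way to reconcile your derivation with the stated form: by homogeneity the variable $t$ is redundant, so the two versions are equivalent.
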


% \textbf{Acknowledgment} The author is very grateful to his advisors Valentin Féray and Guillaume Chapuy for many interesting discussions and for reviewing this paper several times.

\appendix

\section{Proof of Lemma \ref{lem C 0-1-2}}\label{appendix}
We prove in this appendix \cref{lem C 0-1-2}. In order to obtain an explicit differential expression for operators $\C_0$, $\C_1$ and $\C_2$ we develop the catalytic expressions given in \cref{ssec op Bn} for these operators. Since the computations are lengthy for the operator $\C_2$, we explain the important steps of the proof without giving all the details.  

It is direct from the definitions that for any $i\geq 0$ we have
$$Y_+^i\frac{y_0}{\alpha}=\frac{y_i}{\alpha},\quad \text{as operators on $\PY$.}$$
We apply $\Theta_Y$ to obtain \cref{eq C_0}. By applying $\Gamma_Y$ on the last equation we get;
    $$\Gamma_Y Y_+^i\frac{y_0}{\alpha}=\frac{ib}{\alpha} y_{i+1}+ \sum_{m\geq1}y_{i+m+1}\frac{m\partial}{\partial p_m}+\frac{1}{\alpha}\sum_{1\leq j\leq i}y_{i-j+1}p_{j},$$
 Hence, for any $i_1,i_2\geq 0$, we have
 \begin{align}\label{eq proof C1}
  Y_+^{i_2} \Gamma_Y Y_+^{i_1}\frac{y_0}{\alpha}=\frac{i_1 b}{\alpha} y_{i_1+i_2+1}
+\sum_{m\geq1}y_{i_1+i_2+m+1}\frac{m\partial}{\partial p_m}
+\frac{1}{\alpha}\sum_{1\leq j\leq i_1}y_{i_1+i_2-j+1}p_{j}.   
 \end{align}

\noindent We apply $\Theta_Y$ and we sum over all tuples $(i_1,i_2)$ such that $i_1+i_2=\ell$ to obtain
\begin{align*}
  \C_{\ell,1}
  &=\frac{b}{\alpha}p_{\ell+1} \sum_{i_1+i_2=\ell\atop{i_1,i_2\geq 0}}i_1+(\ell+1)\sum_{m\geq1}p_{\ell+m+1}\frac{m\partial}{\partial p_m}+\frac{1}{\alpha}\sum_{j_2+j_2=\ell+1\atop{j_1,j_2\geq 1}}p_{j_1}p_{j_2}\sum_{i_1+i_2=\ell\atop{i_1,i_2\geq 0}}\mathbbm 1_{i_1\geq j_1}\\
  &=\frac{b}{\alpha}\binom{\ell+2}{2}p_{\ell+1} +(\ell+1)\sum_{m\geq1}p_{\ell+m+1}\frac{m\partial}{\partial p_m}+\frac{1}{\alpha}\sum_{j_2+j_2=\ell+1\atop{j_1,j_2\geq 1}}p_{j_1}p_{j_2}j_2.\\
\end{align*}
In order to obtain \cref{eq C_1}, we \textit{symmetrize} the last sum with respect to $(j_1,j_2)$;
\begin{align*}
  \sum_{j_2+j_2=\ell+1\atop{j_1,j_2\geq 1}}p_{j_1}p_{j_2}j_2
  &=\frac{1}{2}\left(\sum_{j_2+j_2=\ell+1\atop{j_1,j_2\geq 1}}p_{j_1}p_{j_2}j_2+\sum_{j_2+j_2=\ell+1\atop{j_1,j_2\geq 1}}p_{j_1}p_{j_2}j_1\right) \\
  &=\frac{1}{2}\sum_{j_2+j_2=\ell+1\atop{j_1,j_2\geq 1}}p_{j_1}p_{j_2}(\ell+1).
\end{align*}
This idea will be used repeatedly in the proof of \cref{eq C_2} which we now explain. We start by applying $\Gamma_Y$ on \cref{eq proof C1};

\begin{align*}
    \Gamma_Y Y_+^{i_2} \Gamma_Y Y_+^{i_1}\frac{y_0}{\alpha}
    =&\frac{b^2}{\alpha} i_1(i_1+i_2+1) y_{i_1+i_2+2}+i_1b \sum_{k\geq 1}y_{i_1+i_2+k+2}\frac{k\partial}{\partial p_k}\\
    &+\frac{i_1b}{\alpha}\sum_{j=1}^{i_1+i_2+1}y_{i_1+i_2-j+2}p_{j}+b(i_1+i_2+m+1)\sum_{m\geq1}y_{i_1+i_2+m+2}\frac{m\partial}{\partial p_m}\\
    &+\alpha\sum_{m,k\geq 1}y_{i_1+i_2+m+k+2}\frac{m\partial}{\partial p_m}\frac{k\partial}{\partial p_k}+\sum_{m\geq1}\sum_{j=1}^{i_1+ i_2+m+1}y_{i_1+i_2+m+2-j}p_{j}\frac{m\partial}{\partial p_m}\\
    &+\frac{b}{\alpha}(i_1+i_2-j+1)\sum_{1\leq j\leq i_1}y_{i_1+i_2-j+2}p_{j}+\sum_{1\leq j\leq i_1}\sum_{k\geq 1}y_{i_1+i_2-j+k+1}\frac{k\partial}{\partial p_k}p_{j}\\
    &+\sum_{j=1}^{i_1}\sum_{j'=1}^{i_1+i_2-j+1}y_{i_1+i_2-j+1} p_{j'}p_{j}.
\end{align*}
Fix three integers  $i_1,i_2,i_3\geq 0$ such that $i_1+i_2+i_3=\ell$.
We apply $Y_+^{i_3}$ on the last equation and we regroup the terms of the same type. We get
\begin{align*}
    Y_+^{i_3}\Gamma_Y Y_+^{i_2} \Gamma_Y Y_+^{i_1}\frac{y_0}{\alpha}
    =&\frac{b^2}{\alpha} i_1(i_1+i_2+1) y_{\ell+2}+(2i_1+i_2+m+1)b \sum_{m\geq 1}y_{\ell+m+2}\frac{m\partial}{\partial p_m}\\
    &+\frac{b}{\alpha}\sum_{j=1}^{\ell+1}(i_1\mathbbm 1 _{j\leq i_1+i_2+1}+(i_1+i_2-j+1)\mathbbm 1_{j\leq i_1})y_{\ell-j+2}p_{j}\\
    &+\alpha\sum_{m,k\geq 1}y_{\ell+m+k+2}\frac{m\partial}{\partial p_m}\frac{k\partial}{\partial p_k}
    +\sum_{m\geq1}\sum_{j=1}^{\ell+m+1}\mathbbm 1_{j\leq i_1+i_2+m+1}y_{\ell+m+2-j}p_{j}\frac{m\partial}{\partial p_m}\\
    &+\sum_{1\leq j\leq \ell}\sum_{k\geq 1}y_{\ell-j+k+2}\mathbbm 1_{j\leq i_1}(p_{j}\frac{k\partial}{\partial p_k}+k\delta_{k,j})\\
    &+\sum_{j,j'\geq1\atop{j+j'\leq \ell+1}}\mathbbm 1_{j\leq i_1}\mathbbm 1_{j'\leq i_1+i_2-j+1}y_{\ell-j+2} p_{j'}p_{j}.
\end{align*}
By applying $\Theta_Y$ and taking the sum over all tuples $(i_1,i_2,i_3)$, we get\footnote{This step involves the computations of sums of some polynomial expression in the variables $i_1, i_2$ and $i_3$ which can be easily checked using a software of formal computation (Maple for example).} 
 
\begin{align*}
  \C_{\ell,2}
  &=\binom{\ell+2}{3}\frac{(3\ell+5)b^2}{4\alpha}p_{\ell+2}+b\binom{\ell+2}{2}\sum_{m\geq 1}p_{m+\ell+2}(m+\ell+1)\frac{m\partial}{\partial p_m}\\
  &+\frac{b}{2\alpha}\sum_{j_1+j_2=\ell+2\atop{j_1,j_2\geq 1}}j_2((\ell+1)^2-j_1 j_2)p_{j_1}p_{j_2}
  +\alpha\binom{\ell+2}{2}\sum_{m,k\geq 1}y_{\ell+m+k+2}\frac{m\partial}{\partial p_m}\frac{k\partial}{\partial p_k}+\binom{\ell+3}{4}p_\ell\\
  &+\binom{\ell+2}{2}\sum_{m\geq1 }\sum_{j_1+j_2=\ell+m+2\atop{j_1,j_2\geq 1}} p_{j_1}p_{j_2}\frac{m\partial}{\partial p_m}
  +\sum_{j_1,j_2,j_3\geq 1\atop{j_1+j_2+j_3=\ell+2}}\frac{j_3(2j_2+j_3-1)}{2}p_{j_1}p_{j_2}p_{j_3}.
\end{align*}
In order to conclude, we first symmetrize the third sum; we use the fact that $(\ell+1)^2-j_1 j_2)p_{j_1}p_{j_2}$ is symmetric in $j_1$ and $j_2$. Moreover, we symmetrize the last sum in two steps as follows; $j_3p_{j_1}p_{j_2}p_{j_3}$ is symmetric in $j_1$ and $j_2$, hence
\begin{align*}
  \frac{1}{2}\sum_{j_1,j_2,j_3\geq 1\atop{j_1+j_2+j_3=\ell+2}}j_3(2j_2+j_3-1)p_{j_1}p_{j_2}p_{j_3}
&=\frac{1}{4}\sum_{j_1,j_2,j_3\geq 1\atop{j_1+j_2+j_3=\ell+2}}j_3p_{j_1}p_{j_2}p_{j_3}(2j_2+j_3-1+2j_1+j_3-1)  \\
&=\frac{1}{2}\sum_{j_1,j_2,j_3\geq 1\atop{j_1+j_2+j_3=\ell+2}}j_3p_{j_1}p_{j_2}p_{j_3}(\ell+1).
\end{align*}
Finally, notice that $p_{j_1}p_{j_2}p_{j_3}$ is symmetric in $j_1$, $j_2$ and $j_3$. We then take the average between the three indexing tuples $(j_1,j_2,j_3)$, $(j_2,j_3,j_1)$ and $(j_3,j_1,j_2)$. This finishes the proof of \cref{eq C_2} and hence the proof of the lemma.

\section*{Acknowledgment} The author is very grateful to his advisors Valentin Féray and Guillaume Chapuy for many interesting discussions and for reviewing this paper several times. He also thanks the anonymous referees whose comments made this paper clearer.

\bibliographystyle{amsalpha}
\bibliography{biblio}
\end{document}